\pgfplotsset{compat=1.5}
 \newcounter{extralabel}[section]
 \newtheorem{ittheorem}{Theorem}
 \newtheorem{itlemma}{Lemma}
 \newtheorem{itproposition}{Proposition}
 \newtheorem{itdefinition}{Definition}
 \newtheorem{itcorollary}{Corollary}
 \newtheorem{itconjecture}{Conjecture}
 \newtheorem{itremark}{Remark}
 \newtheorem{itassumption}{Assumption}
 \newenvironment{theorem}{\addtocounter{extralabel}{1}
 \begin{ittheorem}}{\end{ittheorem}}
 \newenvironment{lemma}{\addtocounter{extralabel}{1}
 \begin{itlemma}}{\end{itlemma}}
 \newenvironment{proposition}{\addtocounter{extralabel}{1}
 \begin{itproposition}}{\end{itproposition}}
 \newenvironment{definition}{\addtocounter{extralabel}{1}
 \begin{itdefinition}}{\end{itdefinition}}
 \newenvironment{corollary}{\addtocounter{extralabel}{1}
 \begin{itcorollary}}{\end{itcorollary}}
 \newenvironment{remark}{\addtocounter{extralabel}{1}
 \begin{itremark}}{\end{itremark}}
\newcommand{\Rand}[1]{\marginpar{#1}}
\newcommand{\be}[1]{\Rand{\vspace{0,6cm}\tt #1}\begin{equation}\label{#1}}
\newcommand{\beL}[1]{\Rand{\vspace{0,6cm}\tt #1}\begin{lemma}\label{#1}}
\newcommand{\belC}[2]{\Rand{\vspace{0,6cm}\tt #1}\begin{lemma}[#2]\label{#1}}
\newcommand{\beP}[1]{\Rand{\vspace{0,6cm}\tt #1}\begin{proposition}\label{#1}}
\newcommand{\bePC}[2]{\Rand{\vspace{0,6cm}\tt #1}\begin{proposition}[#2]\label{#1}}
\newcommand{\beD}[1]{\Rand{\vspace{0,6cm}\tt #1}\begin{definition}\label{#1}}
\newcommand{\beT}[1]{\Rand{\vspace{0,6cm}\tt #1}\begin{theorem}\label{#1}}
\newcommand{\beC}[1]{\Rand{\vspace{0,6cm}\tt #1}\begin{corollary}\label{#1}}
\newcommand{\bePr}[1]{\Rand{\vspace{0,6cm}\tt #1}\begin{proof}\label{#1}}
\newcommand{\bea}[1]{\Rand{\vspace{0,7cm}\tt #1\vspace{-0,7cm}}\begin{eqnarray}\label{#1}}
\renewcommand{\d}{{\rm d}}
\newcommand{\e}{{\rm e}}
\newcommand{\E}{\mathbb{E}}
\renewcommand{\P}{\mathbb{P}}
 \def\1{{\mathchoice {1\mskip-4mu\mathrm l} 
{1\mskip-4mu\mathrm l}
{1\mskip-4.5mu\mathrm l} {1\mskip-5mu\mathrm l}}}
\def\CB{\mathcal{B}}
\def\CC{\mathcal{C}}
\def\CE{\mathcal{E}}
\def\CF{\mathcal{F}}
\def\CG{\mathcal{G}}
\def\CL{\mathcal{L}}
\def\CI{\mathcal{I}}
\def\CT{\mathcal{T}}
\def\CR{\mathcal{R}}
\def\CP{\mathcal{P}}
\def\E{\mathbb{E}}
\def\G{\mathbb{G}}
\def\N{\mathbb{N}}
\def\P{\mathbb{P}}
\def\R{\mathbb{R}}
\def\S{\mathbb{S}}
\def\Z{\mathbb{Z}}
\def\Im{\mathrm{i}}
\DeclareMathSymbol{\varNu}{\mathord}{letters}{78}
\newcommand{\ee}{\end{equation}}
\newcommand{\eea}{\end{eqnarray}}
\newcommand{\bean}{\begin{eqnarray*}}
\newcommand{\eean}{\end{eqnarray*}}
\newcommand{\sign}{\mathrm{sgn}\,}
\definecolor{rood}{rgb}{1,0,0}
\definecolor{rood1}{rgb}{0.9,0,0.1}
\definecolor{rood2}{rgb}{0.8,0,0.2}
\definecolor{rood3}{rgb}{0.7,0,0.3}
\definecolor{rood4}{rgb}{0.6,0,0.4}
\definecolor{rood5}{rgb}{0.5,0,0.5}
\definecolor{rood6}{rgb}{0.4,0,0.6}
\definecolor{rood7}{rgb}{0.3,0,0.7}
\definecolor{rood8}{rgb}{0.2,0,0.8}
\definecolor{rood9}{rgb}{0.1,0,0.9}
\definecolor{rood9}{rgb}{0,0,1}
\newtheorem{xx}{\bf xxx}
\newtheorem{zz}{\bf zzz}
\newtheorem{yy}{\bf yyy}
\definecolor{groen}{rgb}{0,0.5,0.2}
\begin{document}


\title{Spatial populations with seed-bank:\\
well-posedness, duality and equilibrium}

\author{Andreas Greven$^1$, Frank den Hollander$^2$, Margriet Oomen$^3$}

\date{April 27, 2020}

\maketitle

\begin{abstract}
We consider a system of interacting Fisher-Wright diffusions with seed-bank. Individuals live in colonies and are subject to resampling and migration as long as they are {\em active}. Each colony has a structured seed-bank into which individuals can retreat to become {\em dormant}, suspending their resampling and migration until they become active again. As geographic space labelling the colonies we consider a countable Abelian group $\G$ endowed with the discrete topology. The key example of interest is the Euclidean lattice $\G=\Z^d$, $d \in \N$. Our goal is to \emph{classify} the long-time behaviour of the system in terms of the underlying model parameters. In particular, we want to understand in what way the seed-bank enhances genetic diversity. 

We introduce three models of increasing generality, namely, individuals become dormant:  (1) in the seed-bank of their colony; (2) in the seed-bank of their colony while adopting a \emph{random colour} that determines their wake-up time; (3) in the seed-bank of a \emph{random colony} while adopting a \emph{random colour}. The extension in (2) allows us to model wake-up times with fat tails while preserving the Markov property of the evolution. The extension in (3) allows us to place individuals in different colony when they become dormant. For each of the three models we show that the system of continuum stochastic differential equations, describing the population in the large-colony-size limit, has a unique strong solution. We also show that the system converges to a unique equilibrium depending on a single \emph{density parameter} that is determined by the initial state, and exhibits a dichotomy of \emph{coexistence} (= locally multi-type equilibrium) versus \emph{clustering} (= locally mono-type equilibrium) depending on the parameters controlling the migration and the seed-bank. 

The seed-bank slows down the loss of genetic diversity. In model (1), the dichotomy between clustering and coexistence is determined by migration only. In particular, clustering occurs for recurrent migration and coexistence occurs for transient migration, as for the system without seed-bank. In models (2) and (3), an interesting interplay between migration and seed-bank occurs. In particular, the dichotomy is affected by the seed-bank when the wake-up time has infinite mean. For instance, for \emph{critically recurrent migration} the system exhibits clustering for finite mean wake-up time and coexistence for infinite mean wake-up time. Hence, at the \emph{critical dimension} for the system without seed-bank, \emph{new universality classes} appear when the seed-bank is added. If the wake-up time has a sufficiently fat tail, then the seed-bank determines the dichotomy and migration has no effect at all. 

The presence of the seed-bank makes the proof of convergence to a unique equilibrium a conceptually delicate issue. By combining duality arguments with coupling techniques, we show that our results also hold when we replace the Fisher-Wright diffusion function by a more general diffusion function, drawn from an appropriate class. 

\medskip\noindent
\emph{Keywords:} 
Fisher-Wright diffusion, resampling, migration, seed-bank, duality, equilibrium, coexistence 
versus clustering, 

\medskip\noindent
\emph{MSC 2010:} 
Primary 
60J70, 
60K35; 
Secondary 
92D25. 

\medskip\noindent 
\emph{Acknowledgements:} 
AG was supported by the Deutsche Forschungsgemeinschaft  (through grant DFG-GR 876/16-2 of SPP-1590), FdH and MO were supported by the Netherlands Organisation for Scientific Research (through NWO Gravitation Grant NETWORKS-024.002.003). FdH was also supported by the Alexander von Humboldt Foundation (during extended visits to Bonn and Erlangen in the Fall of 2019).  
\end{abstract}

\bigskip

\footnoterule
\noindent
\hspace*{0.3cm} {\footnotesize $^{1)}$ 
Department Mathematik, Universit\"at Erlangen-N\"urnberg, Cauerstrasse 11,
D-91058 Erlangen, Germany\\
greven@mi.uni-erlangen.de}\\
\hspace*{0.3cm} {\footnotesize $^{2)}$ 
Mathematisch Instituut, Universiteit Leiden, Niels Bohrweg 1, 2333 CA  Leiden, NL\\
denholla@math.leidenuniv.nl}\\
\hspace*{0.3cm} {\footnotesize $^{3)}$ 
Mathematisch Instituut, Universiteit Leiden, Niels Bohrweg 1, 2333 CA  Leiden, NL\\
m.oomen@math.leidenuniv.nl}


\tableofcontents

\newpage


\section{Background and outline}
\label{s.introduct}


\subsection{Background and goals}
\label{ss.back}

In populations with a seed-bank, individuals can become dormant and stop reproducing themselves, until they can become active and start reproducing themselves again. In \cite{BCEK15} and \cite{BCKW16}, the evolution of a population evolving according to the Fisher-Wright model with a {\em seed-bank} was studied. In this model individuals are subject to \emph{resampling} and can move in and out of a seed-bank. While in the seed-bank they suspend resampling, i.e., the seed-bank acts as a repository for the genetic information of the population. Individuals that do not reside in the seed-bank are called \emph{active}, those that do are called \emph{dormant}. In the present paper we extend the single-colony Fisher-Wright model with seed-bank introduced in \cite{BCKW16} to a multi-colony setting in which individuals live in different colonies and move between colonies. In other words, we introduce \emph{spatialness}. 

Seed-banks are observed in many taxa, including plants, bacteria and other micro-organisms. Typically, they arise as a response to unfavourable environmental conditions. The dormant state of an individual is characterised by low metabolic activity and interruption of phenotypic development (see e.g.\ Lennon and Jones~\cite{LJ11}). After a varying and possibly large number of generations, dormant individuals can be resuscitated under more favourable conditions and reprise reproduction after having become active. This strategy is known to have important implications for population persistence, maintenance of genetic variability and stability of ecosystems. It acts as a \emph{buffer} against evolutionary forces such as genetic drift, selection and environmental variability. The importance of this evolutionary trait has led to several attempts to model seed-banks from a mathematical perspective, see e.g.\ \cite{KKL01}, \cite{BCKS13}, \cite{CAESKSB14}, \cite{BBGCWB19}. In \cite{BCKW16} it was shown that the continuum model obtained by taking the large-colony-size limit of the individual-based model with seed-bank is the Fisher-Wright diffusion with seed-bank.  Also the long-time behaviour and the genealogy of the continuum model with seed-bank were analysed in \cite{BCKW16}.  
 
In the present paper we consider a {\em spatial} version of the continuum model with seed-bank, in which individuals live in colonies, each with their own seed-bank, and are allowed to \emph{migrate} between colonies. Our goal is to understand the change in behaviour compared to the spatial model without seed-bank. The latter has been the object of intense study. A sample of relevant papers and overviews is \cite{S80}, \cite{D93}, \cite{DG93b}, \cite{DGV95}, \cite{DG96}, \cite{EF96}, \cite{dHS98}, \cite{dH06}, \cite{DGHSS08}, \cite{GHKK14}. We expect the presence of the seed-bank to affect the long-time behaviour of the system not only quantitatively but also qualitatively. To understand how this comes about, we must find ways to deal with the \emph{richer behaviour} of the population caused by the motion in and out of the seed-bank. Earlier work on a  spatial model with seed-bank, migration and mutation was carried out in \cite{HP16}, where the probability to be identical by decent for two individuals drawn randomly from two colonies was computed as a function of the distance between the colonies. 

It has been recognised that qualitatively different behaviour may occur when the wake-up time in the seed-bank changes from having a thin tail to having a fat tail \cite{LJ11}. One challenge in modelling seed-banks has been that fat tails destroy the Markov property for the evolution of the system. A key idea of the present paper is that we can \emph{enrich the seed-bank with internal states} -- which we call colours -- to allow for fat tails and still preserve the Markov property for the evolution. We will see that fat tails induce \emph{new universality classes}.

The main goals of the present paper are the following: 
\begin{itemize}
\item[(1)] 
Identify the typical features of the long-time behaviour of populations with a seed-bank. In particular, prove convergence to equilibrium, and identify the parameter regimes for \emph{clustering} (= convergence towards locally mono-type equilibria)
and \emph{coexistence} (= convergence towards locally multi-type equilibria). 
\item[(2)]
Identify the role of finite versus infinite mean wake-up time. Identify the \emph{critical dimension} in case the geographic space is $\Z^d$, $d \geq 1$, i.e., the dimension at which the crossover between clustering and coexistence occurs for migration with finite variance. 
\begin{itemize}
\item[(2a)]
Show that if the wake-up time has \emph{finite mean}, then the dichotomy between coexistence and clustering is controlled by the migration only and the seed-bank has no effect. In particular, clustering prevails when the symmetrised migration kernel is recurrent while coexistence prevails when it is transient. This is the classical dichotomy for populations without seed-bank \cite{CG94}. The critical dimension is $d=2$.
\item[(2b)]
Show that if the wake-up time has \emph{infinite mean} with \emph{moderately fat tails}, then the dichotomy is controlled by both the migration and the seed-bank. In particular, the parameter regimes for clustering and coexistence reveal an interesting interplay between rates for migration and rates for exchange with the seed-bank. The critical dimension is $1 < d < 2$.
\item[(2c)]
Show that if the wake-up time has \emph{infinite mean} with \emph{very fat tails}, then the dichotomy is controlled by the seed-bank only and the migration has no effect. The critical dimension is $d=1$. 
\end{itemize}
\end{itemize}

We focus on the situation where the individuals can be of two types. The extension to infinitely many types, called the Fleming-Viot measure-valued diffusion, only requires standard adaptations and will not be considered here (see \cite{DGV95}). Also, instead of Fisher-Wright resampling we will allow for state-dependent resampling, i.e., the rate of resampling in a colony depends on the fractions of the two types in that colony.  In what follows we only work with \emph{continuum} models, in which the components represent \emph{type frequencies} in the colonies labelled by a discrete geographic space.  

The techniques of proof that we use include duality, moment relations, semigroup comparisons and coupling. These techniques are standard, but have to be adapted to the fact that individuals move into and out of seed-banks. Since there is no resampling and no migration in the seed-bank, the motion of ancestral lineages in the dual process looses part of the random-walk structure that is crucial in models without seed-bank. Moreover, for seed-banks with infinite mean wake-up times, we encounter \emph{fat-tailed} wake-up time distributions in the dual process, and we need to deal with lineages that are dormant most of the time and therefore are much slower to coalesce. The coupling arguments also change. Already in a single colony, if the seed-bank has infinitely many internal states, then we are dealing with an infinite system in which the manipulation of Lyapunov functions and the construction of successful couplings from general classes of initial states is hard. In the multi-colony setting this becomes even harder, and conceptually challenging issues arise. 


\subsection{Outline}
\label{ss.outline}

In Section~\ref{s.basics} we introduce three models of increasing generality, establish their well-posedness via a martingale problem, and introduce their dual processes, which play a crucial role in the analysis. In Section~\ref{s.scaling} we state our main results.  
We focus on the long-time behaviour, prove convergence to equilibrium, and establish a \emph{dichotomy} between clustering and coexistence. We show that this dichotomy is \emph{affected by the presence of the seed-bank}, namely, the dichotomy depends not only on the migration rates, but can also depend on the relative sizes of the active and the dormant population and their rates of exchange. In particular, if the dormant population is much larger than the active population, then the residence time in the seed-bank has a fat tail that \emph{enhances genetic diversity significantly}. 

Sections~\ref{s.wpdualpr}--\ref{s.model3} are devoted to the proofs of the theorems stated in Sections~\ref{s.basics}--\ref{s.scaling}. In Appendix \ref{appA} we give the derivation of the single-colony continuum model from the single-colony individual-based Fisher-Wright model in the large-colony-size limit. In the individual-based model active individuals \emph{exchange} with dormant individuals, i.e., for each active individual that becomes dormant a dormant individual becomes active. In Appendix \ref{appB} we look at the continuum limit of the single-colony individual-based Moran model in which active and dormant individuals no longer exchange state but rather change state independently. We show that change instead of exchange does not affect the long-time behaviour. Appendices \ref{appC} and \ref{appe} contain the proof of technical lemmas that are needed in the proof of the convergence to equilibrium. 

In three upcoming companion papers \cite{GdHOpr2}, \cite{GdHOpr3}, \cite{GdHOpr4} we deal with three further aspects: 
\begin{itemize}
\item[(I)]
In \cite{GdHOpr2} we establish the \emph{finite-systems scheme}, i.e.,  we identify in the coexistence regime how a finite truncation of the system behaves as both the time and the truncation level tend to infinity, properly tuned together. This underlines the relevance of systems with an infinite geographic space and a seed-bank with infinitely many colours for the description of systems with a large finite geographic space and a seed-bank with a large finite number of colours. We show that there is a \emph{single universality class} for the scaling limit, represented by a Fisher-Wright diffusion whose volatility constant is {\em reduced} by the seed-bank. We show that if the wake-up time has finite mean, then the scaling time is \emph{proportional} to the geographical volume of the system, while if the wake-up time has infinite mean, then the scaling time grows \emph{faster} than the geographical volume of the system. We also investigate what happens for systems with a large finite geographic space and a seed-bank with infinitely many colours, where the behaviour turns out to be different.
\item[(II)] 
In \cite{GdHOpr3} we consider the special case where the colonies are organised in a \emph{hierarchical fashion}, i.e., the geographic space is the hierarchical group $\Omega_N$ of order $N$. We identify the parameter regime for clustering for all $N<\infty$, and analyse the \emph{multi-scale behaviour} of the system in the hierarchical mean-field limit $N \to \infty$ by looking at block averages on successive hierarchical space-time scales. Playing with the migration kernel, we can choose the migration to be close to critically recurrent in the sense of potential theory. By letting $N\to\infty$ we can approach the \emph{critical dimension}, so that the migration becomes similar to migration on the two-dimensional Euclidean geographic space. With the help of \emph{renormalisation arguments} we show that, close to the critical dimension, the scaling behaviour on large space-time scales is \emph{universal}.
\item[(III)] 
In \cite{GdHOpr4} we identify the \emph{pattern of cluster formation} in the clustering regime (= how fast mono-type clusters grow in time) and describe the \emph{genealogy} of the population. The latter provides further insight into how the seed-bank enhances genetic diversity. 
\end{itemize}  
In these papers too we will see that the seed-bank can cause not only quantitative but also qualitative changes in the scaling behaviour of the system.


\section{Introduction of the three models and their basic properties}
\label{s.basics}

In Section ~\ref{ss.process} we give a formal definition of the three models of increasing generality. In Section~\ref{ss.comments} we comment on their biological significance. In Section~\ref{ss.wellpos} we establish their well-posedness via a martingale problem (Theorem~\ref{T.wellp}). In Section~\ref{ss.duality} we introduce the associated  dual processes and state the relevant duality relations (Theorems~\ref{T.dual1}, \ref{T.dual2} and \ref{T.dual3}). In Section~\ref{ss.dichcrit} we use these duality relations to formulate a criterion for clustering versus coexistence (Theorems~\ref{T.dichcrit.model1} and \ref{T.dichcrit.model2}).


\subsection{Migration, resampling and seed-bank: three models}
\label{ss.process}

In this section we extend the model for a population with seed-bank from \cite{BCKW16} to three models of increasing generality for  spatial populations with seed-bank. In each of the three models, we consider populations of individuals of two types -- either $\heartsuit$ or $\diamondsuit$ -- located in a {\em geographic space} $\G$ that is a countable Abelian group endowed with the discrete topology. In each of the three models, the population in a colony consist of an \emph{active} part and a \emph{dormant} part. The \emph{repository} of the dormant population at colony $i\in\G$ is called the seed-bank at $i\in\G$. Individuals in the active part of a colony $i\in\G$ can resample, migrate and exchange with a dormant population. Individuals in the dormant part of a colony $i\in\G$ only exchange with the active population. An active individual that resamples chooses uniformly at random another individual from its colony and adopts its type. (Alternatively, resampling may be viewed as the active individual being replaced by a copy of the active individual chosen. Because individuals carry a type and not a label, this gives the same model.) When an active individual at colony $i\in\G$ migrates, it chooses a parent from another colony $j\in\G$ and adopts its type. In each of the three models the migration is described by a {\em migration kernel} $a(\cdot,\cdot)$, which is  an irreducible $\G \times \G$ matrix of transition rates satisfying
\begin{equation}
\label{gh10}
a(i,j) = a(0,j-i) \quad \forall\,i,j \in \G, \qquad \sum_{i \in \G} a(0,i) < \infty.
\end{equation}
Here, $a(i,j)$ is to be interpreted as the rate at which an active individual at colony $i\in\G$ chooses a parent in the active part of colony $j\in\G$ and adopts its type. An active individual that becomes dormant \emph{exchanges} with a randomly chosen dormant individual that becomes active. The three models we discuss in the present paper differ in the way the active population exchanges with the dormant population. However, in each of the three models the exchange mechanism guarantees that the sizes of the active and the dormant population stay fixed over time. The dormant part of the population only evolves due to exchange of individuals with the active part of the population. 
 
Since we look at continuum models obtained from individual-based models, we are interested in the frequencies of type $\heartsuit$ in the different colonies. In Appendix~\ref{appA} we discuss the individual-based models underlying the continuum models described below.
 
\begin{remark}{{\bf [Notation]}}
\label{notation}
{\rm Throughout the paper we use lower case letters for \emph{components} and upper case letters for \emph{systems of components}.} \hfill $\Box$
\end{remark}

\paragraph{Model 1: single-layer seed-bank.}

Each colony $i \in \G$ has an active part $A$ and a dormant part $D$. Therefore we say that \emph{the effective geographic space} is given by $\G\times{\{A,D\}}$. For $i\in\G$ and $t \geq 0$, let $x_i(t)$ denote the fraction of individuals in colony $i$ of type $\heartsuit$ that are active at time $t$, and $y_i(t)$ the fraction of individuals in colony $i$ of type $\heartsuit$ that are dormant at time $t$. Then the system is described by the process 
\begin{equation}
\label{e409}
(Z(t))_{t\geq 0}, \qquad Z(t) = \big(z_i(t)\big)_{i\in\G}, \qquad z_i(t) = (x_i(t),y_i(t)),
\end{equation} 
on the state space 
\begin{equation}
\label{ss1}
E=\left([0,1]\times[0,1]\right)^\G,
\end{equation} 
and $(Z(t))_{t\geq0}$ evolves according to the following SSDE: 
\begin{eqnarray}
\label{gh1}
\d x_i(t) &=& \sum_{j \in \G} a(i,j)\, [x_j (t) - x_i(t)]\,\d t
+ \sqrt{d x_i(t)[1-x_i(t)]}\,\d w_i(t)\\ \nonumber
&&+\,Ke\,[y_i(t) - x_i(t)]\,\d t,\\[0.2cm]
\label{gh2}
\d y_i (t) &=& e\,[x_i(t) - y_i(t)]\,\d t,\qquad  i\in\G,
\end{eqnarray}
where $(w_i(t))_{t \geq 0}$, $i \in \G$, are independent standard Brownian motions. As initial state $Z(0)=z$ we may pick any $z\in E$. The first term in \eqref{gh1} describes the {\em migration} of active individuals at rate $a(i,j)$. The second term in \eqref{gh1} describes the {\em resampling} of individuals at rate $d \in (0,\infty)$. The third term in \eqref{gh1} together with the term in \eqref{gh2} describe the \emph{exchange} of active and dormant individuals at rate $e \in (0,\infty)$. 

\medskip\noindent
\vspace{-0.5cm}
\begin{figure}[htbp]
\begin{center}
\begin{tikzpicture}
\draw [fill=red!20!blue!20!,ultra thick] (0,0) rectangle (2,4);
\draw [fill=red!20!blue!20!,ultra thick] (4,0) rectangle (6.5,4);
\node  at (1,2) {\text{\Large A}};
\node  at (5.25,2) {\text{\Large D}};
\draw[ultra thick,<-](2.25,2)--(3.75,2);
\draw[ultra thick,->](2.25,3)--(3.75,3);
\draw[ultra thick,<-](-1.5,2)--(0,2);
\draw[ultra thick,->](-1.5,3)--(0,3);
\node at (3,4.3) {\text{exchange}};
\node at (1,5.3) {\text{resampling}};
\node at (-1,4.3) {\text{migration}};
\draw [ultra thick] (1,3.75)to [out=180,in=180](1, 4.75) ;
\draw [ultra thick,->] (1,4.75)to [out=0,in=60](1.1, 3.75) ;
\node[above]  at (3,3) {$Ke$};
\node[above]  at (3,2) {$e$};
\node[above]  at (-0.75,3) {};
\node[above]  at (-0.75,2) {};
\node[left]  at (1.7,4.5) {$d$};
\end{tikzpicture}
\vspace{0.2cm}
\caption{\small The evolution in model 1. Individuals are subject to migration, resampling 
and exchange with the seed-bank.}
\label{fig:Model1}
\end{center}
\end{figure}
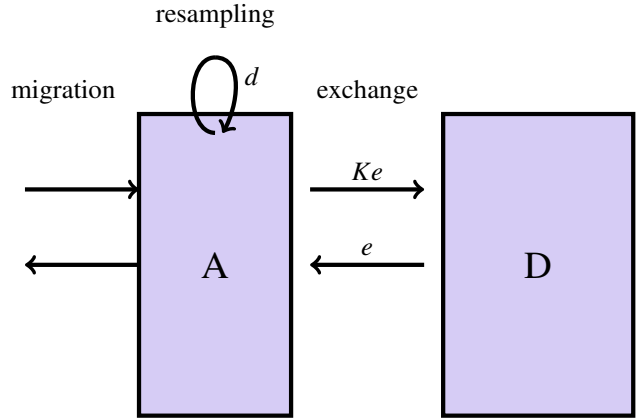
\vspace{-0.5cm}

The factor $K \in (0,\infty)$ is defined by 
\begin{equation}
\label{ratio1}
K = \frac{\text{size dormant population}}{\text{size active population}}, 
\end{equation}
and is the same for all colonies $i\in\G$. The factor $K$ turns up in the scaling limit of the individual-based model when there is an {\em asymmetry} between the sizes of the active and the dormant population (see Appendix \ref{appA}). In Fig.~\ref{fig:Model1} we give a schematic illustration of the process \eqref{gh1}--\eqref{gh2}. A detailed description of the underlying individual-based model, as well as a derivation of the continuum limit \eqref{gh1}--\eqref{gh2} from the individual-based model following \cite{BCKW16}, can be found in Appendix \ref{appA}. The continuum limit is also referred to as the frequency limit or the diffusion limit. 

\begin{remark}{\bf [Interpretation of the state space.]}
\label{rm1ss}
{\rm Note that the \emph{state space} of the system can also be written as
\begin{equation}
\label{Edef}
E= [0,1]^\S, \qquad \S=\G\times\{A,D\}, 
\end{equation} 
where $A$ denotes the reservoir of the active population and $D$ the repository of the dormant population. With that interpretation, the process is denoted by
\begin{equation}
(Z(t))_{t \geq 0}, \qquad  Z(t) = \big(z_{u}(t)\big)_{u \in \S}
\end{equation} 
with $z_{u}(t)=x_i(t)$ if $u=(i,A)$ and $z_{u}(t) = y_i(t)$ if $u=(i,D).$ To analyse the system we need both interpretations of the state space.} \hfill$\Box$
\end{remark}

\paragraph{Model 2: multi-layer seed-bank.}

\begin{figure}[htbp]
\begin{center}
\begin{tikzpicture}
\draw [fill=red!20!blue!20!,ultra thick] (0,0) rectangle (2,4);
\draw [fill=red!20!blue!20!,ultra thick] (4,0) rectangle (6.5,4);
\draw [fill=yellow!50!, ultra thick] (4.25,3.15) rectangle (6.25,3.85); 
\draw [fill=red!50!, ultra thick] (4.25,2.15) rectangle (6.25,2.85); 
\draw [fill=green!50!, ultra thick] (4.25,0.55) rectangle (6.25,1.25); 
\node  at (1,2) {\text{\Large A}};
\node  at (5.25,3.45) {\text{\Large $D_0$}};
\node  at (5.25,2.45) {\text{\Large $D_1$}};
\node  at (5.25,0.85) {\text{\Large $D_m$}};
\draw[ultra thick,<-](2.25,3.4)--(3.75,3.4);
\draw[ultra thick,->](2.25,3.7)--(3.75,3.7);
\draw[ultra thick,<-](2.25,2.4)--(3.75,2.4);
\draw[ultra thick,->](2.25,2.7)--(3.75,2.7);
\draw[ultra thick,<-](2.25,0.8)--(3.75,0.8);
\draw[ultra thick,->](2.25,1.1)--(3.75,1.1);
\node at (3,4.3) {\text{exchange}};
\node at (1,5.3) {\text{resampling}};
\draw [ultra thick] (1,3.75)to [out=180,in=180](1, 4.75) ;
\draw [ultra thick,->] (1,4.75)to [out=0,in=60](1.1, 3.75) ;
\foreach \x in {1.9,1.7,1.5,0.35,0.15}
\draw[fill] (5.25,\x) circle [radius=0.05];
\node[above]  at (3,3.6) {$K_0 e_0$};
\node[below]  at (3,3.49) {$e_0$};
\node[above]  at (3,2.6) {$K_1 e_1$};
\node[below]  at (3,2.49) {$e_1$};
\node[above]  at (3,1.0) {$K_me_m$};
\node[below]  at (3,0.89) {$e_m$};
\node[left]  at (1.7,4.5) {$d$};
\node at (-1,4.3) {\text{migration}};
\draw[ultra thick,<-](-1.5,2)--(0,2);
\draw[ultra thick,->](-1.5,3)--(0,3);	
\end{tikzpicture}
\vspace{.3cm}
\caption{\small The evolution in model 2. Individuals are subject to migration, resampling and 
exchange with the seed-bank, as in model 1. Additionally, when individuals become dormant 
they get a colour and when they become active they loose their colour.}
\label{fig:Model2}
\end{center}
\end{figure}
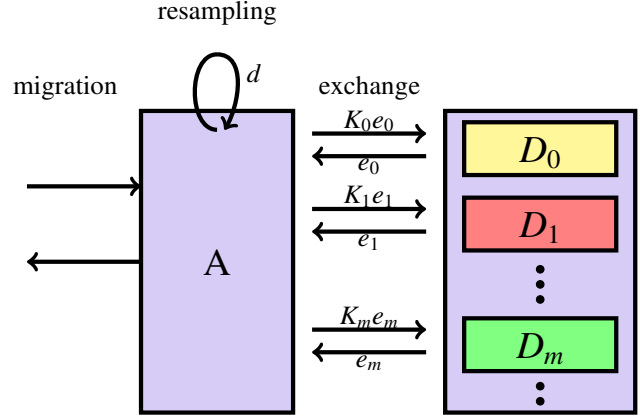

In this model we give the seed-bank an internal structure by colouring the dormant individuals with countably many colours  $m\in\N_0$. Active individuals that become dormant are assigned a colour $m$ that is drawn randomly from an infinite sequence of colours labeled by $\N_0$ (see Fig.~\ref{fig:Model2} for an illustration). As will be explained in Section~\ref{ss.comments}, this captures the  \emph{different ways in which individuals can enter into the seed-bank}. In Section \ref{ss.duality} we will show how this internal structure allows for fat tails in the wake-up times of individuals while preserving the Markov property.

For each $i\in\G$ a colony now consists of an active part $A$ and a whole sequence $(D_m)_{m\in\N_0}$ of dormant parts, labeled by their colour $m\in\N_0$. Therefore in this model the \emph{effective geographic space} is given by $\G\times\{A,(D_m)_{m\in\N_0}\}$.

As before, for $i\in\G$, let $x_i(t)$ denote the fraction of individuals in colony $i$ of type $\heartsuit$ that are active 
at time $t$, but now let $y_{i,m}(t)$ denote the fraction of individuals in colony $i$ of type $\heartsuit$ that are dormant with colour $m$ at time $t$. Then the system is described by the process
\begin{equation}
\label{e510}
(Z(t))_{t\geq 0}, \qquad Z(t) = \big(z_i(t)\big)_{i \in \G}, \qquad z_i(t) = (x_i(t),(y_{i,m}(t))_{m \in \N_0}),
\end{equation}
on the state space 
\begin{equation}
\label{ss2}
E = ([0,1]\times[0,1]^{\N_0})^\G.
\end{equation}
Suppose that active individuals exchange with dormant individuals with colour $m$ at rate $e_m \in (0,\infty)$, and let the factor $K_m \in (0,\infty)$ capture the asymmetry between the size of the active population and the $m$-dormant population, i.e., similarly as in \eqref{ratio1}, 
\begin{equation}
\label{ratio2}
K_m = \frac{\text{size $m$-dormant population}}{\text{size active population}},
\qquad m\in\N_0, 
\end{equation}
where $K_m \in (0,\infty)$ is the same for all colonies. Then the process $(Z(t))_{t \geq 0}$ evolves according to the SSDE
\begin{eqnarray}
\label{gh1*}
\d x_i(t) &=& \sum_{j \in \G} a(i,j)\,[x_j (t) - x_i(t)]\,\d t
+ \sqrt{d x_i(t)[1-x_i(t)]}\,\d w_i(t)\\ \nonumber
&&+\,\sum_{m\in\N_0} K_me_m\, [y_{i,m}(t) - x_i(t)]\,\d t,\\[0.2cm]
\label{gh2*}
\d y_{i,m} (t) &=& e_m\,[x_i(t) - y_{i,m}(t)]\,\d t, \qquad m\in\N_0,\ i\in\G,
\end{eqnarray}
where we have to assume that 
\begin{equation}
\label{defchi}
\sum_{m\in\N_0} K_me_m<\infty,
\end{equation}
since otherwise active individuals become dormant instantly. Comparing \eqref{gh1*}--\eqref{gh2*} with the SSDE of model 1 in \eqref{gh1}--\eqref{gh2}, we see that active individuals migrate (the first term in \eqref{gh1*}), resample (the second term in \eqref{gh1*}), but now interact with a whole sequence of dormant populations (the third term in \eqref{gh1*} and the term in \eqref{gh2*}). As initial state $Z(0)=z$ we may again take any $z \in E$.
 
\begin{remark}{\bf [Interpretation of the state space.]}
\label{rm2ss}
{\rm Note that, like in Remark \ref{rm1ss}, the \emph{state space} of the system can also be written as
\begin{equation}
\label{Edef2}
E= [0,1]^\S, \qquad \S=\G\times\{A,(D_m)_{m\in\N_0}\}. 
\end{equation} 
With this interpretation, the process is denoted by
\begin{equation}
\label{e511}
(Z(t))_{t\geq 0}, \qquad Z(t) = \big(z_{u}(t)\big)_{u \in \S},
\end{equation} 
with $z_{u}(t)=x_i(t)$ if $u=(i,A)$ and $z_{u}(t) = y_{i,m}(t)$ if $u=(i,D_m)$ for $m\in\N_0$.} \hfill$\Box$
\end{remark}

\paragraph{Model 3: multi-layer seed-bank with displaced seeds.}

We can extend the mechanism of model 2 by allowing active individuals that become dormant to do so in a randomly chosen colony. This amounts to introducing a sequence of irreducible \emph{displacement kernels} $a_m(\cdot,\cdot)$, $m \in \N_0$, satisfying 
\begin{equation}
\label{pmdef}
a_m(i,j) = a_m(0,j-i) \quad \forall\,i,j \in \G,
\qquad \sum_{i \in \G} a_m(0,i) = 1 \quad \forall\, m \in\N_0,
\end{equation}  
and replacing \eqref{gh1*}--\eqref{gh2*} by
\begin{eqnarray}
\label{gh1**}
\d x_i(t) &=& \sum_{j \in \G} a(i,j)\,
[x_j (t) - x_i(t)]\,\d t
+ \sqrt{d x_i(t)[1-x_i(t)]}\,\d w_i(t)\\ \nonumber
&&+\, \sum_{j\in\G} \sum_{m\in\N_0} K_me_m\,a_m(j,i)\,[y_{j,m}(t) - x_i(t)]\,\d t,\\[0.2cm]
\label{gh2**}
\d y_{i,m} (t) &=& \sum_{j \in \G} e_m\,a_m(i,j)\,[x_j(t) - y_{i,m}(t)]\,\d t, \qquad m\in\N_0,\ i\in\G.
\end{eqnarray}
Here, the third term in \eqref{gh1**} together with the term in \eqref{gh2**} describe the \emph{switch of colony} when individuals exchange between active and dormant. Namely, with probability $a_m(i,j)$ simultaneously an active individual in colony $i$ becomes dormant with colour $m$ in colony $j$ and a randomly chosen dormant individual with colour $m$ in colony $j$ becomes active in colony $i$. The state space $E$ is the same as in \eqref{ss2}. Also \eqref{e510}, \eqref{ratio2}, \eqref{defchi} and \eqref{e511} remain the same.

\paragraph{Two key quantities.}

In models 2 and 3 we must assume that
\begin{equation}
\label{emcond}
\chi = \sum_{m\in\N_0} K_me_m < \infty
\end{equation}
in order to make sure that active individuals do not become dormant instantly. Define
\begin{equation}
\label{rhodef}
\rho=\sum_{m\in\N_0} K_m=\frac{\text{size dormant population}}{\text{size active population}}.
\end{equation}
It turns out that $\rho$ and $\chi$ are two key quantities of our system. In particular, we will see that the long-time behaviour of model 2 and model 3 is different for $\rho<\infty$ and $\rho=\infty$.  

 
\subsection{Comments}
\label{ss.comments} 

\begin{itemize}

\item[(1)]
Models 1--3 are increasingly more general. Model 2 is the special case of model 3 when $a_m(0,0)=1$ for all $m\in\N_0$, while model 1 is the special case of model 2 when $e_0=e$, $K_0=K$ and $e_m=K_m=0$ for all $m \in \N$. Nonetheless, in what follows we prefer to state our main theorems for each model separately, in order to exhibit the increasing level of complexity. In Appendix~\ref{appA} we explain how \eqref{gh1}--\eqref{gh2}, \eqref{gh1*}--\eqref{gh2*} and \eqref{gh1**}--\eqref{gh2**} arise as the large-colony-size limit of individual-based Fisher-Wright models.  

\item[(2)]
As geographic space $\G$ we allow \emph{any} countable Abelian group endowed with the discrete topology. Key examples are the Euclidean lattice $\G=\Z^d$, $d\in\N$, and the hierarchical lattice $\G=\Omega_N$, $N\in\N$. In this paper we will focus $\G=\Z^d$. The case $\G=\Omega_N$ will be considered in more detail in \cite{GdHOpr3}.  

\item[(3)]
In model 1, each colony has a seed-bank that serves as a \emph{repository} for the genetic information (type $\heartsuit$ or $\diamondsuit$) carried by the individuals. Because the active and the dormant population exchange individuals, the genetic information can be temporarily stored in the seed-bank and thereby be withdrawn from the resampling. We may think of dormant individuals as seeds that drop into the soil and preserve their type until they come to the surface again and grow into a plant. 

In model 2, the seed-bank is a repository for seeds with one of infinitely many colours. \emph{The colours provide us with a tool to model different distributions for the time an individual stays dormant without loosing the Markov property for the evolution of the system}. Tuning the parameters $K_m$ and $e_m$ properly and subsequently forgetting about the colours, we can mimic different distributions for the time an individual stays dormant. This is of biological significance, especially in colonies of bacteria, where individuals stay dormant for random times whose distribution is fat-tailed (see \cite{LJ11}).

In model 3, the seed may even be blown elsewhere. Individuals that displace before becoming dormant are observed in plant-species as well as in bacteria populations (see \cite{LJ11}).

\item[(4)]
In Appendix~\ref{appB} we comment on what happens when the rates to become active or dormant are decoupled, i.e., individuals are no longer subject to exchange but move in and out of the seed-bank independently. This leads to a Moran model where the sizes of the active and the dormant population can fluctuate. We will show that, modulo a change of variables and a short transient period in which the sizes of the active and the dormant population establish equilibrium, this model has the same behaviour as the model with exchange. 

\item[(5)]
In \eqref{gh1}, \eqref{gh1*} and \eqref{gh1**} we may replace the diffusion functions $dg_{\text{FW}}$, $d \in (0,\infty)$, where 
\begin{equation}
\label{FWg}
g_{\text{FW}}(x) = x(1-x), \qquad x \in [0,1],
\end{equation}
is the Fisher-Wright diffusion function, by a \emph{general diffusion function} in the class $\CG$ defined by 
\begin{equation}
\label{gh6}
\CG = \Big\{g\colon\,[0,1] \to [0,\infty)\colon\,g(0) = g(1) = 0, \, g(x) >0 \,\,\forall\,x \in (0,1), 
\, g \mbox{ Lipschitz}\Big\}.
\end{equation}
This class is appropriate because a diffusion with a diffusion function $g \in \CG$ stays confined to $[0,1]$, yet can go everywhere in [0,1] (Breiman~\cite[Chapter 16, Section 7]{B68}). Picking $g\neq g_{\text{FW}}$ amounts to allowing the resampling rate to be \emph{state-dependent}, i.e., the resampling rate in state $x$ equals $g(x)/x(1-x)$, $x \in (0,1)$. An example is the Kimura-Ohta diffusion function $g(x)=[x(1-x)]^2$, $x \in [0,1]$, for which the resampling rate is equal to the genetic diversity of the colony. \emph{In the sequel we allow for general diffusion functions $g \in \CG$ in all three models, unless stated otherwise.} 
\end{itemize}


\subsection{Well-posedness}
\label{ss.wellpos}

For every law on $E$, with $E$ depending on the choice of model, we want the SSDE for models 1, 2 and 3 to define a Borel Markov process, i.e., the law of the path is a Borel measurable function of the initial state for every starting point in the state space \cite[p.62]{D93}. We use a martingale problem, in the sense of \cite[p.173]{EK86}, to characterize the SSDE. Let 
\begin{equation}
\label{eq401}
\begin{aligned}
\CF= \Big\{&f \in C_b (E, \R) \colon\, f \mbox{ depends on finitely many components}\\[-0.2cm] 
&\mbox{and is twice continuously differentiable in each component}\Big\}.
\end{aligned}
\end{equation}
The generator $G$ of the process acting on $\CF$ reads for model 1 (\eqref{gh1}--\eqref{gh2}),
\begin{equation}
\label{eq402}
\begin{aligned}
G &= \sum_{i \in \G} \Bigg( \Bigg[ \sum_{j \in \G} 
a(i,j)(x_j - x_i)\Bigg] \frac{\partial}{\partial x_i}
+ g(x_i) \frac{\partial^2}{\partial x_i^2}\\
& \qquad\qquad\qquad \qquad + Ke\,(y_i-x_i) \frac{\partial}{\partial x_i}
+ e\,(x_i -y_i) \frac{\partial}{\partial y_i}\Bigg),
\end{aligned}
\end{equation}
for model 2 (\eqref{gh1*}--\eqref{gh2*}),
\begin{equation}
\label{eq403}
\begin{aligned}
G &= \sum_{i \in \G} \Bigg( \Bigg[ \sum_{j \in \G} 
a(i,j)(x_j - x_i)\Bigg] \frac{\partial}{\partial x_i}
+ g(x_i) \frac{\partial^2}{\partial x_i^2}\\
&\qquad\qquad\qquad \qquad + \sum_{m\in\N_0} \Bigg[K_m e_m\,(y_{i,m}-x_i) \frac{\partial}{\partial x_i}
+ e_m\,(x_i -y_{i,m}) \frac{\partial}{\partial y_{i,m}}\Bigg]\Bigg),
\end{aligned}
\end{equation}
while for model 3 (\eqref{gh1**}--\eqref{gh2**}) the last term in the right-hand side of \eqref{eq403} is to be replaced by
\begin{equation}
\label{eq404}
\sum_{i,j\in\G} \sum_{m\in\N_0} \left[K_m e_m\,a_m(j,i)\,(y_{j,m}-x_i)\frac{\partial}{\partial x_i}
+e_m\,a_m(i,j)\,(x_j-y_{i,m})\frac{\partial}{\partial y_{i,m}}\right].
\end{equation} 

\begin{theorem}{{\bf [Well-posedness: models 1--3]}}
\label{T.wellp}
For each of the three models the following holds:\\ 
(a) The SSDE has a unique strong solution in $C([0,\infty),E)$. Its law is the unique solution of the $(G,\CF,\delta_u)$-martingale problem for all $u \in E$.\\ 
(b) The process starting in $u\in E$ is Feller and strong Markov. Consequently, the SSDE defines a unique Borel Markov process starting from any initial law on $E$.
\end{theorem}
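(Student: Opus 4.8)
The plan is to handle all three models at once, exploiting the two structural features they share: the noise is \emph{diagonal} (each active component $x_i$ is driven by its own Brownian motion with diffusion coefficient $\sqrt{g(x_i)}$ depending on $x_i$ alone), while the \emph{drift is linear} with summable coefficients, by \eqref{gh10} and \eqref{emcond}. I would prove part (a) through the Yamada--Watanabe programme: establish existence of a weak solution, prove pathwise uniqueness, and then invoke the Yamada--Watanabe theorem to upgrade to a unique strong solution. The equivalence with the $(G,\CF,\delta_u)$-martingale problem is then routine, obtained by applying It\^o's formula to $f\in\CF$ (which depends on finitely many components, so $Gf$ involves only finitely many terms).

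For existence I would use a finite approximation. For finite $\Lambda\subset\G$ and, in models 2--3, a finite colour-truncation $\{0,\dots,M\}$, set the external migration and exchange rates to zero to obtain a closed SDE on the compact cube $[0,1]^{\Lambda\times\{A,D_0,\dots,D_M\}}$. There the drift is Lipschitz and $\sqrt{g(\cdot)}$ is continuous and bounded, so a weak solution exists by the standard theory for martingale problems with continuous coefficients on a compact domain; since $g(0)=g(1)=0$ the solution stays confined to the cube (cf.\ \cite[Chapter~16, Section~7]{B68}). As $E$ is a countable product of compact intervals, hence compact in the product topology, tightness of the approximating laws in $C([0,\infty),E)$ reduces to component-wise equicontinuity, which follows from uniform moment bounds on the increments (all components lie in $[0,1]$, each drift term $a(i,j)(x_j-x_i)$ is bounded by $a(i,j)$, and the diffusion coefficient is bounded). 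Any weak limit point solves the $(G,\CF)$-martingale problem: for $f\in\CF$ only finitely many terms of $G$ act, and the truncation errors vanish as $\Lambda\uparrow\G$, $M\to\infty$ by the summability \eqref{gh10} and \eqref{emcond}.

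Pathwise uniqueness is the step I expect to be the main obstacle, because it must combine the non-Lipschitz square-root diffusion with the infinite-dimensional coupling. Let $Z,Z'$ be two solutions driven by the same Brownian motions with $Z(0)=Z'(0)$. The dormant equations \eqref{gh2}, \eqref{gh2*}, \eqref{gh2**} are linear ODEs driven by the active components, so the differences $y_{i,m}-y'_{i,m}$ are controlled by time-integrals of the active differences. For the active equations the key point is that, since $g$ is Lipschitz with $g(0)=0$,
\[
\big(\sqrt{g(x)}-\sqrt{g(x')}\big)^2 \;\leq\; |g(x)-g(x')| \;\leq\; L\,|x-x'|, \qquad x,x'\in[0,1],
\]
so the diffusion coefficient satisfies the Yamada--Watanabe modulus condition with $\rho(u)=Lu$, for which $\int_{0+}\rho(u)^{-1}\,\d u=\infty$. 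I would take the usual smooth approximations $\phi_n$ of $r\mapsto|r|$ with $\phi_n''(r)\,(\sqrt{g(x)}-\sqrt{g(x')})^2\to0$, apply It\^o to $\phi_n(x_i-x_i')$, and let $n\to\infty$ to kill the martingale and the second-order term. To close the estimate in infinite dimensions I would fix a strictly positive summable weight sequence $(\gamma_i)_{i\in\G}$ obeying a Liggett--Spitzer bound $\sum_i\gamma_i\,a(i,j)\leq M_\gamma\,\gamma_j$ (such weights exist since $\sum_i a(0,i)<\infty$) and run a Gronwall argument on $t\mapsto\sum_{i\in\G}\gamma_i\,\E\big[|x_i(t)-x_i'(t)|\big]$, absorbing the dormant contributions via \eqref{emcond} (and, in model 3, using that the $a_m(\cdot,\cdot)$ are probability kernels). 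This yields $Z=Z'$ almost surely. By the Yamada--Watanabe theorem, weak existence together with pathwise uniqueness produces a unique strong solution in $C([0,\infty),E)$ and makes the martingale problem well-posed for each $\delta_u$, proving (a).

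For part (b), uniqueness of the $(G,\CF,\delta_u)$-martingale problem for every $u\in E$ gives the Markov property and Borel measurability of $u\mapsto\P_u$ by the standard argument \cite[p.~173]{EK86}, so the SSDE defines a unique Borel Markov process for any initial law on $E$ \cite[p.~62]{D93}. The Feller property follows from continuous dependence of the solution on its initial condition, obtained from the same weighted Gronwall estimate applied to two solutions with coupled but distinct initial data: this gives continuity of $u\mapsto\E_u[f(Z(t))]$ for $f\in\CF$, and since $\CF$ is measure-determining this upgrades to the full Feller property, whence the strong Markov property is automatic.
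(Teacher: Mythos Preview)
Your proposal is correct, and the Yamada--Watanabe programme you outline (weak existence via finite truncation and tightness, pathwise uniqueness via the $\phi_n$-approximation of $|\cdot|$ combined with a weighted Gronwall estimate using Liggett--Spitzer weights) is essentially the \emph{content} of the theorem the paper invokes as a black box. The paper proceeds differently: rather than building the argument from scratch, it quotes \cite[Theorem~3.2]{SS80} (Shiga--Shimizu), which delivers a unique strong solution in $C([0,\infty),[0,1]^\S)$ for systems of the form $\d z_u=\alpha_u(z_u)\,\d B_u+f_u(Z)\,\d t$ under three hypotheses: (i) each $\alpha_u$ is $\tfrac12$-H\"older, bounded, and vanishes at $0$ and $1$; (ii) the drift satisfies $|f_u(Z^1)-f_u(Z^2)|\leq\sum_v Q_{u,v}|z^1_v-z^2_v|$ for some matrix with $\sup_u\sum_v Q_{u,v}<\infty$; (iii) $f_u(Z)\geq0$ when $z_u=0$ and $f_u(Z)\leq0$ when $z_u=1$. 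The paper then just verifies (i)--(iii) model by model on the effective index set $\S=\G\times\{A,D\}$ (resp.\ $\G\times\{A,(D_m)_{m\in\N_0}\}$), writing down the matrix $Q$ explicitly and reading off the row-sum bound from \eqref{gh10} and \eqref{emcond}; It\^o's formula plus \cite[Theorem~IX.1.7(i)]{RY99} give the martingale-problem characterisation. For part~(b) the paper likewise delegates the Feller property to \cite[Remark~3.2]{SS80} and \cite[Theorem~5.8]{Lig85}, whereas you extract it from your Gronwall bound on two coupled solutions.

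Your route is more self-contained and makes the mechanism transparent; the paper's is considerably shorter. One technical point worth tightening in your write-up: the weighted sum you propose lives on $\G$, but the drift couples the active and dormant layers, so the Gronwall inequality must really be closed on the full effective space $\S$. The dormant-to-active feedback in \eqref{gh1*} brings a term $\sum_m K_m e_m\,|y_{i,m}-y'_{i,m}|$ into the estimate for $|x_i-x_i'|$, and the phrase ``absorbing the dormant contributions via \eqref{emcond}'' does not by itself explain how this is dominated by the quantity you are Gronwalling. The clean fix is either to include the dormant components in the weighted sum with their own weights (and then one is essentially verifying the Shiga--Shimizu matrix condition), or to first eliminate $y_{i,m}-y'_{i,m}$ via the variation-of-constants formula for the linear ODE \eqref{gh2*} and feed the resulting time-integral back into the active estimate.
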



\subsection{Duality}
\label{ss.duality}

For $g= dg_{\text{FW}}$ the three models have a tractable dual, which will be seen to play a crucial role in the analysis of their long-time behaviour. For $g \neq dg_{\text{FW}}$ the three models do not have a tractable dual. However, we  compare them with models that do and determine their long-time behaviour. In \cite[Sections 2.2 and 3]{BCKW16} it was shown that the non-spatial Fisher-Wright diffusion with seed-bank is dual to the so-called \emph{block-counting process} of a seed-bank coalescent. The latter describes the evolution of the number of partition elements in a partition of $n\in\N$ individuals, sampled from the current population, into subgroups of individuals with the same ancestor (i.e., individuals that are identical by descent). The enriched dual generates the ancestral lineages of the individuals evolving according to a Fisher-Wright diffusion with seed-bank, i.e., generates their full genealogy. The corresponding block-counting process counts the number of ancestral lineages left when traveling backwards in time. In this section we will extend the duality results in \cite{BCKW16} to the spatial setting.   

\paragraph{Model 1.}

\begin{figure}
	\begin{center}
		\begin{tikzpicture}[scale=0.4]
		\draw [ultra thick,->] (-1.5,1)to [out=90,in=90](1.5,1);
		\draw [ultra thick,black!50!,->] (3.5,1)to [out=90,in=180](5,2.5); 
		\node[right] at (5,2.5){coalescence};
		\node[] at (0,3){migration};
		\node[left] at (-2.5,0) {$Ke$};
		\node[right] at (2.5,0) {$e$};
		\node[right] at (5,-2) {dormant};
		\node[right] at (5,1) {active};
		\draw[ultra thick, ->] (-2.5,1)--(-2.5,-2);
		\draw[ultra thick, <-] (2.4,1)--(2.4,-2);
		\draw[ultra thick] (-5,1)--(5,1);
		\draw[dashed, ultra thick](-5,-2)--(5,-2);
		\end{tikzpicture}
	\end{center}
	\caption{\small Transition scheme for an ancestral lineage in the dual, which moves according 
		to the transition kernel $b(\cdot,\cdot)$ in \eqref{mrw}. Two active ancestral lineages that are 
		at the same colony coalesce at rate $d$.}
	\label{fig:dualrw}
\end{figure}
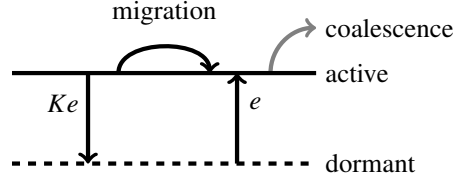

\begin{figure}[htbp]
	\begin{center}
		\vspace{.2cm}
		\setlength{\unitlength}{.5cm}
		\begin{tikzpicture}[scale=0.4]
		\draw [fill=blue!10!] (0,-0.5) rectangle (1,4.5);
		\draw [fill=blue!10!] (2,-0.5) rectangle (3,4.5);
		\draw [fill=blue!10!] (4,-0.5) rectangle (5,4.5);
		\draw [fill=blue!10!] (6,-0.5) rectangle (7,4.5);
		\draw [fill=blue!10!] (8,-0.5) rectangle (9,4.5);
		\draw [fill=black, ultra thick] (10,2) circle [radius=0.05] ;
		\draw [fill=black, ultra thick] (11,2) circle [radius=0.05] ;
		\draw [fill=black, ultra thick] (12,2) circle [radius=0.05] ;
		\draw [fill=blue!10!] (13,-0.5) rectangle (14,4.5);
		\draw [ultra thick, ->] (-1,-.5) -- (-1,5);
		\node[left] at (-1.5,2.5) {$t$};
		\draw [fill=blue] (.25,-.5) circle [radius=0.1] ;
		\draw [fill=blue] (.75,-.5) circle [radius=0.1] ;
		\draw[blue,ultra thick] (.25,-.5)--(.25,2);	
		\draw[blue,ultra thick] (.75,-.5)--(.75,2)--(.25,2)--(.25,4.5);
		\draw [fill=blue] (2.25,-.5) circle [radius=0.1] ;
		\draw[blue,ultra thick] (2.25,-.5)--(2.25,1.5);
		\draw[red,ultra thick] (2.25,1.5)--(2.25,3.5);
		\draw[blue,ultra thick] (2.25,3.5)--(2.25,4)--(.75,4)--(.75, 4.5);
		\draw [fill=red] (4.25,-.5) circle [radius=0.1] ;
		\draw[red,ultra thick] (4.25,-.5)--(4.25,4.5);	
		\draw [fill=blue] (6.25,-.5) circle [radius=0.1] ;
		\draw[blue,ultra thick] (6.25,-.5)--(6.25,4.5);
		\draw [fill=red] (6.75,-.5) circle [radius=0.1] ;
		\draw[red,ultra thick] (6.75,-.5)--(6.75,1);
		\draw[blue,ultra thick] (6.75,1)--(6.75,3)--(8.75,3)--(8.75,4.5);
		\draw [fill=blue] (13.25,-.5) circle [radius=0.1] ;
		\draw[blue,ultra thick] (13.25,-.5)--(13.25,4.5);	
		\end{tikzpicture}
\vspace{0.2cm}
\caption{\small Picture of the evolution of lineages in the spatial coalescent. The purple blocks depict the colonies, the blue lines the active lineages, and the red lines the dormant lineages. Blue lineages can migrate and become dormant, (i.e., become red lineages). Two blue lineages can coalesce when they are at the same colony. Red dormant lineages first have to become active (blue) before they can coalesce with other blue and active lineages or migrate. Note that the dual runs \emph{backwards in time}. The collection of all lineages determines the genealogy of the system.}
\label{fig:Duality}
	\end{center}
\end{figure}
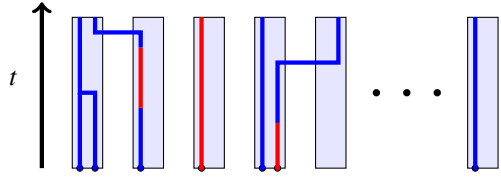

Recall that for model 1, $\S=\G\times\{A,D\}$ is the effective geographic space. For $n\in\N$ the state space of the $n$-spatial seed-bank coalescent is the set of partitions of $\{1,\ldots,n\}$, where the partition elements are marked with a position vector giving their location. A state is written as $\pi$, where
\begin{equation}
\label{e402}
\begin{aligned}
&\pi = ((\pi_1,\eta_1), \ldots, (\pi_{\bar{n}},\eta_{\bar{n}})), \quad \bar{n} = |\pi|,\\
&\pi_\ell\subset \{1,\ldots, n\},  \text{ and }\{\pi_1,\cdots \pi_{\bar{n}}\}\ \mbox{  is a partition of } \{1,\ldots, n\},\\ 
&\eta_\ell \in \S, \quad \ell \in \{1,\ldots, \bar{n}\}, \quad 1\leq \bar{n}\leq n.
\end{aligned}
\end{equation}
A marked partition element $(\pi_\ell,\eta_\ell)$ is called active if $\eta_\ell=(j,A)$ and called dormant if $\eta_\ell=(j,D)$ for some $j\in\G$. The $n$-spatial seed-bank coalescent is denoted by
\begin{equation}
\label{e403}
(\CC^{(n)}(t))_{t \geqslant 0},
\end{equation}
and starts from
\begin{equation}
\label{e403alt}
\CC^{(n)}(0) = \pi(0), \qquad  \pi(0) = \{(\{1\},\eta_{\ell_1}),\ldots,(\{n\},\eta_{\ell_n})\}, \qquad
\eta_{\ell_1},\ldots,\eta_{\ell_n} \in \S.
\end{equation}

The $n$-spatial seed-bank coalescent is a Markov process that evolves according to the following two rules.
\begin{enumerate}
\item 
Each partition element moves independently of all other partition elements according the kernel 
\begin{equation}
\label{mrw}
b^{(1)}((i,R_i), (j, R_j)) = \left\{ \begin{array}{ll}
a (i, j), &\text{ if } R_i = R_j =A,\\
Ke, &\text{ if } i = j,\ R_i=A,\ R_j = D, \\
e, &\text{ if } i = j,\ R_i=D,\ R_j = A,\\
0, &\mbox{otherwise},
\end{array}
\right.
\end{equation}
where $a(\cdot,\cdot)$ is the migration kernel defined in \eqref{gh10}, $K$ is the relative size of the dormant population defined in \eqref{ratio1}, and $e$ is the rate of exchange between the active and the dormant population shown in \eqref{gh1}--\eqref{gh2}. Therefore an active partition element migrates according to the transition kernel $a(\cdot,\cdot)$ and becomes dormant at rate $Ke$, while a dormant partition element can only become active and does so at rate $e$. In \eqref{mrw}, the notation $b^{(1)}$ marks that the kernel refers to model 1. Later we will use the notation $b^{(2)}$ for model 2 and $b^{(3)}$ for model 3.  
\item 
Independently of all other partition elements, two partition elements that are at the same colony and are both active coalesce with rate $d$, i.e., the two partition elements merge into one partition element.
\end{enumerate}

\noindent
The spatial seed-bank coalescent $(\CC(t))_{t\geq 0}$ is defined as the projective limit of the $n$-spatial seed-bank coalescents $(\CC^{(n)}(t))_{t\geq 0}$ as $n\to\infty$. This object is well-defined by Kolmogorov's extension theorem (see \cite[Section 3]{BCKW16}).

For $n\in\N$ we define the block-counting process $(L(t))_{t\geq 0}$ corresponding to the $n$-spatial seed-bank coalescent as the process that counts at each site $(i,R_i)\in\G\times\{A,D\}$ the number of partition elements of $\CC^{(n)}(t)$,  i.e., 
\begin{equation}
\label{blctpr}
\begin{aligned}
&L(t)=\big(L_{(i,A)}(t),L_{(i,D)}(t)\big)_{i\in\G},\\
&L_{(i,A)}(t)=L_{(i,A)}(\CC^{(n)}(t))
=\sum_{\ell=1}^{\bar{n}}1_{\{\eta_\ell(t)=(i,A)\}},\qquad L_{(i,D)}(t)=L_{(i,D)}(\CC^{(n)}(t))
=\sum_{\ell=1}^{\bar{n}}1_{\{\eta_\ell(t)=(i,D)\}}.
\end{aligned}
\end{equation}
Therefore $(L(t))_{t\geq 0}$ has state space $E'=(\N_0\times\N_0)^\G$. We denote the elements of $E^\prime$ by sequences $(m_i,n_i)_{i\in\G}$, and define $\delta_{(j,R_j)}\in E^\prime$ to be the element of $E^\prime$ that is $0$ at all sites $(i,R_i)\in\G\times\{A,D\}\backslash (j,R_j)$, but $1$ at the site $(j,R_j)$. From the evolution of $\CC^{(n)}(t)$ described below \eqref{e403} we see that the block-counting process has the following transition kernel:
\begin{eqnarray}
\label{blockproc}
(m_i,n_i)_{i\in\G} \rightarrow
\begin{cases}
({m}_i,{n}_i)_{i\in\G}-\delta_{(j,A)}+\delta_{(k,A)},\ 
&\text{at rate } m_j a(j,k) \text{ for } j,k\in\G,\\
({m}_i,{n}_i)_{i\in\G}-\delta_{(j,A)},\ 
&\text{at rate } d{m_j \choose 2} \text{ for } j\in\G,\\
({m}_i,{n}_i)_{i\in\G}-\delta_{(j,A)}+\delta_{(j,D)},\ 
&\text{at rate } m_jKe \text{ for } j\in\G, \\
({m}_i,{n}_i)_{i\in\G}+\delta_{(j,A)}-\delta_{(j,D)},\ 
&\text{at rate } n_je \text{ for } j\in\G.\\
\end{cases}
\end{eqnarray}

The process $(Z(t))_{t\geq 0}$ defined in \eqref{gh1}--\eqref{gh2} is dual to the block-counting process $(L(t))_{t\geq 0}$. The duality function $H\colon\,E \times E^\prime\to \R$ is defined by
\begin{equation}
\label{hpoldef}
H\Big((x_i,y_i)_{i\in\G},(m_i,n_i)_{i\in\G}\Big) = \prod_{i\in\G} x_i^{m_i}y_i^{n_i}.
\end{equation}
The {\em duality relation} reads as follows.

\begin{theorem}{{\bf [Duality relation: model 1]}}. Let $H$ be defined as in \eqref{hpoldef}. Then for all $(x_i,y_i)_{i\in\G}\in E$ and $(m_i,n_i)_{i\in\G}\in E^\prime$,
\label{T.dual1}
\begin{equation}
\label{e401}
\E_{(x_i,y_i)_{i\in\G}}\Big[H\Big((x_i(t),y_i(t))_{i\in\G},(m_i,n_i)_{i\in\G}\Big)\Big]
=\E_{(m_i,n_i)_{i\in\G}}\Big[H\Big((x_i,y_i)_{i\in\G},(L_{(i,A)}(t),L_{(i,D)}(t))_{i\in\G}\Big)\Big]
\end{equation}
with $\E$ the generic symbol for expectation (on the left over the original process, on the right over the dual process).
\end{theorem}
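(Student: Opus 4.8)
The plan is to prove \eqref{e401} by the standard generator method: verify that the duality function $H$ \emph{intertwines} the generator $G$ of $(Z(t))_{t\ge0}$ in \eqref{eq402} with the generator $\wh G$ of the block-counting process $(L(t))_{t\ge0}$ read off from \eqref{blockproc}, and then lift this infinitesimal identity to the semigroup identity via a standard duality theorem (e.g.\ \cite[Thm.~4.4.11]{EK86}). Throughout, write $z=(x_i,y_i)_{i\in\G}$ and $w=(m_i,n_i)_{i\in\G}$, and abbreviate the duality function \eqref{hpoldef} as $H(z,w)$.

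First I would fix a dual state $w\in E^\prime$ with \emph{finite support}, which is automatic since the dual starts from finitely many lineages and their number can only decrease. Then $z\mapsto H(z,w)$ is a polynomial depending on finitely many components, so it lies in $\CF$ and $GH(\cdot,w)$ is well defined. Using the elementary identities $x_i\,\partial_{x_i}H=m_iH$ and $y_i\,\partial_{y_i}H=n_iH$ together with the quadratic form $g=dg_{\text{FW}}$, I would compute $GH(\cdot,w)$ term by term and match each contribution against a transition of the kernel \eqref{blockproc}. Concretely: the resampling term $g(x_i)\,\partial^2_{x_i}H$ reproduces the coalescence transition, contributing $d\binom{m_i}{2}\,[H(z,w-\delta_{(i,A)})-H(z,w)]$; the seed-bank terms $Ke\,(y_i-x_i)\,\partial_{x_i}H$ and $e\,(x_i-y_i)\,\partial_{y_i}H$ reproduce the two exchange transitions, contributing $m_iKe\,[H(z,w-\delta_{(i,A)}+\delta_{(i,D)})-H(z,w)]$ and $n_ie\,[H(z,w+\delta_{(i,A)}-\delta_{(i,D)})-H(z,w)]$; and the migration term $\sum_j a(i,j)(x_j-x_i)\,\partial_{x_i}H$ reproduces $\sum_j m_i a(i,j)\,[H(z,w-\delta_{(i,A)}+\delta_{(j,A)})-H(z,w)]$. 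Summing over $i,j\in\G$ gives the pointwise identity $GH(\cdot,w)(z)=\wh G H(z,\cdot)(w)$ for all $z\in E$. It is here that $g=dg_{\text{FW}}$ is essential: the second-order term closes on polynomials only because $g$ is quadratic, which is exactly why a tractable dual exists.

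To lift this to \eqref{e401}, I would invoke that $0\le H\le 1$ (all coordinates lie in $[0,1]$), that the forward process is well-posed, Feller and strong Markov by Theorem~\ref{T.wellp}, and that the dual is a non-explosive countable-state Markov jump process: its lineage number is non-increasing, and because $\sum_i a(0,i)<\infty$ the total jump rate out of any finite configuration is finite. These are precisely the hypotheses of the standard duality theorem, yielding $\E_z[H(Z(t),w)]=\E_w[H(z,L(t))]$. Equivalently, one checks directly that $s\mapsto \E_z\E_w\big[H(Z(s),L(t-s))\big]$ has vanishing derivative on $[0,t]$ by the generator identity, the interchange being justified by dominated convergence from the uniform bound $H\le 1$ and the finiteness of the dual rates.

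The main obstacle is the rigorous lifting in the \emph{infinite-dimensional} setting: the generator identity is a pointwise algebraic fact, but $G$ contains infinite sums over $\G$, so justifying the semigroup interchange requires care. The cleanest remedy is to exploit that, for $w$ of finite support, only finitely many lineages are ever present, so the dual is effectively the finite $n$-coalescent $\CC^{(n)}$; one establishes the identity for each $n$ and hence for all finitely supported $w$, which suffices since \eqref{hpoldef} only sees finitely many components. The analogous statements for models~2 and 3 (Theorems~\ref{T.dual2} and \ref{T.dual3}) then reduce to the same computation with additional bookkeeping over the colour index $m\in\N_0$ and, for model~3, the displacement kernels $a_m(\cdot,\cdot)$.
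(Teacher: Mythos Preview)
Your proposal is correct and follows essentially the same approach as the paper: both verify the pointwise generator identity $GH(\cdot,w)=\wh G H(z,\cdot)$ by a term-by-term computation (migration, resampling, exchange) on the monomial duality function, and then invoke the standard generator/duality criterion (the paper cites \cite[p.190--193]{EK86} and \cite[Proposition~1.2]{JK14}, you cite \cite[Thm.~4.4.11]{EK86}) to pass to the semigroup identity. Your discussion of the lifting step---boundedness of $H$, finite support of $w$, non-explosiveness of the dual---is somewhat more explicit than the paper's, which simply records that $H$ and $R_tH$ lie in the relevant generator domains, but the argument is the same.
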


\noindent
Since the duality function $H$ gives all the mixed moments of $(Z(t))_{t\geq 0}$, the duality relation in Theorem~\ref{T.dual1} is called a \emph{moment dual}.

\begin{remark}{\bf [Duality relation in terms of the effective geographic space]}
\label{dualeff}
{\rm Interpreting $(Z(t))_{t\geq 0}$ as a process on the effective geographic space $\S$, recall Remark \ref{rm1ss}, we can rewrite the duality relation. Let the block-counting process $(L(t))_{t\geq 0}=(L(\CC(t))_{t \geq 0}$ count at each site $u\in\S$ the number of partition elements of $\CC(t)$, i.e.,
\begin{equation}
\begin{aligned}
L(t)&=(L_u(t))_{u\in\S},\\
L_u(t)&=L_u(\CC(t)) =\sum_{\ell=1}^{\bar{n}}1_{\{\eta_\ell(t)=u\}},
\end{aligned}
\end{equation}
and rewrite the duality function $H$ in \eqref{hpoldef} as
\begin{equation}
H((z_u,l_u)_{u\in\S})=\prod_{u\in\S}z_u^{l_u}.
\end{equation}
Then, for $z\in\E$ and $l\in\E^\prime$, the {\em duality relation} reads 
\begin{equation}
\label{e401b}
\E\big[H(z_u(t),l_u)\big]=\E\big[H(z_u,L_u(t))\big]. 
\end{equation}
Interpreting the duality relation in terms of the effective geographic space $\S$, we see that each ancestral lineage in the dual is a Markov chain that moves according to the transition kernel $b^{(1)}(\cdot,\cdot)$. Interpreting the duality relation in terms of the geographic space $\G$, we see that an ancestral lineage is a random walk moving on $\G$, with internal states $A$ and $D$. Both interpretations turn out to be useful in analysing the long-time behaviour of the system.}
\hfill$\Box$
\end{remark}

\begin{remark}{\bf [Wake-up times]}
\label{wakeup}
{\rm Define (see Fig.~\ref{fig:dualrw})
\begin{equation}
\label{defwake}
\begin{aligned}
\sigma&= \text{ typical time spent by an ancestral lineage in state } A \text{ before switching to state } D,\\
\tau&= \text{ typical time spent by an ancestral lineage in state } D \text{ before switching to state } A.
\end{aligned}
\end{equation}
(Here, the word typical refers to what happens to an ancestral lineage each time it switches state at some geographic location. For a more precise definition we refer to Section~\ref{ss.clustcase} and Fig.~\ref{fig:periods}.) It follows from \eqref{mrw} that
\begin{equation}
\begin{aligned}
\P(\sigma>t) &= \e^{-Ke t},\\
\P(\tau>t) &= \e^{-e t}.
\end{aligned}
\end{equation}
An ancestral lineage in the dual of the spatial seed-bank process behaves as an ancestral lineage in the dual of a spatial Fisher-Wright diffusion without seed-bank (see e.g.\ \cite{FG96}), but becomes dormant every once in a while. On the long run we expect an ancestral lineage to be active only a fraction $\frac{1}{1+K}$ of the time. We will see in Section \ref{s.model1} that the effect of the seed-bank on the long-time behaviour of the ancestral lineages in the dual is a slow down by a factor $\frac{1}{1+K}$ compared to the long-time behaviour of the ancestral lineages in the dual of interacting Fisher-Wright diffusions without seed-bank.
} \hfill$\Box$
\end{remark}

\paragraph{Model 2.} 

The dual for model 2 arises naturally from the dual for model 1 by adding internal states to the seed-bank and adapting the rates of becoming active and dormant accordingly. Recall that for model 2 the effective geographic space is $\S=\G \times \{A, (D_m)_{m\in\N_0}\}$. Migration and coalescence are as before, but at every colony switches between an active copy $A$ and a dormant copy $D_m$ now occur at rates $e_m$, respectively, $K_m\,e_m$. The spatial coalescent $(\CC(t))_{t\geq 0}$ in \eqref{e403} starts from an initial configuration like \eqref{e403alt} and evolves according to the same two rules, but the transition kernel $b(\cdot,\cdot)$ must be replaced by
\begin{equation}
\label{mrw2}
b^{(2)}((i,R_i), (j, R_j)) = \left\{ \begin{array}{ll}
a (i, j), & R_i = R_j  =A,\\
K_me_m, & i = j,\ R_i=A,\ R_j = D_m, \text{ for } m\in\N_0,\\
e_m, & i = j,\ R_i=D_m,\ R_j = A, \text{ for } m\in\N_0,\\
0, &\mbox{otherwise}.
\end{array}
\right.
\end{equation} 

The corresponding block-counting process becomes
\begin{equation}
\begin{aligned}
&L(t)
=\left(L_{(i,A)}(t),\left(L_{(i,D_m)}(t)\right)_{m\in\N_0}\right)_{i\in\G},\\
&L_{(i,A)}(t)
= L_{(i,A)}(\CC(t)) =\sum_{\ell=1}^{\bar{n}}1_{\{\eta_\ell(t)=(i,A)\}},\qquad L_{(i,D_m)}(t)=L_{(i,D_m)}(\CC(t))
= \sum_{\ell=1}^{\bar{n}}1_{\{\eta_\ell(t)=(i,D_m)\}},\  m\in\N_0.
\end{aligned}
\end{equation}
The state space is now given by $E^\prime=\left(\N_0\times\N_0^{\N_0}\right)^\G$, and the transition kernel becomes
\begin{eqnarray}
\label{blockproc2}
(m_i,(n_{i,m})_{m\in\N_0})_{i\in\G} \rightarrow
\begin{cases}
({m}_i,({n}_{i,m})_{m\in\N_0})_{i\in\G}-\delta_{(j,A)}+\delta_{(k,A)},\ 
&\text{at rate } m_j a(j,k) \text{ for } j,k\in\G,\\
(m_i,(n_{i,m})_{m\in\N_0})_{i\in\G}-\delta_{(j,A)},\ 
&\text{at rate } d{m_j \choose 2} \text{ for } j\in\G,\\
(m_i,(n_{i,m})_{m\in\N_0})_{i\in\G}-\delta_{(j,A)}+\delta_{(j,D_m)},\ 
&\text{at rate } m_jK_me_m \text{ for } j\in\G, \\
(m_i,(n_{i,m})_{m\in\N_0})_{i\in\G}+\delta_{(j,A)}-\delta_{(j,D_m)},\ 
&\text{at rate } n_{j,m}e_m \text{ for } j\in\G.\\
\end{cases}
\end{eqnarray}

The duality function $H\colon\,E \times E^\prime \to\R$ is defined by 
\begin{eqnarray}
H\Big((x_i,y_{i,m})_{i\in\G,m\in\N_0},(m_i,n_{i,m})_{i\in\G,m\in\N_0}\Big)
=\prod_{i\in\G}\prod_{m\in\N_0}x_i^{m_i}y_{i,m}^{n_{i,m}}.
\label{dualmod2alt}
\end{eqnarray}

\begin{theorem}{{\bf [Duality relation: model 2]}} 
\label{T.dual2} 
For $(x_i,y_{i,m})_{i\in\G,m\in\N_0}\in E$ and $(m_i, n_{i,m})_{i\in\G,m\in\N_0}\in E^\prime$,
\begin{equation}
\label{e401alt}
\begin{aligned}
&\E_{(x_i,y_{i,m})_{i\in\G,m\in\N_0}}\Big[H\Big((x_i(t),y_{i,m}(t))_{i\in\G,m\in\N_0},(m_i,n_{i,m})_{i\in\G,m\in\N_0}\Big)\Big]\\
&\qquad = \E_{(m_i,n_{i,m})_{i\in\G,m\in\N_0}}\Big[H\Big((x_i,y_{i,m})_{i\in\G,m\in\N_0},(L_{(i,A)}(t),L_{(i,D_m)}(t))_{i\in\G,m\in\N_0}\Big)\Big].
\end{aligned}
\end{equation}
\end{theorem}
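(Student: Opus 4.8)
The plan is to prove the \emph{infinitesimal} form of the duality and then integrate it, exactly as for Theorem~\ref{T.dual1}. Write $G$ for the generator \eqref{eq403} of the forward process (taken with $g = d\,g_{\text{FW}}$, the only case in which a tractable dual exists) and $\wh{G}$ for the generator of the block-counting process determined by the transition kernel \eqref{blockproc2}. With $H$ the duality function \eqref{dualmod2alt}, the heart of the proof is the pointwise generator-duality identity
\begin{equation*}
\big(G\,H(\cdot,l)\big)(z) = \big(\wh{G}\,H(z,\cdot)\big)(l), \qquad z\in E,\ l\in E^\prime,
\end{equation*}
where on the left $G$ acts on the $z=(x_i,y_{i,m})$-variables with $l=(m_i,n_{i,m})$ frozen, and on the right $\wh{G}$ acts on the $l$-variables with $z$ frozen. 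Once this identity is in hand, the integrated relation \eqref{e401alt} follows from the standard passage from generator duality to integrated duality \cite[Section~4.4]{EK86}, using the well-posedness of the forward process (Theorem~\ref{T.wellp}) and the fact that, started from the $n$-coalescent, the block-counting process is a well-defined countable-state Markov chain.

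The identity itself is a term-by-term bookkeeping computation, carried out on the monomial $H = \prod_{i}x_i^{m_i}\prod_m y_{i,m}^{n_{i,m}}$. Each first-order operator lowers one exponent by one: the migration term $\sum_j a(i,j)(x_j-x_i)\partial_{x_i}$ reproduces the dual migration rate $m_j a(j,k)$; the exchange term $K_m e_m(y_{i,m}-x_i)\partial_{x_i}$ reproduces the active-to-dormant rate $m_j K_m e_m$; and $e_m(x_i-y_{i,m})\partial_{y_{i,m}}$ reproduces the dormant-to-active rate $n_{j,m} e_m$. The second-order resampling term, using $\partial_{x_i}^2 x_i^{m_i} = m_i(m_i-1)x_i^{m_i-2}$ together with the factor $x_i(1-x_i)$, collapses into a difference matching the coalescence transition $m_i \mapsto m_i-1$ at the rate recorded in \eqref{blockproc2}, while all the ``diagonal'' contributions assemble into the total outflow rate of the dual chain. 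Each forward term is thus matched with exactly one dual transition, as already visible in the proof of Theorem~\ref{T.dual1}.

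The only genuinely new feature compared with model~1 is the sum over the countably many colours $m\in\N_0$. For a dual state $l$ arising from the $n$-coalescent only finitely many $m_i,n_{i,m}$ are nonzero, so $H(\cdot,l)\in\CF$ depends on finitely many $z$-variables; nevertheless the exchange term in $G$ sums over \emph{all} colours. Its diagonal part contributes $-\chi\,m_i H$ with $\chi=\sum_m K_m e_m$, and, since every $y_{i,m}\in[0,1]$, the off-diagonal colour sum is dominated in absolute value by $\chi$ as well. Convergence of both sums --- hence the very well-definedness of $G$ on $H$ and the legitimacy of interchanging summation with differentiation --- is exactly what the standing assumption \eqref{emcond}, $\chi<\infty$, provides.

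I expect the main obstacle to be not this (routine) generator computation but the rigorous justification of the integration step in the infinite-dimensional state space $E$. Although $H$ is bounded by $1$ and lies in $\CF$, applying $G$ produces a function involving $y_{i,m}$ for all colours $m$, so one must check that $GH$ remains a bounded, admissible function to which the martingale problem of Theorem~\ref{T.wellp} applies, uniformly enough in time to match the two semigroups on $[0,t]$. Here again \eqref{emcond} is decisive: it keeps every colour sum finite and prevents active lineages from becoming dormant instantaneously, so that both the forward generator and the dual chain are well-defined and the duality argument closes.
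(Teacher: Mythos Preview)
Your proposal is correct and follows exactly the paper's approach: the paper's proof of Theorem~\ref{T.dual2} simply says to repeat the generator-criterion computation of Theorem~\ref{T.dual1} with the block-counting process \eqref{blockproc2} and duality function \eqref{dualmod2alt} in place of their model~1 analogues. Your additional remarks on the role of $\chi<\infty$ in controlling the colour sums and on the passage from generator duality to the integrated relation are accurate elaborations of details the paper leaves implicit.
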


\noindent
By rewriting the block-counting process as in Remark~\ref{dualeff}, the duality function can be rewritten as
\begin{equation}
H((z_u,l_u)_{u\in\S})=\prod_{u\in\S}z_u^{l_u}
\end{equation}
and the {\em duality relation} reads 
\begin{equation}
\label{e401b2}
\E\Big[H\Big((z_u(t))_{u\in\S},(l_u)_{u\in\S}\Big)\Big]
= \E\Big[H\Big((z_u)_{u\in\S},(L_u(t))_{u\in\S})\Big)\Big].
\end{equation}
 
\begin{remark}{\bf[Fat-tailed wake-up times.]}
\label{fattails}
{\rm Recall the definition of $\chi$ in \eqref{emcond} and the definition of $\rho$ in \eqref{rhodef}. Define
\begin{equation}
\label{defwakealt}
\begin{aligned}
\sigma&= \text{ typical time spent by an ancestral lineage in the active state } A\\
&\quad\text{ before switching to a dormant state }\cup_{m\in\N_0} D_m,\\
\tau&= \text{ typical time spent by an ancestral lineage in the dormant state } \cup_{m\in\N_0} D_m\\
&\quad\text{ before switching to the active state }A.
\end{aligned}
\end{equation}
Note that $\tau$ does not look at the colour of the dormant state. It follows from \eqref{mrw2} that
\begin{equation}
\label{longtimeprob}
\begin{aligned}
\P(\sigma>t) &=\e^{-\chi t},\\
\P(\tau>t) &= \sum_{m\in\N_0}\frac{K_me_m}{\chi}\e^{-e_m t},
\end{aligned}
\end{equation}
independently of the colony $i\in\G$. Hence 
\begin{equation}
\label{exptau}
\E\left[\tau\right]=\frac{\rho}{\chi}.
\end{equation}
If $\rho<\infty$, then we invoke the seed-bank colours and use the balance equations for recurrent Markov chains to see that each ancestral lineage in the dual in the long run spends a fraction $\frac{\rho}{1+\rho}$ of the time in the dormant state. Like in model 1, an ancestral lineage in the dual behaves like an ancestral lineage in the dual of interacting Fisher-Wright diffusions, but is slowed down by a factor $\frac{\rho}{1+\rho}$. However, if $\rho=\infty$, then \eqref{mrw2} together with \eqref{exptau} imply that each ancestral lineage in the dual behaves like a null-recurrent Markov chain on $\{A,(D_m)_{m\in\N_0}\}$, and consequently the probability to be active tends to 0 as $t\to\infty$. Therefore we may expect that the long-time behaviour of the system is affected by the seed-bank. In particular, choosing
\begin{equation}
\label{assalta}
\begin{aligned}
&K_m \sim A\,m^{-\alpha}, \quad e_m \sim B\,m^{-\beta}, \quad m \to \infty,\\
&A,B \in (0,\infty), \quad \alpha,\beta \in \R\colon\,\alpha \leq 1 < \alpha +\beta,
\end{aligned}
\end{equation}
we see that \eqref{longtimeprob} implies
\begin{equation}\label{ft}
\P(\tau>t)\sim C t^{-\gamma}, \qquad t \to\infty,
\end{equation}
with $\gamma = \frac{\alpha+\beta-1}{\beta}$ and $C = \frac{A}{\chi\beta}\,B^{1-\gamma}\,\Gamma(\gamma)$, where $\Gamma$ is the Gamma-function. The conditions on $\alpha,\beta$ guarantee that $\rho=\infty$, $\chi<\infty$ (recall \eqref{emcond} and \eqref{rhodef}). Examples are: $\alpha=0$, $\beta >1$ and $\alpha \in (0,1)$, $\beta > 1-\alpha$. Thus, for $\rho=\infty$ we can model individuals with a fat-tailed wake-up time simply by not taking their colours into account.  \emph{The internal structure of the seed-bank captured by the colours allows us to model fat-tailed wake-up times without loosing the Markov property for the evolution.} 
} \hfill$\Box$	
\end{remark} 
 
\paragraph{Model 3.}

The effective geographic space is again $\S=\G \times \{A, (D_m)_{m\in\N_0}\}$. On top of migration and coalescence, each switch from $A$ to $D_m$ and vice versa is accompanied by a displacement according to the displacement kernel $a_m(\cdot,\cdot)$ defined in \eqref{pmdef}.    Therefore each lineage in the dual evolves according to 
\begin{equation}
\label{mrw3}
b^{(3)}((i,R_i), (j, R_j)) = \left\{ \begin{array}{ll}
a (i, j), & R_i = R_j  =A,\\
K_me_ma_m(j,i), &\ R_i=A,\ R_j = D_m, \text{ for } m\in\N_0,\\
e_ma_m(i,j), &\ R_i=D_m,\ R_j = A, \text{ for } m\in\N_0.\\
\end{array}
\right.
\end{equation}
Again, when two ancestral lineages are active at the same site they coalesce at rate $1$ and the corresponding block-counting process evolves according to the transition kernel
\begin{equation}
\label{blockproc3}
(m_i,(n_{i,m})_{m\in\N_0})_{i\in\G} \rightarrow
\begin{cases}
({m}_i,({n}_{i,m})_{m\in\N_0})_{i\in\G}-\delta_{(j,A)}+\delta_{(k,A)},\ 
&\text{at rate } m_j a(j,k) \text{ for } j,k\in\G,\\
(m_i,(n_{i,m})_{m\in\N_0})_{i\in\G}-\delta_{(j,A)},\ 
&\text{at rate } d{m_j \choose 2} \text{ for } j\in\G,\\
(m_i,(n_{i,m})_{m\in\N_0})_{i\in\G}-\delta_{(j,A)}+\delta_{(k,D_m)},\ 
&\text{at rate } m_jK_me_ma_m(k,j) \text{ for } j\in\G, \\
(m_i,(n_{i,m})_{m\in\N_0})_{i\in\G}+\delta_{(k,A)}-\delta_{(j,D_m)},\ 
&\text{at rate } n_{j,m}e_ma_m(j,k) \text{ for } j\in\G.\\
\end{cases}
\end{equation}

\begin{theorem}{{\bf [Duality relation: model 3]}}
\label{T.dual3}
The same duality relation holds as in \eqref{e401alt}, where now the dual dynamics includes not only the exchange between active and dormant but also the accompanying displacement in space.  
\end{theorem}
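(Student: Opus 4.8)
The plan is to prove the duality by verifying it first at the level of generators and then lifting it to the semigroups by exactly the abstract argument already used for models~1 and~2 (Theorems~\ref{T.dual1} and~\ref{T.dual2}); the only genuinely new feature of model~3 is the displacement kernels $a_m(\cdot,\cdot)$, so the work reduces to tracking these correctly in the exchange terms. Writing $G$ for the forward generator of \eqref{gh1**}--\eqref{gh2**} (that is, \eqref{eq403} with its last term replaced by \eqref{eq404}), $\wh{G}$ for the generator of the block-counting process with transition kernel \eqref{blockproc3}, and $H$ for the monomial duality function \eqref{dualmod2alt}, I would first establish the pointwise identity
\begin{equation}
(G\,H(\cdot,l))(z) = (\wh{G}\,H(z,\cdot))(l), \qquad z\in E,\ l\in E^\prime .
\end{equation}
Once this is in place, the boundedness of $H$ and of $GH$ together with the well-posedness of the forward martingale problem (Theorem~\ref{T.wellp}) and the non-explosiveness of the dual let the standard duality theorem (Ethier--Kurtz~\cite{EK86}) promote it to the semigroup relation \eqref{e401alt}.

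The generator computation is the heart of the matter. Since $H(z,l)=\prod_{i\in\G}\big(x_i^{m_i}\prod_{m\in\N_0}y_{i,m}^{n_{i,m}}\big)$ is a product of monomials, it satisfies the polynomial identities $x_i\,\partial_{x_i}H=m_iH$ and $y_{i,m}\,\partial_{y_{i,m}}H=n_{i,m}H$, and $\partial^2_{x_i}$ produces the factor $m_i(m_i-1)$. The resampling term $g(x_i)\partial^2_{x_i}$ with $g=d\,g_{\mathrm{FW}}$ therefore reproduces, exactly as in the classical Fisher--Wright moment duality and in models~1 and~2, the coalescence transitions $-\delta_{(j,A)}$ of \eqref{blockproc3}, while the migration term reproduces the active-to-active moves $-\delta_{(j,A)}+\delta_{(k,A)}$. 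The two terms of \eqref{eq404} are the new part: applying $\partial_{x_i}$ to the first gives $\sum_{i,j\in\G}\sum_{m\in\N_0}K_me_m\,a_m(j,i)\,m_i\big(y_{j,m}x_i^{-1}-1\big)H$ (interpreted as a polynomial identity), whose gain part corresponds to the transition $-\delta_{(i,A)}+\delta_{(j,D_m)}$ at rate $m_iK_me_m\,a_m(j,i)$, matching the third line of \eqref{blockproc3} after relabelling; applying $\partial_{y_{i,m}}$ to the second gives $\sum_{i,j\in\G}\sum_{m\in\N_0}e_m\,a_m(i,j)\,n_{i,m}\big(x_jy_{i,m}^{-1}-1\big)H$, whose gain part corresponds to $+\delta_{(j,A)}-\delta_{(i,D_m)}$ at rate $n_{i,m}e_m\,a_m(i,j)$, matching the fourth line. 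Reading $\wh{G}H(z,\cdot)$ directly off \eqref{blockproc3} and relabelling summation indices, I expect the two sides to agree term by term.

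For the analytic side-conditions I would note that $H$ takes values in $[0,1]$ and depends on finitely many coordinates (as $l$ has finite support), and that $GH$ is bounded: the migration sum converges since $\sum_{j\in\G}a(i,j)<\infty$ by \eqref{gh10}, and the displacement sums converge since $\sum_{j\in\G}a_m(j,i)=\sum_{j\in\G}a_m(i,j)=1$ by \eqref{pmdef} and $\sum_{m\in\N_0}K_me_m=\chi<\infty$ by \eqref{emcond}. On the dual side the number of partition elements is at most $n$ and non-increasing, and the total jump rate out of any configuration is a finite sum of finite rates, so $(L(t))_{t\geq0}$ is a well-defined, non-explosive Markov jump process; this is what makes the lifting theorem applicable. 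The main obstacle is thus not analytic but a matter of orientation: one must keep straight that a lineage active at $i$ becomes dormant \emph{at} $j$ with weight $a_m(j,i)$, whereas a lineage dormant at $i$ reactivates \emph{at} $j$ with weight $a_m(i,j)$, in exact correspondence with the asymmetric appearance of $a_m(j,i)$ in \eqref{gh1**} and $a_m(i,j)$ in \eqref{gh2**}. Having verified this correspondence, the proof closes verbatim as for model~2.
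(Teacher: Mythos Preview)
Your proposal is correct and follows essentially the same approach as the paper: verify the generator identity $(GH(\cdot,l))(z)=(\wh{G}H(z,\cdot))(l)$ for the monomial duality function and then invoke the generator criterion, exactly as for models~1 and~2. The paper's own proof is a one-line reference to this same argument (``replace the block-counting process by \eqref{blockproc3}, the duality function by \eqref{dualmod2alt}, and check the generator criterion''); your write-up simply makes the displacement bookkeeping explicit, and your tracking of the orientation $a_m(j,i)$ versus $a_m(i,j)$ is accurate.
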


\subsection{Dichotomy criterion }
\label{ss.dichcrit}

For $g=dg_{\mathrm{FW}}$ the duality relations in Theorems~\ref{T.dual1}, \ref{T.dual2} and \ref{T.dual3} provide us with the following criterion to characterise the long-term behaviour. If, in the limit as $t\to\infty$, locally only one type survives in the population, then we say that the system exhibits \emph{clustering}. If, in the limit as $t\to\infty$, locally both types survive in the population, then we say that the system exhibits \emph{coexistence}. For model 1 the criterion reads as follows.

\begin{theorem}{\bf [Dichotomy criterion: model 1]}
\label{T.dichcrit.model1} 
Suppose that $\mu(0)$ is invariant and ergodic under translations. Let $d\in(0,\infty)$. Then the system with $g=dg_{\mathrm{FW}}$ clusters if and only if in the dual two partition elements coalesce with probability $1$. 
\end{theorem}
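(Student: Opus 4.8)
The plan is to convert the dichotomy into a statement about the two-point function of the process and then read that function off from the dual via Theorem~\ref{T.dual1}. First I would fix the density parameter. Under $\mu(0)$ translation invariant, $\E_{\mu(0)}[x_i]=\bar x(t)$ and $\E_{\mu(0)}[y_i]=\bar y(t)$ are site-independent, and the first-moment equations coming from \eqref{gh1}--\eqref{gh2} kill the migration term, leaving $\dot{\bar x}=Ke(\bar y-\bar x)$, $\dot{\bar y}=e(\bar x-\bar y)$; hence $\bar x+K\bar y$ is conserved and $\bar x(t),\bar y(t)\to\theta:=(\bar x(0)+K\bar y(0))/(1+K)$. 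Thus $\E_{\mu(0)}\E[z_u(t)]\to\theta$ for every $u\in\S$, consistently with a single dual lineage spending the stationary fractions $\tfrac{1}{1+K},\tfrac{K}{1+K}$ of its time active and dormant. Using the convergence to a unique equilibrium from Section~\ref{s.scaling}, I would record that the system clusters (locally mono-type) if and only if
\[
\lim_{t\to\infty}\E_{\mu(0)}\E\big[z_u(t)\,(1-z_v(t))\big]=0 \qquad \forall\,u,v\in\S,
\]
i.e.\ no pair of sampled individuals carries two different types in the limit, and that coexistence is exactly the failure of this for some pair.

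Next I would feed the pair into the duality relation \eqref{e401b}. Writing $\E^{u,v}$ for expectation over the dual started from one partition element at $u$ and one at $v$, the choices $l=\delta_u$ and $l=\delta_u+\delta_v$ give $\E[z_u(t)]=\E^{u}[z_{U(t)}]$ and, splitting on whether the two lineages have coalesced by time $t$,
\[
\E[z_u(t)z_v(t)]
=\E^{u,v}\big[z_{U(t)}z_{V(t)}\,\1_{\{\text{no coal.\ by }t\}}\big]
+\E^{u,v}\big[z_{W(t)}\,\1_{\{\text{coal.\ by }t\}}\big].
\]
Applying $\E_{\mu(0)}$, using $\E_{\mu(0)}[z_a]\to\theta$ along any single lineage (so the coalesced term contributes $\theta\,\P^{u,v}(\text{coal.\ by }t)+o(1)$), and subtracting, I obtain
\[
\E_{\mu(0)}\E\big[z_u(t)(1-z_v(t))\big]
=\E^{u,v}\Big[\big(\theta-\E_{\mu(0)}[z_{U(t)}z_{V(t)}]\big)\,\1_{\{\text{no coal.\ by }t\}}\Big]+o(1).
\]

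The two directions then follow. \emph{If} two partition elements coalesce with probability $1$, then $\P^{u,v}(\text{no coal.\ by }t)\to0$, and since $0\le\theta-\E_{\mu(0)}[z_{U(t)}z_{V(t)}]\le\theta$ the right-hand side tends to $0$ for every $u,v$, so clustering holds. \emph{Conversely}, if the coalescence probability is $<1$, the difference of the two lineage motions under $b^{(1)}$ is transient, so conditionally on non-coalescence the lineages delocalise over $\G$; decorrelating $\E_{\mu(0)}[z_{U(t)}z_{V(t)}]\to\theta^2$ by ergodicity of $\mu(0)$, the limit becomes $\theta(1-\theta)\,\P^{u,v}(\text{no coal.})>0$ for $\theta\in(0,1)$, which is coexistence. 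This also exhibits how coalescence-with-probability-one is equivalent to recurrence of the difference walk (meetings occur infinitely often and each carries a positive coalescence chance).

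The hard part will be the decorrelation in the converse direction. Ergodicity of $\mu(0)$ under $\G$-translations yields only a Cesàro (averaged) decorrelation, not the pointwise statement $\E_{\mu(0)}[z_az_b]\to\theta^2$ as $|a-b|\to\infty$, which in general requires mixing. The way around this is to exploit that the non-coalescing pair $(U(t),V(t))$ spreads out, so that the quantity of interest is already a spatial average of $\E_{\mu(0)}[z_az_b]$ weighted by the lineage law; this lets a mean-ergodic argument substitute for pointwise mixing. Making rigorous both that the pair genuinely delocalises (transience of the difference walk forcing $|U(t)-V(t)|\to\infty$ on non-coalescence) and that the seed-bank sojourns, which can trap a lineage in a dormant state for long stretches, do not obstruct this spreading, is the technically delicate step, and is precisely where the structure of the kernel $b^{(1)}$ in \eqref{mrw} and Remark~\ref{wakeup} must be used.
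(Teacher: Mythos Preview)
Your ``$\Longleftarrow$'' direction is correct and essentially identical to the paper's: once all lineages coalesce almost surely, the mixed moments collapse to the first moment along the single remaining lineage, and the $\{A,D\}$-ergodic theorem gives the value $\theta$.

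The ``$\Longrightarrow$'' direction, however, is where you and the paper diverge. You aim to show that on the event of non-coalescence the pair delocalises and ergodicity of $\mu(0)$ forces $\E_{\mu(0)}[z_{U(t)}z_{V(t)}]\to\theta^2$, whence the two-point cross moment tends to $\theta(1-\theta)\,\P(\text{no coal.})>0$. You are right that this decorrelation is delicate: ergodicity yields only Ces\`aro control, and conditioning on non-coalescence makes the pair law awkward. The paper sidesteps all of this. Starting from the exact identity
\[
\E_{\mu(0)}\E\big[z_u(t)(1-z_v(t))\big]
=\E^{u,v}\Big[\E_{\mu(0)}\big[z_{U(t)}(1-z_{V(t)})\big]\,\1_{\{|L(t)|=2\}}\Big],
\]
one does \emph{not} need the inner expectation to converge to $\theta(1-\theta)$; it suffices that its limit be strictly positive. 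Since $z_{U}z_{V}\le z_{U}$, one has $\E_{\mu(0)}[z_{U}z_{V}]\le\E_{\mu(0)}[z_{U}]$, and averaging over the limiting $\{A,D\}^2$-distribution of the two lineage states (which is $\big(\tfrac{1}{1+K},\tfrac{K}{1+K}\big)^{\otimes 2}$, independent of positions and of the coalescence event) gives $\limsup \E_{\mu(0)}[z_{U}z_{V}\mid |L(t)|=2]\le\theta$ with \emph{strict} inequality because $\mu(0)$ is nontrivial and ergodic (so $z_{V}<1$ on a set of positive measure intersecting $\{z_{U}>0\}$). Hence if $\P(\text{no coal.})>0$ the right-hand side stays bounded away from $0$, contradicting clustering. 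No transience, no delocalisation, no spectral or mixing argument is needed.

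So your route is not wrong in spirit, but the ``hard part'' you isolate is an artefact of asking for more than the theorem requires. A small additional caution: you invoke ``the convergence to a unique equilibrium from Section~\ref{s.scaling}'' to justify the two-point characterisation of clustering. Be careful here, since in the paper the long-time-behaviour theorems \emph{use} the dichotomy criterion; the reduction to $\lim_t\E[z_u(t)(1-z_v(t))]=0$ should be argued directly (as a moment criterion for weak convergence to a $\{0,1\}$-valued limit), not by quoting those later results.
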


The idea behind Theorem \ref{T.dichcrit.model1} is as follows. If in the dual two partition elements coalesce with probability 1, then a random sample of $n$ individuals drawn from the current population has a common ancestor some finite time backwards in time. Since individuals inherit their type from their parent individuals, this means that all $n$ individuals have the same type. A formal proof will be given in Section \ref{ss.dichotomy}.

For model 2--3 we need an extra assumption on $\mu(0)$ when $\rho=\infty$.  

\begin{definition}{\bf [Colour regular initial measures]}
\label{D.regular}
{\rm We say that $\mu(0)$ is \emph{colour regular} when
\begin{equation}
\label{covcond1}
\lim_{N\to\infty} \E_{\mu(0)}[y_{0,N}] \quad \text{ exists},
\end{equation}	
i.e., $\mu(0)$ has asymptotically converging colour means.} \hfill$\Box$
\end{definition}

\noindent
Thus, colour regularity is a condition on the deep seed-banks (where deep means $m\to\infty$). This condition is needed because as time proceeds lineages starting from deeper and deeper seed-banks become active for the first time, and bring new types into the active population. Without control on the initial states of the deep seed-banks, there may be no convergence to equilibrium.

\begin{theorem}{{\bf [Dichotomy criterion: models 2--3]}}
\label{T.dichcrit.model2} 
The same as in Theorem~\ref{T.dichcrit.model1} is true for $\rho<\infty$, but for $\rho=\infty$ additionally requires that $\mu(0)$ is colour regular.
\end{theorem}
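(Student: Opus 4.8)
The plan is to reduce the dichotomy criterion for models~2--3 to the dual characterisation of Theorem~\ref{T.dichcrit.model1}, and then to analyse separately the two regimes $\rho<\infty$ and $\rho=\infty$. By the moment duality of Theorem~\ref{T.dual2} (and Theorem~\ref{T.dual3} for displaced seeds), clustering is equivalent to a statement about coalescence of the dual block-counting process: locally mono-type equilibria correspond to two sampled lineages coalescing with probability~$1$. Concretely, the quantity to control is $\E_{\mu(0)}[z_{u}(t)z_{v}(t)]-\E_{\mu(0)}[z_u(t)]\,\E_{\mu(0)}[z_v(t)]$ as $t\to\infty$; clustering means this covariance converges to the full variance (the two coordinates become perfectly correlated), which via duality is exactly the probability that the two dual lineages starting at $u$ and $v$ have coalesced by time~$t$. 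So the first step is to write the second mixed moments through \eqref{e401b2} in terms of a $2$-lineage dual and identify the clustering statement with ``coalescence probability $\to 1$''.

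For $\rho<\infty$ the argument runs essentially as in model~1. By Remark~\ref{fattails}, when $\rho<\infty$ each dual lineage is a \emph{positive-recurrent} Markov chain on the internal states $\{A,(D_m)_{m\in\N_0}\}$, spending a fraction $\tfrac{1}{1+\rho}$ of its time active; since coalescence only happens when both lineages are simultaneously active at the same colony, the seed-bank merely slows the effective random walk by the factor $\tfrac{1}{1+\rho}$ without changing its recurrence type. Thus two lineages coalesce with probability~$1$ iff the symmetrised migration kernel is recurrent, and the ergodicity of $\mu(0)$ under translations is enough to pass from the two-point coalescence statement to the full clustering/coexistence dichotomy, exactly reproducing Theorem~\ref{T.dichcrit.model1}. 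Here I would invoke the law-of-large-numbers / ergodic argument already used for model~1 together with the spatial-homogeneity of the wake-up laws \eqref{longtimeprob} to ensure the slow-down factor is uniform over colonies.

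The delicate regime, and the main obstacle, is $\rho=\infty$. Now \eqref{exptau} gives $\E[\tau]=\infty$, so by Remark~\ref{fattails} a dual lineage behaves like a \emph{null-recurrent} chain on $\{A,(D_m)_{m\in\N_0}\}$: the probability of being active tends to $0$, lineages spend overwhelming time in deeper and deeper seed-banks, and the naive slow-down picture breaks down. The problem is that lineages activating for the first time out of very deep seed-banks inject the initial colour-means $\E_{\mu(0)}[y_{0,m}]$ into the active population at arbitrarily late times, so without control on these deep values the second moments need not converge at all. This is precisely what the colour-regularity hypothesis \eqref{covcond1} is designed to fix: assuming $\lim_{N\to\infty}\E_{\mu(0)}[y_{0,N}]$ exists guarantees that the contribution of freshly-awakened deep lineages has a well-defined limiting mean, so the one-point functions converge and the covariance computation stabilises. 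Technically I would split a dual lineage's trajectory according to the maximal colour-depth reached up to time~$t$, use \eqref{longtimeprob} and \eqref{ft} to estimate the tail of the first-activation time from colour $m$, and show that colour regularity lets one interchange the $t\to\infty$ and the deep-seed-bank limits; the surviving mass then determines the single density parameter governing the equilibrium. Once convergence of the moments is secured, the equivalence ``clustering $\iff$ two lineages coalesce with probability~$1$'' is recovered verbatim as in the $\rho<\infty$ case, completing the proof. I expect the heart of the work to be this deep-seed-bank interchange-of-limits estimate, which is where colour regularity is genuinely used and where models~2--3 depart from the classical no-seed-bank theory.
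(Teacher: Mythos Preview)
Your overall strategy---reduce clustering to a statement about second moments, then use the duality relations \eqref{e401alt}--\eqref{e401b2} to translate this into coalescence of two dual lineages---is exactly the paper's approach. However, two points deserve comment.

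\medskip
\textbf{Conflation with a later result.} In the $\rho<\infty$ paragraph you write that ``two lineages coalesce with probability~$1$ iff the symmetrised migration kernel is recurrent''. That is \emph{not} part of Theorem~\ref{T.dichcrit.model2}; it is the content of Theorem~\ref{T.ltb2}. The theorem at hand is purely the abstract equivalence ``clustering $\Longleftrightarrow$ two dual partition elements coalesce a.s.'', and its proof does not require knowing \emph{when} coalescence occurs. For $\rho<\infty$ the paper simply repeats the model-1 computation: positive recurrence of the colour chain gives a stationary distribution $(\tfrac{1}{1+\rho},\tfrac{K_m}{1+\rho})$, so after coalescence a single surviving lineage equilibrates and the mixed moments converge to $\theta=\frac{\theta_x+\sum_m K_m\theta_{y,m}}{1+\rho}$, exactly as in \eqref{dualarg}--\eqref{defclus}. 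Your slow-down heuristic is correct but not needed here.

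\medskip
\textbf{A simpler route for $\rho=\infty$.} Your proposed decomposition by maximal colour-depth reached, together with tail estimates from \eqref{ft} and an interchange-of-limits argument, would likely work but is considerably heavier than what the paper does. The paper exploits null-recurrence directly: for a single dual lineage,
\[
\lim_{t\to\infty}\P\bigl(L(t)=\delta_{(\cdot,A)}\bigr)=0,\qquad
\lim_{t\to\infty}\P\bigl(L(t)=\delta_{(\cdot,D_m)}\bigr)=0\ \ \forall\,m,\qquad
\lim_{t\to\infty}\sum_{m\ge M}\P\bigl(L(t)=\delta_{(\cdot,D_m)}\bigr)=1\ \ \forall\,M.
\]
Thus all mass of the single-lineage law escapes to arbitrarily deep seed-banks. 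Combined with colour regularity $\theta_{y,m}\to\theta$, the duality identity
\[
\lim_{t\to\infty}\E\Bigl[\prod_{u\in\S}z_u(t)^{l_u}\Bigr]
=\lim_{t\to\infty}\Bigl[\theta_x\,\P(L(t)=\delta_{(\cdot,A)})+\sum_{m\in\N_0}\theta_{y,m}\,\P(L(t)=\delta_{(\cdot,D_m)})\Bigr]=\theta
\]
follows immediately, with no trajectory splitting or maximal-depth bookkeeping. The paper also invokes a coupling result of Lindvall to ensure the limiting one-lineage law is independent of the starting state; this handles the translation-invariance/ergodicity input cleanly.

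\medskip
Finally, you do not explicitly treat the ``$\Longrightarrow$'' direction (clustering implies coalescence a.s.). The paper handles it, as in model~1, by showing that if the non-coalescence probability were positive then the conditional second moment on $\{|L(t)|=2\}$ would be strictly less than $\theta$ (by non-triviality of the initial law), contradicting \eqref{cluscon2}.
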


\begin{remark}{\rm\textbf{[Clustering criterion general $g\in\CG$]}
\label{rmgeng}
In Section \ref{s.scaling} we will see that the dichotomy criterion in Theorems \ref{T.dichcrit.model1} and \ref{T.dichcrit.model2} for $g=dg_{\mathrm{FW}}$ does not depend on $d$, the rate of resampling. We will use \emph{duality comparison arguments} to carry over the dichotomy criterion in Theorems \ref{T.dichcrit.model1} and \ref{T.dichcrit.model2} to $g\in\CG$. We will see later that for all three models the system with $g$ exhibits clustering if and only if the system with $g_{\mathrm{FW}}$ exhibits clustering.}\hfill$\Box$
\end{remark}

\begin{remark}{\bf [Liggett conditions]}
\label{ligcon}
{\rm We will see in Section \ref{ss.cosalt} that, for model 2 with $\rho=\infty$, if an initial measure $\mu$ is invariant and ergodic under translations and is colour regular, then the Markov chain evolving according to $b^{(2)}(\cdot,\cdot)$ satisfies the following two conditions:
\begin{enumerate}
\item[{\rm (1)}] 
$\lim_{t\to \infty} \sum_{(k,R_k)\in \G\times\{A,(D_m)_{m\in\N_0}\}} b^{(2)}_t\big((i,R_i),(k,R_k)\big)\, 
\E_{\mu}[z_{(k,R_k)}] =\theta$,
\item[{\rm (2)}] 
$\lim_{t\to \infty}\sum_{(k,R_k),(l,R_l)\in \G\times\{A,(D_m)_{m\in\N_0}\}} 
b^{(2)}_t\big((i,R_i),(k,R_k)\big)\,b^{(2)}_t\big((j,R_j),(l,R_l)\big)\,\E_\mu[z_{(k,R_k)}z_{(l,R_l)}]=\theta^2$.
\end{enumerate}
These are precisely the conditions in \cite[Chapter V.1]{Lig85} necessary to determine the dichotomy in the long-time behaviour of the voter model. We show that (1) and (2) imply convergence to a unique equilibrium that is invariant and ergodic under translations. It is difficult to identify exactly which initial measures $\mu$ satisfy (1) and (2). This is the reason why we work with sufficient conditions and need the notion of colour regularity. 

For model 2 with $\rho<\infty$, conditions (1) and (2) are satisfied when $\mu(0)$ is invariant and ergodic under translations, and colour regularity is not needed. The same holds for model 1, once the state space is replaced by $\G\times\{A,D\}$ and $b^{(2)}(\cdot,\cdot)$ is replaced by $b^{(1)}(\cdot,\cdot)$. Also for model 3 conditions (1) and (2) hold after replacing $b^{(2)}(\cdot,\cdot)$ by $b^{(3)}(\cdot,\cdot)$. If $\rho=\infty$ in model 3 we need to assume colour regularity, if $\rho<\infty$, this is not needed.  } \hfill$\Box$
\end{remark}


\section{Long-time behaviour}
\label{s.scaling}

In this section we study the long-time behaviour of models 1--3. In  Sections~\ref{ss.scal1}--\ref{ss.scal3} we prove convergence to a unique equilibrium measure, establish the dichotomy between clustering and coexistence, and identify which of the two occurs in terms of the migration kernel and the rates governing the exchange with the seed-bank (Theorems~\ref{T.ltb1}--\ref{T.ltb3}). 

Throughout the sequel, $g$ is a general diffusion function from the class $\CG$ defined in \eqref{gh6}. Special cases are the multiples of the standard Fisher-Wright diffusion function: $g=dg_{\mathrm{FW}}$, $d \in (0,\infty)$, with $g_{\mathrm{FW}}(x)=x(1-x)$, $x \in [0,1]$. We use the following notation (with $\CP(E)$ denotes the set of probability measures on $E$): 
\begin{equation}
\begin{aligned}
\CT &= \big\{\mu\in\CP(E)\colon\, \mu \text{ is invariant under translations in } \G\big\},\\
\CT^{\mathrm{erg}} &= \big\{\mu\in\CT\colon\, \mu \text{ is ergodic under translations in } \G\big\},\\
\CI &= \big\{\mu\in\CT\colon\, \mu \text{ is invariant under the evolution}\big\}. 
\end{aligned}
\end{equation}


\subsection{Long-time behaviour of Model 1}
\label{ss.scal1}

Let $a(\cdot,\cdot)$ be as in \eqref{gh10}. Define the \emph{symmetrized} migration kernel
\begin{equation}
\label{gh11}
\hat a(i,j) = \tfrac{1}{2} [a (i,j) + a(j,i)], \qquad i,j \in \G, 
\end{equation}
which describes the difference of two independent copies of the migration each driven by $a(\cdot,\cdot)$. Let $\hat{a}_t(0,0)$ denote the time-$t$ transition kernel of the random walk with migration kernel $\hat{a}(\cdot,\cdot)$, and suppose that 
\begin{equation}
\label{ass2}
t \mapsto \hat{a}_t(0,0) \text{ is regularly varying at infinity}.
\end{equation} 
(Examples can be found in \cite[Chapter 3]{H95}.)  Define
\begin{equation}
\label{Ihatpdef}
I_{\hat{a}} = \int_1^\infty \d t\,\hat{a}_t(0,0).
\end{equation}
Note that $I_{\hat{a}}=\infty$ if and only if $\hat{a}(\cdot,\cdot)$ is recurrent (see e.g.\ \cite[Chapter 1]{Sp64}). Define
\begin{equation}
\label{thetadef}
\theta = \E_{\mu(0)}\left[ \frac{x_0 + Ky_0}{1+K}\right].
\end{equation}
If $\mu(0)$ is invariant and ergodic under translations, then $\theta$ is the initial density of $\heartsuit$ in the population. 

From the SSDE in \eqref{gh1}--\eqref{gh2} we see that
\begin{equation}\label{mart1}
\left(\frac{x_0(t)+Ky_0(t)}{1+K}\right)_{t\geq 0}
\end{equation}
is a martingale. In particular,
\begin{equation}
\label{e1203}
\theta = \E_{\mu(t)}\left[ \frac{x_0 + Ky_0}{1+K}\right] 
\qquad \forall\,t \geq 0.
\end{equation}
For $\theta\in[0,1]$, we define
\begin{equation}\label{deftergT}
\CT^{\mathrm{erg}}_\theta=\left\{\mu\in\CT^{\mathrm{erg}}\colon\, 
\E_{\mu(0)}\left[ \frac{x_0 + Ky_0}{1+K}\right]=\theta \right\}.
\end{equation}
Write $\mu(t)$ to denote the law of $(Z(t))_{t\geq 0}$, defined in \eqref{e409}. Recall that associated means that increasing functions of the configuration are positively correlated, i.e., if $f\colon\,E\to\R$ and $g\colon\,E\to\R$ depend on only finitely many coordinates and are coordinate-wise increasing, then 
\begin{equation}
\E_{\nu_\theta}[f(x)g(x)]\geq \E_{\nu_\theta}[f(x)]\,\E_{\nu_\theta}[g(x)].
\end{equation}

\begin{theorem}{{\bf [Long-time behaviour: model 1]}}
\label{T.ltb1}
Suppose that $\mu(0) \in \CT_\theta^{\mathrm{erg}}$.
\begin{itemize}
\item[{\rm (a)}] (Coexistence regime)
If $\hat{a}(\cdot,\cdot)$ is transient, i.e., $I_{\hat{a}}<\infty$, then
\begin{equation}
\label{gh13}
\lim_{t\to\infty} \mu(t) = \nu_\theta,
\end{equation}
where
\begin{eqnarray}
\label{e1172}
&&\nu_\theta \mbox{ is an equilibrium measure for the process on } E,\\
\label{gh14}
&&\nu_\theta \mbox{ is invariant, ergodic and mixing under translations},\\
\label{gh14*}
&&\nu_\theta \mbox{ is associated},\\
\label{gh15}
&&\E_{\nu_\theta}[x_0] = \E_{\nu_\theta}[y_0] = \theta,
\end{eqnarray}
with $\E_{\nu_\theta}$ denoting expectation over $\nu_\theta$.
\item[{\rm (b)}] (Clustering regime)
If $\hat{a}(\cdot,\cdot)$ is recurrent, i.e., $I_{\hat{a}}=\infty$, then
\begin{equation}
\label{gh12}
\lim_{t\to\infty} \mu(t)
= \theta\, [\delta_{(1,1)}]^{\otimes \G} + (1-\theta)\, [\delta_{(0,0)}]^{\otimes \G}.
\end{equation}
\end{itemize}
\end{theorem}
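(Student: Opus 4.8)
The plan is to exploit the moment duality from Theorem~\ref{T.dual1} together with the dichotomy criterion in Theorem~\ref{T.dichcrit.model1}, reducing everything to the transience/recurrence of the random walk generated by the kernel $b^{(1)}(\cdot,\cdot)$ in \eqref{mrw}. The key observation, emphasized in Remark~\ref{wakeup}, is that a single ancestral lineage in the dual is a random walk on $\G$ carrying an internal state in $\{A,D\}$: it is active a positive fraction $\tfrac{1}{1+K}$ of the time (since $\sigma,\tau$ have finite exponential means), so on the long run it behaves like a time-changed copy of the symmetrized walk $\hat a(\cdot,\cdot)$ slowed by the factor $\tfrac{1}{1+K}$. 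I would first make this precise: the \emph{difference} of two independent dual lineages, observed only when both are active, is a random walk driven by $\hat a(\cdot,\cdot)$, and two active partition elements at the same colony coalesce at rate $d>0$. Hence coalescence with probability $1$ is equivalent to the joint process of two lineages spending infinite total time together-and-active at the origin, which is equivalent to $I_{\hat a}=\infty$, i.e.\ recurrence of $\hat a(\cdot,\cdot)$. This pins down the dichotomy: recurrent $\Rightarrow$ clustering, transient $\Rightarrow$ coexistence.

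For part~(b), clustering, I would argue as follows. By Theorem~\ref{T.dichcrit.model1}, recurrence of $\hat a$ gives coalescence with probability $1$, so for any finite sample all lineages eventually merge and the mixed moments in \eqref{e401} reduce, as $t\to\infty$, to the moments of a single coalesced lineage. Using the martingale property \eqref{mart1}--\eqref{e1203}, the conserved density is $\theta$, so the limiting law of a single coordinate is Bernoulli: type $\heartsuit$ with probability $\theta$ and type $\diamondsuit$ with probability $1-\theta$. The moment computation must further show that the active and dormant coordinates $x_i,y_i$ at a single colony become equal in the limit (both lineages coalesce to the same ancestor regardless of their final $A/D$ mark), and that distinct colonies become perfectly correlated, yielding the mono-type product form $\theta\,[\delta_{(1,1)}]^{\otimes\G}+(1-\theta)\,[\delta_{(0,0)}]^{\otimes\G}$ in \eqref{gh12}. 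Convergence of all mixed moments to those of this law, together with the fact that moments determine laws on the compact space $E=([0,1]^2)^\G$, gives weak convergence.

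For part~(a), coexistence, the structure is more delicate because the equilibrium $\nu_\theta$ is nontrivial. Here I would construct $\nu_\theta$ as the $t\to\infty$ limit of $\mu(t)$ started from a suitable reference measure and identify it through its moments: transience $I_{\hat a}<\infty$ means two dual lineages coalesce with probability strictly less than $1$, so the limiting mixed moments computed from \eqref{e401} are genuinely those of a multi-type measure. The claimed properties \eqref{e1172}--\eqref{gh15} would be verified in turn: invariance under the evolution \eqref{e1172} follows since $\mu(t)$ converges and the dynamics is Feller (Theorem~\ref{T.wellp}); translation invariance, ergodicity and mixing \eqref{gh14} follow from translation covariance of the dual together with a decorrelation estimate showing two-point functions factorize as the colonies separate (again via transience, so that two lineages started far apart rarely meet); association \eqref{gh14*} would follow from an FKG-type / monotone coupling argument for the SSDE, using that the drift and diffusion preserve the stochastic ordering; and the means \eqref{gh15} follow from the conserved density $\theta$ applied separately to the single-lineage dual started in $A$ versus $D$, both of which drift to the common value $\theta$.

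The main obstacle I expect is \emph{existence and uniqueness} of the equilibrium in the coexistence regime, i.e.\ showing that $\lim_{t\to\infty}\mu(t)$ actually exists (not merely along subsequences) and is independent of $\mu(0)\in\CT^{\mathrm{erg}}_\theta$. Moment duality gives convergence of moments provided the dual moment expressions converge, but establishing this requires controlling the full genealogy of finite samples and verifying the Liggett-type conditions (1)--(2) of Remark~\ref{ligcon}: that the two-point dual functions converge to $\theta$ and $\theta^2$ respectively. This rests on a careful analysis of the long-time behaviour of the dual random walk with seed-bank — in particular the uniform control of the time spent active — and on ergodicity of $\mu(0)$ to evaluate the limiting averages. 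In model~1 this is comparatively clean because $K<\infty$ makes the lineage active a positive fraction of the time, so no colour-regularity hypothesis is needed; the slowdown by $\tfrac{1}{1+K}$ does not change the recurrence class. The coupling/association argument for \eqref{gh14*} is the secondary technical point, since it must be done directly for the SSDE rather than through the dual.
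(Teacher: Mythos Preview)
Your proposal is essentially correct in spirit for part~(b) when $g=dg_{\mathrm{FW}}$, and you correctly identify the dichotomy criterion and the Liggett-type conditions as the crux. There are, however, two genuine gaps.

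\medskip
\textbf{General $g\in\CG$.} The theorem is stated for arbitrary $g\in\CG$ (see \eqref{gh6} and the standing assumption in Section~\ref{s.scaling}), but moment duality (Theorem~\ref{T.dual1}) is only available for $g=dg_{\mathrm{FW}}$. Your entire plan for part~(a) --- constructing $\nu_\theta$ through limiting mixed moments computed via the dual --- therefore does not apply to general $g$. The paper handles this as follows. For clustering, it first proves the result for $g=dg_{\mathrm{FW}}$ via duality (Lemma~\ref{lem:clusgisgfw}), and then extends to general $g$ by a \emph{duality comparison} argument (Lemma~\ref{lem:comparison}): one sandwiches $g$ from below by a shifted Fisher--Wright function $\tilde g(x)=c(x-\epsilon)(1-x-\epsilon)$ and compares the second-moment evolution with an auxiliary coalescing system. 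For coexistence, the paper avoids duality altogether and works instead with the first- and second-moment relations of Lemma~\ref{lem1cg94}, which hold for every $g\in\CG$; this yields a class $\CR^{(1)}_\theta$ of measures preserved under the dynamics (Definition~\ref{Rtheta}), and the equilibrium is then pinned down by a \emph{coupling through common Brownian motions} (Lemmas~\ref{L.coup}--\ref{L.suc}) rather than by moment convergence from the dual. Your ``secondary technical point'' --- monotone coupling for the SSDE --- is in fact the paper's primary tool in the coexistence regime.

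\medskip
\textbf{Asymmetric migration.} Your reduction ``the difference of two lineages, observed when both are active, is a random walk driven by $\hat a$'' is too quick. The migration kernel $a(\cdot,\cdot)$ is \emph{not} assumed symmetric in model~1, and the two lineages have independent, unequal activity clocks $T(t),T'(t)$; the difference walk picks up an extra drift governed by $\tilde a(\phi)=\mathrm{Im}\,a(\phi)$ over the mismatch $\Delta(t)=|T(t)-T'(t)|$. The paper needs a Fourier computation (Steps~2--4 in the proof of Lemma~\ref{lem:clusgisgfw}) together with the CLT for $T(t),T'(t)$ to show that $\Delta(t)\asymp\sqrt{t}$, whence $t\,\tilde a(\phi)^2\le 2t[1-\hat a(\phi)]$ makes the asymmetric contribution harmless. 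Without this, the equivalence ``coalescence w.p.~1 $\Leftrightarrow I_{\hat a}=\infty$'' is not established; indeed, the counterexample in Section~\ref{asymmig} shows that for model~2 the analogous step can fail when the CLT for the activity time breaks down.
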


The results in \eqref{gh13}--\eqref{gh12} say that the system converges to an equilibrium whose density of type $\heartsuit$ equals $\theta$ in \eqref{thetadef}, a parameter that is controlled by the initial state $\mu(0)$ and the asymmetry parameter $K$. The equilibrium can be either locally mono-type or locally multi-type, depending on whether the symmetrised migration kernel is recurrent or transient. If the equilibrium is mono-type, then the system grows large mono-type clusters (= clustering). If the equilibrium is multi-type, then the system allows $\heartsuit$ and $\diamondsuit$ to mix (= coexistence).  In the case of coexistence, the equilibrium measure $\nu_\theta$ also depends on the migration kernel $a(\cdot,\cdot)$, the values of the parameters $e,K$, and the diffusion function $g \in \CG$  (recall \eqref{gh6}). The dichotomy itself, however, is \emph{controlled by $I_{\hat{a}}$ only}. In particular, $g\in\CG$ plays no role, a fact that will be shown with the help of a \emph{duality comparison} argument. In view of Theorem \ref{T.dichcrit.model1}, if $g=dg_{\mathrm{FW}}$, then $I_{\hat{a}}=\infty$ implies that with probability 1 two ancestral lineages in the dual coalesce. Therefore $I_{\hat{a}}=\infty$ is said to be \emph{the total hazard of coalescence}. Remarkably, this dichotomy is the same as the dichotomy observed for systems without seed-bank (see \cite{CG94}): clustering prevails for recurrent migration; coexistence prevails for transient migration; for $\G=\Z^d$ the critical dimension is $d=2$. From the proof in Section \ref{ss.clustcase} it will become clear that in the dual the ancestral lineages in the long run behave like the ancestral lineages without seed-bank, but are slowed down by a factor $\frac{1}{1+K}$. Consequently, the dormant periods of the ancestral lineages do not affect the dichotomy of the system. In particular, it does not affect the critical dimension separating clustering from coexistence. 

\begin{remark}{{\bf [Ergodic decomposition]}}
{\rm Because $\CT$ is a Choquet simplex, Theorem~\ref{T.ltb1} carries over from $\mu(0)\in\CT^{\mathrm{erg}}$ to $\mu(0)\in\CT$, after decomposition into ergodic components.} \hfill $\Box$
\end{remark}


\subsection{Long-time behaviour of  Model 2}
\label{ss.scal2}

For model 2 we need the extra condition that $a(\cdot,\cdot)$ is \emph{symmetric}, i.e.,
\begin{equation}
\label{sym}
a(i,j) = a(j,i) \qquad \forall\, i,j \in \G.
\end{equation} 
Note that $\hat{a}_t(0,0) = a_t(0,0)$ because of \eqref{sym}. Below we comment on what happens when we drop this assumption. Recall \eqref{emcond}--\eqref{rhodef}. It turns out that the long-time behaviour of model 2 is different for $\rho<\infty$ and $\rho=\infty$.

\paragraph{{\bf Case $\rho<\infty$.}}
For a finite seed-bank, we define the initial density as
\begin{equation}
\label{thetadefalt}
\theta = \E_{\mu(0)}\left[\frac{x_0 + \sum_{m\in\N_0} K_m\,y_{0,m}}{1+\rho}\right],
\end{equation}
which is the counter part of \eqref{thetadef} in model 1. Like in model 1, it follows from the SSDE in \eqref{gh1*}--\eqref{gh2*} that
\begin{equation}
\label{mart2}
\left(\frac{x_0(t) + \sum_{m\in\N_0} K_m\,y_{0,m}(t)}{1+\rho}\right)_{t\geq 0}
\end{equation}
is a martingale. Hence also here the density is a preserved quantity under the evolution of the system. The dichotomy is controlled by the same integral $I_{\hat{a}}$ as defined in \eqref{Ihatpdef} for model 1.

\paragraph{{\bf Case $\rho=\infty$.}} 
For an infinite seed-bank, we assume that (recall Remark \ref{fattails})
\begin{equation}
\label{assalt}
\begin{aligned}
&K_m \sim A\,m^{-\alpha}, \quad e_m \sim B\,m^{-\beta}, \quad m \to \infty,\\
&A,B \in (0,\infty), \quad \alpha,\beta \in \R\colon\,\alpha \leq 1 < \alpha +\beta,
\end{aligned}
\end{equation}
for which
\begin{equation}
\label{Ptail}
{P}(\tau >t) \sim C\,t^{-\gamma}, \quad t\to\infty,
\end{equation}
with $\gamma = \frac{\alpha+\beta-1}{\beta}\in(0,1)$ and $C = \frac{A}{\beta}\,B^{1-\gamma}\,\gamma\Gamma(\gamma)\in(0,\infty)$, where $\Gamma$ is the Gamma-function. In addition, we assume that the initial measure $\mu(0)$ is colour regular (recall Definition \ref{D.regular}), and define
\begin{equation}
\theta = \lim_{m\to\infty}\E[y_{0,m}].
\end{equation}
This ensures the existence of the initial density
\begin{equation}
\label{thetaatt}
\theta = \lim_{M\to\infty}  \E_{\mu(0)}\left[\frac{x_0 + \sum_{m=0}^M K_m\,y_{0,m}}
{1+\sum_{m=0}^M K_m}\right]. 
\end{equation}
It turns out that the dichotomy is controlled by the integral
\begin{equation}
\label{Idef}
I_{\hat{a},\gamma} = \int_1^\infty \d t\,\,t^{-(1-\gamma)/\gamma}\,\hat{a}_t(0,0)
\end{equation}
instead of the integral $I_{\hat{a}}$ for $\rho<\infty$.

\medskip
For $\theta\in(0,1)$, define (both for $\rho<\infty$ and $\rho=\infty$) 
\begin{equation}
\label{deftergTalt}
\CT^{\mathrm{erg}}_\theta=\left\{\mu\in\CT^{\mathrm{erg}}\colon\,\lim_{M\to\infty}
\E_{\mu(0)}\left[ \frac{x_0 + \sum_{m=0}^MK_my_{0,m}}{1+\sum_{m=0}^MK_m}\right]=\theta \right\}.
\end{equation}

\begin{theorem}{{\bf [Long-time behaviour: model 2]}}
\label{T.ltb2}
\begin{itemize} 
\item[{\rm (I)}] 
Let $\rho<\infty$. Assume \eqref{ass2} and \eqref{sym}. Suppose that $\mu(0) \in \CT^{\mathrm{erg}}_\theta$.
\begin{itemize}
\item[{\rm (a)}] (Coexistence regime)
If $I_{\hat{a}}<\infty$, then
\begin{equation}
\label{gh13alt}
\lim_{t\to\infty} \mu(t) = \nu_\theta,
\end{equation}
where
\begin{eqnarray}
\label{e1291}
&&\nu_\theta \mbox{ is an equilibrium measure for the process on } E,\\
\label{gh14alt}
&&\nu_\theta \mbox{ is invariant, ergodic and mixing under translations},\\
\label{gh14*alt}
&&\nu_\theta \mbox{ is associated},\\
\label{gh15alt}
&&\E_{\nu_\theta}[x_0] = \E_{\nu_\theta}[y_{0,m}] = \theta\,\,\forall\,m\in\N_0, 
\end{eqnarray}
with $\E_{\nu_\theta}$ denoting expectation over $\nu_\theta$.
Moreover,
\begin{equation}\label{vari}
\begin{aligned}
\liminf_{m\to\infty} e_m > 0\colon &\liminf_{m\to\infty} \mathrm{Var}_{\nu_\theta}(y_{0,m}) > 0,\\
\limsup_{m\to\infty} e_m = 0\colon &\limsup_{m\to\infty} \mathrm{Var}_{\nu_\theta}(y_{0,m}) = 0.
\end{aligned}
\end{equation}
\item[{\rm (b)}] (Clustering regime)
If $I_{\hat{a}}=\infty$, then
\begin{equation}
\label{gh12alt}
\lim_{t\to\infty} \mu(t) = \theta\, [\delta_{(1,1^{\N_0})}]^{\otimes \G} 
+ (1-\theta)\, [\delta_{(0,0^{\N_0})}]^{\otimes \G}.
\end{equation}
\end{itemize}
\item[{\rm (II)}] 
Let $\rho=\infty$. Assume \eqref{ass2}, \eqref{sym} and \eqref{assalt}. Suppose that $\mu(0)\in\CT^{\mathrm{erg}}$ and, in addition, is colour regular with initial density $\theta$ given by \eqref{thetaatt}. Then the same results as in {\rm (I)} hold after $I_{\hat{a}}$ in \eqref{Ihatpdef} is replaced by $I_{\hat{a},\gamma}$ in \eqref{Idef}. Moreover, 
\begin{equation}
\label{thetaattlim}
 \lim_{M\to\infty}  \E_{\nu_\theta}\left[\frac{x_0 + \sum_{m=0}^M K_m\,y_{0,m}}
{1+\sum_{m=0}^M K_m}\right]=\theta,  
\end{equation} 
and $\nu_\theta $ is colour regular.
\end{itemize}
\end{theorem}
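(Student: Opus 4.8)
The plan is to transfer the problem, via the moment duality of Theorem~\ref{T.dual2}, from the interacting diffusions to the block-counting dual, and to read off the long-time behaviour from the asymptotics of the dual coalescent. Since the state space $E=([0,1]\times[0,1]^{\N_0})^\G$ is compact, the linear span of the duality functions $z\mapsto\prod_u z_u^{l_u}$ is a point-separating subalgebra of $C(E)$, hence dense by Stone--Weierstrass; so convergence of all mixed moments is equivalent to weak convergence, and by duality each moment $\E_{\mu(t)}[\prod_u z_u^{l_u}]=\E[\prod_u z_u^{L_u(t)}]$ is governed by finitely many dual lineages. Throughout I would work in the effective-geographic-space picture of Remark~\ref{dualeff}, where a lineage is a Markov chain with kernel $b^{(2)}(\cdot,\cdot)$ of \eqref{mrw2} and two lineages coalesce at rate $d$ when both active at the same colony.

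The dichotomy reduces, by Theorem~\ref{T.dichcrit.model2}, to deciding whether two lineages coalesce with probability $1$, which I would settle by estimating the expected collision local time $\int_0^\infty \P(\text{both active and at the same colony at time } t)\,\d t$: coalescence is certain iff this integral diverges. Using \eqref{sym}, the spatial displacement of a single lineage is the $a$-walk subordinated to its active clock $S(t)$, and the difference of two lineages runs the $\hat a$-walk. For $\rho<\infty$ the colour chain is positive recurrent, so $S(t)\sim t/(1+\rho)$ and the active probability stays bounded away from $0$; the integral is comparable to $\int^\infty \hat a_t(0,0)\,\d t$ and the dichotomy is governed by $I_{\hat a}$ of \eqref{Ihatpdef}. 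For $\rho=\infty$, under \eqref{assalt}--\eqref{Ptail} the colour chain is null recurrent with wake-up tail $\P(\tau>t)\sim Ct^{-\gamma}$, $\gamma\in(0,1)$; heavy-tailed renewal theory then gives $S(t)\sim c\,t^{\gamma}$ and active probability $\sim c't^{\gamma-1}$. Substituting $\P(\text{both active})\sim t^{2(\gamma-1)}$ and $\hat a_{S(t)}(0,0)\sim\hat a_{t^\gamma}(0,0)$ and changing variables $s=t^\gamma$ turns the integral into $\int_1^\infty s^{-(1-\gamma)/\gamma}\hat a_s(0,0)\,\d s=I_{\hat a,\gamma}$ of \eqref{Idef}, so the dichotomy is governed by $I_{\hat a,\gamma}$. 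The correlations between the two lineages' schedules and positions must be controlled by Dynkin--Lamperti / Karamata--Tauberian estimates rather than by the naive product above.

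In the clustering regime ($I=\infty$) every finite sample coalesces to one ancestor, so all mixed moments converge to those of a single Bernoulli type, giving the mono-type limit \eqref{gh12alt}; the density $\theta$ is the preserved quantity, read off from the martingale \eqref{mart2} for $\rho<\infty$ and from the colour-regular density \eqref{thetaatt} for $\rho=\infty$. In the coexistence regime ($I<\infty$) I would instead verify the Liggett conditions (1)--(2) of Remark~\ref{ligcon} for $b^{(2)}$: condition (1), convergence of the one-lineage mean to $\theta$, and condition (2), convergence of the two-lineage second moment to $\theta^2$, which uses that two non-coalescing lineages drift apart and decorrelate. By \cite[Chapter V.1]{Lig85} this yields a unique translation-invariant, ergodic, mixing equilibrium $\nu_\theta$; the higher mixed moments converge by the same coalescent argument (each block of the limiting partition contributes the mean $\theta$), so moment convergence upgrades to weak convergence \eqref{gh13alt}. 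The remaining properties I would obtain as follows: the means \eqref{gh15alt} from condition (1) at a single site; association \eqref{gh14*alt} from attractiveness (monotonicity) of the SSDE, which propagates the FKG property from a product initial law to the limit; and the variance dichotomy \eqref{vari} by running the two-lineage dual from two $D_m$-copies, whereby survival of $\mathrm{Var}_{\nu_\theta}(y_{0,m})$ is equivalent to the colour-$m$ lineage waking up and coalescing, which persists iff $\liminf_m e_m>0$ and vanishes iff $\limsup_m e_m=0$.

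The main obstacle is the coexistence regime for $\rho=\infty$. Because the colour chain is null recurrent, lineages spend increasingly long spells in ever deeper seed-banks, and the new types fed into the active population come precisely from lineages surfacing from arbitrarily deep colours; this is exactly why colour regularity of $\mu(0)$ is imposed, and verifying Liggett condition (2) requires quantitative decorrelation estimates that survive the fat-tailed dormancy. The most delicate point is to show that $\nu_\theta$ is itself colour regular and satisfies the density identity \eqref{thetaattlim}: this is an interchange of the infinite-time, infinite-volume, and infinite-depth ($m\to\infty$) limits, for which I would combine the dual representation of the deep-colour means with the colour regularity of the initial law and a uniform-integrability/tightness control on the contribution of the deep seed-banks.
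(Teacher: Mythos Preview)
Your clustering analysis is essentially the paper's argument: reduce to the dual, estimate the collision integral via renewal asymptotics for the active clock, and obtain $I_{\hat a}$ or $I_{\hat a,\gamma}$ as the dichotomy criterion. That part is fine.

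The coexistence part, however, has a genuine gap and a route that diverges from the paper. First, the theorem is stated for general $g\in\CG$ (recall the convention at the start of Section~\ref{s.scaling}), but the moment duality of Theorem~\ref{T.dual2} is only available for $g=dg_{\mathrm{FW}}$. Your entire coexistence argument---moment convergence via the dual coalescent, Stone--Weierstrass---collapses for $g\neq dg_{\mathrm{FW}}$. The paper instead uses the first- and second-moment relations (Lemma~\ref{lem1cg9422}), which hold for \emph{all} $g\in\CG$, and then runs a coupling of two copies of the SSDE via shared Brownian motions to get uniqueness. Second, your appeal to \cite[Chapter V.1]{Lig85} is a mis-citation: those results are for the voter model, not for interacting diffusions; the Liggett conditions in Remark~\ref{ligcon} are structurally analogous but do not let you quote Liggett's theorem directly. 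The paper proves the implication (conditions $\Rightarrow$ convergence to unique ergodic equilibrium) by hand via the coupling and the preserved class $\CR^{(2)}_\theta$.

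For $\rho=\infty$ your proposal is too vague precisely where the real work is. The Lyapunov quantity $\hat\E[|\Delta_i|+\sum_m K_m|\delta_{i,m}|]$ can be infinite, so the coupling argument does not close as in the $\rho<\infty$ case. The paper's route is: (i) first establish existence of invariant measures by coupling $\mu$ with $\mu S_s$, where the Lyapunov quantity is finite by a direct SDE estimate; (ii) prove that any such invariant $\nu_\theta$ has $\mathrm{Var}_{\nu_\theta}(y_{0,m})\to 0$ as $m\to\infty$ (Lemma~\ref{lemvar}), so the tail $\sigma$-algebra in the seed-bank direction is trivial; (iii) then invoke Goldstein's theorem to couple two invariant measures successfully in the deep colours, which makes the Lyapunov quantity finite after all and closes the argument. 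Colour regularity of $\nu_\theta$ and \eqref{thetaattlim} fall out of step (ii). Your ``uniform-integrability/tightness control'' does not capture this tail-triviality mechanism.
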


The result in part (I) shows that for $\rho<\infty$ the long-time behaviour is similar to that of model 1. Like in model 1, the results in \eqref{gh13alt}--\eqref{gh12alt} say that the system converges to an equilibrium whose density of type $\heartsuit$ equals $\theta$ in \eqref{thetadef}, the density of $\heartsuit$ under the initial measure $\mu(0)$. Again, the equilibrium can be either mono-type or multi-type, depending on whether the symmetrised migration kernel is recurrent or transient. Like in model 1, in both cases the equilibrium measure depends on $\theta$. In the case of coexistence, the equilibrium measure $\nu_\theta$ also depends on the migration kernel $a(\cdot,\cdot)$, the sequences of parameters $(e_m)_{m\in\N_0}$ and $(K_m)_{m\in\N_0}$, and the diffusion function $g \in \CG$ (recall \eqref{gh6}). Again, the dichotomy itself is \emph{controlled by $I_{\hat{a}}$ only}, and the resampling rate given by $g\in\CG$ plays no role. Therefore if $g=dg_{FW}$, in view of Theorem \ref{T.dichcrit.model1}, whether or not two ancestral lineages in the dual coalesce with probability 1 is still only determined by the migration kernel $a(\cdot,\cdot)$.  The same dichotomy holds as for systems without seed-bank (see \cite{CG94}). Therefore part (I) of Theorem \ref{T.ltb2} indicates that, as long as the dormant periods of the ancestral lineages in the dual have a finite mean (here $\frac{\rho}{1+\rho}$; recall Remark \ref{fattails}), the seed-bank does not affect the dichotomy of the system. 

Even so, \eqref{vari} indicates that there is interesting behaviour in the deep seed-banks. Indeed, when the exchange rate $e_m$ between the $m$-dormant and the active population is bounded away from zero as $m\to\infty$ the deep seed-banks are asymptotically \emph{random}, while when $e_m$ tend to zero as $m\to\infty$ the deep seed-banks are asymptotically \emph{deterministic}. The latter means that the deep seed-banks serve as a reservoir, containing a fixed mixture of types. For $\rho<\infty$ this reservoir is too small to influence the dichotomy of the system, but not for $\rho=\infty$.  

For $\rho=\infty$ the system again converges to an equilibrium whose density of type $\heartsuit$ equals $\theta$ in \eqref{thetaatt}, the density of $\heartsuit$ under the initial measure $\mu(0)$. The equilibrium can be mono-type or multi-type, but \emph{the dichotomy criterion has changed.}  Instead of $I_{\hat{a}}$, the dichotomy is now controlled by the integral $I_{\hat{a},\gamma}$ (recall \eqref{Idef}), where $\gamma$ is the parameter determined by relative sizes $K_m$ of the colour $m$-dormant populations with respect to the active population and the exchanges rates $(e_m)_{m\in\N_0}$ with the seed-bank, recall \eqref{assalt}--\eqref{Ptail}.  If $g=dg_{FW}$, $\gamma$ is the parameter of the tail of the wake-up time of an ancestral lineages in the dual (recall \eqref{fattails}). Therefore if $g=dg_{Fw}$, in view of Theorem \ref{T.dichcrit.model2}, we see that the dormant periods of the ancestral lineages in the dual do affect whether or not two ancestral lineages in the dual coalesce with probability $1$. For general $g\in\CG$, the integral $I_{\hat{a},\gamma}$ in \eqref{Idef} shows a \emph{competition} between migration and exchange. The smaller $\gamma$ is, the longer the individuals remain dormant in the seed-bank, the smaller $I_{\hat{a},\gamma}$ is, and the more coexistence becomes likely. As a consequence clustering requires more stringent conditions than recurrent migration; for $\G=\Z^d$ the critical dimension is $1<d<2$ for $\gamma \in [\tfrac12,1]$ and $d=1$ for $\gamma \in (0,\tfrac12)$. The \emph{seed-bank enhances genetic diversity}. Note that $\gamma \uparrow 1$ links up with the case $\rho<\infty$, where coexistence occurs if and only if the migration is transient. Also note that for $\gamma \in (0,\tfrac12)$ there is always coexistence \emph{irrespective of the migration}.

In the case of clustering the equilibrium measure only depends on $\theta$, while in the case of coexistence, like for $\rho<\infty$, $\nu_\theta$ depends on the migration kernel $a(\cdot,\cdot)$, the sequences of parameters $(e_m)_{m\in\N_0},\ (K_m)_{m\in\N_0}$, and the diffusion function $g \in \CG$. Since we assumed \eqref{assalt}, we have $\limsup_{m\to\infty} e_m=0$ , and so we are automatically in the second case of \eqref{vari}. Hence the deep seed-banks are asymptotically deterministic, i.e., the $m$-dormant population converges in law to a deterministic state $\theta$ as $m\to\infty$.  Roughly speaking, in case $g=dg_{FW}$, in equilibrium the volatility of a colour is inversely proportional to its average wake-up time in the dual. Since $\rho=\infty$, for each $M\in\N_0$ we have $\sum_{m=M}^\infty K_m=\infty$, and in the coexistence regime the effect of the seed-bank can be interpreted as a migration towards an infinite reservoir with deterministic density $\theta$.  

Like for model 1, also here $\CT$ is a Choquet simplex, and Theorem~\ref{T.ltb2} carries over from $\CT^{\mathrm{erg}}$ to $\CT$, after decomposition into ergodic components.

\paragraph{Example of effect of infinite seed-bank.}

For a symmetric migration kernel with finite second moment the following holds:
\begin{itemize}
\item 
For $\G=\Z^2$, $\hat{a}_t(0,0) \asymp t^{-1}$, $t\to\infty$, and so coexistence occurs for all $\gamma \in (0,1)$. 
\item 
For $\G=\Z$, $\hat{a}_t(0,0) \asymp t^{-1/2}$, $t\to\infty$, and so coexistence occurs if and only if $\gamma \in (0,\tfrac23)$.
\end{itemize}
In both cases the migration is recurrent, so that clustering prevails in model 1. 

\begin{corollary}{\bf [Three regimes]}\label{threereg} 
Under the conditions of Theorem \ref{T.ltb2}, the system in \eqref{gh1*}--\eqref{gh2*} has three different parameter regimes:
\begin{itemize}
\item[{\rm (1)}]
$\gamma \in (1,\infty)$: migration determines the dichotomy.
\item[{\rm (2)}]
$\gamma \in [\tfrac12,1]$: interplay between migration and seed-bank determines the dichotomy. 
\item[{\rm (3)}]
$\gamma \in (0,\tfrac12)$: seed-bank determines the dichotomy. 
\end{itemize} 
\end{corollary}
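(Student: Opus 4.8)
The plan is to deduce the three regimes directly from Theorem~\ref{T.ltb2} by reducing the clustering/coexistence dichotomy to a single convergence question for the integrals $I_{\hat a}$ in \eqref{Ihatpdef} and $I_{\hat a,\gamma}$ in \eqref{Idef}, and then to separate their finiteness into a migration part and a seed-bank part. Under the regular-variation assumption \eqref{ass2} I would write $\hat a_t(0,0)=t^{-\kappa}\ell(t)$ with $\ell$ slowly varying and $\kappa\ge 0$, so that the migration alone is encoded in $\kappa$: the pure-migration dichotomy $I_{\hat a}<\infty$ holds iff $\kappa>1$ (with the usual boundary correction from $\ell$), equivalently $\hat a(\cdot,\cdot)$ is recurrent iff $\kappa\le 1$. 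Since the integrand of $I_{\hat a,\gamma}$ differs from that of $I_{\hat a}$ only by the extra factor $t^{-(1-\gamma)/\gamma}$, one obtains $I_{\hat a,\gamma}<\infty$ iff the exponent $(1-\gamma)/\gamma+\kappa$ exceeds $1$, i.e. iff $\kappa>\kappa^\ast(\gamma)$ with $\kappa^\ast(\gamma):=2-1/\gamma$. The whole corollary then amounts to tracking the position of $\kappa^\ast(\gamma)$ relative to the pure-migration threshold $1$ and to $0$, which changes qualitatively at $\gamma=1$ and $\gamma=\tfrac12$.

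For regime (1), $\gamma\in(1,\infty)$ corresponds to a \emph{finite-mean} wake-up time: by \eqref{Ptail} a tail exponent $\gamma>1$ gives $\E[\tau]<\infty$, which by \eqref{exptau} is equivalent to $\rho=\sum_{m}K_m<\infty$ (recall \eqref{rhodef}). I would therefore appeal to part~(I) of Theorem~\ref{T.ltb2}, where the dichotomy for $\rho<\infty$ is governed by $I_{\hat a}$, an integral of the symmetrised migration kernel alone; the seed-bank parameters $(K_m,e_m)$ do not enter the criterion, so migration alone decides, which is exactly the claim of regime (1). For regime (3), $\gamma\in(0,\tfrac12)$, the clean point is that $\kappa^\ast(\gamma)=2-1/\gamma<0$. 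Using only the trivial bound $\hat a_t(0,0)\le 1$ one gets $I_{\hat a,\gamma}\le\int_1^\infty \d t\,t^{-(1-\gamma)/\gamma}<\infty$, because $(1-\gamma)/\gamma>1$ precisely when $\gamma<\tfrac12$. Hence $I_{\hat a,\gamma}<\infty$ for \emph{every} admissible migration kernel, and part~(II) of Theorem~\ref{T.ltb2} yields coexistence irrespective of whether the migration is recurrent or transient; this is exactly the assertion that the seed-bank alone determines the dichotomy and migration has no effect.

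Regime (2), $\gamma\in[\tfrac12,1]$, is the substantive case. Here $\kappa^\ast(\gamma)=2-1/\gamma\in[0,1]$, so the threshold for $I_{\hat a,\gamma}$ sits strictly between $0$ and the pure-migration threshold $1$. To establish genuine interplay I would verify two things: first, that migration still matters, since $I_{\hat a,\gamma}<\infty$ for $\kappa>\kappa^\ast(\gamma)$ and $I_{\hat a,\gamma}=\infty$ for $\kappa<\kappa^\ast(\gamma)$, so both outcomes are realised by varying the kernel; second, that the seed-bank genuinely shifts the criterion, since $\kappa^\ast(\gamma)<1$ for $\gamma<1$ means there exist recurrent kernels (those with $\kappa^\ast(\gamma)<\kappa\le 1$, hence $I_{\hat a}=\infty$) for which nevertheless $I_{\hat a,\gamma}<\infty$ and coexistence prevails. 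On $\G=\Z^d$ with finite second moment the local central limit theorem gives $\kappa=d/2$, so $I_{\hat a,\gamma}<\infty\Leftrightarrow d>4-2/\gamma$, which produces the non-integer critical dimension $1<d<2$ announced in the text; taking $d=1$ (so $\kappa=\tfrac12$) gives coexistence iff $\gamma<\tfrac23$, and $d=2$ (so $\kappa=1$) gives coexistence for all $\gamma<1$, matching the two examples and showing that both the dimension and $\gamma$ affect the outcome.

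The main obstacle is regime (2), and within it the two boundary cases where the exponent $(1-\gamma)/\gamma+\kappa$ equals exactly $1$ — in particular $\gamma=\tfrac12$ (where $\kappa^\ast=0$) and $\gamma=1$ (where $I_{\hat a,\gamma}$ reduces to $I_{\hat a}$). These are not decided by the power alone and require a Tauberian comparison that uses the slowly varying factor $\ell$ from \eqref{ass2}. One must also check that the limit $\gamma\uparrow 1$ matches part~(I) continuously, so that the finite-mean case $\rho<\infty$ of regime (1) glues to the $\gamma=1$ endpoint of regime (2). The remaining steps — the trivial bound in regime (3) and the reduction to $I_{\hat a}$ in regime (1) — are straightforward once Theorem~\ref{T.ltb2} is in hand.
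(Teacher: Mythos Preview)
Your proposal is correct and follows essentially the same route as the paper. The paper's own argument (in Section~\ref{ss.recap2}) is a terse three-sentence summary: regime~(1) is identified with $\rho<\infty$ so that Theorem~\ref{T.ltb2}(I) applies and only $I_{\hat a}$ matters; regime~(3) uses exactly your trivial bound $\hat a_t(0,0)\le 1$ to force $I_{\hat a,\gamma}<\infty$ whenever $(1-\gamma)/\gamma>1$; and regime~(2) is dispatched by noting that $I_{\hat a,\gamma}$ mixes both ingredients. Your write-up is considerably more detailed---in particular, your threshold computation $\kappa^\ast(\gamma)=2-1/\gamma$ and the explicit check that it lies in $[0,1]$ for $\gamma\in[\tfrac12,1]$ make the ``interplay'' claim concrete, and this is essentially the content of the paper's Remark~\ref{r.1430} on the degree of the random walk---but the underlying logic is identical.
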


\paragraph{Role of symmetry in migration.}

Unlike in model 1, it is \emph{not} possible to remove the symmetry assumption in \eqref{sym}, as the following counterexample shows. We consider model 2 with $\rho<\infty$ under assumption \eqref{ass2}, but we do not assume \eqref{sym}.

\begin{itemize}
\item
{\bf Counterexample:} Let $\G=\Z^2$, and for $\eta \in (0,1)$ pick
\begin{equation}
\label{achoice}
a(i,j) = \left\{\begin{array}{ll}
\tfrac14(1+\eta), &j=i+(1,0) \text{ or } i+(0,1),\\[0.2cm] 
\tfrac14(1-\eta), &j=i-(1,0) \text{ or } i-(0,1),\\ 
\end{array}
\right.
\end{equation}
i.e., two-dimensional nearest-neighbour random walk with drift upward and rightward. Suppose that $\tau$ in \eqref{Ptail} has a \emph{one-sided stable distribution} with parameter $\gamma \in (1,2)$ (obtained from \eqref{assalt} but with $\alpha,\beta \in \R$: $1<\alpha<1+\beta$). Then coexistence occurs while $I_{\hat{a}} = \infty$. 
\end{itemize}

\noindent 
Recall that for the two-dimensional nearest-neighbour random walk without drift we get clustering according to Theorem~\ref{T.ltb2}, independently of the distribution of $\tau$. The key feature of the counterexample is that it corresponds to $\mathbb{E}(\tau)<\infty$ and $\mathbb{E}(\tau^2)=\infty$. Hence the central limit theorem fails for $\tau$. We will see in Section~\ref{asymmig} that the failure of the central limit theorem for $\tau$ is responsible for turning clustering into coexistence.   

The above raises the question to what extent the equilibrium behaviour depends on the nature of the geographic space. To answer this question, we need a key concept for random walks on countable Abelian groups, which we describe next. 

\begin{remark}{{\bf [Dichotomy criterion and degrees of random walk]}}
\label{r.1430}
{\rm We can read the condition $I_{\hat{a},\gamma}<\infty$ for coexistence versus $I_{a,\gamma}=\infty$ for clustering in terms of the \emph{degree} of the random walk. Namely, let $\hat{a}(\cdot,\cdot)$ be the transition kernel of an irreducible random walk on a countable Abelian group. Then the degree $\delta$ of $\hat{a}(\cdot,\cdot)$ is defined as
\begin{equation}
\label{e1434}
\delta = \sup\left\{\zeta > -1\colon\,\int^\infty_1  \d t\, t^\zeta\, \hat{a}_t(0,0)< \infty\right\}.
\end{equation}
The degree is defined to be $\delta^+$ when the integral is finite \emph{at} the degree and $\delta^-$ when the integral is infinite \emph{at} the degree. Hence we can rephrase the dichotomy criterion in Theorem \ref{T.ltb2} as
\begin{equation}
\label{e1440}
\mbox{clustering} \quad \Longleftrightarrow \quad \text{ either } 
-\frac{1-\gamma}{\gamma} \geq \delta^- 
\mbox{ or } 
- \frac{1-\gamma}{\gamma} > \delta^+.
\end{equation}
For further details we refer to \cite{DGW04a}, \cite{DGW05}, which relate the degree of the random walk to the tail of its return time to the origin.} \hfill $\Box$
\end{remark}

\paragraph{Modulation of wake-up time with slowly varying function.} 

Under weak conditions it is possible to modulate \eqref{Ptail} by a slowly varying function. Assume that  
\begin{equation}
\label{mod}
\frac{P(\tau \in \d t)}{\d t}  \sim \varphi(t)\, t^{-(1+\gamma)}, \quad t \to\infty,  
\end{equation}
with $\varphi$ \emph{slowly varying at infinity}. Define
\begin{equation}
\hat\varphi(t) = \left\{\begin{array}{ll}
\varphi(t), &\gamma \in (0,1),\\[0.2cm]
\int_1^t \d s\,\varphi(s) s^{-1}, &\gamma=1. 
\end{array}
\right.
\end{equation} 
As shown in \cite[Section 1.3]{BGT87}, without loss of generality we may take $\hat\varphi$ to be infinitely differentiable and to be represented by the integral  
\begin{equation}
\label{hatphirepr}
\hat\varphi(t) = \exp\left[\int_{(\cdot)}^t \frac{\d u}{u}\,\psi(u)\right]
\end{equation}
for some $\psi\colon [0,\infty) \to \R$ such that $\lim_{u\to\infty} |\psi(u)| = 0$. If we assume that $\psi$ eventually has a sign and satisfies $|\psi(u)| \leq C/\log u$ for some $C<\infty$, then \eqref{Idef} needs to be replaced by
\begin{equation}
\label{cluscritseed-b}
I_{\hat{a},\gamma,\varphi} = \int_1^\infty  \d t\,\hat\varphi(t)^{-1/\gamma}\,
t^{-(1-\gamma)/\gamma}\,\hat{a}_t(0,0).
\end{equation}
A proof is given in Section \ref{ss.proofslowvar}. The modulation of the wake-up time by a slowly varying function appears naturally for the  model on the hierarchical group, analysed in \cite{GdHOpr3}. There the integral criterion for the dichotomy in \eqref{cluscritseed-b} is needed to apply Theorem \ref{T.ltb2}.


\subsection{Long-time behaviour of Model 3}
\label{ss.scal3}

It remains to see how the switch of colony during the exchange affects the dichotomy. We will focus on the special case where the displacement kernels do not depend on $m$, i.e.,   
\begin{equation}
\label{displa}
a_m(\cdot,\cdot) = a^\dagger(\cdot,\cdot) \qquad \forall \,m\in\N_0,
\end{equation}
with $a^\dagger(\cdot,\cdot)$ an irreducible \emph{symmetric} random walk kernel on $\G\times\G$. Let $\hat{a}^\dagger_t(\cdot,\cdot)$ denote the time-$t$ transition kernel of the random walk with symmetrised displacement kernel $\hat{a}^\dagger(\cdot,\cdot)$ ($= a^\dagger(\cdot,\cdot)$) and jump rate 1. Assume that (compare with \eqref{ass2})
\begin{equation}
\label{qsym}
\begin{aligned}
&t \mapsto (\hat{a}_t \ast \hat{a}^\dagger_t)(0,0) \text{ is regularly varying at infinity},\\
&(\hat{a}_{Ct} \ast \hat{a}^\dagger_t)(0,0) \asymp (\hat{a}_t \ast \hat{a}^\dagger_t)(0,0) \text{ as } t \to\infty 
\text{ for every } C \in (0,\infty),
\end{aligned}
\end{equation}
where $\ast$ stands for convolution. Let
\begin{equation}
\label{Idefaltplane}
I_{\hat{a} \ast \hat{a}^\dagger} 
= \int_1^\infty \d t\,\,(\hat{a}_t \ast \hat{a}^\dagger_t)(0,0)
\end{equation}
and
\begin{equation}
\label{Idefalt}
I_{\hat{a} \ast \hat{a}^\dagger,\gamma} 
= \int_1^\infty \d t\,\,t^{-(1-\gamma)/\gamma}\,(\hat{a}_t \ast \hat{a}^\dagger_t)(0,0).
\end{equation} 

\begin{theorem}{{\bf [Long-time behaviour: model 3]}}
\label{T.ltb3}
Suppose that, in addition to the assumptions of Theorem~\ref{T.ltb2}, both \eqref{displa} and \eqref{qsym} hold. Then the same results as for model $2$ hold: (I) for $\rho<\infty$ after $I_{\hat{a}}$ in \eqref{Ihatpdef} is replaced by $I_{\hat{a} \ast \hat{a}^\dagger}$ in \eqref{Idefaltplane};  (II) for $\rho=\infty$ after $I_{\hat{a},\gamma}$ in \eqref{Idef} is replaced by $I_{\hat{a} \ast \hat{a}^\dagger,\gamma}$ in \eqref{Idefalt}. 
\end{theorem}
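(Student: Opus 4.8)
The plan is to run the entire scheme developed for model~2 (Theorem~\ref{T.ltb2}), since model~3 differs from model~2 only through the displacement that accompanies each switch between the active and the dormant state. By Theorem~\ref{T.dual3} the process is dual to the block-counting process driven by the kernel $b^{(3)}(\cdot,\cdot)$ in \eqref{mrw3}, and by Theorem~\ref{T.dichcrit.model2} the dichotomy is again equivalent to the event that two ancestral lineages in the dual coalesce with probability~$1$. Thus everything reduces to analysing the motion of a single lineage, and of the difference of two independent lineages, now moving according to $b^{(3)}$ instead of $b^{(2)}$. The only structural change is that, under \eqref{displa}, at each transition $A \leftrightarrow D_m$ the lineage makes an extra jump governed by the $m$-independent symmetric kernel $a^\dagger(\cdot,\cdot)$, on top of the migration it performs with kernel $a(\cdot,\cdot)$ while active.

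First I would decompose the spatial position of a lineage at time $t$ into two independent contributions: the migration accumulated during its active periods, and the sum of the displacement steps made at the successive switches. Because $a^\dagger$ does not depend on the colour $m$ and is symmetric, the switch displacements form a sequence of i.i.d.\ $a^\dagger$-steps, independent of the active clock. For the difference of two independent lineages, coalescence occurs at rate $d$ whenever both are active at the same colony, so the total hazard of coalescence is comparable to $\int_1^\infty \d t\, \P(\text{both active at time }t)\,r_t(0,0)$, where $r_t(0,0)$ is the probability that the difference walk sits at the origin at time $t$. The core computation is to identify $r_t$. For $\rho<\infty$ the number of switches and the total active time up to $t$ both grow like $t$, so the migration part contributes a factor $\hat{a}_{c_1 t}$ and the displacement part a factor $\hat{a}^\dagger_{c_2 t}$; by independence $r_t(0,0) \asymp (\hat{a}_{c_1 t} \ast \hat{a}^\dagger_{c_2 t})(0,0)$, and the scaling-invariance assumption in \eqref{qsym} absorbs the constants $c_1,c_2$, so that $r_t(0,0)\asymp(\hat{a}_t \ast \hat{a}^\dagger_t)(0,0)$. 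Integrating (with $\P(\text{both active})$ bounded away from $0$) yields finiteness of the hazard iff $I_{\hat{a}\ast\hat{a}^\dagger}<\infty$, which is the claim in case~(I).

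For $\rho=\infty$ the same decomposition applies but must be subordinated: since the dormant periods have tail exponent $\gamma\in(0,1)$ (recall \eqref{Ptail}), the accumulated active time and the number of switches up to $t$ both scale like $t^\gamma$ via a stable subordinator of index $\gamma$, while $\P(\text{lineage active at }t)\asymp t^{-(1-\gamma)}$ by Dynkin--Lamperti. Hence $r_t(0,0)\asymp (\hat{a}_{t^\gamma}\ast\hat{a}^\dagger_{t^\gamma})(0,0)$ and $\P(\text{both active})\asymp t^{-2(1-\gamma)}$; substituting $s=t^\gamma$ turns $\int_1^\infty \d t\,\P(\text{both active})\,r_t(0,0)$ into a constant times $\int_1^\infty \d s\, s^{-(1-\gamma)/\gamma}(\hat{a}_s\ast\hat{a}^\dagger_s)(0,0) = I_{\hat{a}\ast\hat{a}^\dagger,\gamma}$, giving case~(II). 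Once the coalescence dichotomy is in place, convergence to the equilibrium $\nu_\theta$ in the coexistence regime and to the mono-type mixture in the clustering regime, together with the association, the density identity \eqref{thetaattlim}, the variance dichotomy \eqref{vari}, and colour regularity of $\nu_\theta$, follow verbatim from the model-2 proof: one verifies the Liggett conditions~(1)--(2) of Remark~\ref{ligcon} for the kernel $b^{(3)}$ and reruns the second-moment and coupling arguments, with \eqref{qsym} playing the role that \eqref{ass2} played for model~2.

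The main obstacle will be the $\rho=\infty$ subordination step: one must show that the active clock and the switch-count are simultaneously of order $t^\gamma$ and that, despite being driven by the same renewal structure, the migration displacement and the switch displacement combine into the single convolution $(\hat{a}_{t^\gamma}\ast\hat{a}^\dagger_{t^\gamma})(0,0)$ at the level of the return probability. Controlling the joint law of the pair (active time, number of displacement steps) and transferring the regular-variation estimates through the convolution --- which is precisely where the two parts of \eqref{qsym} are needed --- is the delicate point; the $\rho<\infty$ case is a simplification in which both clocks are linear in $t$ and \eqref{qsym} is only used to discard the slow-down constants.
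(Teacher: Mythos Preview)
Your proposal is correct and matches the paper's approach: both decompose each lineage's position into the migration part over the accumulated active time $T(t)$ and the displacement part over the switch count $N(t)$, arrive at the collision integral $\int \E\big[(\hat{a}_{T(t)+T'(t)}\ast\hat{a}^\dagger_{N(t)+N'(t)})(0,0)\,1_{\CE(t)}1_{\CE'(t)}\big]\,\d t$, use $T(t),N(t)\asymp t$ (resp.\ $t^\gamma$) together with \eqref{qsym} to identify $I_{\hat{a}\ast\hat{a}^\dagger}$ (resp.\ $I_{\hat{a}\ast\hat{a}^\dagger,\gamma}$), and lift the coexistence proof from model~2 with $b^{(3)}$ in place of $b^{(2)}$. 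The delicate joint-subordination step you flag for $\rho=\infty$ is handled in the paper by a direct Fourier computation on the convolution, which bypasses the need to control the joint law of $(T(t),N(t))$ beyond the common $t^\gamma$ scaling.
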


\noindent
In the case of coexistence the equilibrium measure $\nu_\theta$ depends on $a(\cdot,\cdot)$, $a^\dagger(\cdot,\cdot)$, $(e_m)_{m\in\N_0}$, $(K_m)_{m\in\N_0}$ and $g \in \CG$. The dichotomy itself, however, is controlled by $I_{\hat{a} \ast \hat{a}^\dagger}$, respectively, $I_{\hat{a} \ast \hat{a}^\dagger,\gamma}$ alone.

An interesting observation is the following. Since $\hat{a}_t(\cdot,\cdot)$ and $\hat{a}^\dagger_t(\cdot,\cdot)$ are symmetric, we have (by a standard Fourier argument)
\begin{equation}
\label{e1102}
\hat{a}_t(i,j) \leq \hat{a}_t(0,0), \qquad \hat{a}^\dagger_t(i,j) \leq \hat{a}^\dagger_t(0,0) 
\qquad \forall\,i,j \in \G\,\,\forall\,t \geq 0.
\end{equation} 
Hence, $I_{\hat{a} \ast \hat{a}^\dagger,\gamma} \leq I_{\hat{a},\gamma} \wedge I_{\hat{a}^\dagger,\gamma}$. Consequently, the extra displacement in model 3 can only make coexistence more likely compared to model 2, which is intuitively plausible.  

If $a(\cdot,\cdot)=a^\dagger(\cdot,\cdot)$, then $(a_t \ast a^\dagger_t)(0,0) = a_{2t}(0,0)$ and therefore  the  dichotomy is the same as for model 2. Hence the extra displacement  has in this case no effect on the dichotomy. However, if the displacement is transient while the migration is recurrent, then there is a difference. For instance, if $\rho<\infty$, the migration is a simple random walk on $\Z$, and the displacement is a symmetric random walk on $\Z$ with infinite mean, e.g.\ $a^\dagger(0,x) = a^\dagger(0,-x) \sim D|x|^{-\delta}$, $D \in (0,\infty)$, $\delta \in (1,2)$, then $I_{\hat{a}}=\infty$, $I_{\hat{a}^\dagger}<\infty$ and $I_{\hat{a} \ast \hat{a}^\dagger} < \infty$ \cite[Section 8]{Sp64}. Therefore there is clustering in model 2, but coexistence in model 3.


\section{Proofs: Well-posedness and duality}
\label{s.wpdualpr}

In Section~\ref{ss.mp} we prove Theorem~\ref{T.wellp}, in Section~\ref{ss.dual} Theorems~\ref{T.dual1}, \ref{T.dual2} and \ref{T.dual3}, and in Section~\ref{ss.dichotomy} Theorems~\ref{T.dichcrit.model1} and \ref{T.dichcrit.model2}.


\subsection{Well-posedness}
\label{ss.mp}

In this section we prove Theorem~\ref{T.wellp}.

\begin{proof}
(a) We first prove Theorem~\ref{T.wellp}(a): existence and uniqueness of solutions to the SSDE. We do this for each of the three models separately.
	
\paragraph{Model 1.} 
Existence of the process defined in \eqref{gh1}--\eqref{gh2} for model 1 is a consequence 
of the assumptions in \eqref{gh10}, \eqref{pmdef} and \eqref{emcond}, in combination with 
\cite[Theorem 3.2]{SS80}, which reads as follows:
	
\begin{theorem}{{\bf [Unique strong solution]}}
\label{SS80theorem}
Let $\S$ be a countable set, and let $Z=\{z_u\}_{u\in \S} \in[0,1]^\S$. Consider the stochastic differential equation
\begin{equation}
\label{shigasde}
\d z_u(t)=\alpha_u(z_u(t))\,\d B_u(t) + f_u(Z(t))\,\d t,\qquad u\in \S,
\end{equation}
where $\alpha_u\colon\,[0,1] \to \R$ for all $u\in \S$, $f_u\colon\,[0,1]^\S \to [0,1]$ for all $u\in \S$, and $B=\{B_u\}_{u\in \S}$ is a collection of independent standard Brownian motions. Suppose that:
\begin{enumerate}
\item[{\rm (1)}] 
The functions $\alpha_u$, $u\in \S$, are real-valued, $\frac{1}{2}$-H\"older continuous (i.e., there are $C_u \in (0,\infty)$ such that $|\alpha_u(x)-\alpha_u(y)| \leq C_u |x-y|^{\frac{1}{2}}$ for all $x,y \in [0,1]$) and uniformly bounded, with $\alpha_u(0)=\alpha_u(1)=0$, $u\in \S$.
\item[{\rm (2)}]
The functions $f_u$, $u\in \S$, are continuous and satisfy:
\begin{itemize}
\item[$\bullet$] 
There exists a matrix $Q=\{Q_{u,v}\}_{u,v\in \S}$ such that  $Q_{u,v}\geq 0$ for all $u,v \in \S$,
$\sup_{u\in \S} \sum_{v\in \S}$ $Q_{u,v} < \infty$, and
\begin{equation}
\label{e1500}
|f_u(Z^1)-f_u(Z^2)| \leq \sum_{v\in \S} Q_{u,v}|z^1_{v}-z^2_{v}|, 
\quad \text{ for } \quad Z^1=\{z^1_{v}\}_{v\in \S}\in[0,1]^\S,\ Z^2=\{z^2_{v}\}_{v\in \S}\in[0,1]^\S.
\end{equation}
\item[$\bullet$] 
For $Z\in [0,1]^\S$ and $z_u=0$,
\begin{equation}
\label{e1505}
f_u(Z)\geq 0.
\end{equation}
\item[$\bullet$] 
For $Z\in [0,1]^\S$ and $z_u=1$,
\begin{equation}
\label{e1509}
f_u(Z)\leq 0.
\end{equation}
\end{itemize}
\end{enumerate}
Then \eqref{shigasde} has a unique $[0,1]^\S$-valued strong solution with a continuous path. 
\end{theorem}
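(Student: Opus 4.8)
The plan is to prove the three assertions of Theorem~\ref{SS80theorem} in turn --- confinement to $[0,1]^\S$, pathwise uniqueness, and weak existence --- and then to combine them via the Yamada--Watanabe principle, by which pathwise uniqueness together with weak existence yields a \emph{unique strong} solution with continuous paths. Two structural inputs drive everything. First, each diffusion coefficient $\alpha_u$ is only $\tfrac12$-H\"older, so the usual Lipschitz contraction is unavailable and must be replaced by the Yamada--Watanabe approximation device. Second, the interaction matrix $Q$ has uniformly summable rows, $\beta := \sup_{u\in\S}\sum_{v\in\S} Q_{u,v} < \infty$, so a single $\sup$-norm Gronwall estimate closes over the infinitely many coordinates without the need for weights.

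\textbf{Pathwise uniqueness.} I would place two $[0,1]^\S$-valued solutions $Z^1,Z^2$ with the same initial state on one probability space, driven by the \emph{same} family $B=\{B_u\}_{u\in\S}$, and set $\Delta_u = z^1_u - z^2_u$. For the Yamada--Watanabe functions I take $a_n\downarrow0$ with $\int_{a_n}^{a_{n-1}}\d u/u = n$ and mollifiers $\rho_n$ supported in $(a_n,a_{n-1})$ with $0\le\rho_n(u)\le 2/(nu)$ and $\int\rho_n=1$, and put $\psi_n(x)=\int_0^{|x|}\d y\int_0^y\rho_n(u)\,\d u$, so that $\psi_n(x)\uparrow|x|$, $|\psi_n'|\le1$ and $\psi_n''=\rho_n(|\cdot|)$. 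Applying It\^o to $\psi_n(\Delta_u(t))$, the stochastic integral has zero mean (the integrand is bounded since $\alpha_u$ is bounded and $|\psi_n'|\le1$), the second-order diffusion term is bounded by $\tfrac12\rho_n(|\Delta_u|)\,(\alpha_u(z^1_u)-\alpha_u(z^2_u))^2 \le \tfrac12\cdot\tfrac{2}{n|\Delta_u|}\cdot C_u^2|\Delta_u| = C_u^2/n\to0$ --- this is exactly where hypothesis~(1) is used --- and the drift term is controlled by \eqref{e1500}. Letting $n\to\infty$ and writing $m_u(t)=\E|\Delta_u(t)|\le1$, I obtain $m_u(t)\le\int_0^t\sum_v Q_{u,v}\,m_v(s)\,\d s$, whence $M(t):=\sup_u m_u(t)\le\beta\int_0^t M(s)\,\d s$ with $M(0)=0$; Gronwall gives $M\equiv0$, i.e.\ $Z^1=Z^2$ a.s.

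\textbf{Existence and confinement.} For existence I would approximate by finite subsystems: for finite $\Lambda\subset\S$ solve the $[0,1]^\Lambda$-valued version of \eqref{shigasde} in which the coordinates outside $\Lambda$ are frozen at their initial values inside the drifts $f_u$. This is a finite system with $\tfrac12$-H\"older diffusion and Lipschitz drift, so the classical one-dimensional Yamada--Watanabe theorem supplies a pathwise-unique strong solution $Z^\Lambda$. Confinement is the crucial boundary feature: since $\alpha_u(0)=\alpha_u(1)=0$ the noise switches off at the endpoints, while \eqref{e1505}--\eqref{e1509} make the drift point inward there, so a one-dimensional comparison keeps each coordinate in $[0,1]$ for all time. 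I would then show that $\{Z^\Lambda\}_{\Lambda\uparrow\S}$ is Cauchy on compact time intervals in the metric $\sup_u\E|\cdot|$ by running the same It\^o/Yamada--Watanabe computation on a difference of two truncations: the propagation term is handled by \eqref{e1500} and $\beta<\infty$, while the source term involves only drifts that differ between the two truncations and is controlled by summability of the rows of $Q$. The limit $Z$ inherits continuous paths and values in $[0,1]^\S$ and, passing to the limit in the integrated form, solves \eqref{shigasde}. (Alternatively one establishes tightness of $\{Z^\Lambda\}$ in $C([0,\infty),[0,1]^\S)$ and identifies any subsequential limit as a weak solution.)

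\textbf{Main obstacle.} Either route produces a solution whose law, combined with the pathwise uniqueness above, upgrades to the asserted unique strong solution by Yamada--Watanabe. The delicate point is the interplay of the two non-standard features: the non-Lipschitz $\tfrac12$-H\"older coefficient forces the approximating functions $\psi_n$, and one must verify that the resulting errors $C_u^2/n$ vanish \emph{after} summing over the infinitely many coordinates --- which is precisely why the estimate is organised coordinatewise and closed in the $\sup$-norm through the single row-sum bound $\beta$, rather than by attempting a global contraction.
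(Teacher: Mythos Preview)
The paper does not prove this theorem. Theorem~\ref{SS80theorem} is quoted verbatim from Shiga--Shimizu \cite[Theorem~3.2]{SS80} and used as a black box: the authors merely verify, for each of their three models, that the coefficients of the SSDE satisfy hypotheses (1) and (2), and then invoke the cited result. There is therefore no ``paper's own proof'' to compare against.

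Your sketch is a reasonable outline of the Shiga--Shimizu argument itself: Yamada--Watanabe approximating functions to neutralise the $\tfrac12$-H\"older diffusion, a row-sum Gronwall to close pathwise uniqueness, finite-subsystem approximation plus boundary comparison for existence and confinement, and then the Yamada--Watanabe theorem to upgrade weak existence plus pathwise uniqueness to a unique strong solution. One point deserves more care than you give it: the Cauchy estimate for $\{Z^\Lambda\}_\Lambda$ in the $\sup_u \E|\cdot|$ metric requires the \emph{tails} $\sum_{v\notin\Lambda} Q_{u,v}$ to be small uniformly in $u$, which is not implied by $\sup_u\sum_v Q_{u,v}<\infty$ alone; in the original proof this is handled either by working in a weighted $\ell^1$-type norm adapted to $Q$ or, as you note parenthetically, by a tightness/martingale-problem route. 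But since the paper treats the whole theorem as an external citation, there is nothing further to compare.
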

	
To apply Theorem~\ref{SS80theorem} to model 1, recall that
\begin{equation}
\label{e1332}
\S=\G \times \{A,D\}, 
\end{equation}
where $A$ denotes the active part of a colony and $D$ the dormant part of a colony. Since $\G$ is countable and $\{A,D\}$ is finite, $\S$ is countable. As before, we denote the fraction of active individuals of type $\heartsuit$ at colony $i\in\G$ by $x_i$ and the fraction of dormant individuals of type $\heartsuit$ at colony $i\in\G$ by $y_i$. Note that for every $u \in \S$ we have either $u=(i,A)$ or $u=(i,D)$ for some $i \in \G$. Therefore $Z=\{z_u\}_{u\in \S}=\{x_i\colon\, i\in\G\}\cup\{y_i\colon\,i\in\G\}$, and $z_u=x_i$ when $u=(i,A)$ and $z_u=y_i$ when $u=(i,D)$. We can rewrite \eqref{gh1}--\eqref{gh2} in the form of \eqref{shigasde} by picking
\begin{equation}
\label{e1529}
\alpha_u(z_u)=\begin{cases}
\sqrt{g(x_i)}, &u=(i,A),\\
0, &u=(i,D),
\end{cases}
\end{equation}
and 
\begin{equation}
\label{e1530}
f_u(Z) = \begin{cases}
\sum_{j \in \G} a(i,j)\,(x_j-x_i)+Ke\,(y_i-x_i), 
&u=(i,A),\\
e\,(x_i-y_i),
&u=(i,D).
\end{cases}
\end{equation}
Since $g\in \CG$ (recall \eqref{gh6}), the conditions in (1) are satisfied. To check the conditions in (2), define the matrix $Q=\{Q_{u,v}\}_{u,v\in\S}$ by
\begin{equation}
\label{e1547}
Q_{u,v} =
\begin{cases}
\sum_{j\in\G} a(i,j)+Ke, &u=(i,A),\,v=(i,A),\\
a(i,j), &u=(i,A),\,v=(j,A),\\
Ke, &u=(i, A),\,v=(i,D),\\
e, &u=(i,D),\,v=(i,D) \text{ or } u=(i,D),\,v=(i,A),\\
0, &\text{otherwise.}
\end{cases}
\end{equation}
Then
\begin{equation}
\label{e1558}
\sum_{v\in S} Q_{u,v} = 
\begin{cases}
2 \sum_{j\in\G} a(i,j)+2Ke, &u=(i,A),\\
2e, &u=(i,D).
\end{cases}
\end{equation}
Since we have assumed that $\sum_{j \in \G} a(i,j)=\sum_{j \in \G} a(0,j-i)<\infty$, it follows that $\sup_{u\in \S} \sum_{v\in \S} Q_{u,v} <\infty$. Since $x_i\in[0,1]$ and $y_i\in[0,1]$, the requirements on $f_u$ are immediate. Hence we have a unique strong solution with a continuous path. 

By It\^o's formula, the law of the strong solution solves the martingale problem. Uniqueness of that solution follows from \cite[Theorem IX 1.7(i)]{RY99}. This in turn implies the Markov property.
	
\paragraph{Model 2.}
To apply Theorem~\ref{SS80theorem} to model 2, recall that
\begin{equation}
\label{e1332alt}
\S= \G \times \{A,(D_m)_{m\in\N_0}\}.
\end{equation}
Pick 
\begin{equation}
\label{e1580}
\alpha_u(z_u)=\begin{cases}
\sqrt{g(x_i)}, &u=(i,A),\\
0, &u=(i,D_m),\,m\in\N_0,
\end{cases}
\end{equation}
and
\begin{equation}
\label{e1587}
\begin{aligned}
f_u(Z) = \begin{cases}
\sum_{j \in \G} a(i,j)\,(x_j-x_i)+\sum_{m\in\N_0} K_me_m\,(y_{i,m}-x_i), 
&u=(i,A),\\
e_m\,(x_i-y_{i,m}),
&u=(i,D_m).
\end{cases}
\end{aligned}
\end{equation}
Set
\begin{equation}
\label{e1599}
Q_{u,v} =
\begin{cases}
\sum_{j\in\G} a(i,j)+\sum_{m\in\N_0} K_me_m, &u=(i,A),\, v=(i,A),\\
a(i,j), &u=(i,A),\, v=(j,A),\,j \neq i,\\
K_me_m, &u=(i, A),\, v=(i,D_m),\\
e_m, &u=(i,D_m),\, v=(i,D_m) \text{ or } u=(i,D_m),\, v=(i,A),\\
0, &\text{otherwise.}
\end{cases}
\end{equation}
Then, by assumptions \eqref{gh10} and \eqref{emcond}, $Q$, $f$ and $\alpha$ satisfy the conditions of Theorem~\ref{SS80theorem}.
	
\paragraph{Model 3.} 
The state space $\S$ and the function $\alpha$ are the same as in model $2$. When $u\in \S$ is of the form $(i, A)$, we must adapt the function $f_u$ such that it takes the displacement of seeds into account. The matrix $Q$ must be adapted accordingly and, by assumption \eqref{pmdef}, the conditions of Theorem~\ref{SS80theorem} are again satisfied. 
	
\medskip\noindent
(b) The proof of Theorem~\ref{T.wellp}(b) is the same for models 1--3. The Feller property can be proved by using duality if $g = d g_{\text{FW}}$, $d \in (0,\infty)$. For general $g$ we use \cite[Remark 3.2]{SS80} (see also \cite[Theorem 5.8]{Lig85}). The Feller property in turn implies the strong Markov property. 
\end{proof}


\subsection{Duality}
\label{ss.dual}

In this section we prove Theorems~\ref{T.dual1}, \ref{T.dual2} and \ref{T.dual3}.

\paragraph{Model 1: Proof of Theorem~\ref{T.dual1}.}

\begin{proof} 
We use the generator criterion (see \cite[p.190--193]{EK86} or \cite[Proposition 1.2]{JK14}) to prove the duality relation given in \eqref{e401}. Let $F$ be the generator  of the spatial block-counting process defined in \eqref{blockproc}, and let $H((m_j,n_j)_{j\in\G})$ be defined as in \eqref{hpoldef}, but read as a function of the second sequence only. Then
\begin{equation}
\begin{aligned}
(FH)\big((m_j,n_j)_{j\in\G}\big)
&=\sum_{i\in\G}\bigg[
\sum_{k\in\G} m_i 
a(i,k)\,\big[H\big(	({m}_j,{n}_j)_{j\in\G}-\delta_{(i,A)}+\delta_{(k,A)}\big)
-H\big((m_j,n_j)_{j\in\G}\big)\big]\\
& \qquad +\, d{m_i\choose 2}\, \big[H\big(({m}_j,{n}_j)_{j\in\G}-\delta_{(i,A)}\big)
-H\big((m_j,n_j)_{j\in\G}\big)\big]\\
& \qquad +\, m_i K e\,\big[H\big(({m}_j,{n}_j)_{j\in\G}-\delta_{(i,A)}+\delta_{(i,D)}\big)
-H\big((m_j,n_j)_{j\in\G}\big)\big]\\
& \qquad +\, n_i e\,\big[H\big(	({m}_j,{n}_j)_{j\in\G}+\delta_{(i,A)}-\delta_{(i,D)}\big)
-H\big((m_j,n_j)_{j\in\G}\big)\big]
\bigg].
\end{aligned}
\end{equation}
Recall that $G$ is the generator of the SSDE (recall \eqref{eq401}--\eqref{eq402}). Let $\mathcal{D}_G$ denote the domain of $G$ and $\mathcal{D}_F$ the domain of $F$. Let $(S_t)_{t \geq 0}$ denote the semigroup of the process $(Z(t))_{t\geq 0}$ in \eqref{e409} and $(R_t)_{t\geq 0}$ the semigroup of the process $(L(t))_{t\geq 0}$ in \eqref{blctpr}. Since 
\begin{equation}
\frac{d^2}{dt^2} (R_tH)((x_j,y_j,n_j,m_j)_{j\in\mathbb{G}}) = (F^2R_tH)((x_j,y_j,n_j,m_j)_{j\in\mathbb{G}}),
\end{equation} 
we see that $H((x_j,y_j,n_j,m_j)_{j\in\mathbb{G}})\in \mathcal{D}_G$ and $(R_tH)((x_j,y_j,n_j,m_j)_{j\in\mathbb{G}})\in\mathcal{D}_G$. It is also immediate that $H((x_j,y_j,n_j,m_j)_{j\in\mathbb{G}})\in\mathcal{D}_F$ and $(S_tH)((x_j,y_j,n_j,m_j)_{j\in\mathbb{G}})\in\mathcal{D}_F$. Applying the generator $G$ in \eqref{eq402} with $g =\frac{d}{2}g_{\text{FW}}$ to \eqref{hpoldef}, we find
\begin{equation}
\begin{aligned}
&(GH)\big((x_j,y_j)_{j\in\G}\big)\\
&= \sum_{i \in \G} \Bigg\{ \Bigg[ \sum_{k \in \G} 
a(i,k)\,(x_k - x_i)\Bigg] \frac{\partial}{\partial x_i} 
\Big(\prod_{j\in\G}x_j^{m_j}y_j^{n_j}\Big)\\
&\qquad + \tfrac{d}{2}\,x_i(1-x_i) \frac{\partial^2}{\partial x_i^2}
\Big(\prod_{j\in\G}x_j^{m_j}y_j^{n_j}\Big)
+ Ke\,(y_i-x_i) \frac{\partial}{\partial x_i}
\Big(\prod_{j\in\G}x_j^{m_j}y_j^{n_j}\Big)\\
&\qquad\qquad\qquad + e\,(x_i -y_i) \frac{\partial}{\partial y_i}
\Big(\prod_{j\in\G}x_j^{m_j}y_j^{n_j}\Big)\Bigg\}\\
&= \sum_{i \in \G} \Bigg\{ \Bigg[ \sum_{k \in \G} m_ia(i,k) 
\prod_{\substack{j\in\G\\ j\neq i\\j\neq k}} x_j^{m_j}y_j^{n_j}
\,\big(x_i^{m_i-1}y_i^{n_i}x_k^{m_k+1}y_k^{n_k}-x_i^{m_i}y_i^{n_i}x_k^{m_l}y_k^{n_k}\big)\,
\Bigg]\\
&\qquad+ \prod_{\substack{j\in\G\\ j\neq i}}x_j^{m_j}y_j^{n_j}\tfrac{d}{2}\,
m_i(m_i-1)\,\big(x_i^{m_i-1}y_i^{n_i}-x_i^{m_i}y_i^{n_i}\big)\,1_{\{m_i\geq 2\}}\\
&\qquad\qquad\qquad + m_i Ke\, \prod_{\substack{j\in\G\\ j\neq i}} 
x_j^{m_j}y_j^{n_j}\,\big(x_i^{m_i-1}y_i^{n_i+1}-x_i^{m_i}y_i^{n_i}\big)\,\\
&\qquad\qquad\qquad + n_i e\, \prod_{\substack{j\in\G\\ j\neq i}}
x_j^{m_j}y_j^{n_j}\,\big(x_i^{m_i+1}y_i^{n_i-1}-x_i^{m_i}y_i^{n_i}\big)\,\Bigg\}\\
&= (FH) \big((m_j,n_j)_{j\in\G}\big).
\end{aligned}
\end{equation}
Consequently, it follows from the generator criterion that 
\begin{equation}
\label{e1476}
\E\Big[H\Big((X_i(t),Y_i(t),m_i,n_i)_{i\in\G}\Big)\Big]=\E\Big[H\Big((x_i,y_i,M_i(t),N_i(t))_{i\in\G}\Big)\Big].
\end{equation}
This settles Theorem~\ref{T.dual1}.
\end{proof}

\paragraph{Model 2: Proof of Theorem~\ref{T.dual2}.}

\begin{proof}
Theorem~\ref{T.dual2} follows after replacing in the above proof the block-counting process in \eqref{blockproc} by the one in \eqref{blockproc2}, the duality function by the one in \eqref{dualmod2alt}, and checking the generator criterion. 
\end{proof}

\paragraph{Model 3: Proof of Theorem~\ref{T.dual3}.}

\begin{proof}
Theorem~\ref{T.dual3} follows after replacing the block-counting process in \eqref{blockproc} by the one in \eqref{blockproc3}, the duality function is by the one in \eqref{dualmod2alt}, and checking the generator criterion. 
\end{proof}


\subsection{Dichotomy criterion}
\label{ss.dichotomy}

In this section we prove Theorems~\ref{T.dichcrit.model1} and \ref{T.dichcrit.model2}.

\paragraph{Model 1: Proof of Theorem~\ref{T.dichcrit.model1}.} 

\begin{proof}
$\mbox{}$
	
\medskip\noindent	
``$\boldsymbol{\Longleftarrow}$'' 
The proof uses the duality relation in Theorem \ref{T.dual1}.  Define $\theta_x=\E_{\mu(0)}[x_0]$ and $\theta_y=\E_{\mu(0)}[y_0]$. Note that, since $\mu(0)$ is invariant under translations, we have $\E_{\mu(0)}[x_i]=\theta_x$ and $\E_{\mu(0)}[y_i]=\theta_y$  for all $i\in\G$. We proceed as in \cite[Proposition 2.9]{BCKW16}. Let $(m_i,n_i)_{i\in\G}\in E^\prime$ be such that $\sum_{i\in\G} [m_i(0)+n_i(0)]<\infty$, and put 
\begin{equation}
T=\inf\left\{t\geq 0\colon\, \sum_{i\in\G} [m_i(t)+n_i(t)]=1\right\}.
\end{equation}
By assumption, each pair of partition elements coalesces with probability $1$, and hence $\P(T<\infty)=1$. By duality
\begin{equation}
\label{dualarg}
\begin{aligned}
&\lim_{t\to\infty}\E\left[\prod_{i\in\G}x_i(t)^{m_i}y_i(t)^{n_i}\right]\\
&=\lim_{t\to\infty}\E\left[\prod_{i\in\G}x_i^{m_i(t)}y_i^{n_i(t)}\right]\\
&= \lim_{t\to\infty}\E\left[\prod_{i\in\G}x_i^{m_i(t)}y_i^{n_i(t)} ~\Big|~T<\infty\right]\P(T<\infty)
+\E\left[\prod_{i\in\G}x_i^{m_i(t)}y_i^{n_i(t)} ~\Big|~T=\infty\right]\,\P(T=\infty)\\
&= \lim_{t\to\infty}\E\left[\prod_{i\in\G}x_i^{m_i(t)}y_i^{n_i(t)} 
~\Big|~T<\infty,\ m(t)=1,\ n(t)=0\right]\,\P(m(t)=1,\ n(t)=0)\\
&\qquad +\lim_{t\to\infty}\E\left[\prod_{i\in\G}x_i^{m_i(t)}y_i^{n_i(t)} 
~\Big|~T<\infty,\ m(t)=1,\ n(t)=0\right]\,\P(m(t)=0,\ n(t)=1)\\
&= \theta_x\,\frac{1}{1+K}+\theta_y\,\frac{K}{1+K},
\end{aligned}
\end{equation}
where in the last step we use that a single lineage in the dual behaves like the Markov chain with transition kernel $b^{(1)}(\cdot,\cdot)$ defined in \eqref{mrw}. It follows from \eqref{dualarg} that, for all $i,j\in\G$,
\begin{equation}
\label{defclust}
\lim_{t\to\infty} \E\left[\frac{x_i(t)+Ky_i(t)}{1+K}\left(1-\frac{x_j(t)+Ky_j(t)}{1+K}\right)\right]=0.
\end{equation}
Hence, either $\lim_{t\to\infty}(x(t),y(t))=(0,0)^{\G}$ or $\lim_{t\to\infty}(x(t),y(t))=(1,1)^{\G}$. Computing $\lim_{t\to\infty}\E[x_i(t)]$ with the help of \eqref{dualarg}, we find
\begin{equation}
\label{defclus}
\lim_{t\to\infty}\mu(t)=(1-\theta)\,[\delta_{(0,0)}]^{\bigotimes \G}
+\theta\,[\delta_{(1,1)}]^{\bigotimes \G}
\end{equation} 
with $\theta = \E_{\mu(0)}\left[\frac{x_0+Ky_0}{1+K}\right]=\frac{\theta_x+K\theta_y}{1+K}$, which means that the system clusters. 

\medskip\noindent
``$\boldsymbol{\Longrightarrow}$''
Suppose that the systems clusters. Then \eqref{defclust} holds for all $i,j\in\G$, which means that
\begin{equation}
\label{defclust2}
\lim_{t\to\infty} \E\left[z_u(t)\left(1-z_v(t)\right)\right]=0 \qquad \forall\, u,v\in\S.
\end{equation}
Let 
\begin{equation}
\label{numlin}
\left|L(t)\right|=\sum_{u\in\S}L_u(t),
\end{equation}
be the total number of lineages left at time $t$. Applying the duality relation in \eqref{e401b} to \eqref{defclust2}, we find
\begin{equation}
\label{contra1}
\begin{aligned}
0 &= \lim_{t\to\infty}\E\left[z_u(t)(1-z_v(t))\right]\\
&=\lim_{t\to \infty} \E_{\mu(0)}\left[\E_{\delta_u}\left[\prod_{u\in\S}z_u^{L_u(t)}\right]\right]
-\E_{\mu(0)}\left[\E_{\delta_u+\delta_v}\left[\prod_{u\in\S}z_u^{L_u(t)}\right]\right]\\
&=\frac{\theta_x+K\theta_y}{1+K}\left[1-\lim_{t\to \infty}\P_{\delta_u+\delta_v}
\left(|L(t)|=1\right)\right]\\
&\qquad -\lim_{t\to\infty}\E_{\mu(0)}\left[\E_{\delta_u+\delta_v}\left[\prod_{u\in\S}
z_u^{L_u(t)} ~\Big|~ |L(t)|=2\right]\right]\P_{\delta_u+\delta_v}(|L(t)|=2).
\end{aligned}
\end{equation}
As to the last term in the right-hand side of \eqref{contra1}, we note that
\begin{equation}
\label{contra}
\begin{aligned}
&\lim_{t\to\infty}\E_{\mu(0)}\left[\E_{\delta_u+\delta_v}\left[\prod_{u\in\S}
z_u^{L_u(t)} ~\Big|~ |L(t)|=2\right]\right]\\
&\quad =\lim_{t\to\infty}\frac{1}{(1+K)^2}\E_{}\left[\prod_{u\in\S} z_u^{L_u(t)}
~\Big|~ L(t)=\delta_{(i,A)}+\delta_{(j,A)},\,  i,j\in\G\right]\\
&\qquad + \lim_{t\to\infty}\frac{2K}{(1+K)^2}\E_{}\left[\prod_{u\in\S} z_u^{L_u(t)}
~\Big|~ L(t)=\delta_{(i,A)}+\delta_{(j,D)},\,  i,j\in\G\right]\\
&\qquad + \lim_{t\to\infty}\frac{K^2}{(1+K)^2}\E_{}\left[\prod_{u\in\S} z_u^{L_u(t)}
~\Big|~L(t)=\delta_{(i,D)}+\delta_{(j,D)}, i,j\in\G\right]\\
&\quad <\frac{\theta_x}{(1+K)^2}+\frac{K\theta_x+K\theta_y}{(1+K)^2}+\frac{K^2\theta_y}{(1+K)^2}
=\frac{\theta_x+K\theta_y}{1+K} = \theta.
\end{aligned} 
\end{equation}
Here, the strict inequality follows from the non-trivial invariant initial distribution (ruling out $z \equiv 0$ and $z \equiv 1$), together with the fact that the swapping between active and dormant is driven by a positive recurrent Markov chain on $\{A,D\}$. Hence \eqref{defclust2} holds if and only if $\lim_{n\to\infty} \P_{\delta_u+\delta_v}(|L(t)|=2|\,|L(0)|=2)=0$ for every $u,v\in\S$. Therefore every pair of lineages coalesces with probability $1$. 

Thus, we have proved Theorem~\ref{T.dichcrit.model1}.  

\paragraph{Model 2: Proof of Theorem~\ref{T.dichcrit.model2}.}

$\mbox{}$

\paragraph{{\bf Case $\rho<\infty$.}} 
Like for model 1, we define
\begin{equation}
\theta_x = \E_{\mu(0)}[x_0], \qquad 
\theta_{y,m} = \E_{\mu(0)}[y_{0,m}], \qquad 
\theta = \frac{\theta_x+\sum_{m=0}^\infty K_m\theta_{y,m}}{1+\rho}.
\end{equation}
For $\rho<\infty$, a lineage in the dual moves as a positive recurrent Markov chain on $\{A,(D_m)_{m\in\N_0}\}$. Therefore the argument for ``$\boldsymbol{\Longleftarrow}$'' given for model 1 goes through via the duality relation, which gives
\begin{equation}
\label{limd}
\lim_{t\to \infty}\E\left[\prod_{u\in\S}z_u(t)^{l_u}\right]
=\lim_{t\to \infty}\E\left[\prod_{u\in\S}z_u^{L_u(t)}\right]
=\frac{\theta_x+\sum_{m\in\N_0}K_m\theta_{y,m}}{1+\sum_{m\in\N_0} K_m}.
\end{equation} 
With the duality relation in \eqref{e401b2}, the argument for ``$\boldsymbol{\Longrightarrow}$'' given for model 1 also goes through directly.

\paragraph{{\bf Case $\rho=\infty$.}}
For $\rho=\infty$, a lineage in the dual moves as a null-recurrent Markov chain, which has no stationary distribution, and so \eqref{limd} does not carry over. However, from \cite[Section 3]{Lin92} it follows that, for all $u_1,u_2\in \S$,
\begin{equation}\label{mo4714}
\lim_{t\to \infty} \big\|\P_{u_1}(L(t)=\delta_{(\cdot)}\mid L(t)=1)-\P_{u_2}(L(t)=\delta_{(\cdot)}\mid L(t)=1)\big\|_{tv}=0.
\end{equation}
Moreover, by null-recurrence,
\begin{equation}
\label{nulrec}
\begin{aligned}
\lim_{t\to\infty} \P(L(t) = \delta_{(\cdot,A)}) &= 0,\\
\lim_{t\to\infty} \P(L(t) = \delta_{(\cdot,D_m)}) &= 0 \qquad \forall m\in\N_0,\\ 
\lim_{t \to \infty} \sum_{m=M}^\infty \P(L(t) = \delta_{(\cdot,D_m)}) &=1 \qquad \forall M\in\N_0.
\end{aligned}
\end{equation}

\medskip\noindent
``$\boldsymbol{\Longleftarrow}$''
By duality, we have
\begin{equation}
\label{mo4714a}
\lim_{t\to \infty}\E\left[\prod_{u\in\S}z_u(t)^{l_u}\right]
= \lim_{t\to \infty}\E\left[\prod_{u\in\S}z_u^{L_u(t)}\right]
= \lim_{t\to\infty} \left[\theta_x \P(L(t) = \delta_{(\cdot,A)}) 
+ \sum_{m\in\N_0} \theta_{y,m} \P(L(t) = \delta_{(\cdot,D_m)})\right],
\end{equation} 
where we follow an argument similar as in \eqref{dualarg} and use that $\P(T<\infty)=1$. Because the initial measure is colour regular, we know that $\lim_{m\to\infty} \theta_{y,m} = \theta$ (recall Definition~\ref{D.regular}). But \eqref{nulrec}--\eqref{mo4714a} imply that all moments tend to $\theta$. In particular, 
\begin{equation}
\lim_{t\to \infty}\E[x_i(t)] = \theta = \lim_{t\to \infty}\E[y_{i,m}(t)], \qquad i \in \G,\, m\in\N_0.
\end{equation}

\medskip\noindent
``$\boldsymbol{\Longrightarrow}$'' 
By the duality relation in \eqref{e401b2} and the assumption of clustering, we find 
\begin{equation}
\label{cluscon2}
\lim_{t\to \infty}\E\left[z_u(t)(1-z_v(t))\right]=0\qquad \forall u,v\in\S.
\end{equation}   
Therefore
\begin{equation}
\begin{aligned}
&\lim_{t\to\infty}\E\left[z_u(t)(1-z_v(t))\right]\\
&=\lim_{t\to \infty} \E_{\mu(0)}\left[\E_{\delta_u}\left[\prod_{u\in\S}z_u^{L_u(t)}\right]\right]
- \E_{\mu(0)}\left[\E_{\delta_u+\delta_v}\left[\prod_{u\in\S}z_u^{L_u(t)}\right]\right]\\
&={\theta}\left[1-\lim_{t\to \infty}\P_{\delta_u+\delta_v}\left(|L(t)|=1\right)\right]\\
&\qquad -\lim_{t\to\infty} \E_{\mu(0)}\left[\E_{\delta_u+\delta_v}
\left[\prod_{u\in\S} z_u^{L_u(t)} ~\Big|~ |L(t)|=2\right]\right]\P_{\delta_u+\delta_v}(|L(t)|=2)
=0.
\end{aligned}
\end{equation}
Suppose that $\lim_{t\to\infty} \P_{\delta_u+\delta_v}(|L(t)|=2) \neq 0$. Then $\lim_{t\to\infty} \E_{\delta_u+\delta_v}\left[\prod_{u\in\S }z_u^{L_u(t)}\mid |L(t)|=2\right]=\theta$. However,
\begin{eqnarray}
\lim_{t\to\infty}\E_{\mu(0)}\left[\E_{\delta_u+\delta_v}\left[\prod_{u\in\S} z_u^{L_u(t)} ~\Big|~ |L(t)|=2\right]\right]
<\E_{\mu(0)}\left[\E_{\delta_u+\delta_v}\left[\prod_{u\in\S} z_u^{L_u(t)} ~\Big|~ |L(t)|=1\right]\right]=\theta,
\end{eqnarray}
because we start from a nontrivial stationary distribution. 

Thus, we have proved Theorem~\ref{T.dichcrit.model2}.

\paragraph{Model 3: Proof of Theorem~\ref{T.dichcrit.model2}.}

Since the duality relation for model 3 is exactly the same as for model 2, the same results hold by translation invariance and the extra displacement does not affect the dichotomy criterion. 

\end{proof}


\subsection{Outline remainder of paper}

In Sections~\ref{s.model1}--\ref{s.model3} we prove Theorems~\ref{T.ltb1}, \ref{T.ltb2} and \ref{T.ltb3}, respectively. For  each of the three models we split the proof into four parts:
\begin{enumerate}
\item Moment relations.
\item The clustering case.
\item The coexistence case. 
\item Proof of the dichotomy.
\end{enumerate}


\section{Proofs: Long-time behaviour for Model 1}
\label{s.model1}

In Section~\ref{ss.momrels} we relate the first and second moments of the process $(Z(t))_{t\geq 0}$ in \eqref{gh1}--\eqref{gh2} to the random walk with internal states $\{A,D\}$ that evolves according to the transition kernel $b^{(1)}(\cdot,\cdot)$ given in \eqref{mrw} (Lemma~\ref{lem1cg94} below). These moment relations hold for all $g\in\CG$. In Section~\ref{ss.clustcase} we deal with the clustering case (Lemmas~\ref{lem:clusgisgfw}--\ref{lem:comparison} below), in Section \ref{ss.cos} with the coexistence case (Lemmas~\ref{muinrtheta}--\ref{lem10} below). In Section~\ref{ss.dichmodel1} we prove Theorem~\ref{T.ltb1}. In Sections \ref{ss.clustcase} and \ref{ss.cos} we will see that the moment relations are crucial when no duality is available. 

Below we write $\E_z$ for $\E_{\delta_z}$, the expectation when the process starts from the initial distribution $\delta_z$, $z \in E$.

\subsection{Moment relations}
\label{ss.momrels}

\begin{lemma}{\bf [First and second moment]}
\label{lem1cg94}
For $z\in E$, $t\geq 0$ and $(i,R_i),(j,R_j)\in \G\times\{A,D\}$, 
\begin{equation}
\label{eqexprw}
\E_z[z_{(i,R_i)}(t)]=\sum_{(k,R_k)\in \G\times\{A,D\}} b^{(1)}_t\big((i,R_i),(k,R_k)\big)\,z_{(k,R_k)}
\end{equation}
and 
\begin{equation}
\label{eqexprw2}
\begin{aligned}
&\E_z[z_{(i,R_i)}(t)z_{(j,R_j)}(t)]
=\sum_{(k,R_k),(l,R_l)\in \G\times\{A,D\}} b^{(1)}_t\big((i,R_i),(k,R_k)\big)\,b^{(1)}_t\big((j,R_j),(l,R_l)\big)\,
z_{(k,R_k)}z_{(l,R_l)}\\ 
& +\,2\int_{0}^{t} \d s \sum_{k\in \G} 
b^{(1)}_{(t-s)}((i,R_i),(k,A))\,b^{(1)}_{(t-s)}((j,R_j),(k,A))\,\E_z[g(x_k(s))].
\end{aligned}
\end{equation}
\end{lemma}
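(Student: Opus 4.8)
The plan is to derive both identities from the martingale problem of Theorem~\ref{T.wellp}, applied to the admissible test functions $\zeta_u(Z)=z_u$ and $(\zeta_u\zeta_v)(Z)=z_uz_v$. These lie in $\CF$, since they depend on one, respectively two, components and are polynomial, hence smooth and bounded on $E$. Throughout I would record that the random-walk generator
\[
(\CL f)(u)=\sum_{w\in\S}b^{(1)}(u,w)\,[f(w)-f(u)],\qquad u\in\S,
\]
is a \emph{bounded} operator on $\ell^\infty(\S)$: the total jump rate out of $(i,A)$ equals $\sum_j a(i,j)+Ke$, which is finite and uniformly bounded by \eqref{gh10}, while out of $(i,D)$ it equals $e$. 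Consequently $e^{t\CL}=b^{(1)}_t$ is well defined and all the infinite linear ODE systems below have unique bounded solutions.

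\textbf{First moment.} Taking $\zeta_u$ in the martingale problem gives $\E_z[z_u(t)]=z_u+\int_0^t \E_z[(G\zeta_u)(Z(s))]\,\d s$. Since the drift part of $G$ is a first-order operator whose coefficients are precisely the $b^{(1)}$-rates, a direct check from \eqref{eq402} and \eqref{mrw} shows $G\zeta_u=\CL\zeta_u$ (the diffusion term annihilates the linear function $\zeta_u$). Hence $m(t):=(\E_z[z_u(t)])_{u\in\S}$ solves $\partial_t m=\CL m$ with $m(0)=z$, whose unique solution is $m(t)=b^{(1)}_t z$. This is \eqref{eqexprw}.

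\textbf{Second moment.} Applying the martingale problem to $\zeta_u\zeta_v$ and computing $G(\zeta_u\zeta_v)$, the Leibniz rule for the first-order drift yields $\zeta_v\,\CL\zeta_u+\zeta_u\,\CL\zeta_v$, while the diffusion term $\sum_k g(x_k)\partial^2_{x_k}$ contributes only the cross term $2g(x_k)\,\mathbf{1}_{\{u=v=(k,A)\}}$, because $\partial^2_{x_k}(\zeta_u\zeta_v)=2(\partial_{x_k}\zeta_u)(\partial_{x_k}\zeta_v)$ for linear $\zeta_u,\zeta_v$. Writing $F(t)(u,v)=\E_z[z_u(t)z_v(t)]$ and taking expectations, this produces the inhomogeneous linear system
\[
\partial_t F(t)=(\CL_1+\CL_2)F(t)+2\,\Gamma(t),\qquad F(0)(u,v)=z_uz_v,
\]
where $\CL_1,\CL_2$ are the copies of $\CL$ acting on the first and second $\S$-variable (two independent $b^{(1)}$-walks) and the source $\Gamma(t)(u,v)=\sum_k \E_z[g(x_k(t))]\,\mathbf{1}_{\{u=(k,A)\}}\mathbf{1}_{\{v=(k,A)\}}$ is supported on the diagonal of active sites. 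Solving by the variation-of-constants (Duhamel) formula gives
\[
F(t)=\big(b^{(1)}_t\otimes b^{(1)}_t\big)F(0)+\int_0^t \big(b^{(1)}_{t-s}\otimes b^{(1)}_{t-s}\big)\,2\Gamma(s)\,\d s,
\]
and expanding each two-walk semigroup into a product of kernels yields exactly \eqref{eqexprw2}: the homogeneous term becomes the double sum over $(k,R_k),(l,R_l)$ of $b^{(1)}_t((i,R_i),(k,R_k))\,b^{(1)}_t((j,R_j),(l,R_l))\,z_{(k,R_k)}z_{(l,R_l)}$, and the source term, using that $\Gamma(s)$ lives on $\{w_1=w_2=(k,A)\}$, collapses to the single sum over $k\in\G$ of $b^{(1)}_{t-s}((i,R_i),(k,A))\,b^{(1)}_{t-s}((j,R_j),(k,A))\,\E_z[g(x_k(s))]$.

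\textbf{Main obstacle.} The computations are routine once the analytic setup is in place; the delicate part is the infinite-dimensional bookkeeping. I would justify differentiating the integrated martingale identities in $t$ by continuity in $s$ of the bounded integrands, the interchange of $\E_z$ with the infinite sums defining $G\zeta_u$ and $G(\zeta_u\zeta_v)$ by the summability in \eqref{gh10} together with $|z_u|\le 1$ and $g$ bounded on $[0,1]$, and the uniqueness of the ODE solutions by boundedness of $\CL$ and $\CL_1+\CL_2$ on $\ell^\infty$. These are precisely the ingredients that let the classical second-moment computation of \cite{CG94} go through in the present two-state $\{A,D\}$ setting.
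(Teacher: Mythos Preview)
Your proposal is correct and follows essentially the same route as the paper: derive ODEs for the first and second moments from the generator applied to the linear and bilinear test functions, identify the drift as the generator of one (respectively two independent) $b^{(1)}$-walks with a diagonal source $2\,\E_z[g(x_k(s))]$ on active sites, and solve via the semigroup/variation-of-constants formula (the paper cites \cite[Theorem I.2.15]{Lig85} for this step). Your extra care about boundedness of $\CL$ on $\ell^\infty(\S)$ and the justification of interchanges is a welcome addition but does not change the argument.
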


\begin{proof}
We derive systems of differential equations for the moments and solve these in terms of the random walk. Let $(RW_t)_{t \geq 0}$ denote the semigroup of the random walk with transition kernel $b^{(1)}(\cdot,\cdot)$, and recall that the corresponding generator is given by
\begin{equation}
(G_{RW} f)(i,R_i)=\sum_{(j,R_j)\in\G\times\{A,D\}} b^{(1)}\big((i,R_i),(j,R_j)\big)\left[f(j,R_j)-f(i,R_i)\right].
\end{equation}
Applying the generator \eqref{eq402} of the system in \eqref{gh1}--\eqref{gh2} to the function $f_{(i,R_i)}\colon\, E\to\R$,  $f_{(i,R_i)}(z)=z_{(i,R_i)}$, we obtain by standard stochastic calculus
\begin{equation}
\label{genonz}
\begin{aligned}
\frac{\d\E_z[z_{(i,R_i)}(t)]}{\d t}
&= \left[ \sum_{j\in\G} a(i,j)\,\big(\E_z[x_j(t)]-\E_z[x_i(t)])+Ke\,(\E_z[y_i(t)]-\E_z[x_i(t)]\big)\right]\,1_{(R_i=A)}\\
&\qquad + e\,\big(\E_z[x_i(t)]-\E_z[y_i(t)]\big)\,1_{(R_i=D)}.
\end{aligned}
\end{equation} 
Hence, denoting by $(S_t)_{t \geq 0} $ the semigroup of the system in \eqref{gh1}--\eqref{gh2}, we see from \eqref{genonz} and the definition of $b^{(1)}(\cdot,\cdot)$ in \eqref{mrw} that $(S_tf_{(i,R_i)})$ solves the differential equation
\begin{equation}
\label{Ligeq}
F^\prime(t)=(G_{RW} F)(t).
\end{equation}
On the other hand, for each $f\in \mathcal{C}_b(\G\times\{A,D\})$, $RW_tf$ also solves \eqref{Ligeq}. In particular, for $z\in\E$ define $f_z\colon\,\G\times\{A,D\}\to \R$  by $f_z(i,R_i)=z(i,R_i)$ for $z\in E$, then $RW_tf_z $ is a solution to \eqref{Ligeq}. Since 
\begin{equation}
(RW_0f_z)(i,R_i)=z(i,R_i)=(S_0f_{(i,R_i)})(z),
\end{equation}
we see that \eqref{eqexprw} holds. To prove \eqref{eqexprw2}, we derive a similar system of differential equations and again solve this in terms of the random walk moving according to the kernel $b(\cdot,\cdot)$. Let $f\colon\,E\to \R$ be given by $f(z)=z_{(i,R_i)}z_{(j,R_j)}$. Using the generator \eqref{eq402}, we obtain via It\^o-calculus that
\begin{equation}
\label{eqtworw}
\begin{aligned}
\frac{\d}{\d t}\E_z[z_{(i,R_i)}(t)z_{(j,R_j)}(t)]
&=\sum_{k\in\G} a(i,k)\,\big(\E_z[x_k(t)z_{(j,R_j)}(t)]-\E_z[x_i(t)z_{(j,R_j)}(t)]\big)\,1_{\{R_i= A\}}\\
&\quad + Ke\,\big(\E_z[y_i(t)z_{(j,R_j)}(t)]-\E_z[x_i(t)z_{(j,R_j)}(t)]\big)\,1_{\{R_i=A\}}\\
&\quad + e\, \big(\E_z[x_i(t)z_{(j, R_j)}(t)]-\E_z[y_i(t)z_{(j,R_j)}(t)]\big)\,1_{\{R_i= D\}}\\
&\quad + \sum_{l\in\G}a(j,l)\,\big(\E_z[x_l(t)z_{(i,R_i)}(t)]-\E_z[x_j(t)z_{(i,R_i)}(t)]\big)\,1_{\{R_j= A\}}\\
&\quad + Ke\,\big(\E_z[y_j(t)z_{(i,R_i)}(t)]-\E_z[x_j(t)z_{(i,R_i)}(t)]\big)\,1_{\{R_j=A\}}\\
&\quad + e\,\big(\E_z[x_j(t)z_{(i, R_i)}(t)]-\E_z[y_j(t)z_{(i,R_i)}(t)]\big)\,1_{\{R_j=D\}}\\	
&\quad + 2\, \E_z[g(x_i(t))]\, 1_{\{i=j\}}\,1_{\{R_i=R_j=A\}}.
\end{aligned}
\end{equation} 
Let  $U$ be the generator of two independent random walks each moving with transition kernel $b^{(1)}(\cdot,\cdot)$, i.e., for all $h\in\mathcal{C}_b((\G\times\{A,D\})^2)$,
\begin{equation}
\begin{aligned}
(U h)((i,R_i),(j,R_j))
&=\sum_{k\in\G} a(i,k)\,\big[h((k,A),(j,R_j))-h((i,R_i),(j,R_j))\big]\,1_{\{i,R_i= A\}}\\
&\quad + Ke\,\big[h((i,D),(j,R_j))- h((i,R_i),(j,R_j))\big]\,1_{\{i,R_i= A\}}\\
&\quad + e\, \big[h((i,A),(j,R_j))-h((i,R_i),(j,R_j))\big]\,1_{\{i,R_i= D\}}\\
&\quad + \sum_{l\in\G} a(j,l)\,\big[h((i,R_i),(l,A))-h((i,R_i),(j,R_j))\big]\,1_{\{R_j= A\}}\\
&\quad + Ke\,\big[h((i,R_i),(j,D))-h((i,R_i),(j,R_j))\big]\,1_{\{R_j= A\}}\\
&\quad + e\, \big[h((i,R_i),(j,A))-h((i,R_i),(j,D))\big]\,1_{\{R_j= D\}}.
\end{aligned}
\end{equation}
Let $F(t)=\E_z[z_{(i,R_i)}(t)z_{(j,R_j)}(t)]$ and  $H(t)= 2 \E_z[g(x_i(t))] 1_{\{i=j\}}1_{\{R_i=R_j=A\}}$. Then we can rewrite \eqref{eqtworw} as
\begin{equation}
\frac{\d}{\d t}F(t)=(U F)(t)+H(t). 
\end{equation}
Denote by $(RW_t^{(2)})_{t \geq 0}$ the semigroup corresponding to $U$.  Applying \cite[Theorem I.2.15]{Lig85}, we obtain
\begin{equation}
F(t)=RW_t^{(2)}F(0)+\int_{0}^{t}\d s\,RW_{t-s}^{(2)}H(s).
\end{equation}
Hence
\begin{equation}
\begin{aligned}
\E_z[z_{(i,R_i)}(t)z_{(j,R_j)}(t)]
&= \sum_{(k,R_k),(l,R_l)\in\G\times\{A,D\}}b^{(1)}_t\big((i,R_i),(k,R_k)\big)\,b^{(1)}_t\big((j,R_j),(l,R_l)\big)\,
\E_z[z_{(k,R_k)}z_{(l,R_l)}]\\
&\qquad + 2\int_{0}^{t} \d s\,\sum_{k\in\G}
b^{(1)}_{t-s}\big((i,R_i),(k,A)\big)\,b^{(1)}_{t-s}\big((j,R_j),(k,A)\big)\,\E_z[g(x_k(s))].
\end{aligned}
\end{equation}
\end{proof}

\begin{remark}{\bf [Density]}
\label{rem:prob}
{\rm From Lemma \ref{lem1cg94} we obtain that if $\mu$ is a translation invariant measure such that $\E_\mu[x_0(0)]=\theta_x$ and $\E_\mu[y_0(0)]=\theta_y$, then  
\begin{equation}
\label{e5.52}
\E_\mu[z_{(i,R_i)}(t)]=\theta_x\sum_{(k,R_k)\in \G\times\{A\}} b^{(1)}_t\big((i,R_i),(k,R_k)\big)
+\theta_y\sum_{(k,R_k)\in \G\times\{D\}} b^{(1)}_t\big((i,R_i),(k,R_k)\big),
\end{equation}	
in particular, $\lim_{t\to\infty}\E_\mu[z_{(i,R_i)}(t)]=\frac{\theta_x+K\theta_y}{1+K}=\theta$, recall \eqref{thetadef}, and 
\begin{equation}
\label{e5.53}
\begin{aligned}
&\E_\mu[z_{(i,R_i)}(t)z_{(j,R_j)}(t)]\\
&= \sum_{(k,R_k),(l,R_l)\in \G\times\{A,D\}} b^{(1)}_t\big((i,R_i),(k,R_k)\big)\,b^{(1)}_t\big((j,R_j),(l,R_l)\big)\,
\E_\mu[z_{(k,R_k)}z_{(l,R_l)}]\\ 
&\qquad +\,2\int_{0}^{t}\d s \sum_{k\in \G} 
b^{(1)}_{t-s}\big((i,R_i),(k,A)\big)\,b^{(1)}_{t-s}\big((j,R_j),(k,A)\big)\,\E_\mu[g(x_i(s))].
\end{aligned}
\end{equation}
} \hfill $\Box$
\end{remark}

\begin{remark}{\bf [First moment duality]}
{\rm Note that \eqref{eqexprw} shows that even for general $g\in\CG$ there is a \emph{first moment duality} between the process $Z(t)$ and the random walk $RW(t),$ that moves according to the kernel $b^{(1)}(\cdot,\cdot)$. The duality function is given by
\begin{equation}
H:E\times \G\times\{A,D\}\to\R,\qquad H(z,(i,R_i))=z_{(i,R_i)}.
\end{equation}	
Equation \eqref{eqexprw} in Lemma \ref{lem1cg94} tells us that $\E[H(Z(t),RW(0))]=\E[H(Z(0),RW(t))]$.
 }
\end{remark}


\subsection{The clustering case}
\label{ss.clustcase}

The proof that the system in \eqref{gh1}--\eqref{gh2} converges to a unique trivial equilibrium when $\hat{a}(\cdot,\cdot)$ is recurrent goes as follows. We first consider the case where $g = dg_{\text{FW}}$, for which \emph{duality} is available (Lemma~\ref{lem:clusgisgfw}). Afterwards we use a \emph{duality comparison argument} to show that the dichotomy between coexistence and clustering does not depend on the choice of $g \in \CG$ (Lemma~\ref{lem:comparison}).

\paragraph{$\bullet$ Case $g = dg_{\text{FW}}$.}

\begin{lemma}{\bf [Clustering]}
\label{lem:clusgisgfw}
Suppose that $\mu(0)\in\CT_\theta^{\mathrm{erg}}$ and $g=dg_{\mathrm{FW}}$. Moreover, suppose that $\hat{a}(\cdot,\cdot)$ defined in \eqref{gh11} is recurrent, i.e., $I_{\hat{a}}=\infty$. Let $\mu(t)$ be the law at time $t$ of the system defined in \eqref{gh1}--\eqref{gh2}. Then  
\begin{equation}
\label{gh12p}
\lim_{t\to\infty} \mu(t)
= \theta\, [\delta_{(1,1)}]^{\otimes \G} + (1-\theta)\, [\delta_{(0,0)}]^{\otimes \G}.
\end{equation}
\end{lemma}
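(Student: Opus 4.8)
The plan is to reduce the statement to a coalescence property of the dual and then to a recurrence property of a difference random walk. Since $g=dg_{\mathrm{FW}}$, the duality of Theorem~\ref{T.dual1} is available, so by the dichotomy criterion in Theorem~\ref{T.dichcrit.model1} it suffices to prove that, when $\hat a(\cdot,\cdot)$ in \eqref{gh11} is recurrent (i.e.\ $I_{\hat a}=\infty$, recall \eqref{Ihatpdef}), any two partition elements in the dual coalesce with probability $1$. Once this is shown, the explicit form \eqref{gh12p} of the limit is exactly the computation \eqref{defclus} carried out in the proof of Theorem~\ref{T.dichcrit.model1}, using that $\mu(0)\in\CT^{\mathrm{erg}}_\theta$ fixes the conserved density at $\theta$. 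Thus the whole content of the lemma is a random-walk statement about the dual.

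To prove a.s.\ coalescence I would run two independent lineages $\eta^1(t),\eta^2(t)$, each a Markov chain with kernel $b^{(1)}(\cdot,\cdot)$ from \eqref{mrw}, that coalesce at rate $d$ whenever both are active and at the same colony. Writing $\tau$ for the coalescence time and representing coalescence as killing, one has $\P(\tau>t)=\E[\exp(-d\int_0^t \mathbf{1}\{\text{both active, same colony}\}\,\d s)]$, so that $\P(\tau<\infty)=1$ is equivalent to the additive functional $\int_0^\infty \mathbf{1}\{\text{both active, same colony}\}\,\d s$ being infinite almost surely. Encoding the pair by the difference $\Delta(t)=\xi^1(t)-\xi^2(t)$ of the two spatial positions together with the internal states $(R_1(t),R_2(t))$, the triple $(\Delta,R_1,R_2)$ is an irreducible continuous-time Markov chain on $\G\times\{A,D\}^2$, and the functional above is precisely its local time at the state $(0,A,A)$. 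Hence the goal becomes: this joint chain is recurrent.

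The recurrence is driven by $\hat a$. When both lineages are active, $\Delta$ moves with kernel $a(0,\cdot)+a(0,-\cdot)=2\hat a(0,\cdot)$; when exactly one is active it moves with $a$ (respectively its reflection); and when both are dormant it is frozen. The internal pair $(R_1,R_2)$ is a positive recurrent chain on $\{A,D\}^2$ in which each lineage is active a fraction $\tfrac{1}{1+K}$ of the time, so averaging the generator of $\Delta$ over the stationary internal states yields the effective generator $\frac{2}{1+K}\sum_x\hat a(0,x)[f(\cdot+x)-f(\cdot)]$, i.e.\ the walk $\hat a$ slowed down by the factor $\tfrac{1}{1+K}$ anticipated in Remark~\ref{wakeup}. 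I would make this rigorous by a regeneration argument at the successive return times $T_0<T_1<\cdots$ of $(R_1,R_2)$ to the symmetric state $(A,A)$: the embedded increments $\Delta(T_{n+1})-\Delta(T_n)$ are i.i.d., symmetric (by the lineage-swap symmetry $\Delta\mapsto-\Delta$), and have the same tail and the same recurrence class as $\hat a$, because the cycle lengths have exponential tails and the heavy part of the increment comes only from the $a$-jumps. Recurrence of the embedded walk, and hence of $\Delta$, then follows from $I_{\hat a}=\infty$; consequently the triple $(\Delta,R_1,R_2)$ is recurrent, its local time at $(0,A,A)$ is infinite a.s., the total coalescence hazard is infinite, and $\P(\tau<\infty)=1$.

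The main obstacle is precisely this transfer of recurrence from $\hat a$ to the seed-bank-modulated, variable-speed difference walk $\Delta$: the migration kernel governing $\Delta$ changes with the configuration $(R_1,R_2)$ and $\Delta$ is frozen during the $(D,D)$ periods, so $\Delta$ is not a deterministic time-change of a single $\hat a$-walk and standard recurrence criteria do not apply directly. The regeneration construction is meant to circumvent this, but one must check that the regeneration-cycle increments preserve the return-probability asymptotics of $\hat a$; here the regular-variation assumption \eqref{ass2} on $t\mapsto\hat a_t(0,0)$ and the exponential control on the cycle lengths are the relevant inputs, and one should verify that the freezing and the reflected-kernel excursions do not move the walk out of the recurrence class determined by $\hat a$.
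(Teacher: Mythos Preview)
Your reduction via Theorem~\ref{T.dichcrit.model1} to almost-sure coalescence of two dual lineages, and the further reduction to recurrence of the joint chain $(\Delta,R_1,R_2)$ on $\G\times\{A,D\}^2$, is correct and matches the paper's starting point. Where your route diverges is in how you establish this recurrence. The paper does not regenerate at $(A,A)$-returns and study an embedded walk. Instead it writes the expected total joint-active occupation time $I$ directly as an integral over $t$ of $\mathbb E[\sum_i a_{T(t)}(0,i)a_{T'(t)}(0,i)\,1_{\CE(t)}1_{\CE'(t)}]$, where $T(t),T'(t)$ are the accumulated activity times of the two lineages; it then evaluates this via Fourier transform, the SLLN and CLT for $T(t)$ (Steps~2--3), and the elementary inequality $\tilde a(\phi)^2\le 2[1-\hat a(\phi)]$ (Step~4), obtaining two-sided bounds of the integrand by $\hat a_{Ct}(0,0)$ and hence $I=\infty\Leftrightarrow I_{\hat a}=\infty$.

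The obstacle you flag --- transferring recurrence from $\hat a$ to the modulated difference walk --- is precisely the substance of the paper's Step~4, and it is not dispatched by the regeneration picture alone. Your embedded increments are indeed symmetric by the swap $\Delta\mapsto-\Delta$, but the claim that they ``have the same recurrence class as $\hat a$'' needs an argument: during the $(A,D)$ and $(D,A)$ excursions the walk moves with the \emph{asymmetric} kernel $a$, and the cycle increment is a random mixture of $\hat a$-displacements and correlated $a$-displacements of random (exponential-tailed) length. Showing that the resulting characteristic function satisfies $1-\hat\mu(\phi)\asymp 1-\hat a(\phi)$ near $\phi=0$ is essentially equivalent to the Fourier bound the paper proves; without it, the regeneration argument is incomplete. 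The paper's route has the advantage that the asymmetric contribution is isolated as the factor $\cos(\Delta(t)\tilde a(\phi))$ and controlled in one line via $\tilde a(\phi)^2\le 2[1-\hat a(\phi)]$, whereas your route would have to recover this bound inside the regeneration-cycle law. Your approach is more probabilistic in spirit and would generalise more readily to settings without Fourier structure, but as written it stops just short of the decisive estimate.
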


\begin{proof}
Since $g=dg_{\mathrm{FW}}$, we can use duality. By the dichotomy criterion in Theorem \ref{T.dichcrit.model1}, it is enough to show that in the dual two partition elements coalesce with probability 1. Recall from Section \ref{ss.duality} that each of the partition elements in the dual moves according to the transition kernel $b^{(1)}(\cdot,\cdot)$ on $\G \times \{A,D\}$ defined by \eqref{mrw} (see Fig.~\ref{fig:dualrw}). Recall from Section \eqref{ss.duality} that $b^{(1)}(\cdot,\cdot)$ describes a random walk on $\G$ with migration rate kernel $a(\cdot,\cdot)$ that becomes dormant (state $D$) at rate $Ke$ (after which it stops moving), and becomes active (state $A$) at rate $e$ (after which it can move again). When two partition elements in the dual are active and are at the same site, they coalesce at rate $d$, i.e., each time they are active and meet at the same site they coalesce with probability $d/[\sum_{j\in\Z^d} a(i,j)+Ke+d]>0$. Hence, in order to show that two partition elements coalesce with probability $1$, we have to show that \emph{with probability $1$ two partition elements meet infinitely often while being active}. The latter holds if and only if the expected total time the random walks spend together at the same colony while being active is infinite. We will show that this occurs if and only the random walk with \emph{symmetrised} transition rate kernel $\hat{a}(\cdot,\cdot)$ is recurrent. The proof comes in 4 Steps. 

\medskip\noindent 
{\bf 1.\ Active and dormant time lapses.}
Consider two copies of the random walk with kernel $b^{(1)}(\cdot,\cdot)$, both starting at 0 and in the active state.  Let 
\begin{equation}
\label{timelapses}
\begin{array}{lll}
&(\sigma_k)_{k\in\N}, &(\sigma'_k)_{k\in\N},
\end{array}
\end{equation}
denote the successive time lapses during which they are active and let
\begin{equation}
\label{timelapsesb}
\begin{array}{lll}
&(\tau_k)_{k\in\N}, &(\tau'_k)_{k\in\N},
\end{array}
\end{equation}
denote the successive time lapses during which they are dormant (see Fig.~\ref{fig:periods}). These are mutually independent sequences of i.i.d.\ random variables with marginal laws
\begin{equation}
\label{e1955}
\begin{array}{llllll}
\mathbb{P}(\sigma_1>t) &=& \mathbb{P}(\sigma'_1>t) &=& \e^{-Ke\,t}, &t \geq 0,\\
\mathbb{P}(\tau_1>t) &=& \mathbb{P}(\tau'_1>t) &=& \e^{-e\,t} &t \geq 0,
\end{array}
\end{equation}
where we use the symbol $\mathbb{P}$ to denote the joint law of the two sequences. 

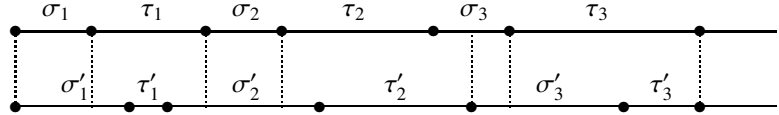
\begin{figure}[htbp]
\begin{center}
\setlength{\unitlength}{.5cm}
\begin{picture}(20,2)(0,0)
\put(0,0){\line(20,0){20}}
\put(0,-1.98){\line(20,0){20}}
\put(0,0){\circle*{.25}}
\put(2,0){\circle*{.25}}
\put(5,0){\circle*{.25}}
\put(7,0){\circle*{.25}}
\put(11,0){\circle*{.25}}
\put(13,0){\circle*{.25}}
\put(18,0){\circle*{.25}}
\put(0,-2){\circle*{.25}}
\put(3,-2){\circle*{.25}}
\put(4,-2){\circle*{.25}}
\put(8,-2){\circle*{.25}}
\put(12,-2){\circle*{.25}}
\put(16,-2){\circle*{.25}}
\put(18,-2){\circle*{.25}}
\put(.7,.4){$\sigma_1$}
\put(3.3,.4){$\tau_1$}
\put(5.7,.4){$\sigma_2$}
\put(8.6,.4){$\tau_2$}
\put(11.7,.4){$\sigma_3$}
\put(15,.4){$\tau_3$}
\put(1.2,-1.6){$\sigma'_1$}
\put(3.2,-1.6){$\tau'_1$}
\put(5.7,-1.6){$\sigma'_2$}
\put(9.7,-1.6){$\tau'_2$}
\put(13.7,-1.6){$\sigma'_3$}
\put(16.7,-1.6){$\tau'_3$}
\qbezier[15](0,-2)(0,-1)(0,0)
\qbezier[15](2,-2)(2,-1)(2,0)
\qbezier[15](5,-2)(5,-1)(5,0)
\qbezier[15](7,-2)(7,-1)(7,0)
\qbezier[15](12,-2)(12,-1)(12,0)
\qbezier[15](13,-2)(13,-1)(13,0)
\qbezier[15](18,-2)(18,-1)(18,0)
\end{picture}
\vspace{1.5cm}
\caption{\small Successive periods during which the two random walks are active and dormant. The time lapses between the dotted lines represent periods of joint activity.}
\label{fig:periods}
\end{center}
\end{figure}

Let $a_t(\cdot,\cdot)$ denote the time-$t$ transition kernel of the random walk with migration kernel $a(\cdot,\cdot)$. Let
\begin{equation}
\label{Edefs}
\begin{aligned}
\CE(k,t) &= \left\{\sum_{\ell=1}^k (\sigma_\ell + \tau_\ell) \leq t 
< \sum_{\ell=1}^k (\sigma_\ell + \tau_\ell) + \sigma_{k+1}\right\},\\
\CE'(k',t) &= \left\{\sum_{\ell=1}^{k'} (\sigma'_\ell + \tau'_\ell) \leq t 
< \sum_{\ell=1}^{k'} (\sigma'_\ell + \tau'_\ell) + \sigma'_{k+1}\right\},
\end{aligned}
\end{equation}
be the events that the random walks are active at time $t$ after having become dormant and active exactly $k,k'$ times, and let
\begin{equation}
\label{Tdefs}
\begin{aligned}
T(k,t) 
&= \sum_{\ell=1}^k \sigma_\ell + \left(\left(t-\sum_{\ell=1}^k (\sigma_\ell + \tau_\ell)\right)\wedge \sigma_{k+1}\right),\\
T'(k',t) 
&= \sum_{\ell=1}^{k'} \sigma'_\ell + \left(\left(t-\sum_{\ell=1}^{k'} (\sigma'_\ell + \tau'_\ell)\right)\wedge \sigma_{k+1}\right),
\end{aligned}
\end{equation}
be the total accumulated activity times of the random walks on the events in \eqref{Edefs}. Note that the terms between brackets in \eqref{Tdefs} are at most $\sigma_{k+1}$, respectively, $\sigma'_{k'+1}$, and therefore are negligible as $k,k'\to\infty$.

Given the outcome of the sequences in \eqref{timelapses}--\eqref{timelapsesb}, the probability that at time $t$ both random walks are active and are at the same colony equals
\begin{equation}
\label{same1}
\sum_{k,k'\in\N} \left(\sum_{i\in\G} a_{T(k,t)}(0,i)\,a_{T'(k',t)}(0,i)\right)\,1_{\CE(k,t)}\,1_{\CE(k',t)}, 
\end{equation}    
Therefore the expected total time the random walks are active and are at the same colony equals
\begin{equation}
\label{same2}
I = \int_0^\infty \d t\,\sum_{k,k'\in\N} \mathbb{E}_{(0,A),(0,A)}\left[\left(\sum_{i \in \G} 
a_{T(k,t)}(0,i)\,a_{T'(k',t)}(0,i)\right)1_{\CE(k,t)}\,1_{\CE'(k',t)}\right], 
\end{equation}    
where $\mathbb{E}$ is the expectation over the sequences in \eqref{timelapses}. Let
\begin{equation}
\label{e2056}
N(t) = \max\left\{k\in\N\colon\,\sum_{\ell=1}^k (\sigma_\ell+\tau_\ell) \leq t\right\}, 
\qquad N'(t) = \max\left\{k'\in\N\colon\,\sum_{\ell=1}^{k'} (\sigma_\ell+\tau_\ell) \leq t\right\},  
\end{equation}
be the number of times the random walks have become dormant and active up to time $t$. Let
\begin{equation}
\label{e2061}
T(t) = T(N(t),t), \quad T'(t) = T'(N'(t),t), \quad \CE(t) = \CE(N(t),t), \quad \CE'(t) = \CE'(N'(t),t),
\end{equation}
be the total accumulated activity times of the random walks up to time $t$, respectively, the events that the random walks are active at time $t$. Then we may write
\begin{equation}
\label{same3}
I = \int_0^\infty \d t\,\,\mathbb{E}_{(0,A),(0,A)}\left[\left(\sum_{i\in\G} a_{T(t)}(0,i)\,a_{T'(t)}(0,i)\right)
1_{\CE(t)}\,1_{\CE'(t)}\right]. 
\end{equation}
We know that coalescence occurs with probability 1 if and only if $I=\infty$. 

\medskip\noindent
{\bf 2.\ Fourier analysis.} 
Define
\begin{equation}
\label{same4}
M(t) = T(t) \wedge T'(t), \qquad \Delta(t) = [T(t) \vee T'(t)]  - [T(t) \wedge T'(t)].
\end{equation}
Then
\begin{equation}
\label{same5} 
\sum_{i\in\G} a_{T(t)}(0,i)\,a_{T'(t)}(0,i) 
= \sum_{j\in\G} \hat{a}_{2M(t)}(0,j)\,a_{\Delta(t)}(j,0).
\end{equation}
Indeed, the difference of the two random walks at time $M(t)$ has distribution $\hat{a}_{2M(t)}(0,\cdot)$, and in order for the random walk with the largest activity time to meet the random walk with the smallest activity time at time $2M(t)+\Delta(t)$, it must bridge this difference in time $\Delta(t)$. To work out \eqref{same5}, we assume without loss of generality that $\sum_{j \in \G} a(0,j) =1$, and use \emph{Fourier analysis}. For ease of exposition we focus on the special case where $\G = \Z^d$, but the argument below extends to \emph{any} countable Abelian group endowed with the discrete topology, because these properties ensure that there is a version of Fourier analysis on $\G$ \cite[Section 1.2]{Ru62}. For $\phi \in [-\pi,\pi]^d$, define
\begin{equation}
\label{same6}
a(\phi) = \sum_{j \in \Z^d} \e^{\Im (\phi,j)} a(0,j), \qquad \hat{a}(\phi) = \mathrm{Re}\,a(\phi),
\qquad \tilde{a}(\phi) = \mathrm{Im}\,a(\phi).
\end{equation}
Then
\begin{equation}
\label{same7}
\begin{aligned}
\hat{a}_t(0,j) &= \frac{1}{(2\pi)^d} \int_{[-\pi,\pi]^d} \d\phi\,\e^{-\Im (\phi,j)}\,\e^{-t[1-\hat{a}(\phi)]},\\
a_t(j,0) &= \frac{1}{(2\pi)^d} \int_{[-\pi,\pi]^d} \d\phi'\,\e^{\Im (\phi',j)}\,
\e^{-t[1-\hat{a}(\phi')-\Im\tilde{a}(\phi')]},
\end{aligned}
\end{equation}
where we use that $a(\phi) = \hat{a}(\phi) + \Im \tilde{a}(\phi)$. Inserting these representations into \eqref{same5}, we get
\begin{equation}
\label{same8} 
\sum_{i\in\Z^d} a_{T(t)}(0,i)\,a_{T'(t)}(0,i) 
= \frac{1}{(2\pi)^d} \int_{[-\pi,\pi]^d} \d\phi\, 
\e^{-[2M(t)+\Delta(t)]\,[1-\hat{a}(\phi)]}\,\cos(\Delta(t) \tilde{a}(\phi)),
\end{equation}
where we use that $\sum_{j\in\Z^d} \e^{\Im(\phi'-\phi,j)} =  (2\pi)^d\delta(\phi'-\phi)$, with $\delta(\cdot)$ the Dirac distribution (Folland~\cite[Chapter 7]{F92}). 

\medskip\noindent
{\bf 3.\ Limit theorems.}
By the strong law of large numbers, we have
\begin{equation}
\label{SLLN}
\lim_{k\to\infty} \frac{1}{k} \sum_{\ell=1}^k \sigma_\ell = \frac{1}{Ke} \quad \P\text{-a.s.},
\qquad  
\lim_{k\to\infty} \frac{1}{k} \sum_{\ell=1}^k \tau_\ell = \frac{1}{e} \quad \P\text{-a.s.}
\end{equation}
Therefore, by the standard renewal theorem (Asmussen~\cite[Chapter I, Theorem 2.2]{A03}),
\begin{equation}
\label{ETasymp}
\begin{aligned}
&\lim_{t\to\infty} \frac{1}{t}\,N(t) = \lim_{t\to\infty} \frac{1}{t}\,N'(t) = A \quad \P\text{-a.s.},\\
&\lim_{t\to\infty} \frac{1}{t}\,T(t) =  \lim_{t\to\infty} \frac{1}{t}\,T'(t) = B \quad \P\text{-a.s.},\\[0.2cm]
&\lim_{t\to\infty} \mathbb{P}\big(\CE(t)\big) = \lim_{t\to\infty} \mathbb{P}\big(\CE'(t)\big) = B, 
\end{aligned}
\end{equation}
with
\begin{equation}
\label{e2138}
A = \frac{1}{\frac{1}{Ke}+\frac{1}{e}} = \frac{K}{1+K}\,e, \qquad 
B= \frac{\frac{1}{Ke}}{\frac{1}{Ke}+\frac{1}{e}} = \frac{1}{1+K}.
\end{equation}
Moreover,  by the central limit theorem, we have
\begin{equation}
\label{CLT}
\left(\frac{T(t)-Bt}{c\sqrt{t}},\frac{T'(t)-Bt}{c\sqrt{t}}\right) 
\quad \Longrightarrow \quad (Z,Z') \quad \text{in $\mathbb{P}$-distribution as $t\to\infty$}
\end{equation}
with $(Z,Z')$ independent standard normal random variables and 
\begin{equation}
\label{e2149}
c^2 = A\left[(1-B)^2\,\mathbb{V}\mathrm{ar}(\sigma_1)
+B^2\,\mathbb{V}\mathrm{ar}(\tau_1) \right]
\end{equation}
(see \cite{Smith55} or \cite[Theorem VI.3.2]{A03}). Since $T(t),\CE(t)$ and $T'(t),\CE'(t)$ are independent, and each pair is asymptotically independent as well, we find that 
\begin{equation}
\label{Fourier}
\mathbb{E}_{(0,A),(0,A)}\left[ \left(\sum_{i\in\Z^d} a_{T(t)}(0,i)\,a_{T'(t)}(0,i)\right)1_{\CE(t)}\,1_{\CE'(t)} \right]
\sim B^2 f(t), \qquad t \to \infty,
\end{equation}
with
\begin{equation}
\label{fdef}
\begin{aligned}
f(t) &= \frac{1}{(2\pi)^d} \int_{[-\pi,\pi]^d} \d\phi\, 
\e^{-[1+o(1)]\,2Bt\,[1-\hat{a}(\phi)]}\,
\mathbb{E}\left[\cos\Big([1+o(1)]\,c (Z-Z')\sqrt{t}\,\tilde{a}(\phi)\Big)\right]\\
&= \frac{1}{(2\pi)^d} \int_{[-\pi,\pi]^d} \d\phi\, 
\e^{-[1+o(1)]\,2Bt\,[1-\hat{a}(\phi)]}\,
\e^{-[1+o(1)]\,c^2t\,\tilde{a}(\phi)^2},
\end{aligned}
\end{equation}
where we use that $\cos$ is symmetric, $Z-Z' = \sqrt{2}\,Z''$ in $\mathbb{P}$-distribution with $Z''$ standard normal, and $\mathbb{E}(\e^{\Im\mu Z''}) = \e^{-\mu^2/2}$, $\mu \in \R$. From \eqref{same3} and \eqref{Fourier} we have that $I<\infty$ if and only if $t \mapsto f(t)$ is integrable. By Cram\'er's theorem, deviations of $T(t)/t$ and $T'(t)/t$ away from $B$ are exponentially costly in $t$. Hence the error terms in \eqref{fdef}, arising from \eqref{ETasymp} and \eqref{CLT}, do \emph{not} affect the integrability of $t \mapsto f(t)$. Note that, because $a(\cdot,\cdot)$ is assumed to be \emph{irreducible} (recall \eqref{gh10}), $\hat{a}(\phi) = 1$ if and only if $\phi=0$. Hence the integrability of $t \mapsto f(t)$ is determined by the behaviour of $\hat{a}(\phi)$ and $\tilde{a}(\phi)$ as $\phi \to 0$. 

\medskip\noindent
{\bf 4.\ Irrelevance of asymmetric part of migration.}
We next observe that $\tilde{a}(\phi)^2 \leq 1-\hat{a}(\phi)^2 \leq 2[1-\hat{a}(\phi)]$. Hence, $t\,\tilde{a}(\phi)^2 \leq 2t\,[1-\hat{a}(\phi)]$. Therefore we see from \eqref{fdef} that for sufficiently large $T\in\R$ we can bound $t \mapsto f(t)$ on $[T,\infty)$ from above and below by functions of the form $t \mapsto g_{C}(t)$ with 
\begin{equation}
\label{same9}
g_C(t) = \frac{1}{(2\pi)^d} \int_{[-\pi,\pi]^d} \d\phi\,\e^{-\,Ct\,[1-\hat{a}(\phi)]},\qquad C \in (0,\infty).
\end{equation}    
From \eqref{same7} we have
\begin{equation}
\label{same10}
g_C(t) = \hat{a}_{C t}(0,0) \asymp \hat{a}_t(0,0),
\end{equation} 
where the last asymptotics uses that $t \mapsto \hat{a}_t(0,0)$ is regularly varying at infinity (recall \eqref{ass2}).  Combining \eqref{same3}, \eqref{Fourier} and \eqref{same9}--\eqref{same10}, we get 
\begin{equation}
\label{ISLLN}
I  = \infty \quad \Longleftrightarrow \quad I_{\hat{a}} = \infty
\end{equation}
with $I_{\hat{a}} = \int_1^\infty \d t\,\hat{a}_t(0,0)$. Thus, if $\hat{a}(\cdot,\cdot)$ is recurrent, then $I=\infty$ and the system clusters. Moreover, we see from the bounds on $f(t)$ (recall \eqref{fdef}) that \emph{ the asymmetric part of the migration kernel has no effect on the integrability}. 

This settles the dichotomy between clustering and coexistence when $g = g_{\text{FW}}$.
\end{proof}

\paragraph{$\bullet$ Case $g\neq dg_{\text{FW}}$.}

For $g\neq dg_{\text{FW}}$ the proof of Lemma~\ref{lem:clusgisgfw} does not go through. However, the \emph{moments relations} in Lemma~\ref{lem1cg94} hold for general $g\in\CG$. Using these moment relations and a technique called duality comparison (see \cite{CG94}), we prove Lemma~\ref{lem:clusgisgfw} for general $g\in\CG$.  

\begin{lemma}{\bf [Duality comparison]}
\label{lem:comparison}
Suppose that $\mu(0)\in\CT_\theta^{\mathrm{erg}}$ and $g\in\CG$. Moreover, suppose that $\hat{a}(\cdot,\cdot)$ defined in \eqref{gh11} is recurrent, i.e., $I_{\hat{a}}=\infty$. Let $\mu(t)$ be the law at time $t$ of the system defined in \eqref{gh1}--\eqref{gh2}. Then  
\begin{equation}
\lim_{t\to\infty} \mu(t) = \theta\, [\delta_{(1,1)}]^{\otimes \G} + (1-\theta)\, [\delta_{(0,0)}]^{\otimes \G}.
\end{equation}
\end{lemma}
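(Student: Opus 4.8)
I plan to prove Lemma~\ref{lem:comparison} by a \emph{duality comparison}: the dichotomy is governed by a collision quantity that is \emph{independent of $g$}, so clustering for general $g\in\CG$ is forced by the same mechanism that produces it for $g=dg_{\mathrm{FW}}$ in Lemma~\ref{lem:clusgisgfw}. Since the dichotomy criterion of Theorem~\ref{T.dichcrit.model1} is only available for the Fisher--Wright case, I will instead work directly with the moment relations of Lemma~\ref{lem1cg94}, which hold for every $g\in\CG$. First I reduce clustering to the two-point functions. Writing $R_{uv}(t)=\E_{\mu(0)}[z_u(t)(1-z_v(t))]$ for $u,v\in\S$, the claimed limit $\theta\,[\delta_{(1,1)}]^{\otimes\G}+(1-\theta)\,[\delta_{(0,0)}]^{\otimes\G}$ is equivalent to showing $R_{uv}(t)\to 0$ for all $u,v\in\S$: this gives concentration of each coordinate on $\{0,1\}$ (take $u=v$) together with pairwise agreement, and since ``not all equal'' on a finite set of sites is contained in $\bigcup_{k,\ell}\{z_{u_k}=1,z_{u_\ell}=0\}$, all finite-dimensional products converge to $\theta$, using that the first moments already satisfy $\E_{\mu(0)}[z_u(t)]\to\theta$ by \eqref{e5.52}.

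The heart of the matter is an exact identity obtained from \eqref{e5.52}--\eqref{e5.53}. Using $\sum_{v'}b^{(1)}_t(v,v')=1$ and translation invariance of $\mu(0)$ (so that $h(s):=\E_{\mu(0)}[g(x_k(s))]$ is independent of $k$), I would combine the first- and second-moment formulas into
\begin{equation}
R_{uv}(t)=F_{uv}(t)-2\int_0^t \d s\,h(s)\,\kappa_{uv}(t-s),
\qquad
F_{uv}(t)=\sum_{u',v'\in\S}b^{(1)}_t(u,u')\,b^{(1)}_t(v,v')\,R_{u'v'}(0),
\end{equation}
where $\kappa_{uv}(r)=\sum_{k\in\G}b^{(1)}_r(u,(k,A))\,b^{(1)}_r(v,(k,A))$ is the rate at time $r$ at which two independent $b^{(1)}$-walks started at $u,v$ are simultaneously active at the same colony. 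The term $F_{uv}(t)$ is manifestly $g$-independent and satisfies $0\le F_{uv}(t)\le 1$, while the coalescence term is nonnegative; crucially, $\int_0^\infty \kappa_{uv}(r)\,\d r=\infty$ for all $u,v\in\S$ by the collision computation carried out in Steps~1--4 of Lemma~\ref{lem:clusgisgfw}, which only uses recurrence of $\hat a(\cdot,\cdot)$ (recall \eqref{gh11} and \eqref{ass2}) and never uses $g$.

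From $R_{uv}(t)\ge 0$ the identity yields $2\int_0^t h(s)\,\kappa_{uv}(t-s)\,\d s\le F_{uv}(t)\le 1$ for all $t$. If $\liminf_{s\to\infty}h(s)=c>0$, then for $s$ large $h(s)\ge c/2$, and substituting $r=t-s$ gives $\int_0^{t-T_0}\tfrac{c}{2}\kappa_{uv}(r)\,\d r\le \tfrac12$ for all $t$, contradicting $\int_0^\infty\kappa_{uv}=\infty$; hence $\liminf_{s\to\infty}h(s)=0$. Since $g,g_{\mathrm{FW}}$ are both continuous, positive on $(0,1)$ and vanish at $\{0,1\}$, the vanishing of $\E[g(x_0(s))]$ along a subsequence is equivalent to concentration of the law of $x_0(s)$ on $\{0,1\}$, i.e.\ to $\E[x_0(s)(1-x_0(s))]\to 0$ along that subsequence. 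I would then feed this back through the identity, together with the associatedness and monotonicity of the system, to upgrade the statement to the full convergence $R_{uv}(t)\to 0$, equivalently to the exclusion of any nontrivial translation-invariant invariant measure of density $\theta$ in the recurrent regime.

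The \textbf{main obstacle} is precisely this last upgrade. First, one cannot sandwich a general $g\in\CG$ between multiples $c_1 g_{\mathrm{FW}}\le g\le c_2 g_{\mathrm{FW}}$ (the Kimura--Ohta function $g(x)=[x(1-x)]^2$ vanishes strictly faster at the boundary), so the comparison with the Fisher--Wright system \emph{cannot} be a monotone comparison of coalescence rates; it must be performed at the level of the $g$-independent collision local time $\int_0^\infty\kappa_{uv}=\infty$, which is exactly what the identity above isolates. Second, the contradiction argument as stated yields only $\liminf_s h(s)=0$, and promoting this to a genuine limit requires handling the convolution weighting $\int_0^t h(s)\kappa_{uv}(t-s)\,\d s$ and controlling the spatial correlations $R_{uv}$ with $u\neq v$, not merely the single-site diversity. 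I expect this to be the technically delicate part, to be resolved by combining the moment identity with associatedness (giving monotone control of the two-point functions) so that the vanishing of local diversity propagates to all pairs, thereby establishing $R_{uv}(t)\to 0$ for every $u,v\in\S$ and completing the proof.
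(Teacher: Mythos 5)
Your setup is sound up to a point: the reduction of clustering to $R_{uv}(t)\to 0$, the convolution identity $R_{uv}(t)=F_{uv}(t)-2\int_0^t h(s)\,\kappa_{uv}(t-s)\,\d s$ obtained from Lemma~\ref{lem1cg94}, the observation that $\int_0^\infty \kappa_{uv}(r)\,\d r=\infty$ follows from the $g$-independent collision computation in Lemma~\ref{lem:clusgisgfw}, and the resulting conclusion $\liminf_{s\to\infty}\E[g(x_0(s))]=0$ are all correct. But the proof stops exactly where the real difficulty begins, and the tool you propose to bridge it does not work. First, associatedness is not available: $\mu(0)$ is an arbitrary measure in $\CT^{\mathrm{erg}}_\theta$, not necessarily associated, so there is no ``monotone control of the two-point functions'' to invoke (the paper only uses preservation of associatedness in the coexistence regime, starting from $\delta_\theta$, which is trivially associated). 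Second, and more fundamentally, even granting associatedness the implication you need is false: a product of Bernoulli$(\theta)$ laws is associated, translation invariant and ergodic with density $\theta$, and has every single-site marginal concentrated on $\{0,1\}$, yet $R_{uv}=\theta(1-\theta)>0$ for $u\neq v$. So ``local diversity vanishes $+$ association'' cannot propagate to pair agreement; ruling out such product-like limit points is precisely the content of the lemma, and your identity alone leaves $\lim_t 2\int_0^t h(s)\kappa_{uv}(t-s)\,\d s$ (an indeterminate $0\cdot\infty$ convolution) undetermined between $0$ and $\theta(1-\theta)$.

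The paper closes this gap with a device you explicitly ruled out too quickly. You are right that one cannot bound $g\geq c\,g_{\mathrm{FW}}$ globally (Kimura--Ohta), but one \emph{can} bound, for every $\epsilon>0$, $g(x)\geq \tilde g(x)=c(\epsilon)(x-\epsilon)\bigl(1-(x+\epsilon)\bigr)$ on all of $[0,1]$, because $\tilde g$ is negative on $[0,\epsilon)\cup(1-\epsilon,1]$ where $g\geq 0$, and $g$ is bounded below by a positive constant on $[\epsilon,1-\epsilon]$. The function $\tilde g$ is never put into the SSDE; instead one works with the \emph{shifted} two-point functions $F_t\bigl(2,(i,R_i),(j,R_j)\bigr)=\E_z\bigl[(z_{(i,R_i)}(t)-\epsilon)(z_{(j,R_j)}(t)+\epsilon)\bigr]$ and shows they satisfy $\frac{\d F_t}{\d t}=\mathfrak{B}F_t+H_t$ with $H_t\geq 0$ (since $g\geq\tilde g$), where $\mathfrak{B}$ generates two $b^{(1)}$-walks that \emph{coalesce at rate $c(\epsilon)$} when active at the same site. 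Recurrence of $\hat a(\cdot,\cdot)$ makes this coalescence happen with probability $1$, so $F_t\geq V_tF_0$ yields $\liminf_{t\to\infty}\E_z[z_{(i,R_i)}(t)z_{(j,R_j)}(t)]\geq\theta-\epsilon^2$, which together with the first-moment bound $\limsup\leq\theta$ and $\epsilon\downarrow 0$ gives $R_{uv}(t)\to 0$ for all pairs at once — no upgrade from a $\liminf$ statement on single-site diversity is ever needed. If you want to salvage your route, this shifted comparison dual (not association) is the missing ingredient.
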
 

\begin{proof}
We proceed as in the proof of \cite[Theorem]{CG94}. First assume that $\mu(0)=\delta_z$ for some $z\in E$, and satisfies
\begin{equation}
\lim_{t\to\infty}\sum_{(k,R_k)\in \G\times\{A,D\}}b^{(1)}_t((i,R_i),(k,R_k))\,z_{(k,R_k)}=\theta.
\end{equation}
By Lemma \ref{lem1cg94}, we have 
\begin{equation}
\label{m1}
\E_{z}\left[z_{(i,R_i)}(t)\right]=\sum_{(k,R_k)\in \G\times\{A,D\}}b_t^{(1)}((i,R_i),(k,R_k))\,z_{(k,R_k)}.
\end{equation}
Hence, by assumption, for all $(i,R_i)\in\G\times\{A,D\}$ we have
\begin{equation}\label{m1a}
\lim_{t\to\infty}\E_z\left[z_{(i,R_i)}(t)\right]=\theta.
\end{equation}
Since we have clustering if, for all $(i,R_i),(j,R_j)\in\G\times\{A,D\}$, 
\begin{equation}
\lim_{t\to\infty}\E_z\left[z_{(i,R_i)}(t)(1-z_{(j,R_j)}(t))\right]=0,
\end{equation}
we are left to prove that 
\begin{equation}
\label{m2}
\lim_{t\to\infty}\E_z\left[z_{(i,R_i)}z_{(j,R_j)}\right]=\theta.
\end{equation}
Since \eqref{m1a} implies that $\limsup_{t\to\infty}\E_z[z_{(i,R_i)}z_{(j,R_j)}]\leq \theta$, we are left to prove that
\begin{equation}
\label{enough1}
\liminf_{t\to\infty}\E_z[z_{(i,R_i)}z_{(j,R_j)}] \geq \theta.
\end{equation}
Like in \cite{CG94}, we will prove \eqref{enough1} by \emph{comparison duality}.
 	
Fix $\epsilon>0$. Since $g\in\CG$ we can choose a $c=c(\epsilon)>0$ such that $g(x) \geq \tilde{g}(x) = c(x-\epsilon)(1-(x+\epsilon))$, $x\in [0,1]$. Note that $\tilde{g}(x)<0$ for $x \in [0,\epsilon) \cup (1-\epsilon,1]$, so we cannot replace $g$ by $\tilde{g}$ in the SSDE. Instead we use $\tilde{g}$ as an auxiliary function. 

Consider the Markov chain $(B(t))_{t \geq 0}$, with state space $\{1,2\}\times \left(\G\times\{A,D\}\right)\times\left(\G\times\{A,D\}\right)$ and $B(t)=(B_0(t),B_1(t),B_2(t))$, evolving according to
\begin{equation}
\begin{aligned}
(1,(i,R_i),(i,R_i)) &\to\  (1,(k,R_k),(k,R_k)), \quad  \text{ at rate } b^{(1)}((i,R_i),(k,R_k)),\\
(2,(i,R_i),(j,R_j)) &\to
\begin{cases}
(2,(k,R_k),(j,R_j)), &\text{ at rate } b^{(1)}((i,R_i),(k,R_k)),\\
(2,(i,R_i),(l,R_l)), &\text{ at rate } b^{(1)}((j,R_j),(l,R_l)),\\
(1,(i,R_i),(i,R_i)), &\text{ at rate } c1_{\{i=j\}}1_{\{R_i=R_j=A\}}.
\end{cases}
\end{aligned}
\end{equation}
This describes two random walks, evolving independently according to the transition kernel $b^{(1)}(\cdot,\cdot)$, that coalesce at rate $c>0$ when they are at the same site and are active. We put $B_0(t)=1$ when the two random walks have already coalesced by time $t$, and $B_0(t)=2$ otherwise. Let $\P_{(2,(i,R_i),(j,R_j))}$ denote the law of the Markov Chain $B(t)$ that starts in $(2,(i,R_i),(j,R_j))$. Note that 
\begin{equation}
\P_{(2,(i,R_i),(j,R_j))}\left(B_1(t)=(k,R_k)\right)=b^{(1)}_t((i,R_i),(k,R_k)),
\end{equation}
and similarly 
\begin{equation}
\P_{(2,(i,R_i),(j,R_j))}\left(B_2(t)=(l,R_l)\right)=b^{(1)}_t((j,R_j),(l,R_l)).
\end{equation}
Since we have assumed that $\hat{ a}(\cdot,\cdot)$ is recurrent, i.e., $I_{\hat{a}}=\infty$, the two random walks meet infinitely often at the same site while being active and hence coalesce with probability 1. Therefore 
\begin{equation}
\label{P2lim}
\lim_{t \to \infty}\P_{(2,(i,R_i),(j,R_j))}\left(B_0(t)=2\right)=0.
\end{equation}
We can rewrite the SSDE in \eqref{gh1}--\eqref{gh2} in terms of $b^{(1)}(\cdot,\cdot)$, namely, for all $(i,R_i)\in\G\times\{A,D\}$,
\begin{equation}
\label{m3}
\d z_{(i,R_i)}(t)=\sum_{(k,R_k)\in\G\times\{A,D\}}b^{(1)}((i,R_i),(j,R_j))[z_{(j,R_j)}(t)-z_{(i,R_i)}(t)]\,\d t
+\sqrt{g(z_{i,R_i}(t))}\,1_{\{R_i=A\}}\,\d w_i(t).
\end{equation}
Using \eqref{m3} and It\^o-calculus, we obtain
\begin{equation}
\frac{\d \E_z[z_{(i,R_i)}(t)-\epsilon]}{\d t}
=\sum_{(k,R_k)\in\G\times\{A,D\}}b^{(1)}((i,R_i),(k,R_k))\,
\E\left[(z_{(k,R_k)}(t)-\epsilon)-(z_{(i,R_i)}(t)-\epsilon)\right]
\end{equation}
and
\begin{equation}
\begin{aligned}
&\frac{\d \E_z[(z_{(i,R_i)}(t)-\epsilon)(z_{(j,R_j)}(t)+\epsilon)]}{\d t}\\
&=\sum_{(k,R_k)\in\G\times\{A,D\}} b^{(1)}((i,R_i),(k,R_k))\,
\E_z\left[(z_{(j,R_j)}(t)+\epsilon)(z_{(k,R_k)}(t)-\epsilon)-(z_{(j,R_j)}(t)+\epsilon)(z_{(i,R_i)}(t)-\epsilon)\right]\\
&\qquad +\sum_{(l,R_l)\in\G\times\{A,D\}} b^{(1)}((j,R_j),(k,R_k))\,
\E_z\left[(z_{(i,R_i)}(t)-\epsilon)(z_{(l,R_l)}(t)+\epsilon)-(z_{(i,R_i)}(t)-\epsilon)(z_{(j,R_j)}(t)+\epsilon)\right]\\
&\qquad + \E_z\left[c (z_{(i,R_i)}(t)-\epsilon) (1-(z_{(j,R_j)}(t)+\epsilon)) 1_{\{i=j\}}1_{\{R_i=R_j=A\}}\right]\\
&\qquad + \E_z\left[\left(g(z_{(i,R_i)}(t))-\tilde{g}(z_{(i,R_i)}(t))\right)1_{\{i=j\}}1_{\{R_i=R_j=A\}}\right].
\end{aligned}
\end{equation}
For $t\geq 0$, define $F_t\colon\,\{0,1\}\times(\G\times\{A,D\})\times(\G\times\{A,D\})\to\R$ by
\begin{equation}
\begin{aligned}
F_t(1,(i,R_i),(i,R_i))
&=\E_z\left[z_{(i,R_i)}(t)-\epsilon\right]\\ F_t(2,(i,R_i),(j,R_j)),
&=\E_z\left[(z_{(i,R_i)}(t)-\epsilon)(z_{(j,R_j)}(t)+\epsilon)\right],
\end{aligned}
\end{equation}
and $H_t\colon\,\{0,1\}\times(\G\times\{A,D\})\times(\G\times\{A,D\})\to\R$ by
\begin{equation}
\begin{aligned}
H_t(1,(i,R_i),(i,R_i)) &=0,\\
H_t(2,(i,R_i),(j,R_j)) &=\E_z\left[\big(g(z_{(i,R_i)}(t))-\tilde{g}(z_{(i,R_i)}(t))\big)\,1_{\{i=j\}}\,1_{\{R_i=R_j=A\}}\right].
\end{aligned}
\end{equation}
Let $\mathfrak{B}$ denote the generator of $(B(t))_{t \geq 0}$, and let $(V_t)_{t \geq 0}$ the associated semigroup. Then 
\begin{equation}
\frac{\d F_t}{\d t}=\mathfrak{B}F_t+H_t.
\end{equation} 
Hence, by \cite[Theorem I.2.15]{Lig85}, it follows that 
\begin{equation}
F_t=V_tF_0+\int_0^t \d s\,V_{(t-s)}H_s.
\end{equation}
Since $H_t>0$ for all $t\geq 0$, we obtain
\begin{equation}
\begin{aligned}
&F_t(2,(i,R_i),(j,R_j))\geq V_t F_0(2,(i,R_i),(j,R_j))\\
&=\E_{(2,(i,R_i),(j,R_j))}\left[F_0(B(t))\right]\\
&=\E_{(2,(i,R_i),(j,R_j))}\left[F_0(B(t))1_{\{B_0(t)=1\}}+F_0(B(t))1_{\{B_0(t)=2\}}\right]\\
&=\sum_{(k,R_k),(l,R_l)\in \G\times\{A,D\}}\P_{(2,(i,R_i),(j,R_j))}\left[B_0(t)=1,B_1(t)=(k,R_k)\right](z_{(k,R_k)}-\epsilon)\\
&\qquad + \E_{(2,(i,R_i),(j,R_j))}\left[F_0(B(t))1_{\{B_0(t)=2\}}\right]\\
&=\sum_{(k,R_k),(l,R_l)\in \G\times\{A,D\}}\P_{(2,(i,R_i),(j,R_j))}\left[B_1(t)=(k,R_k)\right](z_{(k,R_k)}-\epsilon)\\
&\qquad -\sum_{(k,R_k),(l,R_l)\in \G\times\{A,D\}}
\P_{(2,(i,R_i),(j,R_j))}\left[B_0(t)=2,B_1(t)=(k,R_k)\right](z_{(k,R_k)}-\epsilon)\\
&\qquad + \E_{(2,(i,R_i),(j,R_j))}\left[F_0(B(t))1_{\{B_0(t)=2\}}\right]\\
&\geq \sum_{(k,R_k)\in \G\times\{A,D\}}b^{(1)}_t((i,R_i),(k,R_k))\,(z_{(k,R_k)}-\epsilon)
-(1+\epsilon^2)\,\P_{(2,(i,R_i),(j,R_j))}\left[B_1(t)=2\right].
\end{aligned}
\end{equation}
Hence, by \eqref{P2lim}, we obtain
\begin{equation}
\liminf_{t \to \infty}F_t(2,(i,R_i),(j,R_j)) \geq \liminf_{t \to \infty}\E_z\left[(z_{(i,R_i)}(t)-\epsilon)(z_{(j,R_j)}(t)+\epsilon)\right]\geq\theta-\epsilon^2.
\end{equation}
Letting $\epsilon\downarrow 0$, we get \eqref{m2}.
	
To get rid of the assumption $\mu(0)=\delta_z$, note that for $\mu(0)\in\CT^{\mathrm{erg}}_\theta$ we have (recall Remark \ref{rem:prob}) 
\begin{equation}
\lim_{t \to \infty}\sum_{(k,R_k)\in \G\times\{A,D\}}b_t((i,R_i),(k,R_k))\E_\mu[z_{(k,R_k)}]=\theta.
\end{equation}
Hence, by the above argument,
\begin{equation}
\begin{aligned}
\E_\mu\left[(z_{(i,R_i)}(t)-\epsilon)(z_{j,R_j}(t)+\epsilon)\right]&=\int\E_z\left[(z_{(i,R_i)}(t)-\epsilon)(z_{j,R_j}(t)+\epsilon)\right]\d \mu(z)\\
&\geq \int \sum_{(k,R_k)\in \G\times\{A,D\}}b^{(1)}_t((i,R_i),(k,R_k))(z_{(k,R_k)}-\epsilon)\\
&\qquad\qquad-(1+\epsilon^2)\P_{(2,(i,R_i),(j,R_j))}\left[B_1(t)=2\right]\d \mu(z)
\end{aligned}
\end{equation}
Letting first $t\to\infty$ and then $\epsilon\downarrow 0$, we find that
\begin{equation}
\lim_{t\to\infty}\E_\mu\left[(z_{(i,R_i)}(t)-\epsilon)(z_{j,R_j}(t)+\epsilon)\right]=\theta,
\end{equation}
and, for all $(i,R_i),(j,R_j)\in\G\times\{A,D\}$,
\begin{equation}
\lim_{t\to\infty} \E_\mu\left[z_{(i,R_i)}(t)(1-z_{j,R_j}(t))\right]=0.
\end{equation}	
\end{proof}


\subsection{The coexistence case}
\label{ss.cos}

For the coexistence case we proceed as in \cite{CG94} with small adaptations. For the convenience of the reader we have written out the full proof. The proof relies on the moment relations in Lemma~\ref{lem1cg94} and no distinction between $g=dg_{\mathrm{FW}}$ and general $g\in\CG$ is needed. The proof consist of several lemmas (Lemmas~\ref{muinrtheta}--\ref{lem10} below), organised into 4 Steps.  In Step 1 we use the moment relations in Lemma~\ref{lem1cg94} to define a set of measures that are preserved under the evolution. In Step 2 we use coupling to prove that, for each given $\theta$, the system converges to a unique equilibrium. In Step 3 we show that, for each given $\theta$, each initial measure under the evolution converges to an invariant measure. In Step 4 we show that the limiting measure is invariant, ergodic and mixing under translations, and is associated. 


\paragraph{1. Properties of measures preserved under the evolution.}

Let $\theta$ be defined as in \eqref{thetadef} such that $\theta=\E_{\mu(0)}\left[\frac{x_0+Ky_0}{1+K}\right]=\frac{\theta_x+K\theta_y}{1+K}$. 

\begin{definition}{\bf [Preserved class of measure]}\label{Rtheta}
{\rm Let $\CR^{(1)}_\theta$ denote the set of measures $\mu \in \CT$ satisfying:
\begin{enumerate}
\item[{\rm (1)}] 
For all $(i,R_i) \in\G\times\{A,D\}$,
\begin{equation}
\label{fm}
\lim_{t\to \infty} \E_\mu[z_{(i,R_i)}(t)]=\theta. 
\end{equation}
\item[{\rm (2)}]
 for all $(i,R_i),\,(j,R_j)\in\G\times\{A,D\}$,
\begin{equation}
\label{sm}
\lim_{t\to \infty}\sum_{(k,R_k),(l,R_l)\in \G\times\{A,D\}} 
b^{(1)}_t\big((i,R_i),(k,R_k)\big)\,b^{(1)}_t\big((j,R_j),(l,R_l)\big)\,\E_\mu[z_{(k,R_k)}z_{(l,R_l)}]=\theta^2.
\end{equation}
\end{enumerate}}\hfill$\Box$
\end{definition}

\noindent
Clearly, if $\mu\in\CR^{(1)}_\theta$, then (1) and (2) together with Lemma~\ref{lem1cg94} imply
\begin{equation}
\label{rt}
\lim_{t\to \infty}\E_\mu\left[\left(\sum_{(k,R_k)\in\G\times\{A,D\}}
b^{(1)}_t((i,R_i),(k,R_k))\,z_{(k,R_k)}-\theta\right)^2\right]=0,
\end{equation}
and so $\lim_{t\to\infty}z_{i,R_i}(t)=\theta$ in $L^2(\mu)$. 

On the other hand, suppose that \eqref{rt} holds for \emph{some} $(i,R_i)\in\G\times\{A,D\}$. Then, by Lemma \eqref{lem1cg94}, we can rewrite \eqref{rt} as 
\begin{equation}
\label{rt3}
\lim_{t\to \infty}\E_\mu\left[\left(\E_z[z_{(i,R_i)}(t)]-\theta\right)^2\right]=0.
\end{equation}
This implies 
\begin{equation}
\label{mo90}
\lim_{t\to\infty}\E_\mu[z_{(i,R_i)}(t)]=\theta,
\end{equation}
and hence, by translation invariance,
\begin{equation}
\label{mo90alt}
\lim_{t\to\infty}\E_\mu[z_{(k,R_i)}(t)]=\theta \qquad \forall\, k\in\G.
\end{equation}
Using that switches between the active state at the dormant state occur at a positive rate, we can use the strong Markov property to obtain that \eqref{mo90alt} holds both for $R_i=A$ and for $R_i=D$. Hence \eqref{fm} holds. Combining \eqref{fm} and \eqref{rt}, we see that also \eqref{sm} holds.

\begin{lemma}
\label{muinrtheta}
$\mu\in\CR^{(1)}_\theta$ for all $\mu\in\CT_\theta^{\mathrm{erg}}$.
\end{lemma}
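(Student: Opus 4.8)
The plan is to reduce the claim to the single $L^2$-statement \eqref{rt} and prove that. Indeed, the discussion following Definition~\ref{Rtheta} already shows that if \eqref{rt} holds for \emph{some} $(i,R_i)\in\G\times\{A,D\}$, then both \eqref{fm} and \eqref{sm} follow, i.e.\ $\mu\in\CR^{(1)}_\theta$. So it suffices to prove, for $\mu\in\CT_\theta^{\mathrm{erg}}$, that
\begin{equation}
g_t^{(i,R_i)} := \sum_{(k,R_k)\in\G\times\{A,D\}} b^{(1)}_t\big((i,R_i),(k,R_k)\big)\,z_{(k,R_k)} \longrightarrow \theta
\quad\text{in } L^2(\mu),\ t\to\infty .
\end{equation}
Since $z$ takes values in $[0,1]$, the quantity $g_t^{(i,R_i)}$ is bounded, and this convergence is the entire content of \eqref{rt}.

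To organize the computation I would write the field values as translates of two fixed functions. Let $T_k$ denote the shift by $k\in\G$ on $L^2(\mu)$, and put $f^A=z_{(0,A)}$, $f^D=z_{(0,D)}$, so that $z_{(k,A)}=T_kf^A$ and $z_{(k,D)}=T_kf^D$. Setting $p_t^R(k)=b^{(1)}_t\big((i,R_i),(k,R)\big)$ for $R\in\{A,D\}$, we obtain the decomposition
\begin{equation}
g_t^{(i,R_i)}=\sum_{k\in\G} p_t^A(k)\,T_kf^A+\sum_{k\in\G} p_t^D(k)\,T_kf^D .
\end{equation}
Here $p_t^A,p_t^D$ are sub-probability measures on $\G$ whose total masses are the probabilities that the lineage is active, respectively dormant, at time $t$; since the internal $\{A,D\}$-chain driven by \eqref{mrw} is positive recurrent, these masses converge to the stationary proportions $\tfrac{1}{1+K}$ and $\tfrac{K}{1+K}$. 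In particular $\E_\mu[g_t^{(i,R_i)}]=\sum_k p_t^A(k)\,\theta_x+\sum_k p_t^D(k)\,\theta_y\to\theta$ by \eqref{thetadef}, which also re-proves \eqref{fm} (this is just the first-moment statement of Remark~\ref{rem:prob}).

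The core of the argument is then a mean ergodic theorem for these random-walk averages. I would show that the normalized averaging kernels are \emph{asymptotically translation invariant} in the sense that $\|p_t^R*\delta_g-p_t^R\|_{\mathrm{TV}}\to 0$ for every fixed $g\in\G$; equivalently, the $\G$-marginal characteristic function of the lineage tends to $0$ off the origin. Because $\mu$ is ergodic under $\G$-translations, the only translation-invariant elements of $L^2(\mu)$ are constants, so the standard $L^2$ mean ergodic theorem for asymptotically invariant averaging sequences yields $\sum_k p_t^A(k)T_kf^A\to\tfrac{1}{1+K}\,\E_\mu[f^A]=\tfrac{\theta_x}{1+K}$ and $\sum_k p_t^D(k)T_kf^D\to\tfrac{K}{1+K}\,\E_\mu[f^D]=\tfrac{K\theta_y}{1+K}$ in $L^2(\mu)$. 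Adding these and using $\mu\in\CT_\theta^{\mathrm{erg}}$ gives $g_t^{(i,R_i)}\to\tfrac{\theta_x+K\theta_y}{1+K}=\theta$, which is \eqref{rt}, and hence $\mu\in\CR^{(1)}_\theta$.

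The main obstacle is establishing the mean ergodic step, and it splits into two parts. First, one must verify that the lineage genuinely de-localizes in $\G$ despite its dormant pauses and regardless of whether $\hat a(\cdot,\cdot)$ is recurrent or transient: the $\G$-displacement up to time $t$ is a random sum of $a(\cdot,\cdot)$-distributed jumps whose number grows linearly in $t$, so its characteristic function decays as in the Fourier computation of Lemma~\ref{lem:clusgisgfw}; no transience hypothesis is needed because condition~\eqref{sm} concerns the spreading of single lineages, not their coalescence. Second, to turn this spreading into $L^2$-convergence I would invoke the Herglotz spectral representation of the (matrix-valued, because of the $\{A,D\}$ states) covariance of the centered ergodic field, where ergodicity of $\mu$ is exactly the statement that this spectral measure carries no atom at the identity of the dual group; bounded convergence, using that the characteristic functions are uniformly bounded by $1$ and vanish off the origin, then forces the variance of $g_t^{(i,R_i)}$ to $0$.
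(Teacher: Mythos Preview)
Your proposal is correct and follows essentially the same route as the paper: decompose into the $A$- and $D$-parts, use the Herglotz spectral representation of the translation-invariant covariance, identify ergodicity with the vanishing of the spectral atom at the origin, and use that the lineage's $\G$-characteristic function tends to $0$ off the origin (via $T(t)\to\infty$ and irreducibility of $a(\cdot,\cdot)$) to kill the variance. One small imprecision: asymptotic invariance in total variation is strictly stronger than, not equivalent to, the pointwise vanishing of the characteristic function off the origin; only the latter is needed for the spectral/bounded-convergence step you describe, and that is exactly what the paper establishes.
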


\begin{proof}
The proof relies on \emph{Fourier analysis} and the existence of spectral measures. As in Section~\ref{ss.clustcase}, for ease of exposition we focus on the special case where $\G = \Z^d$, but the argument below extends to \emph{any} countable Abelian group endowed with the discrete topology.

By translation invariance and the Herglotz theorem, there exist spectral measures $\lambda_A$ and $\lambda_D$ such that, for all $j,k\in\Z^d$,
\begin{equation}
\label{spem}
\begin{aligned}
\E_{\mu}\left[(x_j-\theta_x)(x_k-\theta_x)\right] &=\int_{(-\pi,\pi]^d}\e^{\Im (j-k,\phi)}\d \lambda_A(\phi),\\
\E_{\mu}\left[(y_j-\theta_y)(y_k-\theta_y)\right] &=\int_{(-\pi,\pi]^d}\e^{\Im (j-k,\phi)}\d \lambda_D(\phi).
\end{aligned}
\end{equation}
Let $a(\phi)=\sum_{k\in\Z^d}\e^{\Im(\phi,j)} a(0,k)$ be the characteristic function of the kernel $a(\cdot,\cdot)$ (recall \eqref{same6}), and $T(t)$ the activity time of the random walk up to time $t$ (recall \eqref{Tdefs}). Then
\begin{equation}
\sum_{k\in\Z^d} a_{T(t)}(0,k)\,\e^{\Im(k,\phi)}
=\sum_{n\in\N_0}\frac{\e^{-T(t)}[T(t)]^n}{n!}\sum_{k\in\Z^d} a^n(0,k)\,\e^{\Im(k,\phi)}
=\sum_{n\in\N_0}\frac{\e^{-T(t)}[T(t)\,a(\phi)]^n}{n!}
=\e^{-T(t)(1-a(\phi))}.
\end{equation}
Let $\CE(t)$ be defined as in \eqref{e2061}. Then, for fixed $t>0$, $\P_{(0,A)}(\CE(t))=\sum_{k\in\Z^d}b^{(1)}_t((0,A),(k,A))>0$ and hence
\begin{equation}
\label{spec1}
\begin{aligned}
\E_\mu
&\left[\left(\frac{1}{\P_{(0,A)}(\CE(t))}\sum_{k\in\Z^d}b^{(1)}_t((0,A),(k,A))x_k-\theta_x\right)^2\right]\\
&=\frac{1}{\P_{(0,A)}(\CE(t))^2}\sum_{k,l\in\Z^d} b^{(1)}_t((0,A),(k,A))\,b^{(1)}_t((0,A),(l,A))\,
\E_\mu\left[(x_k-\theta_x)(x_l-\theta_x)\right]\\
&=\frac{1}{\P_{(0,A)}(\CE(t))^2}\sum_{k,l\in\Z^d} b^{(1)}_t((0,A),(k,A))\,b^{(1)}_t((0,A),(l,A))
\int_{(-\pi,\pi]^d} \e^{\Im (k-l,\phi)}\d \lambda_A(\phi)\\
&=\frac{1}{\P_{(0,A)}(\CE(t))^2}\sum_{k,l\in\Z^d} \E_{(0,A),(0,A)}\left[a_{T(t)}(0,k)\,a_{T^\prime(t)}(0,l)\,1_{\CE(t)}\,
1_{\CE^\prime(t)}\right] \int_{(-\pi,\pi]^d} \e^{\Im (k-l,\phi)}\d \lambda_A(\phi)\\
&=\frac{1}{\P_{(0,A)}(\CE(t))^2}\int_{(-\pi,\pi]^d} \E_{(0,A),(0,A)}\left[\sum_{k\in\Z^d} a_{T(t)}\,\e^{\Im (k,\phi)}(0,k)
1_{\CE(t)}\sum_{l\in\Z^d} a_{T^\prime(t)}\,\e^{-\Im (l,\phi)}(0,l)1_{\CE^\prime(t)}\right] \d \lambda_A(\phi)\\
&=\frac{1}{\P_{(0,A)}(\CE(t))^2} \int_{(-\pi,\pi]^d} \E_{(0,A),(0,A)}\left[\e^{-T(t)(1-a(\phi))}1_{\CE(t)}\,
\e^{-T^\prime(t)(1-\bar{a}(\phi))}1_{\CE^\prime(t)}\right] \d \lambda_A(\phi).		
\end{aligned}
\end{equation}
Since $a(\cdot,\cdot)$ is irreducible, $a(\phi)\neq 1$ for all $\phi\in(-\pi,\pi]^d\backslash\{0\}$. Taking the limit $t\to\infty$, we find
\begin{equation}
\lim_{t\to\infty} \E_\mu\left[\left(\frac{1}{\P_{(0,A)}(\CE(t))}
\sum_{k\in\Z^d} b^{(1)}_t((0,A),(k,A))x_k-\theta_x\right)^2\right]=\lambda_A(\{0\}).
\end{equation}
Similarly, 	
\begin{equation}
\lim_{t\to\infty} \E_\mu\left[\left(\frac{1}{\P_{(0,A)}(\CE^c(t))}
\sum_{k\in\Z^d} b^{(1)}_t((0,A),(k,D))y_k-\theta_y\right)^2\right]=\lambda_D(\{0\}).
\end{equation}
Hence
\begin{equation}
\label{spec2}
\begin{aligned}
&\lim_{t\to \infty} \E_\mu\left[\left(\sum_{(k,R_k)\in\Z^d\times\{A,D\}}
b^{(1)}_t((0,A),(k,R_k)\,z_{(k,R_k)}-\theta\right)^2\right]\\
&=\lim_{t\to \infty}\E_\mu\Bigg[\Bigg(\P_{(0,A)}(\CE(t))\sum_{k\in\Z^d}\frac{b^{(1)}_t((0,A),(k,A))}{\P_{(0,A)}(\CE(t))}
\,x_{k}-\frac{\theta_x}{1+K}\\
&\qquad +\P_{(0,A)}(\CE^c(t))\sum_{k\in\Z^d}\frac{b^{(1)}_t((0,A),(k,D))}{\P_{(0,A)}(\CE^c(t))}\,y_{k}
-\frac{K\theta_y}{1+K}\Bigg)^2\Bigg]\\
&\leq \lim_{t\to \infty}\P_{(0,A)}(\CE(t))\,\E_\mu\left[\left(\sum_{k\in\Z^d}
\frac{b^{(1)}_t((0,A),(k,A))}{\P_{(0,A)}(\CE(t))}\,x_{k}-\frac{\theta_x}{(1+K)}\frac{1}{\P_{(0,A)}(\CE(t))}\right)^2\right]\\
&\qquad+\P_{(0,A)}(\CE^c(t))\,\E_\mu\left[\left(\sum_{k\in\Z^d}
\frac{b^{(1)}_t((0,A),(k,D))}{\P_{(0,A)}(\CE^c(t))}\,y_{k}
-\frac{K\theta_y}{1+K}\frac{1}{\P_{(0,A)}(\CE^c(t))}\right)^2\right]\\
&=\frac{1}{1+K}\lambda_A(\{0\})+\frac{K}{1+K}\lambda_D(\{0\}).
\end{aligned}
\end{equation}
Hence, if $\lambda_A(\{0\})=0$ and $\lambda_D(\{0\})=0$, then $\mu\in\CR^{(1)}_\theta$. We will show that 
$\lambda_A(\{0\})=0$ and $\lambda_D(\{0\})=0$ for $\mu\in\CT_\theta^{\mathrm{erg}}$. 

Let $\Lambda_N = [0,N)^d \cap \Z^d$. By the $L^1$-ergodic theorem, we have, for $\mu\in\CT_\theta^{\mathrm{erg}}$,
\begin{equation}
\lim_{N \to \infty}\E_\mu\left[\left(\frac{1}{\Lambda_N}\sum_{j\in\Lambda_N} x_j-\theta_x\right)^2\right]=0.
\end{equation}
(For general $\G$ not that countable groups endowed with the discrete topology are amenable. For amenable groups $\G$, $(\Lambda_N)_{N\in\N}$ must be replaced by a so-called F\o lner sequence, i.e., 
a sequence of finite subsets of $\G$ that exhaust $\G$ and satisfy $\lim_{N\to\infty} |\mathfrak{g} \Lambda_N \triangle \Lambda_N|/|\Lambda_N|=0$ for any $\mathfrak{g}\in\G$ \cite{Li99}. ) Using the spectral measure, we can write
\begin{equation}
\label{q4}
\begin{aligned}
\lim_{N \to \infty} \E_\mu\left[\left(\frac{1}{\Lambda_N} \sum_{j\in\Lambda_N} x_j-\theta_x\right)^2\right]
&=\lim_{N \to \infty} \frac{1}{\Lambda_N^2}\sum_{j,k\in\Lambda_N}
\int_{(-\pi,\pi]^d} \e^{\Im (j-k,\phi)} \d \lambda_A\\
&=\lim_{N \to \infty}\int_{(-\pi,\pi]^d}\left(\frac{1}{\Lambda_N}\sum_{j\in\Lambda_N}
\e^{\Im (j,\phi)}\right)\left(\frac{1}{\Lambda_N}\sum_{k\in\Lambda_N}
\e^{-\Im (k,\phi)}\right)\d \lambda_A
=\lambda_A\{0\}.
\end{aligned}
\end{equation}
In the last equality we use dominated convergence and
\begin{enumerate}
\item 
For all $\phi \in (-\pi,\pi]^d$,
\begin{equation}
\lim_{N \to \infty}\frac{1}{\Lambda_N}\sum_{j,k\in\Lambda_N}\e^{-\Im(k,\phi)}=1_{\{0\}}(\phi).
\end{equation}
\item  
For all $\delta>0$ there exist $\epsilon(N,\delta)>0$ such that if $J_\delta=(-\delta,\delta)$, then 
\begin{equation}
\left|\frac{1}{\Lambda_N}\sum_{j,k\in\Lambda_N}\e^{-\Im (k,\phi)}-1_{\{0\}}(\phi)\right|
\leq 1_{J_\delta}(\phi)+\epsilon(N,\delta),
\end{equation}
where $\epsilon(N,\delta) \downarrow 0$ as $N\to\infty$. 
\end{enumerate}  
We conclude that $\lambda_A(\{0\})=0$. Similarly we can show that $\lambda_D(\{0\})=0$, and hence $\mu\in\CR^{(1)}_\theta$.
\end{proof}

Recall that $(S_t)_{t \geq 0}$ is the semigroup associated with \eqref{gh1}--\eqref{gh2}. 

\begin{lemma}{\bf [Preservation]}
\label{lem2cg94}
If $b(\cdot,\cdot)$ is transient and $\mu\in\CR^{(1)}_\theta$, then the following hold:
\begin{enumerate}
\item[{\rm (a)}] 
$\mu S_t\in\CR^{(1)}_\theta$ for each $t\geq0$.
\item[{\rm (b)}] 
If $t_n\to\infty$ and $\mu S_{t_n}\to\mu(\infty)$, then $\mu(\infty)\in\CR^{(1)}_\theta$.
\end{enumerate}
\end{lemma}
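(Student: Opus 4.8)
The plan is to route everything through the $L^2$-characterization \eqref{rt} of membership in $\CR^{(1)}_\theta$: by the discussion following Definition~\ref{Rtheta}, it suffices to verify \eqref{rt} for a \emph{single} pair $(i,R_i)$, since this already forces both \eqref{fm} and \eqref{sm}. The engine of the proof is the second-moment relation \eqref{eqexprw2} of Lemma~\ref{lem1cg94}, together with the Chapman--Kolmogorov identity $\sum_{(k,R_k)} b^{(1)}_s((i,R_i),(k,R_k))\,b^{(1)}_t((k,R_k),(k',R_{k'})) = b^{(1)}_{s+t}((i,R_i),(k',R_{k'}))$ for the dual kernel. The key observation is that smearing a configuration by $b^{(1)}_s$ and then running the diffusion for time $t$ recombines, via Chapman--Kolmogorov, into smearing by $b^{(1)}_{s+t}$, \emph{plus} a nonnegative ``diffusive'' remainder coming from the integral term in \eqref{eqexprw2}; transience is then used precisely to make this remainder negligible.

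For part (a), translation invariance of $\mu S_t$ is immediate from that of $\mu$ and the translation covariance of the dynamics, and the first-moment condition \eqref{fm} for $\mu S_t$ follows from the semigroup identity $\E_{\mu S_t}[z_{(i,R_i)}(s)] = \E_\mu[z_{(i,R_i)}(s+t)]$ and condition \eqref{fm} for $\mu$ as $s\to\infty$. For the second moment I would substitute \eqref{eqexprw2} (with initial law $\mu$, run for time $t$) into the defining double sum \eqref{sm} for $\mu S_t$; Chapman--Kolmogorov collapses the leading term into $\sum b^{(1)}_{s+t}((i,R_i),\cdot)\,b^{(1)}_{s+t}((j,R_j),\cdot)\,\E_\mu[\,\cdot\,]$, which tends to $\theta^2$ as $s\to\infty$ because $\mu\in\CR^{(1)}_\theta$ satisfies \eqref{sm} and $s+t\to\infty$. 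The remainder is $2\int_0^t \d s' \sum_m b^{(1)}_{s+t-s'}((i,R_i),(m,A))\,b^{(1)}_{s+t-s'}((j,R_j),(m,A))\,\E_\mu[g(x_m(s'))]$, which is bounded by $2t\,\|g\|_\infty\,\sup_{r\ge s} p(r)$, where $p(r) = \sum_m b^{(1)}_r((i,R_i),(m,A))\,b^{(1)}_r((j,R_j),(m,A))$ is the probability that two independent dual walks are simultaneously active at a common colony; since $p(r)\to 0$ (the walks spread out), the remainder vanishes as $s\to\infty$, giving \eqref{sm} for $\mu S_t$.

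For part (b), weak convergence preserves translation invariance, and testing against the bounded continuous coordinate functions gives $\E_{\mu(\infty)}[z_{(i,R_i)}] = \lim_n \E_\mu[z_{(i,R_i)}(t_n)] = \theta$ by \eqref{fm}, whence \eqref{fm} holds for $\mu(\infty)$ by Remark~\ref{rem:prob}. To obtain \eqref{rt} for $\mu(\infty)$, set $W_s(\nu) = \E_\nu[(\sum_{(k,R_k)} b^{(1)}_s((i,R_i),(k,R_k))\, z_{(k,R_k)} - \theta)^2]$. For each fixed $s$, dominated convergence in the (probability) kernel sum gives $W_s(\mu(\infty)) = \lim_n W_s(\mu S_{t_n})$. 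The same substitution as in part (a), now with $j=i$, yields the exact identity $W_s(\mu S_{t_n}) = W_{s+t_n}(\mu) + R_{s,t_n}$ with $0\le R_{s,t_n}\le 2\|g\|_\infty \int_s^{\infty}\d v\, p(v)$ after the change of variables $v = s+t_n-s'$. Letting $n\to\infty$ and using $W_{s+t_n}(\mu)\to 0$ (valid since $\mu\in\CR^{(1)}_\theta$) gives $W_s(\mu(\infty)) \le 2\|g\|_\infty \int_s^\infty \d v\, p(v)$, and letting $s\to\infty$ forces $W_s(\mu(\infty))\to 0$, i.e. \eqref{rt} for $\mu(\infty)$. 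I expect the main obstacle to be exactly this interchange of the two limits in (b) --- the limit $n\to\infty$ defining $\mu(\infty)$ and the limit $s\to\infty$ defining membership. The resolution is that the diffusive remainder $R_{s,t_n}$ is dominated, \emph{uniformly in} $n$, by the convergent tail $\int_s^\infty p(v)\,\d v$, which $\to 0$ as $s\to\infty$ because $\int_0^\infty p(v)\,\d v$ is precisely the expected total time two dual walks spend together while active, shown in Lemma~\ref{lem:clusgisgfw} to be finite if and only if $\hat a(\cdot,\cdot)$ is transient; this is where the standing transience hypothesis is genuinely used.
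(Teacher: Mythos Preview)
Your proposal is correct and follows essentially the same route as the paper: both use Lemma~\ref{lem1cg94} for the moments, Chapman--Kolmogorov to collapse the kernels, and transience of the dual walk to kill the diffusive remainder. Your packaging via the $L^2$-functional $W_s$ and the equivalence \eqref{rt} is a slight reorganisation of the paper's argument, which instead verifies \eqref{fm} and \eqref{sm} separately and, for (b), sandwiches $\E_{\mu(\infty)}[z_{(i,R_i)} z_{(j,R_j)}]$ between $\theta^2$ and $\theta^2 + 2\|g\|\int_0^\infty p(r)\,\d r$ before smearing with $b^{(1)}_s$; your exact identity $W_s(\mu S_{t_n}) = W_{s+t_n}(\mu) + R_{s,t_n}$ is arguably cleaner. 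One minor point: in part (a) you bound the remainder by $2t\|g\|_\infty \sup_{r\ge s} p(r)$ and appeal to ``the walks spread out'' to get $p(r)\to 0$; this is true but the more direct bound (and the one the paper uses, and you yourself use in (b)) is $2\|g\|_\infty \int_s^{s+t} p(v)\,\d v \le 2\|g\|_\infty \int_s^\infty p(v)\,\d v \to 0$, which ties in immediately with the transience hypothesis $I_{\hat a}<\infty$.
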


\begin{proof} 
Our dynamics preserve translation invariance. To check property (1) of $\CR^{(1)}_\theta$ (see \eqref{fm}), set $f(z)=z_{(i,R_i)}$. Since $\mu\in \CR^{(1)}_\theta$, applying Lemma \ref{lem1cg94} multiple times, we obtain
\begin{equation}
\label{mo56}
\begin{aligned}
\lim_{s\to \infty} \E_{\mu S_t}[z_{(i,R_i)}(s)]
&= \lim_{s\to\infty} \sum_{(k,R_k)\in \G\times\{A,D\}} b^{(1)}_s\big((i,R_i),(k,R_k)\big)\,
\E_{\mu S_t}[z_{(k,R_k)}]\\
&= \lim_{s\to\infty} \sum_{(k,R_k)\in \G\times\{A,D\}} b^{(1)}_s\big((i,R_i),(k,R_k)\big)\,
\E_{\mu}[z_{(k,R_k)}(t)]\\
&=\lim_{s\to\infty} \sum_{(k^\prime,R^\prime_{k^\prime})\in \G\times\{A,D\}} 
b^{(1)}_{s+t}\big((i,R_i),(k^\prime,R_{k^\prime})\big)\,\E_{\mu}[z_{(k^\prime,R_{k^\prime})}]\\
&=\lim_{s\to \infty} \E_{\mu}[z_{(i,R_i)}(t+s)]=\theta.
\end{aligned}
\end{equation}
To check property (2) of $\CR^{(1)}_\theta$ (see \eqref{sm}), we set $f(z)=z_{(i,R_i)}z_{(j,R_j)}$. Then, again by applying Lemma \ref{lem1cg94}, we find
\begin{equation}
\begin{aligned}
&\lim_{s\to \infty} \sum_{(k,R_k),(l,R_l)\in \G\times\{A,D\}}
b^{(1)}_s\big((i,R_i),(k,R_k)\big)\,b^{(1)}_s\big((j,R_j),(l,R_l)\big)\,\E_{\mu S_t}[z_{(k,R_k)}z_{(l,R_l)}]\\
&=\lim_{s\to \infty}\sum_{(k,R_k),(l,R_l)\in \G\times\{A,D\}} 
b^{(1)}_s\big((i,R_i),(k,R_k)\big)\,b^{(1)}_s\big((j,R_j),(l,R_l)\big)\,\E_{\mu}[z_{(k,R_k)}(t)z_{(l,R_l)}(t)]\\ 
&=\lim_{s\to \infty}\Bigg[\sum_{(k^\prime,R_{k^\prime}),(l^\prime,R_{l^\prime})\in \G\times\{A,D\}} 
b^{(1)}_{t+s}\big((i,R_i),(k^\prime,R_{k^\prime})\big)\,b^{(1)}_{t+s}\big((j,R_j),(l^\prime,R_{l^\prime})\big)\,
\E_\mu[z_{(k^\prime,R_{k^\prime})}z_{(l^\prime,R_{l^\prime})}]\\
&\quad +\,2\int_{0}^{t} \d r\,\sum_{k^\prime\in \G} 
b^{(1)}_{t-r+s}\big((i,R_i),(k^\prime,A)\big)\,b^{(1)}_{t-r+s}\big((j,R_j),(k^\prime,A)\big)\,\E_\mu[g(x_{k^\prime}(r))]\Bigg].
\end{aligned}
\end{equation} 
Since $\mu\in\CR^{(1)}_\theta$, we are left to show that 
\begin{equation}
\lim_{s\to \infty}\int_{s}^{t+s} \d u\,\sum_{k^\prime \in \G} 
b^{(1)}_{u}\big((i,R_i),(k^\prime,A)\big)\,b^{(1)}_{u}\big((j,R_j),(k^\prime,A)\big)\,\E_\mu[g(x_{k^\prime}(t+s-u))]=0.
\end{equation}	
Using the notation of Section \ref{ss.clustcase}, we get
\begin{equation}
\begin{aligned}
&\lim_{s\to \infty} \int_{s}^{t+s} \d u\,\sum_{k^\prime \in \G} 
b^{(1)}_{u}\big((i,R_i),(k^\prime,A)\big)\,b^{(1)}_{u}\big((j,R_j),(k^\prime,A)\big)\,\E_\mu[g(x_{k^\prime}(t+s-u))]\\
&\leq \|g\| \lim_{s\to \infty}\int_{s}^{t+s} \d u\,\sum_{k^\prime \in \G} 
b^{(1)}_{u}\big((i,R_i),(k^\prime,A)\big)\,b^{(1)}_{u}\big((j,R_j),(k^\prime,A)\big)\\
&= \|g\| \lim_{s\to \infty} \int_{s}^{t+s}\d u\,\,\E_{(i,R_i),(j,R_j)}
\left[\sum_{k^\prime \in\G} a_{T(u)}(i,k^\prime)\,1_{\CE(u)}\,a_{T^\prime(u)}(j,k^\prime)\,1_{\CE^\prime(u)}\right]\\
&\leq \|g\| \lim_{s\to \infty} \int_{s}^{t+s}\d u\,\,\E_{(0,A),(0,A)}
\left[\sum_{k^\prime\in\G} a_{T(u)}(i,k^\prime)\,1_{\CE(u)}\,a_{T^\prime(u)}(j,k^\prime)\,1_{\CE^\prime(u)}\right]
= 0,
\end{aligned}
\end{equation}
where the last equality follows from the assumption $I_{\hat{a}}<\infty$ in Theorem \ref{T.ltb1}, \eqref{same2} and \eqref{ISLLN}. The last inequality follows from the Markov property and the observation that, in order to get a contribution to the integral, the two random walks first have to meet at the same site and both be active. We conclude that $\mu S_t\in\CR^{(1)}_\theta$ for all $t\geq0$. 

To show that $\mu(\infty)\in\CR^{(1)}_\theta$, we proceed like in \eqref{mo56}, to obtain
\begin{equation}
\lim_{s\to \infty}\E_{\mu(\infty)}[z_{(i,R_i)}(s)]
=\lim_{s\to \infty}\lim_{n\to\infty}\E_{\mu S_{t_n}}[z_{(i,R_i)}(s)]
= \lim_{s\to \infty}\lim_{n\to\infty}\E_\mu[z_{(i,R_i)}(t_n+s)]=\theta,
\end{equation}
and so \eqref{fm} is satisfied. To get \eqref{sm}, we note that, by Lemma \ref{lem1cg94},
\begin{equation}
\begin{aligned}
&\sum_{(k,R_k),(l,R_l)\in \G\times\{A,D\}} 
b^{(1)}_{t_n}\big((i,R_i),(k,R_k)\big)\,b^{(1)}_{t_n}\big((j,R_j),(l,R_l)\big)\,\E_\mu[z_{(k,R_k)}z_{(l,R_l)}]\\
& \leq E_\mu[z_{(i,R_i)}(t_n)z_{(j,R_j)}(t_n)]\\
&\leq \sum_{(k,R_k),(l,R_l)\in \G\times\{A,D\}} 
b^{(1)}_{t_n}\big((i,R_i),(k,R_k)\big)\,b^{(1)}_{t_n}\big((j,R_j),(l,R_l)\big)\,\E_\mu[z_{(k,R_k)}z_{(l,R_l)}]\\
&\qquad +\,2\|g\|\int_{0}^{t_n}\d s \sum_{k\in \G} 
b^{(1)}_{t_n-s}\big((i,R_i),(k,A)\big)\,b^{(1)}_{t_n-s}\big((j,R_j),(k,A)\big).
\end{aligned}
\end{equation}
Letting $n\to\infty$, we see that, since $\mu\in\CR^{(1)}_\theta$,
\begin{equation}
\label{mo44}
\begin{aligned}
\theta^2
&\leq \E_{\mu(\infty)}[z_{(i,R_i)} z_{(j,R_j)}]\\
&\leq \theta^2 + 2\|g\| \int_{0}^{\infty} \d s\,
\sum_{k\in \G} b^{(1)}_{r}\big((i,R_i),(k,A)\big)\,b^{(1)}_{r}\big((j,R_j),(k,A)\big).
\end{aligned}
\end{equation}
Inserting \eqref{mo44} into \eqref{sm}, we see that  it is enough to show that 
\begin{equation}
\begin{aligned}
&\lim_{s\to \infty} \sum_{(k,R_k),(l,R_l)\in \G\times\{A,D\}}b^{(1)}_s\big((i,R_i),(k,R_k)\big)\,
b^{(1)}_s\big((j,R_j),(l,R_l)\big)\\
&\qquad \times\,\, 2\|g\| \int_{0}^{\infty} \d r\,\sum_{k^\prime \in \G} 
b^{(1)}_{r}((k,R_k),(k^\prime,A))\,b^{(1)}_{r}((l,R_l),(k^\prime,A))\\
&= \lim_{s\to\infty} 2\|g\| \int_{0}^{\infty} \d r\,\sum_{k^\prime \in \G} 
b^{(1)}_{r+s}\big((i,R_i),(k^\prime,A)\big)\,b^{(1)}_{r+s}\big((j,R_j),(k^\prime,A)\big)
=  0.
\end{aligned}
\end{equation}
However, from the assumption $I_{\hat{a}}<\infty$ in Theorem \ref{T.ltb1}, \eqref{same2} and \eqref{ISLLN}, we have
\begin{equation}
\begin{aligned}
& \lim_{s\to\infty} \|g\| \int_{0}^{\infty} \d r\,\sum_{k^\prime \in \G} 
b^{(1)}_{r+s}\big((i,R_i),(k^\prime,A)\big)\,b^{(1)}_{r+s}\big((j,R_j),(k^\prime,A)\big)\\
&= \lim_{s\to\infty} \|g\| \int_{s}^{\infty} \d r\,\,
\E_{(i,R_i),(j,R_j)}\left[\sum_{k^\prime \in\G}a_{T(r)}(i,k^\prime)1_{\CE(r)}
a_{T^\prime(r)}(j,k^\prime)1_{\CE^\prime(r)}\right] = 0.
\end{aligned}
\end{equation}
\end{proof}

\paragraph{2. Uniqueness of the equilibrium.} 

In this section we show that, for given $\theta$, the equilibrium when it exists is unique. To prove this we extend the coupling argument in \cite{CG94}. Consider two copies of the system \eqref{gh1}--\eqref{gh2} \emph{coupled via their Brownian motions}:     
\begin{eqnarray}
\label{gh1k}
\d x^k_i(t) &=& \sum_{j \in \G} a(i,j)\, [x^k_j (t) - x^k_i(t)]\,\d t
+ \sqrt{g(x^k_i(t))}\,\d w_i(t) +\,Ke\,[y^k_i(t) - x^k_i(t)]\,\d t,\\[0.2cm]
\label{gh2k}
\d y^k_i (t) &=& e\,[x^k_i(t) - y^k_i(t)]\,\d t,\qquad k \in \{1,2\}.
\end{eqnarray}
Here, $k$ labels the copy, and the two copies are driven by the same set of Brownian motions $(w_i(t))_{t \geq 0}$, $i \in 
\G$. As initial probability distributions we choose $\mu^1(0)$ and $\mu^2(0)$ that are both invariant and ergodic under translations. 

Let 
\begin{equation}
\label{e1809}
\bar{z}_i(t) = (z^1_i(t),z^2_i(t)), \qquad z^k_i(t) = (x^k_i(t),y^k_i(t)), \quad k \in \{1,2\}.
\end{equation} 
The coupled system $(\bar{z}_i(t))_{i\in\G}$ has a unique strong solution \cite[Theorem 3.2]{SS80} whose marginals are the single-component systems. Write $\hat\P$ to denote the law of the coupled system, and let $\Delta_i(t) = x^1_i(t)-x^2_i(t)$ and $\delta_i(t) = y^1_i(t)-y^2_i(t)$.  

\begin{lemma}{\bf [Coupling dynamics]}
\label{L.coup}
For every $t \geq 0$,
\begin{equation}
\label{couprel}
\begin{aligned}
\frac{\d}{\d t}\,\hat\E\big[|\Delta_i(t)| + K |\delta_i(t)|\big] 
&= - 2 \sum_{j \in \G} a(i,j)\,\hat\E\left[|\Delta_j(t)|\,
1_{\{\sign\Delta_i(t)\,\neq\,\sign\Delta_j(t)\}}\right]\\
&\qquad - 2Ke\,\hat\E\left[\big(|\Delta_i(t)| + |\delta_i(t)|\big)\,
1_{\{\sign\Delta_i(t)\,\neq\,\sign\delta_i(t)\}}\right].
\end{aligned}
\end{equation} 
\end{lemma}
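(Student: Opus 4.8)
The plan is to derive the evolution equations for the differences $\Delta_i(t)=x^1_i(t)-x^2_i(t)$ and $\delta_i(t)=y^1_i(t)-y^2_i(t)$ and then to differentiate $\hat\E[|\Delta_i|+K|\delta_i|]$ by applying an It\^o/Tanaka argument to the non-smooth map $|\cdot|$. Subtracting the two copies of \eqref{gh1k}--\eqref{gh2k}, and using that both copies are driven by the \emph{same} Brownian motions $w_i$, gives
\[
\d\Delta_i(t)=\sum_{j\in\G}a(i,j)\,[\Delta_j(t)-\Delta_i(t)]\,\d t+Ke\,[\delta_i(t)-\Delta_i(t)]\,\d t+\big[\sqrt{g(x^1_i(t))}-\sqrt{g(x^2_i(t))}\big]\,\d w_i(t),
\]
together with the noise-free equation $\d\delta_i(t)=e\,[\Delta_i(t)-\delta_i(t)]\,\d t$. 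The crucial structural fact is that the diffusion coefficient of $\Delta_i$ \emph{vanishes} when $\Delta_i=0$: since $g\in\CG$ is Lipschitz and nonnegative (recall \eqref{gh6}), $\sqrt g$ is $\tfrac12$-H\"older, so $\big[\sqrt{g(x^1_i)}-\sqrt{g(x^2_i)}\big]^2\le C\,|x^1_i-x^2_i|=C\,|\Delta_i|$ for some $C<\infty$. Throughout I use that $\mu^1(0),\mu^2(0)$, and hence $\hat\P$ at every later time, are translation invariant, so that $\hat\E[|\Delta_j(t)|]=\hat\E[|\Delta_i(t)|]$ for all $i,j\in\G$.

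First I would treat $\hat\E[|\Delta_i(t)|]$. Because $x\mapsto|x|$ is not differentiable at $0$, I would apply It\^o's formula to a Yamada--Watanabe smoothing $\phi_n$ of $|\cdot|$ (even, convex, $\phi_n\uparrow|\cdot|$, $\phi_n'\to\sign$ with $\phi_n'(0)=0$, and $\phi_n''(x)\,|x|\le 2/n$), take $\hat\E$, and send $n\to\infty$. The stochastic integral is a genuine martingale, since $\sqrt g$ is bounded on $[0,1]$, so it drops out under $\hat\E$. The second-order (local-time) term is controlled by
\[
\tfrac12\,\hat\E\big[\phi_n''(\Delta_i)\,\big(\sqrt{g(x^1_i)}-\sqrt{g(x^2_i)}\big)^2\big]\le \tfrac{C}{2}\,\hat\E\big[\phi_n''(\Delta_i)\,|\Delta_i|\big]\le \tfrac{C}{n}\longrightarrow 0,
\]
which is exactly where the degeneracy of the noise at $\Delta_i=0$ is used. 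In the limit this yields $\frac{\d}{\d t}\hat\E[|\Delta_i(t)|]=\hat\E[\sign\Delta_i\,(\sum_{j}a(i,j)(\Delta_j-\Delta_i)+Ke\,(\delta_i-\Delta_i))]$, with the convention $\sign 0=0$. For $\delta_i$ no such care is needed: $t\mapsto\delta_i(t)$ is $C^1$, so $\frac{\d}{\d t}|\delta_i|=\sign(\delta_i)\,\dot\delta_i$ a.e.\ and $\frac{\d}{\d t}\hat\E[K|\delta_i(t)|]=Ke\,\hat\E[\sign\delta_i\,(\Delta_i-\delta_i)]$.

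The remaining step is algebraic, combined with the translation-invariance cancellation. I would use the elementary identity, valid for $a\neq0$,
\[
\sign(a)\,(b-a)=-2|b|\,1_{\{ab<0\}}+|b|-|a|,
\]
applied with $(a,b)=(\Delta_i,\Delta_j)$, with $(\Delta_i,\delta_i)$, and with $(\delta_i,\Delta_i)$. For the migration part this produces the desired term $-2\sum_{j}a(i,j)\,\hat\E[|\Delta_j|\,1_{\{\sign\Delta_i\neq\sign\Delta_j\}}]$ plus the leftover $\hat\E[\sum_{j}a(i,j)(|\Delta_j|-|\Delta_i|)]$, which vanishes by translation invariance. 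For the seed-bank part, adding the two $Ke$-contributions from $|\Delta_i|$ and from $K|\delta_i|$ gives $-2Ke\,\hat\E[(|\Delta_i|+|\delta_i|)\,1_{\{\sign\Delta_i\neq\sign\delta_i\}}]$, while the linear leftovers $(|\delta_i|-|\Delta_i|)$ and $(|\Delta_i|-|\delta_i|)$ cancel identically. Summing the two contributions gives \eqref{couprel}. The main obstacle is the non-smoothness of $|\cdot|$ at the origin and the contribution of the exceptional set $\{\Delta_i(t)=0\}$: the clean equality \eqref{couprel} requires both that the local-time term genuinely disappears (which rests entirely on the $\tfrac12$-H\"older degeneracy of $\sqrt g$ at $\Delta_i=0$) and that the times/events where $\Delta_i=0$ contribute nothing to the drift identity — note that with $\sign 0=0$ the first-order migration term vanishes on $\{\Delta_i=0\}$, matching the fact that $1_{\{\sign\Delta_i\neq\sign\Delta_j\}}$ is there read as $1_{\{\Delta_i\Delta_j<0\}}$. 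Making the interchange of $\frac{\d}{\d t}$ and $\hat\E$ and the passage $n\to\infty$ rigorous, and verifying that the $\{\Delta_i=0\}$ contribution is negligible, is the part that needs the most care.
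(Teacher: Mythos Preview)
Your proposal is correct and follows essentially the same route as the paper: derive the SDE for $\Delta_i$ and the ODE for $\delta_i$, apply a Tanaka-type argument to $|\Delta_i|$ (the paper invokes the generalized It\^o formula with local time and cites \cite[Proposition V.39.3]{RoWi00} to conclude $L^0\equiv 0$ from the Lipschitz property of $g$, whereas you spell out the equivalent Yamada--Watanabe smoothing), and then use the same sign identity together with translation invariance to obtain \eqref{couprel}. The algebraic cancellation of the seed-bank leftovers and the migration leftovers is identical in both arguments.
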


\begin{proof}
\label{pr.1831}
Let $f(x) = |x|$, $x \in \R$. Then $f'(x) = \sign x$ and $f''(x) = 0$ for $x \neq 0$, but $f$ is not differentiable at $x=0$, a point the path hits. Therefore, by a generalization of It\^o's formula, we have
\begin{equation}
\label{Ito1}
\begin{aligned}
\d |\Delta_i(t)| &= \sign\Delta_i(t)\,\d\Delta_i(t) +\d  L^0_t,\\
\d\Delta_i(t) &= \sum_{j\in\G} a(i,j) [\Delta_j(t)-\Delta_i(t)]\,\d t
+ \left[\sqrt{g(x^1_i(t))} - \sqrt{g(x^2_i(t))}\,\right]\, \d w_i(t)\\
&\qquad\qquad  + Ke\,\big[\delta_i(t) - \Delta_i(t)\big]\,\d t,
\end{aligned}
\end{equation}
where $L^0_t$ is the local time of $\Delta_i(t)$ at $0$ (see \cite[Section IV.43]{RoWi00}). Next, we use that $\Delta_i(t)$ has zero local time at $x=0$ because $g$ is Lipschitz (see \cite[Proposition V.39.3]{RoWi00}). Taking expectation, we get
\begin{equation}
\label{Ito2}
\frac{\d}{\d t}\,\hat\E\big[|\Delta_i(t)|\big] 
= \sum_{j\in\G} a(i,j)\,\hat\E\Big[\sign\Delta_i(t)\,[\Delta_j(t)-\Delta_i(t)]\Big]
+ Ke\,\hat\E\Big[\sign\Delta_i(t)\,[\delta_i(t)-\Delta_i(t)]\Big].
\end{equation}
Similarly, we have
\begin{equation}
\label{Ito3}
\begin{aligned}
\d |\delta_i(t)| &= \sign\delta_i(t)\,\d\delta_i(t),\\
\d\delta_i(t) &= e\,\big[\Delta_i(t) - \delta_i(t)\big]\,\d t.
\end{aligned}
\end{equation}
Taking expectation, we get
\begin{equation}
\label{Ito4}
\frac{\d}{\d t}\,\hat\E\big[|\delta_i(t)|\big] 
=  e\,\hat\E\Big[\sign\delta_i(t)\,[\Delta_i(t)-\delta_i(t)]\Big].
\end{equation}
Combining \eqref{Ito2} and \eqref{Ito4}, we get
\begin{equation}
\label{Ito5}
\begin{aligned}
\frac{\d}{\d t}\,\hat\E\big[|\Delta_i(t)| + K |\delta_i(t)|\big] 
&= \sum_{j\in\G} a(i,j)\,\hat\E\Big[\sign\Delta_i(t)\,[\Delta_j(t)-\Delta_i(t)]\Big]\\
&\qquad + K\,e\,\hat\E\Big[ [\sign\Delta_i(t)-\sign\delta_i(t)]\,[\delta_i(t)-\Delta_i(t)]\Big].
\end{aligned}
\end{equation}
Note that 
\begin{equation}
\label{Ito6*}
\sign\Delta_i(t)\,[\Delta_j(t)-\Delta_i(t)] = |\Delta_j(t)|-|\Delta_i(t)|
-2\,|\Delta_j(t)|\,1_{\{\sign\Delta_i(t) \neq \sign \Delta_j(t)\}}.
\end{equation}
By translation invariance, $\E[|\Delta_i(t)|]$ is independent of $i$. Hence the first sum in the right-hand side can be rewritten as 
\begin{equation}
\label{Ito6}
-2 \sum_{j\in\G} a(i,j)\,\hat\E\Big[|\Delta_j(t)|\,1_{\{\sign\Delta_i(t) \neq \sign \Delta_j(t)\}}\Big].
\end{equation}
Similarly, the second sum in the right-hand side can be rewritten as  
\begin{equation}
\label{Ito7}
- 2Ke\,\hat\E\left[\big(|\Delta_i(t)| + |\delta_i(t)|\big)\,
1_{\{\sign\Delta_i(t)\,\neq\,\sign\delta_i(t)\}}\right].
\end{equation}
Combining \eqref{Ito5} and \eqref{Ito6}--\eqref{Ito7}, we get the claim. 
\end{proof}

Lemma~\ref{L.coup} tells us that $t \mapsto \hat\E[|\Delta_i(t)| + K |\delta_i(t)|]$ is a non-increasing Lyapunov function. Therefore $\lim_{t\to\infty} \hat\E[|\Delta_i(t)| + K |\delta_i(t)|] = c_i \in [0,1+K]$ exists. To show that the coupling is successful we need the following lemma.

\begin{lemma}{\bf [Uniqueness of equilibrium]}
\label{L.suc}
If $a(\cdot,\cdot)$ is transient, then $c_i=0$ for all $i \in \G$, and so the coupling is successful, i.e., 
\begin{equation}
\lim_{t\to\infty}\hat\E\big[|\Delta_i(t)| + K |\delta_i(t)|\big]=0. 
\end{equation} 
\end{lemma}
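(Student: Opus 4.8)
The plan is to exploit that Lemma~\ref{L.coup} exhibits $t\mapsto\hat\E[|\Delta_i(t)|+K|\delta_i(t)|]$ as a non-increasing, $[0,1+K]$-valued function, so that the limit $c_i$ exists; by translation invariance of the coupled initial law and of the dynamics, $c_i=c$ is independent of $i$, and it suffices to prove $c=0$. Integrating \eqref{couprel} over $[0,\infty)$ shows that the two nonnegative dissipation terms are integrable in time, hence there is a sequence $t_n\to\infty$ along which both
$$\sum_{j\in\G}a(i,j)\,\hat\E\big[|\Delta_j(t_n)|\,1_{\{\sign\Delta_i\neq\sign\Delta_j\}}\big]\longrightarrow0,\qquad \hat\E\big[(|\Delta_i(t_n)|+|\delta_i(t_n)|)\,1_{\{\sign\Delta_i\neq\sign\delta_i\}}\big]\longrightarrow0.$$

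Since $E\times E=[0,1]^{\S}\times[0,1]^{\S}$ is compact, the laws of $(\bar z_i(t_n))_{i\in\G}$ are tight; passing to a further subsequence I obtain a weak limit $\hat\nu\in\CP(E\times E)$, which is translation invariant and, by the Feller property (Theorem~\ref{T.wellp}(b)), invariant under the coupled flow. Under $\hat\nu$ the Lyapunov function is constant, so its time derivative vanishes; feeding this into \eqref{couprel} forces both dissipation terms to be zero $\hat\nu$-a.s. The vanishing of the second term gives $\sign\Delta_i=\sign\delta_i$ (on the event that they are nonzero), while the vanishing of the first term together with the irreducibility of $a(\cdot,\cdot)$ propagates the sign of $\Delta$ across all of $\G$. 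Consequently, under $\hat\nu$ the two configurations are \emph{ordered everywhere}: either $z^1_i\ge z^2_i$ for all $i$, or $z^1_i\le z^2_i$ for all $i$, consistently in the active and dormant coordinates. This is where transience enters, via Lemma~\ref{lem2cg94}: because $I_{\hat a}<\infty$, both marginals of $\hat\nu$ remain in the preserved class $\CR^{(1)}_\theta$, so their first moments equal $\theta$ and $\hat\E_{\hat\nu}[\Delta_0]=\hat\E_{\hat\nu}[\delta_0]=0$.

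It remains to deduce from ``ordered everywhere'' together with ``equal densities'' that $\hat\nu$ is concentrated on the diagonal $\{z^1=z^2\}$, whence $c=\hat\E_{\hat\nu}[|\Delta_0|+K|\delta_0|]=0$ and the coupling is successful. I would do this by decomposing $\hat\nu$ into its ergodic components under translations (legitimate since $\CT$ is a Choquet simplex): on each component the translation-invariant event $\{z^1_i\ge z^2_i\ \forall i\}$ has measure $0$ or $1$, so on each component one configuration dominates the other, and the $\CR^{(1)}_\theta$-structure of the marginals pins the component density to $\theta$, so the dominated difference has zero mean and constant sign and must vanish. \textbf{The main obstacle} is precisely this last step: ruling out a nontrivial, mean-preserving monotone coupling of two ordered equilibria, i.e.\ showing that the ergodic components of the limiting coupling all carry the \emph{same} density $\theta$ rather than forming a genuine mixture of densities averaging to $\theta$. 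This is the conceptual heart of the argument, and is exactly where the transient ($I_{\hat a}<\infty$) preservation of $\CR^{(1)}_\theta$ from Lemma~\ref{lem2cg94}, the equal-density constraint, and translation ergodicity have to be combined; once it is settled, $\lim_{t\to\infty}\hat\E[|\Delta_i(t)|+K|\delta_i(t)|]=0$ follows for every $i\in\G$.
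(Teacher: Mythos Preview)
Your overall architecture is close to the paper's, but two of the steps you treat as routine are in fact where the work lies, and the paper handles both differently from what you propose.

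\medskip
\textbf{Propagating the sign.} From the vanishing of the first dissipation term you only get, for $i,j$ with $a(i,j)>0$, that $|\Delta_j|\,1_{\{\sign\Delta_i\neq\sign\Delta_j\}}=0$ $\hat\nu$-a.s. This does \emph{not} immediately propagate along paths in the irreducible kernel: whenever $\Delta_{j}=0$ at an intermediate site the chain of implications breaks, so you cannot rule out configurations with $\Delta_i<0$ in one region, $\Delta_k>0$ in another, separated by a ``zero wall''. The paper devotes a separate lemma (Lemma~\ref{C2}, proved in five steps in Appendix~\ref{appC}) to this: it is a \emph{pathwise} argument, using the noise to show that if $\Delta_i<-\epsilon$ and $\Delta_k>\epsilon$ then with positive probability one can push $\Delta_j$ off zero while keeping the signs at $i,k$, thereby manufacturing a neighbour violation. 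This is how one gets $\lim_{t\to\infty}\hat\P(E_0\times E_0)=1$ for the global ordering event. Your one-line ``irreducibility propagates the sign'' hides exactly this difficulty. (Relatedly, your claim that the subsequential limit $\hat\nu$ is invariant under the coupled flow is unjustified---subsequential weak limits need not be invariant---though you can recover what you actually need from $h_i(t)\to0$ via properties (a)--(c) of $h_i$, as the paper does.)

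\medskip
\textbf{Your ``main obstacle'' is avoidable.} You are right that the ergodic-decomposition route is problematic: the ergodic components of $\hat\nu$ may have marginals with densities $\neq\theta$, and nothing in $\CR^{(1)}_\theta$ survives passage to components. The paper sidesteps this entirely with a direct computation (see \eqref{eqsuccoup}). On the full-measure set $E_0\times E_0$ the differences $z^1_{(j,R_j)}-z^2_{(j,R_j)}$ all share one sign, hence for any $t$
\[
\sum_{(j,R_j)} b^{(1)}_t\big((i,R_i),(j,R_j)\big)\,\big|z^1_{(j,R_j)}-z^2_{(j,R_j)}\big|
=\Big|\sum_{(j,R_j)} b^{(1)}_t\big((i,R_i),(j,R_j)\big)\,(z^1_{(j,R_j)}-z^2_{(j,R_j)})\Big|.
\]
By translation invariance of $\bar\nu_\theta$ the left-hand side, integrated against $\bar\nu_\theta$ and multiplied by $(1+K)$, equals $\int(|\Delta_i|+K|\delta_i|)\,\d\bar\nu_\theta$ (using that $\sum_j b^{(1)}_t((i,R_i),(j,A))\to\tfrac{1}{1+K}$ and the dormant analogue). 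The right-hand side is bounded, via the triangle inequality, by
\[
\int_E\Big|\sum_{(j,R_j)} b^{(1)}_t z^1_{(j,R_j)}-\theta\Big|\,\d\nu^1_\theta
\;+\;\int_E\Big|\sum_{(j,R_j)} b^{(1)}_t z^2_{(j,R_j)}-\theta\Big|\,\d\nu^2_\theta,
\]
and each term tends to $0$ as $t\to\infty$ because, by Lemma~\ref{lem2cg94}(b), the marginal limit points $\nu^1_\theta,\nu^2_\theta$ lie in $\CR^{(1)}_\theta$ and hence satisfy the $L^2$ (thus $L^1$) convergence \eqref{rt}. This yields $\int(|\Delta_i|+K|\delta_i|)\,\d\bar\nu_\theta=0$ directly, with no decomposition into ergodic components. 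This is the key trick you are missing: ordering lets you pull the absolute value outside the $b^{(1)}_t$-average, and the $\CR^{(1)}_\theta$ property controls the \emph{averaged} configuration, not the componentwise one.
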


\begin{proof}
Write $-h_i(t)$ to denote the right-hand side of \eqref{couprel}. We begin with the observation that 
$t \mapsto h_i(t)$ has the following properties:

\medskip
\begin{tabular}{ll}
&(a) $h_i \geq 0$.\\
&(b) $0 \leq \int_0^\infty \d t\,h_i(t) \leq 1+K$.\\
&(c) $h_i$ is differentiable with $h_i'$ bounded.
\end{tabular} 

\medskip\noindent
Property (a) is evident. Property (b) follows from integration of \eqref{couprel}:
\begin{equation}
\label{e3298}
\int_0^t \d s\,h_i(s) = \hat{\E}\big[|\Delta_i(0)|+K|\delta_i(0)|\big] - \hat{\E}\big[|\Delta_i(t)|+K|\delta_i(t)|\big]. 
\end{equation}  
The proof of Property (c) is given in Appendix \ref{appe}. It follows from (a)--(c) that $\lim_{t\to\infty} h(t) = 0$. Hence, for every $\epsilon>0$,
\begin{equation}
\label{eps}
\begin{aligned}
&\forall\,i,j\in\G \text{ with } a(i,j)>0\colon\\
&\qquad \lim_{t\to\infty} \hat{\P}\Big(\{\Delta_i(t)<-\epsilon,\, \Delta_j(t)>\epsilon\}
\cup \{\Delta_i(t)>\epsilon,\, \Delta_j(t)<-\epsilon\}\Big) = 0,\\ 
&\forall\,i\in\G\colon\\
&\qquad \lim_{t\to\infty} \hat{\P}\Big(\{\Delta_i(t)<-\epsilon,\, \delta_i(t)>\epsilon\}
\cup \{\Delta_i(t)>\epsilon,\, \delta_i(t)<-\epsilon\}\Big) = 0.
\end{aligned}
\end{equation}
In Appendix \ref{appC} we will prove the following lemma:

\begin{lemma}{\bf [Successful coupling ]}
\label{C2} 
For all $i,j\in\G$ and $\epsilon>0$,
\begin{equation}
\label{eqlem58}
\lim_{t\to\infty} \hat{\P}\Big(\{\Delta_i(t)<-\epsilon,\, \Delta_j(t)>\epsilon\}
\cup \{\Delta_i(t)>\epsilon,\, \Delta_j(t)<-\epsilon\}\Big) = 0.
\end{equation}
\end{lemma}

\noindent
The proof of this lemma relies on the fact that $\hat{a}(\cdot,\cdot)$ is irreducible. Let 
\begin{equation}
\begin{aligned}
E_0\times E_0=&\left\{\bar{z}\in E\times E\colon\, z_{(i,R_i)}^1(t)
\geq z_{(i,R_i)}^2(t)\ \forall (i,R_i)\in\G\times\{A,D\}\right\}
\\& \cup \left\{\bar{z}\in E\times E\colon\,z_{(i,R_i)}^2(t)
\geq z_{(i,R_i)}^1(t)\ \forall (i,R_i)\in\G\times\{A,D\}\right\}.
\end{aligned}
\end{equation}
Then Lemma \ref{C2} together with \eqref{eps} imply that $\lim_{t\to\infty}\hat{\P}\left(E_0\times E_0\right)=1$, which we express by saying that ``one diffusion lies on top of the other". 

Using Lemma \ref{C2} we can complete the proof of the successful coupling. Let $t_n\to\infty$ as $n\to\infty$ and suppose, by possibly going to further subsequences, that $\lim_{n\to\infty}\mu^1(t_n)=\nu_\theta^1$ and $\lim_{n\to\infty}\mu^2(t_n)=\nu_\theta^2$. Let $\bar{\nu}_\theta$ be the measure on $E\times E$ given by $\bar{\nu}_\theta = \nu^1_\theta\times \nu^2_\theta$. Using dominated convergence, invoking the preservation of translation invariance, and using the limiting distribution of $b^{(1)}_t(\cdot,\cdot)$ on $\{A,D\}$, we find
\begin{equation}
\begin{aligned}
\label{eqsuccoup} 
&\int_{E\times E} \d\bar{\nu}_\theta\,|\Delta_i|+K|\delta_i|\\
&\quad = (1+K) \int_{E_0\times E_0} \d\bar{\nu}_\theta \lim_{n\to\infty} \sum_{j\in\G}
\left[b^{(1)}_{t_n}\big((i,R_i),(j,A)\big)\,|x^1_i-x^2_i|+b^{(1)}_{t_n}\big((i,R_i),(j,D)\big)\,|y^1_i-y_i^2|\right]\\
&\quad = \lim_{n\to\infty} (1+K) \int_{E_0\times E_0} \d\bar{\nu}_\theta 
\left|\sum_{j\in\G\times\{A,D\}} b^{(1)}_{t_n}\big((i,R_i),(j,R_j)\big)\,(z^1_{(j,R_j)}-z^2_{(j,R_j)})\,\right| \\
&\quad \leq \lim_{n\to\infty}(1+K) \int_{E} \d\nu^1_\theta \left|\sum_{j\in\G\times\{A,D\}}
b^{(1)}_{t_n}\big((i,R_i),(j,R_j)\big)\,z^1_{(j,R_j)}-\theta\,\right|\\
&\qquad + \lim_{n\to\infty} (1+K) \int_{E} \d\nu^2_\theta \left|\sum_{i\in\G\times\{A,D\}}
b^{(1)}_{t_n}\big((i,R_i),(j,R_j)\big)\,z^2_{(j,R_j)}-\theta\,\right| = 0.
\end{aligned}
\end{equation}
Here, the last equality follows because both $\nu_\theta^1$ and $\nu_\theta^2$ are in $\CR^{(1)}_\theta$ by Lemma \ref{lem2cg94}. Thus, we see that $\bar{\nu}_\theta$ concentrates on the diagonal. Suppose now that there exists a sequence $(t_n)_{n\in\N}$ such that $\lim_{n\to\infty} \E[|\Delta_i(t_n)|+K|\delta_i(t_n)|]=\delta>0$. Since $\{\mathcal{L}(\bar{Z}(t_n))\}_{n\in\N}$ is tight (recall \eqref{e1809}), by Prokhorov's theorem there exists a converging subsequence $\{\mathcal{L}(\bar{Z}(t_{n_k}))\}_{k\in\N}$. Let $\bar\nu_\theta$ denote the limiting measure. Then, by Lemma \ref{lem2cg94} and \eqref{eqsuccoup},
\begin{equation}
\delta = \lim_{k\to\infty}\E[|\Delta_i(t_{n_k})|+K|\delta_i(t_{n_k})|]
= \int_{E\times E}\d\bar{\nu}_\theta\,[|\Delta_i|+K|\delta_i|]=0.
\end{equation}
Thus, $\lim_{t \to \infty}\E[|\Delta_i(t)|+K|\delta_i(t)|]=0$, and we conclude that the coupling is successful. Hence, given the initial average density $\theta$ in \eqref{thetadef}, the equilibrium measure is unique if it exists. 
\end{proof}


\paragraph{3. Stationarity of $\nu_\theta$ and convergence to $\nu_\theta$.} 

\begin{lemma}{\bf [Existence of equilibrium]}
\label{lemenu}
Let $\mu(0)\in\CT^{\mathrm{erg}}_\theta$. Then $\lim_{t\to\infty}\mu(t)=\nu_\theta$ for some invariant measure $\nu_\theta$.
\end{lemma}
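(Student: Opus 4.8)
The plan is to combine relative compactness of the orbit $\{\mu(t)\}_{t\ge 0}$ (where $\mu(t)=\mu(0)S_t$) with the successful coupling of Lemma~\ref{L.suc}, in order to show that every subsequential weak limit is an invariant measure and that all such limits coincide. First I would note that the state space $E=([0,1]\times[0,1])^\G$ is compact in the product topology, so $\{\mu(t)\}_{t\ge 0}$ is relatively compact in $\CP(E)$ for the weak topology and subsequential limits along sequences $t_n\to\infty$ exist. By Lemma~\ref{muinrtheta} we have $\mu(0)\in\CR^{(1)}_\theta$, and by Lemma~\ref{lem2cg94}(b) any subsequential limit $\nu=\lim_{n\to\infty}\mu(t_n)$ again lies in $\CR^{(1)}_\theta$.

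The key step is to show that every such $\nu$ is invariant under the evolution. Fix $s>0$ and couple the process started from $\mu(0)$ with the process started from $\mu(s)=\mu(0)S_s$ via their Brownian motions, as in \eqref{gh1k}--\eqref{gh2k}. Both initial laws are translation invariant and, by Lemma~\ref{lem2cg94}(a), both lie in $\CR^{(1)}_\theta$; this is exactly what the proof of Lemma~\ref{L.suc} requires, since ergodicity under translations is never used there, only membership in $\CR^{(1)}_\theta$ (the final estimate \eqref{eqsuccoup} only uses that the subsequential limits of the two marginals lie in $\CR^{(1)}_\theta$). Hence, because $\hat a(\cdot,\cdot)$ is transient, the coupling is successful: $\hat\E[|\Delta_i(t)|+K|\delta_i(t)|]\to 0$ for every $i\in\G$, where the marginals at time $t$ are now $\mu(t)$ and $\mu(t+s)$. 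For any cylinder function $f$ depending on finitely many coordinates and Lipschitz in each, this yields $|\E_{\mu(t)}[f]-\E_{\mu(t+s)}[f]|\to 0$, and since continuous cylinder functions are convergence determining on the compact space $E$ it follows that $\mu(t_n)\Rightarrow\nu$ forces $\mu(t_n+s)\Rightarrow\nu$ as well. On the other hand, the Feller property established in Theorem~\ref{T.wellp}(b) gives $\mu(t_n+s)=\mu(t_n)S_s\Rightarrow\nu S_s$. Comparing the two limits yields $\nu S_s=\nu$ for every $s>0$, so $\nu$ is invariant.

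Finally I would invoke uniqueness. By the previous step every subsequential limit of $\{\mu(t)\}_{t\ge 0}$ is an invariant measure lying in $\CR^{(1)}_\theta$, and Lemma~\ref{L.suc} shows that the invariant measure with prescribed density $\theta$ in \eqref{thetadef} is unique: coupling two such invariant measures preserves their marginals, and the successful coupling forces the limiting coupling to concentrate on the diagonal, whence the two measures coincide. Consequently all subsequential limits are equal to one and the same invariant measure $\nu_\theta$, and relative compactness then upgrades subsequential convergence to full convergence $\lim_{t\to\infty}\mu(t)=\nu_\theta$. The main obstacle is the invariance step: one must make rigorous the passage from the coordinatewise $L^1$-coupling estimate of Lemma~\ref{L.suc} to weak convergence of the time-shifted laws, and verify that the coupling argument of Lemmas~\ref{L.coup}--\ref{L.suc} indeed goes through for the non-ergodic (but translation-invariant and $\CR^{(1)}_\theta$) initial law $\mu(s)$. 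The remaining tightness and uniqueness inputs are then routine consequences of the compactness of $E$ and of the already-established successful coupling.
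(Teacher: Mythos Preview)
Your proposal is correct and follows essentially the same strategy as the paper: compactness of $E$ for subsequential limits, Lemma~\ref{lem2cg94} for preservation of $\CR^{(1)}_\theta$, the successful coupling of Lemma~\ref{L.suc} applied to a time-shifted pair of initial laws to obtain invariance, and then uniqueness via the same coupling. The only organizational difference is that the paper first constructs $\nu_\theta$ from the specific initial law $\delta_\theta$ and afterwards couples a general $\mu\in\CR^{(1)}_\theta$ to $\nu_\theta$, whereas you work directly with $\mu(0)$ and $\mu(0)S_s$; your observation that Lemma~\ref{L.suc} only uses translation invariance and membership in $\CR^{(1)}_\theta$ (not ergodicity) is exactly what makes either variant go through.
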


\begin{proof}
To prove that the limit is an invariant measure, suppose that $\mu(0)=\mu=\delta_\theta$. Since the state space of $(Z(t))_{t\geq 0}$ is compact, each sequence $\{\mathcal{L}(Z(t_n))\}_{n\in\N}$ is tight. Hence, by Prokhorov's theorem, there exists a converging subsequence such that $\lim_{n\to\infty}\delta_\theta S_{t_n}=\nu_\theta$. Since $\delta_\theta\in\CR^{(1)}_\theta$, Lemma \ref{lem2cg94} tells us that $\lim_{n\to\infty} \delta_\theta S_{t_n}\in \CR^{(1)}_\theta$. To prove that $\nu_\theta$ is invariant, fix any $s_0\geq0$. Let $\mu=\delta_\theta S_{s_0}$. Then, by Lemma \ref{lem2cg94}, $\mu\in\CR^{(1)}_\theta$ and, by Lemma \ref{C2}, we can find a further subsequence such that $\lim_{k\to\infty} \mu(t_{n_k})=\nu_\theta$. By the Feller property of the SSDE in \eqref{gh1}--\eqref{gh2}, we obtain
\begin{equation}
\nu_\theta S_{s_0}= \lim_{n\to\infty} \delta_\theta S_{t_n} S_{s_0}
= \lim_{k\to\infty}\delta_\theta S_{s_0} S_{t_{n_k}} =\lim_{k\to\infty} \mu S_{t_{n_k}} = \nu_\theta.
\end{equation}
Hence, $\nu_\theta$ is indeed an invariant measure.
	
To prove the convergence of $\mu(t)$ to $\nu_\theta$, note that $\nu_\theta\in\CR^{(1)}_\theta$ by Lemma~\ref{lem2cg94}. Let $\nu=\nu_\theta$. Then, by the invariance of $\nu_\theta$, we have $\lim_{t\to\infty} \nu S_t=\nu_\theta$. By Lemma \ref{L.suc}, we have $\lim_{t\to\infty}\mu S_t=\lim_{t\to\infty}\nu S_t = \nu_\theta$ for all $\mu\in\CR^{(1)}_\theta$.   
\end{proof}


\paragraph{4. Ergodicity, mixing and associatedness.}

\begin{lemma}{\bf [Properties of equilibrium]}
\label{lem10}
Let $\mu(0)\in\CR^{(1)}_\theta$ be ergodic under translations. Then $\nu_\theta=\lim_{t\to\infty} \mu(t)$ is ergodic and mixing under translations, and is associated. 
\end{lemma}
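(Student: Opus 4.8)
The plan is to establish the three properties of $\nu_\theta$ in the order \emph{association}, \emph{mixing}, \emph{ergodicity}, deducing the last two from the first together with the moment relations of Lemma~\ref{lem1cg94}. The key observation throughout is that $\nu_\theta$ is the \emph{unique} equilibrium of density $\theta$ (Lemmas~\ref{L.suc} and~\ref{lemenu}), so it is realised as $\lim_{t\to\infty}\mu(t)$ for \emph{any} $\mu(0)\in\CR^{(1)}_\theta$; I am therefore free to choose whatever starting measure is most convenient for each property.

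\textbf{Association.} I would start the system from $\mu(0)=\delta_\theta$, the point mass at the constant configuration $(x_i,y_i)=(\theta,\theta)_{i\in\G}$, which lies in $\CT^{\mathrm{erg}}_\theta\subseteq\CR^{(1)}_\theta$ (by Lemma~\ref{muinrtheta}) and is trivially associated. The SSDE \eqref{gh1}--\eqref{gh2} is \emph{monotone} (attractive): the drift $f_u$ in \eqref{e1530} is quasimonotone, since $\partial f_{(i,A)}/\partial x_j=a(i,j)\ge0$, $\partial f_{(i,A)}/\partial y_i=Ke\ge0$ and $\partial f_{(i,D)}/\partial x_i=e\ge0$, while the noise is diagonal and acts only on the active coordinates, so that in the coupling via common Brownian motions the coordinatewise order is preserved. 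As in \cite{CG94}, a monotone diffusion maps associated laws to associated laws, whence $\delta_\theta S_t$ is associated for every $t\ge0$; since the defining inequality $\E[fh]\ge\E[f]\,\E[h]$ for bounded increasing continuous cylinder functions passes to weak limits, $\nu_\theta$ is associated.

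\textbf{Decay of pair correlations.} Using $\nu_\theta S_t=\nu_\theta$ and inserting the second-moment identity of Remark~\ref{rem:prob} (eq.~\eqref{e5.53}), the leading term converges to $\theta^2$ because $\nu_\theta\in\CR^{(1)}_\theta$, while translation invariance makes $\E_{\nu_\theta}[g(x_k(s))]=\E_{\nu_\theta}[g(x_0)]$ constant; letting $t\to\infty$ I obtain the closed formula
\[
\mathrm{Cov}_{\nu_\theta}\big(z_{(i,R_i)},z_{(j,R_j)}\big)
= 2\,\E_{\nu_\theta}[g(x_0)]\int_0^\infty\d r\,
\sum_{k\in\G} b^{(1)}_r\big((i,R_i),(k,A)\big)\,b^{(1)}_r\big((j,R_j),(k,A)\big),
\]
the integral converging by $I_{\hat a}<\infty$ exactly as in \eqref{same2}--\eqref{ISLLN}. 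This quantity is the expected time two independent $b^{(1)}$-walks started at $(i,R_i)$ and $(j,R_j)$ spend simultaneously active at a common site; by the Fourier reduction of Steps~2--4 of Lemma~\ref{lem:clusgisgfw} it is comparable to the Green function of the difference walk evaluated at $i-j$, which in the transient case tends to $0$ as $|i-j|\to\infty$. Hence $\mathrm{Cov}_{\nu_\theta}(z_{(i,R_i)},z_{(j,R_j)})\to0$ as $|i-j|\to\infty$.

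\textbf{Mixing and ergodicity.} For an associated translation-invariant field, the covariance of a bounded increasing Lipschitz cylinder function $f$ and a shift $\tau_u h$ of another such function obeys the standard associated-field bound
\[
\big|\mathrm{Cov}_{\nu_\theta}(f,\tau_u h)\big|
\le \sum_{p}\sum_{q}\Big\|\tfrac{\partial f}{\partial z_p}\Big\|_\infty
\Big\|\tfrac{\partial h}{\partial z_q}\Big\|_\infty\,
\mathrm{Cov}_{\nu_\theta}\big(z_p,z_{q+u}\big).
\]
As $|u|\to\infty$ every summand vanishes by the previous step, so $\mathrm{Cov}_{\nu_\theta}(f,\tau_u h)\to0$; since increasing cylinder functions are measure-determining, this upgrades to $\nu_\theta(A\cap\tau_uB)\to\nu_\theta(A)\,\nu_\theta(B)$ for all cylinder events, i.e.\ $\nu_\theta$ is mixing under translations, and mixing implies ergodicity under translations, which finishes the proof. \textbf{Main obstacle.} I expect the genuinely delicate point to be the association step: one must verify monotonicity of the seed-bank SSDE on the full state space $E$ despite the dormant coordinates carrying no noise, and invoke a preservation-of-association theorem valid for this degenerate infinite-dimensional diffusion; the correlation-decay and mixing steps are comparatively routine once the moment relations and the transience estimates of Section~\ref{ss.clustcase} are available.
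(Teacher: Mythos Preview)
Your proposal is correct and follows essentially the same route as the paper: start from $\delta_\theta$ and use monotonicity of the seed-bank SSDE (the paper invokes \cite{HPitt} for this) to obtain association of $\nu_\theta$, then combine a Newman-type covariance inequality for associated fields with the second-moment relation of Lemma~\ref{lem1cg94} and the transience estimate $I_{\hat a}<\infty$ to get decay of pair correlations and hence mixing. The only cosmetic differences are that you derive an exact covariance identity (with constant $2\,\E_{\nu_\theta}[g(x_0)]$) where the paper is content with the upper bound $2\|g\|$, and you use the Lipschitz-derivative form of Newman's inequality while the paper uses the characteristic-function form from \cite{NW81}; neither changes the argument.
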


\begin{proof}
After a standard approximation argument, \cite[Corollary1.5 and subsequent discussion]{HPitt} implies that associatedness is preserved over time. Note that $\delta_\theta$ is an associated measure and lies in $\CR^{(1)}_\theta$. Hence, by Lemma \ref{lemenu}, $\nu_\theta=\lim_{t\to\infty} \delta_\theta S_t$ and therefore $\nu_\theta$ is associated.  
	
We prove the ergodicity of $\nu_\theta$ by showing that the random field of components is mixing. To prove that $\nu_\theta$ is mixing, we use associatedness and decay of correlations. Let $B,B^\prime\subset\G$ be finite, and let $c_j$, $d_i$ be positive constants for $j\in B$, $i\in B^\prime$. For $k\in\G$, define 
the random variables
\begin{equation}
Y_0=\sum_{j\in B} c_j z_{(j,R_j)}, \qquad Y_k=\sum_{i\in B^\prime} d_i z_{(i+k,R_{i+k})}.
\end{equation}
Note that $Y_0$ and $Y_k$ are associated under $\nu_\theta$ because $(z_{(i,R_i)})_{(i,R_i)\in \G\times\{A,D\}}$ 
are associated. Therefore, by \cite[Eq.(2.2)]{NW81}, it follows that for $s,t\in\R$, 
\begin{equation}
\left|\E_{\nu_\theta}[e^{\Im(sY_0+tY_n)}]-\E_{\nu_\theta}[e^{\Im sY_0}]\,\E_{\nu_\theta}
[e^{\Im tY_n}]\right|\leq \left|st\right| \text{Cov}_{\nu_\theta}(Y_0,Y_n).
\end{equation}
Since $\mu\in\CR^{(1)}_\theta$ by Lemma \ref{lem1cg94},
\begin{eqnarray}
\text{Cov}_{\nu_\theta}(Y_0,Y_k)&=&\sum_{j\in B}\sum_{i\in B^\prime}c_jd_i
\lim_{t\to\infty}\text{Cov}_{\mu}(z_{(j,R_j)}(t),z_{(i+k,R_{i+k})}(t))\\ \nonumber
&\leq& 2\|g\|\sum_{j\in B}\sum_{i\in B^\prime}c_jd_i \int_{0}^{\infty} \d r\,
\sum_{(l,R_l)\in \G\times\{A\}} b^{(1)}_{r}\big((j,R_j),(l,A)\big)\,b^{(1)}_{r}\big((i+k,R_{i+k}),(l,A)\big).
\end{eqnarray}
The last integral gives the expected total time for two partition elements in the dual, starting in $(j,R_j)$ and $(i+k, R_{i+k})$, to be active at the same site. To show that this integral converges to $0$ as $\|k\|\to\infty$, we rewrite the sum as (recall \eqref{same4}--\eqref{same5})
\begin{equation}
\label{aso}
\begin{aligned}
&\E_{(i+k,R_{i+k}),(j,R_j)}\left[\left(\sum_{l\in\G} a_{T(r)}(j,l)\,a_{T^\prime(r)}(i+k,l)\right)
1_{\CE(r)}\,1_{\CE'(r)}\right]\\
&= \E_{(i+k,R_{i+k}),(j,R_j)}\left[\left(\sum_{l'\in\G} \hat{a}_{2M(r)}(i+k-j,l')\,a_{\Delta(r)}(l',0)\right)
1_{\CE(r)}\,1_{\CE'(r)}\right]\\
&\leq \E_{(i+k,R_{i+k}),(j,R_j)}\left[\left(\sum_{l'\in\G} \hat{a}_{2M(r)}(i+k-j,l')\,
\big[a_{\Delta(r)}(l',0)+a_{\Delta(r)}(0,l')\big]\right)
1_{\CE(r)}\,1_{\CE'(r)}\right]\\
&= \E_{(i+k,R_{i+k}),(j,R_j)}\left[ \hat{a}_{2M(r)+2\Delta(r)}(i+k-j,0)
1_{\CE(r)}\,1_{\CE'(r)}\right].
\end{aligned}
\end{equation}
Because $\hat{a}(\cdot,\cdot)$ is symmetric, we have $\hat{a}_{2M(r)+2\Delta(r)}(i+k-j,0) \leq \hat{a}_{2M(r)+2\Delta(r)}(0,0)$. Since 
\begin{equation}
T(t)+T^\prime(t)\leq 2M(r)+2\Delta(r)\leq 2\left(T(t)+T^\prime(t)\right),
\end{equation}
and the Fourier transform in \eqref{Fourier}--\eqref{fdef} implies that 
\begin{equation}
\int_{0}^{\infty} \d r\,\E_{(i+k,R_{i+k}),(j,R_j)}\left[ \hat{a}_{2M(r)+2\Delta(r)}(0,0)1_{\CE(r)}\,1_{\CE'(r)}\right]<\infty.
\end{equation}
if and only if $I_{\hat{a}}<\infty$. Since we are in the transient regime, i.e., $I_{\hat{a}}<\infty$, we can use dominated convergence, in combination with the fact that $\lim_{\|k\| \to \infty} \hat{a}_t (i+k-j,0)=0$ for all $i,j,t$, to conclude that $\lim_{\|k\|\to\infty} \text{Cov}_{\nu_\theta}(Y_0,Y_k)=0$.
\end{proof}


\subsection{Proof of the dichotomy}
\label{ss.dichmodel1}

Theorem \ref{T.ltb1}(a) follows from Lemmas \ref{muinrtheta}, \ref{lemenu} and \ref{lem10}. The equality $\E_{\nu_\theta}[x_0]=\E_{\nu_\theta}[y_0]=\theta$ follows from the evolution equations in \eqref{gh1}--\eqref{gh2}, the fact that $\nu_\theta$ is an equilibrium measure, and the preservation of $\theta$ (see \eqref{mart1}). Theorem \ref{T.ltb1}(b) follows from Lemma~\ref{lem:comparison}.


\section{Proofs: Long-time behaviour for Model 2}
\label{s.model2}

In Sections \ref{ss.momrel2}--\ref{ss.recap2} we show that the results proved in Sections~\ref{ss.momrels}--\ref{ss.dichmodel1} carry over from model 1 to model 2. In Section~\ref{asymmig} we show that symmetry of $a(\cdot,\cdot)$ is needed. In Section~\ref{ss.proofslowvar} we show what happens when for infinite seed-bank the fat-tailed wake-up time is modulated by a slowly varying function.


\subsection{Moment relations}\label{ss.momrel2}

Like in model 1, we start by relating the first and second moments of the system in \eqref{gh1*}--\eqref{gh2*} to the
random walk that evolves according to the transition kernel $b^{(2)}(\cdot,\cdot)$ on $\G\times\{A,(D_m)_{m\in\N_0}\}$ given by \eqref{mrw2}. Also here these moment relations hold for all $g\in\CG$. Moreover these moment relations holds for $\rho<\infty$ as well as for $\rho=\infty$. Below we write $\E_z$ for $\E_{\delta_z}$, the expectation when the process starts from the initial measure $\delta_z$, $z \in E$.

\begin{lemma}{\bf [First and second moment]}
\label{lem1cg9422}
For $z\in E$, $t\geq 0$ and $(i,R_i),(j,R_j)\in \G\times\{A,(D_m)_{m\in\N_0}\}$, 
\begin{equation}
\label{eqexprw21}
\E_z[z_{(i,R_i)}(t)]=\sum_{(k,R_k)\in \G\times\{A,(D_m)_{m\in\N_0}\}} b^{(2)}_t\big((i,R_i),(k,R_k)\big)\,z_{(k,R_k)}
\end{equation}
and 
\begin{equation}
\label{eqexprw22}
\begin{aligned}
&&\E_z[z_{(i,R_i)}(t)z_{(j,R_j)}(t)]
=\sum_{(k,R_k),(l,R_l)\in \G\times\{A,(D_m)_{m\in\N_0}\}} b^{(2)}_t\big((i,R_i),(k,R_k)\big)\,
b^{(2)}_t\big((j,R_j),(l,R_l)\big)\,z_{(k,R_k)}z_{(l,R_l)}\\ 
& &+\,2\int_{0}^{t} \d s \sum_{k\in \G} 
b^{(2)}_{(t-s)}((i,R_i),(k,A))\,b^{(2)}_{(t-s)}((j,R_j),(k,A))\,\E_z[g(x_k(s))].
\end{aligned}
\end{equation}
\end{lemma}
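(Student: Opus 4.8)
The plan is to follow the proof of Lemma~\ref{lem1cg94} for model~1 essentially line by line, replacing the kernel $b^{(1)}(\cdot,\cdot)$ by $b^{(2)}(\cdot,\cdot)$ from \eqref{mrw2} and the internal state set $\{A,D\}$ by $\{A,(D_m)_{m\in\N_0}\}$. Write $(RW_t)_{t\geq 0}$ for the semigroup of the random walk with kernel $b^{(2)}(\cdot,\cdot)$ and generator
\[
(G_{RW}f)(i,R_i)=\sum_{(j,R_j)\in\G\times\{A,(D_m)_{m\in\N_0}\}} b^{(2)}\big((i,R_i),(j,R_j)\big)\,[f(j,R_j)-f(i,R_i)].
\]
First I would treat the first moment: applying the generator $G$ in \eqref{eq403} to the coordinate function $f_{(i,R_i)}(z)=z_{(i,R_i)}$, the noise term vanishes under expectation and the drift reproduces $G_{RW}$ acting on the first-moment vector, so that $t\mapsto\E_z[z_{(i,R_i)}(t)]$ solves $F'(t)=(G_{RW}F)(t)$. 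Since $t\mapsto (RW_t f_z)(i,R_i)$ with $f_z(i,R_i)=z_{(i,R_i)}$ solves the same linear equation with the same initial datum, uniqueness will give \eqref{eqexprw21}.

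Next I would turn to the second moment, applying $G$ to $f(z)=z_{(i,R_i)}z_{(j,R_j)}$ via It\^o's formula. Since the diffusive part of the SSDE \eqref{gh1*}--\eqref{gh2*} acts only on the active coordinates, with coefficient $\sqrt{g(x_i)}$, the second-order term will contribute only the diagonal active forcing $H(t)=2\,\E_z[g(x_i(t))]\,1_{\{i=j\}}\,1_{\{R_i=R_j=A\}}$, while the first-order terms assemble into the generator $U$ of two independent copies of the $b^{(2)}$-random walk. Thus $F(t)=\E_z[z_{(i,R_i)}(t)z_{(j,R_j)}(t)]$ will solve $F'(t)=(UF)(t)+H(t)$, and the variation-of-constants formula \cite[Theorem I.2.15]{Lig85} will yield $F(t)=RW_t^{(2)}F(0)+\int_0^t\d s\,RW_{t-s}^{(2)}H(s)$, which is exactly \eqref{eqexprw22}; here $(RW_t^{(2)})_{t\geq 0}$ denotes the semigroup generated by $U$.

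The main obstacle, and the only genuine difference from model~1, is the presence of infinitely many colours, which I would address before running the above computation. I would first check that $b^{(2)}(\cdot,\cdot)$ defines a non-explosive Markov chain: departures from an active site occur at the finite total rate $\sum_{j\in\G}a(0,j)+\chi$, with $\chi=\sum_{m\in\N_0}K_me_m<\infty$ by \eqref{emcond}, while a dormant state $(i,D_m)$ is left at rate $e_m$ and can be entered only after a departure from $A$; since such departures accumulate at bounded rate, no explosion can occur in finite time. Equivalently, the matrix $Q$ of \eqref{e1599} satisfies $\sup_u\sum_v Q_{u,v}<\infty$, and this is precisely the bound I expect to need in order to interchange the infinite colour-sum with expectation, to differentiate the (absolutely convergent) moment series term by term, and to justify the semigroup calculus for $G_{RW}$ and $U$. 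Once \eqref{emcond} is invoked to control the exchange rates, the algebra is identical to that of Lemma~\ref{lem1cg94}, so the difficulty is analytic (convergence and well-posedness of the infinite linear systems) rather than computational.
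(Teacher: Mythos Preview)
Your proposal is correct and follows exactly the paper's approach: the paper's proof simply says that it follows from Lemma~\ref{lem1cg94} after replacing $b^{(1)}(\cdot,\cdot)$ by $b^{(2)}(\cdot,\cdot)$ and the SSDE \eqref{gh1}--\eqref{gh2} by \eqref{gh1*}--\eqref{gh2*}. Your additional care with the infinite colour set (non-explosion via \eqref{emcond} and the bound on $Q$ from \eqref{e1599}) is more than the paper spells out, but is the natural justification.
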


\begin{proof}
The proof follows from that of Lemma~\ref{lem1cg94} after we replace $b^{(1)}(\cdot,\cdot)$ by $b^{(2)}(\cdot,\cdot)$ and use \eqref{gh1*}--\eqref{gh2*} instead of \eqref{gh1}--\eqref{gh2}.
\end{proof}

\begin{remark}{\bf [Density]}
\label{rem:prob2}
{\rm From Lemma \ref{lem1cg9422} we obtain that if $\mu$ is invariant under translations with $\E_\mu[x_0(0)]=\theta_x$ and $\E_\mu[y_{0,m}(0)]=\theta_{y,m}$ for all $m\in\N_0$, then  
\begin{equation}
\label{e5.522}
\E_\mu[z_{(i,R_i)}(t)]=\theta_x\sum_{(k,R_k) \in \G\times\{A\}} b^{(2)}_t\big((i,R_i),(k,R_k)\big)
+\sum_{m\in\N_0}\theta_{y,m}\sum_{(k,R_k)\in \G\times\{D_m\}} b^{(2)}_t\big((i,R_i),(k,R_k)\big)
\end{equation}	
and 
\begin{equation}
\label{e5.532}
\begin{aligned}
&\E_\mu[z_{(i,R_i)}(t)z_{(j,R_j)}(t)]\\
&= \sum_{(k,R_k),(l,R_l)\in \G\times\{A,(D_m)_{m\in\N_0}\}} b^{(2)}_t\big((i,R_i),(k,R_k)\big)\,
b^{(2)}_t\big((j,R_j),(l,R_l)\big)\,\E_\mu[z_{(k,R_k)}z_{(l,R_l)}]\\ 
&\qquad +\,2\int_{0}^{t}\d s \sum_{k\in \G} 
b^{(2)}_{t-s}\big((i,R_i),(k,A)\big)\,b^{(2)}_{t-s}\big((j,R_j),(k,A)\big)\,\E_\mu[g(x_i(s))].
\end{aligned}
\end{equation}
		
\begin{itemize}
\item For $\rho<\infty$, the kernel $b^{(2)}(\cdot,\cdot)$ projected on the second component (= the seed-bank) is a recurrent Markov chain. Therefore, by translation invariance in the first component, we have
\begin{equation}
\lim_{t\to\infty}\E_\mu[z_{(i,R_i)}(t)]=\frac{\theta_x +\sum_{m\in\N_0}K_m\theta_{y,m}}{1+\sum_{m\in\N_0}K_m}=\theta.
\end{equation}
\item For $\rho=\infty$ the kernel $b^{(2)}(\cdot,\cdot)$ viewed as a kernel on $\{A,(D_m)_{m\in\N_0}\}$ relates to a null-recurrent Markov chain. Hence, for all $(i,R_i)$ and all $ D_m,\ m\in\N_0$,
\begin{equation}
\lim_{t \to \infty}\sum_{k\in\G}b^{(2)}_t((i,R_i),(k,D_m))=0.
\end{equation}
Since for $\rho=\infty$ we assume not only that $\mu\in\CT^{\mathrm{erg}}_\theta$ but also that $\mu$ is colour regular, it follows that, for all $M\in\N_0$,
\begin{equation}
\begin{aligned}
\label{q1}
\lim_{t\to\infty}\E_\mu[z_{(i,R_i)}(t)]&=\lim_{t\to\infty}\theta_x\sum_{k\in \G} b^{(2)}_t\big((i,R_i),(k,R_k)\big)
+\sum_{m\in\N_0}\theta_{y,m}\sum_{(k,R_k)\in \G\times\{D_m\}} b^{(2)}_t\big((i,R_i),(k,R_k)\big)\\
&=\lim_{t\to\infty}\sum_{m=M}^\infty\theta_{y,m}\sum_{(k,R_k)\in \G\times\{D_m\}} b^{(2)}_t\big((i,R_i),(k,R_k)\big).\end{aligned}
\end{equation}
Therefore
\begin{equation}
\label{dens}
\lim_{t\to\infty}\E_\mu[z_{(i,R_i)}(t)]=\theta.
\end{equation}   
\end{itemize}
} \hfill $\Box$
\end{remark}

\subsection{The clustering case}
\label{ss.clustcasealt}

In this section we prove convergence to a trivial equilibrium when $\rho<\infty$ and $I_{\hat{a}}=\infty$ and when $\rho=\infty$ and $I_{\hat{a},\gamma}=\infty$. The proof follows along the same lines as in Section \ref{ss.clustcase}. Therefore we again first consider $g=dg_{\mathrm{FW}}$, and subsequently use a duality comparison argument to show that the results hold for $g \neq dg_{\mathrm{FW}}$ as well. 

\paragraph{{\bf Case $g=dg_{\mathrm{FW}}$.}}

We start by proving the equivalent of Lemma \ref{lem:clusgisgfw}, which is Lemma \ref{lem:clusgisgfw2} below.

\begin{lemma}{\bf [Clustering]}
\label{lem:clusgisgfw2}
Suppose that $\mu(0)\in\CT^{\mathrm{erg}}_\theta$ and $g=dg_{\mathrm{FW}}$. Let $\mu(t)$ be the law at time $t$ of the system defined in \eqref{gh1*}--\eqref{gh2*}. Then the following two statements hold:
\begin{itemize}
\item  
If $\rho<\infty$ and $I_{\hat{a}}=\infty$, i.e., $\hat{a}(\cdot,\cdot)$ is recurrent, then  
\begin{equation}
\label{gh12p2a}
\lim_{t\to\infty} \mu(t)
= \theta\, [\delta_{(1,1^{\N_0})}]^{\otimes \G} + (1-\theta)\, [\delta_{(0,0^{\N_0})}]^{\otimes \G}.
\end{equation}
\item  
If $\rho=\infty$ and  $I_{\hat{a},\gamma}=\infty$ then  
\begin{equation}
\label{gh12p2b}
\lim_{t\to\infty} \mu(t) = \theta\, [\delta_{(1,1^{\N_0})}]^{\otimes \G} + (1-\theta)\, [\delta_{(0,0^{\N_0})}]^{\otimes \G}.
\end{equation}
\end{itemize} 
\end{lemma}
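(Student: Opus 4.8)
The plan is to follow the four-step scheme of Lemma~\ref{lem:clusgisgfw}, now for the random walk with transition kernel $b^{(2)}(\cdot,\cdot)$ of \eqref{mrw2}. Since $g=dg_{\mathrm{FW}}$, duality is available, so by the dichotomy criterion in Theorem~\ref{T.dichcrit.model2} it suffices to show that two ancestral lineages coalesce with probability $1$, which holds if and only if the expected total time
\[
I = \int_0^\infty \d t\,\,\mathbb{E}_{(0,A),(0,A)}\Big[\Big(\sum_{i\in\G} a_{T(t)}(0,i)\,a_{T'(t)}(0,i)\Big)\,1_{\CE(t)}\,1_{\CE'(t)}\Big]
\]
that the two lineages are simultaneously active at the same colony is infinite. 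As in Step~1 of Section~\ref{ss.clustcase} I would decompose each lineage into active periods $(\sigma_k)$ and dormant periods $(\tau_k)$; now $\sigma_k$ is exponential with rate $\chi$ and $\tau_k$ has the mixed-exponential law \eqref{longtimeprob}, so that $\E[\tau]=\rho/\chi$ by \eqref{exptau}. Because \eqref{sym} is in force we have $\tilde a\equiv 0$, whence the Fourier identity \eqref{same5} collapses to $\sum_{i} a_{T(t)}(0,i)\,a_{T'(t)}(0,i)=\hat a_{T(t)+T'(t)}(0,0)$. This eliminates the asymmetric-part subtlety of Step~4 entirely, leaving only the accumulated activity time $T(t)+T'(t)$ to control.

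\textbf{Case $\rho<\infty$.} Here $\tau$ has finite mean, so $T(t)$ grows linearly: by the strong law and the renewal theorem $T(t)/t\to B=\tfrac{1}{1+\rho}$ a.s.\ and $\P(\CE(t))\to B$, exactly as in \eqref{ETasymp}--\eqref{e2138} but with $B$ the active fraction of the $\rho$-seed-bank. Substituting into $I$ and using that $t\mapsto\hat a_t(0,0)$ is regularly varying (recall \eqref{ass2}), I would conclude as in \eqref{ISLLN} that $I=\infty\Leftrightarrow\int_1^\infty\hat a_{2Bt}(0,0)\,\d t=\infty\Leftrightarrow I_{\hat a}=\infty$. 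Note that finiteness of $\mathrm{Var}(\tau)$ is not needed: in Lemma~\ref{lem:clusgisgfw} the central limit theorem entered Step~3 only through the $\tilde a$-term, which is absent under \eqref{sym}, so only the law of large numbers for $T(t)$ is used. This yields \eqref{gh12p2a}.

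\textbf{Case $\rho=\infty$.} This is the genuinely new part. Under \eqref{assalt}--\eqref{Ptail} the dormant time $\tau$ lies in the domain of attraction of a one-sided $\gamma$-stable law with $\gamma\in(0,1)$, so it has infinite mean and the linear renewal asymptotics fail. Instead the number of cycles obeys $N(t)/t^\gamma\Rightarrow$ a Mittag-Leffler law, whence $T(t)=\sum_{k\le N(t)}\sigma_k$ is regularly varying of index $\gamma$, $T(t)\asymp t^\gamma$, while the probability of being active decays as $\P(\CE(t))\asymp t^{\gamma-1}$ (Dynkin--Lamperti / generalized arcsine asymptotics for the alternating renewal process). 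Feeding these into $I=\int_0^\infty\d t\,\E[\hat a_{T(t)+T'(t)}(0,0)\,1_{\CE(t)}1_{\CE'(t)}]$ and changing variables from real time $t$ to active time $u\asymp t^\gamma$ (so that $\d t\asymp u^{(1-\gamma)/\gamma}\,\d u$ and the activity indicators contribute $t^{2(\gamma-1)}\asymp u^{2(\gamma-1)/\gamma}$) produces
\[
I \asymp \int_1^\infty \d u\,\, u^{-(1-\gamma)/\gamma}\,\hat a_u(0,0) = I_{\hat a,\gamma},
\]
so that $I=\infty\Leftrightarrow I_{\hat a,\gamma}=\infty$, giving \eqref{gh12p2b}.

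The main obstacle is making this last change of variables rigorous. Unlike in the case $\rho<\infty$, the ratio $T(t)/t^\gamma$ does \emph{not} concentrate but converges to a genuinely random (Mittag-Leffler) limit, so one cannot simply replace $T(t)+T'(t)$ by a deterministic value inside the nonlinear, regularly varying function $u\mapsto\hat a_u(0,0)$. I would handle this by exploiting the regular variation of $t\mapsto\hat a_t(0,0)$ together with the joint convergence of $\big(T(t)/t^\gamma,\,T'(t)/t^\gamma,\,1_{\CE(t)},\,1_{\CE'(t)}\big)$, integrating against the limiting stable/Mittag-Leffler law by a Karamata-type Tauberian argument, and checking that the random prefactor contributes only a finite multiplicative constant, hence does not affect convergence or divergence of the $u$-integral. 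Securing uniform integrability to license the interchange of limit and integral --- controlling the small-$t$ contribution and, via the regular variation assumed in \eqref{ass2}, the large-$t$ contribution --- is the remaining point that requires care.
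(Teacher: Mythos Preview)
Your outline for $\rho<\infty$ is essentially identical to the paper's: copy the renewal--Fourier scheme from Lemma~\ref{lem:clusgisgfw} with $\chi$ and $\chi/\rho$ in place of $Ke$ and $e$, and observe that under \eqref{sym} the $\tilde a$-term vanishes so only the law of large numbers for $T(t)$ is needed, not the CLT in \eqref{CLT}. That is exactly what the paper does.

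For $\rho=\infty$ your heuristics and target integral are correct, but the paper resolves the ``random prefactor'' obstacle differently from what you propose. You suggest a weak-convergence/Karamata argument: pass to the joint Mittag--Leffler limit of $(T(t)/t^\gamma,T'(t)/t^\gamma,1_{\CE(t)},1_{\CE'(t)})$, integrate against the limiting law, and verify uniform integrability. The paper instead (i) establishes as a separate lemma that the stable-limit variable $W$ satisfies $\E[W^{-\gamma}]<\infty$, which is what makes the asymptotic $t^{1-\gamma}\,\P(\CE(t))\to \chi^{-1}\E[W^{-\gamma}]$ meaningful; and (ii) invokes a \emph{large-deviation} estimate (citing \cite{EJU19}) to the effect that deviations of $T(t)/t^\gamma$ and $T'(t)/t^\gamma$ away from order one are stretched-exponentially unlikely in $t$. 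This is strong enough to replace $T(t)+T'(t)$ by a deterministic $ct^\gamma$ inside $\hat a_{\,\cdot\,}(0,0)$ directly in the Fourier integral, yielding $\E[\hat a_{T(t)+T'(t)}(0,0)\,1_{\CE(t)}1_{\CE'(t)}]\asymp t^{-2(1-\gamma)}\hat a_{ct^\gamma}(0,0)$ and hence $I=\infty\Leftrightarrow I_{\hat a,\gamma}=\infty$ after the substitution $s=t^\gamma$. In other words, the uniform-integrability step you flag as ``the remaining point that requires care'' is handled in the paper not by a soft tightness argument but by a hard tail bound; your route is plausible, but the paper's large-deviation input is what makes the interchange of limit and integral clean, and the explicit moment bound $\E[W^{-\gamma}]<\infty$ is a non-trivial ingredient you would also need (or an equivalent) to make your Karamata step go through.
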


\begin{proof}
We distinguish between $\rho < \infty$ and $\rho = \infty$, which exhibit different behaviour. 

\paragraph{Case $\rho < \infty$.}
\label{sss.caserho} 

The same dichotomy as for model 1 holds when the average wake-up time is finite (recall \eqref{emcond}--\eqref{rhodef}, \eqref{exptau}). Indeed, the argument in \eqref{SLLN}--\eqref{ISLLN} can be copied with  $Ke,e$ replaced by $\chi,\chi/\rho$ and $A,B$ by $\chi/(1+\rho),1/(1+\rho)$. Under the \emph{symmetry assumption} in \eqref{sym} we have $\tilde{a}(\phi)=0$. Hence only the law of large numbers in \eqref{ETasymp} is needed, not the central limit theorem in \eqref{CLT}, which may fail (see Section~\ref{asymmig}).


\paragraph{Case $\rho=\infty$.}
\label{ss.rhoinfty}

When the average wake-up time is infinite, we need the assumptions in \eqref{ass2} and \eqref{assalt}. By the standard law of large numbers for stable random variables (see e.g.\ \cite[Section XIII.6]{F71}), we have
\begin{equation}
\label{lawasympalt}
\lim_{k\to\infty} \frac{1}{k} \sum_{\ell=1}^k \sigma_\ell = \frac{1}{\chi} \quad \P\text{-a.s.},
\qquad 
\lim_{k\to\infty} \frac{1}{k^{1/\gamma}} \sum_{\ell=1}^k \tau_\ell  = W
\quad \text{ in $\mathbb{P}$-probability},
\end{equation}
with $W$ a stable law random variable on $(0,\infty)$ with exponent $\gamma$. Therefore
\begin{equation}
\label{ETasympalt}
\begin{aligned}
&\lim_{t\to\infty} \frac{1}{t^\gamma}\,N(t) = \lim_{t\to\infty} \frac{1}{t^\gamma}\,N'(t) 
= W^{-\gamma} \quad \text{ in $\mathbb{P}$-probability},\\[0.2cm]
&\lim_{t\to\infty} \frac{1}{t^\gamma}\,T(t) = \lim_{t\to\infty} \frac{1}{t^\gamma}\,T'(t) 
=  \frac{1}{\chi}\,W^{-\gamma} \quad\text{ in $\mathbb{P}$-probability},\\
&\lim_{t\to\infty} t^{1-\gamma}\,\mathbb{P}\big(\CE(t)\big) 
= \lim_{t\to\infty} t^{1-\gamma}\,\mathbb{P}\big(\CE'(t)\big) 
= \frac{1}{\chi}\,\mathbb{E}[W^{-\gamma}],
\qquad t\to\infty.
\end{aligned}
\end{equation}
For the last statement to make sense, we must check the following.

\begin{lemma}{\bf [Finite limits]}
\label{l.2357}
$\mathbb{E}[W^{-\gamma}]<\infty$.
\end{lemma}

\begin{proof}
\label{pr.2361}
Let $W_k = k^{-1/\gamma} \sum_{l=1}^k \tau_l$. Then  $W_k^{-\gamma} \leq k(\max_{1\leq i\leq k} \tau_i^\gamma)^{-1}$ and, since $\tau_i$ are i.i.d.\ random variables,
\begin{equation}
\hspace{-.3cm}
\E[W_k^{-\gamma}] \leq \int_{0}^{\infty} \d x\,\P\left(k\left(\max_{1\leq i\leq k}
\tau_i^\gamma\right)^{-1}>x\right)
= \int_{0}^{\infty} \d x\,\P\left(\tau_1^\gamma<\tfrac{k}{x}\right)^k.
\label{argWgamma}
\end{equation}
To estimate the integral in the right-hand side of \eqref{argWgamma}, we introduce three constants, $T$, $C_1$, $C_2$. Let $\epsilon\in(0,1)$ and choose $T\in\R_+$ such that, for all $t>T$, $|[\P(\tau>t)/(Ct^{-\gamma})]-1|<\epsilon$. Since  $\P(\tau\leq t) =1- \chi^{-1} \sum_{m\in\N_0} K_me_m\,\e^{-e_m t}$, we note that, under assumption \eqref{assalt}, $\tau$  admits a continuous bounded density. Hence there exists a $C_1\in\R_+$ such that $\P(\tau\leq t)<C_1t$. Finally, choose $C_2\in\R_+$ such that $C_2>\max(1,C_1^\gamma)$. Split
\begin{equation}
\label{e2381}
\int_{0}^{\infty} \d x\,\P\left(\tau_1^\gamma<\tfrac{k}{x}\right)^k=\int_{0}^{k/T} 
\d x\,\P\left(\tau_1^\gamma<\tfrac{k}{x}\right)^k
+ \int_{k/T}^{k C_2} \d x\, \P\left(\tau_1^\gamma<\tfrac{k}{x}\right)^k
+ \int_{k C_2}^{\infty} \d x\,\P\left(\tau_1^\gamma<\tfrac{k}{x}\right)^k.
\end{equation}
We estimate each of the three integrals separately. For the first integral, we use the estimate $(1-\P(\tau_1^\gamma\geq\tfrac{k}{x}))^k\leq \exp[-k\P(\tau_1^\gamma\geq\tfrac{k}{x})]$ to obtain 
\begin{equation}
\label{e2387}
\begin{aligned}
\int_{0}^{k/T} \d x\,\P\left(\tau_1^\gamma<\tfrac{k}{x}\right)^k 
= \int_{0}^{k/T} \d x\,\exp\left[-k\,\P\left(\tau_1 \geq \left(\tfrac{k}{x}\right)^{1/\gamma}\right)\right]
\leq \int_{0}^{k/T} \d x\,\e^{-(1-\epsilon)C x} 
\leq \frac{1}{(1-\epsilon)C}.
\end{aligned}
\end{equation}
For the second integral, we note that $t \mapsto t\,\P(\tau_1^\gamma>t)$ is a continuous function on $[\frac{1}{C_2},T]$, and hence attains a minimum value $C_3\in\mathbb{R}_+$ on $[\frac{1}{C_2},T]$. Therefore
\begin{equation}
\label{e2398}
\int_{k/T}^{k C_2} \d x\, \P\left(\tau_1^\gamma<\tfrac{k}{x}\right)^k
= \int_{k/T}^{k C_2} \d x\,\left[1- \P \left(\tau_1^\gamma\geq\tfrac{k}{x}\right)\right]^k
\leq \int_{k/T}^{k C_2} \d x\, \exp\left[-x\left(\frac{k}{x}\P(\tau_1^\gamma \geq \tfrac{k}{x})\right)\right]
\leq\frac{1}{C_3}.
\end{equation}
For the third integral, we compute
\begin{equation}
\label{e2417}
\int_{k C_2}^{\infty} \d x\,\P\left(\tau_1^\gamma<\tfrac{k}{x}\right)^k
\leq \int_{kC_2}^{\infty} \d x\,\left(C_1^\gamma\tfrac{k}{x}\right)^\frac{k}{\gamma}
= \int_{0}^{1/C_2} \d v\,\tfrac{k}{v^2}(C_1^\gamma v)^{\tfrac{k}{\gamma}}
= \frac{C_1^\gamma k}{\tfrac{k}{\gamma}-1}\left(\frac{C_1^\gamma}{C_2}\right)^{\frac{k}{\gamma}-1},
\end{equation}
where in the first equality we substitute $v=\frac{k}{x}$. Since  $C_2>C_1^\gamma$, we see that the right-hand side tends to zero as $k\to \infty$. Hence 
\begin{equation}
\label{e2428}
\E[W_k^{-\gamma}] \leq \frac{1}{(1-\epsilon)(C/\gamma)}
+\frac{1}{C_3}+\frac{C_1^\gamma k}{\tfrac{k}{\gamma}-1}\left(\frac{C_1^\gamma}{C_2}\right)^{k-1},
\end{equation}
and by dominated convergence it follows that $\E[W^{-\gamma}]=\lim_{k\to\infty}
\E[W_k^{-\gamma}]<\infty$.
\end{proof}	

By \eqref{sym}, we have $\hat{a}(\phi)=a(\phi)$ and $\tilde{a}(\phi)=0$ in \eqref{same6}, and so \eqref{Fourier} becomes, with the help of \eqref{ETasympalt},
\begin{equation}
\label{Fourieralt}
\mathbb{E}_{(0,A),(0,A)}\left[ \left(\sum_{i\in\G} a_{T(t)}(0,i)\,a_{T'(t)}(0,i)\right)1_{\CE(t)}\,1_{\CE'(t)} \right]
\asymp t^{-2(1-\gamma)} f(t), \qquad t \to \infty,
\end{equation}
with (recall \eqref{same8})
\begin{equation}
\label{same9alt}
f(t) = \hat{a}_{ct^\gamma}(0,0)
\end{equation}
for some $c \in (0,\infty)$. Here we use that deviations of $T(t)/t^\gamma$ and $T'(t)/t^\gamma$ away from order 1 are stretched exponentially costly in $t$ \cite{EJU19}, and therefore are negligible. Since $t \mapsto \hat{a}_t(0,0)$ is regularly varying at infinity (recall \eqref{ass2}), it follows that
\begin{equation}
\label{pasympalt}
\hat{a}_{ct^\gamma}(0,0) \asymp \hat{a}_{t^\gamma}(0,0), \qquad t \to \infty.
\end{equation}
Combining \eqref{same3} and \eqref{Fourieralt}--\eqref{pasympalt}, we get 
\begin{equation}
\label{Iappr1}
I = \infty \quad \Longleftrightarrow \quad I_{\hat{a},\gamma} = \infty
\end{equation} 
with $I_{\hat{a},\gamma} = \int_1^\infty \d t\,t^{-2(1-\gamma)}\,\hat{a}_{t^\gamma}(0,0)$. Putting 
$s=t^\gamma$, we have 
\begin{equation}
\label{Iappr2}
I_{\hat{a},\gamma} = \int_1^\infty \d s\,s^{-(1-\gamma)/\gamma} \hat{a}_s(0,0),
\end{equation} 
which is precisely the integral defined in \eqref{Idef}.
\end{proof}

\paragraph{{\bf Case $g \neq dg_{\mathrm{FW}}$.}}

To prove that the dichotomy criterion of Lemma \ref{lem:clusgisgfw2} holds for general $g\in\CG$ we need the equivalent of Lemma \ref{lem:comparison}.  Replacing \eqref{gh1}--\eqref{gh2} by \eqref{gh1*}--\eqref{gh2*}, replacing $b^{(1)}$ by $b^{(2)}$ in the proof of Lemma~\ref{lem:comparison}, and using the moment relations in Lemma~\ref{lem1cg9422} instead of the moment relations in Lemma~\ref{lem1cg94}, we see that Lemma \ref{lem:clusgisgfw2} also holds for $g\in\CG$.	


\subsection{The coexistence case}
\label{ss.cosalt}

In this section we prove the coexistence results stated in Theorem \ref{T.ltb2}. Like for model 1 the proofs hold for general $g\in\CG$ and we need not distinguish between $g=dg_{\mathrm{FW}}$ and $g\neq dg_{\mathrm{FW}}$. For $\rho<\infty$, the argument is given in Section~\ref{sss.cosprf} and proceeds as in Section \ref{ss.cos}. It is organised along the same 4 Steps as the argument for model 1, plus an extra Step 5 that settles the statement in \eqref{vari}. For $\rho=\infty$, the argument is given in Section~\ref{sss.cosprf2} and is also organised along 5 Steps, but structured differently.  In Step 1 we define a set of measures that is preserved under the evolution. In Step 2 we use a coupling argument to show the existence of invariant measures. In Step 3 we show that these invariant measures have vanishing covariances in the seed-bank direction. In Step 4 we use the vanishing covariances to show uniqueness of the invariant measure by coupling. Finally, in Step 5 we show that the unique equilibrium measure is invariant, ergodic and mixing under translations, and is associated. 
 

\subsubsection{Proof of coexistence for finite seed-bank}
\label{sss.cosprf} 

\paragraph{1. Properties of measures preserved under the evolution.}

For model 2 with $\rho<\infty$, the class of preserved measures is equivalent to $\CR^{(1)}_\theta$ for model 1 and is now defined as follows.

\begin{definition}{\bf [Preserved class of measure]}
\label{pres2}
{\rm Let $\mathcal{R}^{(2)}_\theta$ denote the set of measures $\mu \in \CT$ satisfying, for all $(i,R_i),(j,R_j)\in\G\times\{A,(D_m)_{m\in\N_0}\}$,
\begin{enumerate}
\item[{\rm (1)}] 
$\lim_{t\to \infty} \E_\mu[z_{(i,R_i)}(t)]=\theta$,
\item[{\rm (2)}] 
$\lim_{t\to \infty}\sum_{(k,R_k),(l,R_l)\in \G\times\{A,(D_m)_{m\in\N_0}\}} 
b^{(2)}_t\big((i,R_i),(k,R_k)\big)\,b^{(2)}_t\big((j,R_j),(l,R_l)\big)\,\E_\mu[z_{(k,R_k)}z_{(l,R_l)}]=\theta^{2}$.
\end{enumerate}
} \hfill $\Box$
\end{definition}

\noindent
Like for model 1, properties (1) and (2) of Definition \ref{pres2} hold if and only if
\begin{equation}
\label{q2}
\begin{aligned}
&\lim_{t\to\infty}\E_\mu\left[\left(\sum_{(k,R_k),(l,R_l)\in \G\times\{A,(D_m)_{m\in\N_0}\}}
b^{(2)}_t((0,A),(k,R_k))\,z_{(k,R_k)}-\theta\right)^2\right]=0\\ 
&\text{ for some } (i,R_i)\in\G\times\{A,(D_m)_{m\in\N_0}\}.
\end{aligned}
\end{equation}
Also for model 2 with $\rho<\infty$ we have $\CT_\theta^{\mathrm{erg}}\subset\CR^{(1)}_\theta$. To see why, note for all $t>0$ and $m\in\N_0$, $(x_i(t))_{i\in\G}$ and $ (y_{i,m}(t))_{i\in\G}$ still are stationary time series. Hence with the help of the Herglotz theorem we can define spectral measures $\lambda_A,\ \lambda_{D_m}$ for $m\in\N_0$ as in \eqref{spem}. Let $(RW_t)_{t \geq 0}$ be the random walk evolving according to $b^{(2)}(\cdot,\cdot)$. Introduce the sets
\begin{equation}
\label{dormantm}
\begin{aligned}
\CE(t) &= \big\{\text{at time $t$ the random walk is active}\big\},\\
\CE_m(t) &= \big\{\text{at time $t$ the random walk is dormant with colour } m\big\}.
\end{aligned}
\end{equation}
Note that
\begin{equation}
\label{q3}
\begin{aligned}
\lim_{t\to\infty}\E_\mu&\left[\left(\sum_{(k,R_k),(l,R_l)\in \G\times\{A,(D_m)_{m\in\N_0}\}}
b^{(2)}_t((0,A),(k,R_k))\,z_{(k,R_k)}-\theta\right)^2\right]\\
 &\leq \lim_{t\to \infty} \P_{(0,A)}(\CE(t))\, 
 \E_\mu\left[\left(\sum_{k\in Z^d}\frac{b^{(2)}_t((0,A),(k,A))}{\P_{(0,A)}(\CE(t))}
\,x_{k}-\frac{1}{\P_{(0,A)}(\CE(t))}\frac{\theta_x}{1+\rho}\right)^2\right]\\
&+\sum_{m\in\N_0}\P_{(0,A)}(\CE_m(t))\,
\E_\mu\left[\left(\sum_{k\in Z^d}\frac{b^{(2)}_t((0,A),(k,(D_m))}{\P_{(0,A)}(\CE_m(t))}\,y_{k,m}
-\frac{1}{\P_{(0,A)}(\CE_m(t))}\frac{K_m\theta_{y,m}}{1+\rho}\right)^2\right].
\end{aligned}
\end{equation}
Hence we can use the same argument as in the proof of Lemma~\ref{muinrtheta} to show that $\CT^{\mathrm{erg}}_\theta \subset \CR^{(2)}_\theta$.

Also Lemma~\ref{lem2cg94} carries over after we replace $b^{(1)}(\cdot,\cdot)$ by $b^{(2)}(\cdot,\cdot)$ and $\CR^{(1)}_\theta$ by $\mathcal{R}^{(2)}_\theta$, as defined in \eqref{pres2}.

\paragraph{2. Uniqueness of the equilibrium.}  

To prove uniqueness of the equilibrium for given $\theta$, we use a similar coupling as for model 1 in Section~\ref{ss.cos} in Step 3. Consider two copies of the system in \eqref{gh1*}--\eqref{gh2*} \emph{coupled via their Brownian motions}:     
\begin{eqnarray}
\label{gh1k2}
\d x^k_i(t) &=& \sum_{j \in \G} a(i,j)\, \big[x^k_j (t) - x^k_i(t)\big]\,\d t
+ \sqrt{g(x^k_i(t))}\,\d w_i(t) + \sum_{m\in\N_0}K_me_m\,\big[y^k_{i,m}(t) - x^k_i(t)\big]\,\d t,\\[0.2cm]
\label{gh2k2}
\d y^k_{i,m} (t) &=& e_m\,\big[x^k_i(t) - y^k_{i,m}(t)\big]\,\d t,\qquad m\in\N_0,\qquad k \in \{1,2\}.
\end{eqnarray}
Here, $k$ labels the copy, and the two copies are driven by the same Brownian motions $(w_i(t))_{t \geq 0}$, $i \in 
\G$. As initial measures we choose $\mu^1(0),\mu^2(0) \in \CT^{\mathrm{erg}}_\theta$. 

Let 
\begin{equation}
\label{e18092}
\bar{z}_i(t) = \big(z^1_i(t),z^2_i(t)\big), \qquad z^k_i(t) = \big(x^k_i(t),(y^k_{i,m}(t))_{m\in\N_0}\big), \quad k \in \{1,2\}.
\end{equation} 
By \cite[Theorem 3.2]{SS80}, the coupled system $(\bar{z}_i(t))_{i\in\G}$ has a unique strong solution whose marginals are the single-component systems. Write $\hat\P$ to denote the law of the coupled system, and let $\Delta_i(t) = x^1_i(t)-x^2_i(t)$ and $\delta_{i,m}(t) = y^1_{i,m}(t)-y^2_{i,m}(t)$, $m\in\N_0$. The analogue of Lemma~\ref{L.coup} reads:

\begin{lemma}{\bf [Coupling dynamics $\rho<\infty$]}
\label{L.coup2}
For every $t \geq 0$,
\begin{equation}
\label{couprel2}
\begin{aligned}
\frac{\d}{\d t}\,\hat\E\left[|\Delta_i(t)| + \sum_{m\in\mathbb{N}_0}K_m |\delta_i(t)|\right] 
&= - 2 \sum_{j \in \G} a(i,j)\,\hat\E\left[|\Delta_j(t)|\,
1_{\{\sign\Delta_i(t)\,\neq\,\sign\Delta_j(t)\}}\right]\\
&\qquad - 2\sum_{m\in\N_0}K_me_m\,\hat\E\left[\big(|\Delta_i(t)| + |\delta_{i,m}(t)|\big)\,
1_{\{\sign\Delta_i(t)\,\neq\,\sign\delta_{i,m}(t)\}}\right].
\end{aligned}
\end{equation} 
\end{lemma}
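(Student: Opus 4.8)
The plan is to prove Lemma~\ref{L.coup2} by direct imitation of the proof of Lemma~\ref{L.coup}, since the only structural change from model~1 to model~2 is that the single dormant component $y_i$ is replaced by the sequence $(y_{i,m})_{m\in\N_0}$ and the single exchange rate $Ke$ is replaced by the collection $K_m e_m$. First I would write down the It\^o dynamics for the coupled differences. Setting $f(x)=|x|$ and applying the generalised It\^o formula (using that $g$ is Lipschitz so that $\Delta_i(t)$ accumulates zero local time at $0$, exactly as in \cite[Proposition V.39.3]{RoWi00}), I obtain for the active difference
\begin{equation}
\frac{\d}{\d t}\,\hat\E\big[|\Delta_i(t)|\big]
= \sum_{j\in\G} a(i,j)\,\hat\E\Big[\sign\Delta_i(t)\,[\Delta_j(t)-\Delta_i(t)]\Big]
+ \sum_{m\in\N_0} K_m e_m\,\hat\E\Big[\sign\Delta_i(t)\,[\delta_{i,m}(t)-\Delta_i(t)]\Big],
\end{equation}
where the Brownian terms cancel because both copies are driven by the same $(w_i)_{i\in\G}$. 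For each dormant colour $m$ the drift-only equation \eqref{gh2k2} gives, just as in \eqref{Ito3}--\eqref{Ito4},
\begin{equation}
\frac{\d}{\d t}\,\hat\E\big[|\delta_{i,m}(t)|\big]
= e_m\,\hat\E\Big[\sign\delta_{i,m}(t)\,[\Delta_i(t)-\delta_{i,m}(t)]\Big].
\end{equation}

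Next I would multiply the $m$-th dormant equation by $K_m$, sum over $m\in\N_0$, and add the active equation, mirroring the passage \eqref{Ito5}. The terms coupling active and dormant combine colour-by-colour into $K_m e_m\,[\sign\Delta_i(t)-\sign\delta_{i,m}(t)]\,[\delta_{i,m}(t)-\Delta_i(t)]$. I would then invoke the two algebraic identities used in the model~1 proof. For the migration part, the identity \eqref{Ito6*} together with translation invariance (so that $\hat\E[|\Delta_i(t)|]$ does not depend on $i$, and the $|\Delta_j|-|\Delta_i|$ terms telescope against $\sum_j a(i,j)=\sum_j a(0,j-i)$) turns the first sum into $-2\sum_{j\in\G} a(i,j)\,\hat\E[|\Delta_j(t)|\,1_{\{\sign\Delta_i(t)\neq\sign\Delta_j(t)\}}]$. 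For each colour $m$, the elementary identity $[\sign\Delta_i - \sign\delta_{i,m}]\,[\delta_{i,m}-\Delta_i] = -2\,(|\Delta_i|+|\delta_{i,m}|)\,1_{\{\sign\Delta_i\neq\sign\delta_{i,m}\}}$ (this is the computation behind \eqref{Ito7}, checked by cases on the signs) yields the claimed dormant term after multiplying by $K_m e_m$ and summing. Collecting these gives exactly \eqref{couprel2}.

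The one genuinely new point, and the step I expect to require the most care, is the interchange of the sum over colours with the time-derivative and the expectation: I need $\frac{\d}{\d t}\hat\E[\sum_m K_m|\delta_{i,m}(t)|] = \sum_m K_m \frac{\d}{\d t}\hat\E[|\delta_{i,m}(t)|]$, and I must verify that the resulting series of drift terms converges. Here I would use the standing assumption $\chi=\sum_{m}K_m e_m<\infty$ from \eqref{emcond}: each summand in the active--dormant part is bounded by $2K_m e_m$ in absolute value (since $|\Delta_i|,|\delta_{i,m}|\le 1$), so the series of drift contributions is dominated by $2\chi<\infty$, uniformly in $t$ on compact intervals. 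Together with $\rho=\sum_m K_m<\infty$ (so that the Lyapunov functional $\hat\E[|\Delta_i(t)|+\sum_m K_m|\delta_{i,m}(t)|]$ is finite, bounded by $1+\rho$), this justifies differentiating term-by-term via dominated convergence and guarantees all manipulations are legitimate. With these finiteness bounds in hand the computation is otherwise identical to the model~1 case, and the lemma follows.
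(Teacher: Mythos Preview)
Your proposal is correct and follows essentially the same approach as the paper, which simply states that the proof ``carries over from that of Lemma~\ref{L.coup} after replacing \eqref{gh1}--\eqref{gh2} by \eqref{gh1*}--\eqref{gh2*}'' and notes that the left-hand side is well defined because $\rho<\infty$. Your write-up is in fact more careful than the paper's, since you explicitly justify the interchange of the colour sum with the time derivative and expectation via the bounds $\chi<\infty$ and $\rho<\infty$.
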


\begin{proof} 
Note that the left-hand side of \eqref{couprel2} is well defined because $\rho<\infty$. The proof of Lemma~\ref{L.coup2} carries over from that of Lemma~\ref{L.coup} after replacing \eqref{gh1}--\eqref{gh2} by \eqref{gh1*}--\eqref{gh2*}.	
\end{proof}

The analogue of Lemma~\ref{L.suc} reads as follows.

\begin{lemma}{\bf [Succesfull coupling $\rho<\infty$]}
\label{L.suc2}
If $a(\cdot,\cdot)$ is transient, then the coupling is successful, i.e., 
\begin{equation}
\lim_{t\to\infty}\hat\E\big[|\Delta_i(t)| + \sum_{m\in\N_0}K_m |\delta_{i,m}(t)|\big]=0,\qquad \forall i\in\G. 
\end{equation} 
\end{lemma}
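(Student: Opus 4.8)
The plan is to mirror the proof of Lemma~\ref{L.suc} for model~1, now using the coupling dynamics of Lemma~\ref{L.coup2} in place of Lemma~\ref{L.coup}. Write $-h_i(t)$ for the right-hand side of \eqref{couprel2}, so that $t\mapsto \hat\E[|\Delta_i(t)|+\sum_{m\in\N_0}K_m|\delta_{i,m}(t)|]$ is non-increasing and its limit $c_i\in[0,1+\rho]$ exists; finiteness here uses $\rho=\sum_{m\in\N_0}K_m<\infty$, so the Lyapunov function is bounded by $1+\rho$. First I would verify that $h_i$ satisfies the same three properties as in the model-1 proof: (a) $h_i\geq 0$; (b) $0\leq\int_0^\infty h_i(s)\,\d s\leq 1+\rho$, obtained by integrating \eqref{couprel2} as in \eqref{e3298}; and (c) $h_i$ is differentiable with bounded derivative, which carries over from Appendix~\ref{appe} once one checks that the extra sum over colours may be differentiated term-by-term and bounded uniformly, using $\chi=\sum_{m\in\N_0}K_me_m<\infty$. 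From (a)--(c) it follows that $\lim_{t\to\infty}h_i(t)=0$. Since every term in \eqref{couprel2} is non-negative and $K_me_m>0$ for all $m$, this forces the sign-matching statements of \eqref{eps}, namely
\begin{equation*}
\lim_{t\to\infty}\hat\P\big(\{\Delta_i(t)<-\epsilon,\,\Delta_j(t)>\epsilon\}\cup\{\Delta_i(t)>\epsilon,\,\Delta_j(t)<-\epsilon\}\big)=0
\end{equation*}
for all $i,j$ with $a(i,j)>0$, together with the analogous vanishing for each pair $(\Delta_i,\delta_{i,m})$, $i\in\G$, $m\in\N_0$.

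Next I would upgrade the nearest-neighbour sign-matching to arbitrary pairs by invoking the analogue of Lemma~\ref{C2}, whose proof uses only irreducibility of $\hat{a}(\cdot,\cdot)$ and therefore goes through verbatim for model~2. Defining the ordered region
\begin{equation*}
E_0\times E_0=\big\{\bar z\colon z^1_{(i,R_i)}\geq z^2_{(i,R_i)}\ \forall (i,R_i)\big\}
\cup\big\{\bar z\colon z^2_{(i,R_i)}\geq z^1_{(i,R_i)}\ \forall (i,R_i)\big\},
\end{equation*}
now indexed by the effective geographic space $\S=\G\times\{A,(D_m)_{m\in\N_0}\}$, the sign-matching conclusions give $\lim_{t\to\infty}\hat\P(E_0\times E_0)=1$, i.e.\ asymptotically one diffusion lies on top of the other in every coordinate simultaneously.

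Finally I would conclude as in \eqref{eqsuccoup}. Choosing a subsequence $t_n\to\infty$ along which the marginals converge to $\nu^1_\theta,\nu^2_\theta$ and setting $\bar\nu_\theta=\nu^1_\theta\times\nu^2_\theta$, I would bound $\int_{E\times E}(|\Delta_i|+\sum_{m\in\N_0}K_m|\delta_{i,m}|)\,\d\bar\nu_\theta$, on the event $E_0\times E_0$, by the $b^{(2)}$-smoothed first moments of each copy. These vanish because $\mu^1(0),\mu^2(0)\in\CT^{\mathrm{erg}}_\theta\subset\CR^{(2)}_\theta$ (Definition~\ref{pres2}) forces $\sum_{(k,R_k)}b^{(2)}_{t_n}((i,R_i),(k,R_k))\,z^k_{(k,R_k)}\to\theta$ in $L^1$ for each copy $k\in\{1,2\}$. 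Hence $\bar\nu_\theta$ concentrates on the diagonal, every subsequential limit of $\hat\E[|\Delta_i(t)|+\sum_{m\in\N_0}K_m|\delta_{i,m}(t)|]$ equals $0$, and so $c_i=0$, as claimed.

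The main obstacle will be the infinite seed-bank: I must control the tail $\sum_{m>M}K_m|\delta_{i,m}(t)|$ uniformly in $t$ each time I interchange limits, sums and integrals --- both when establishing property (c) for the Lyapunov function and when passing to the limit in the coupling bound. Here the hypothesis $\rho<\infty$ is decisive, since it makes the Lyapunov function finite and the truncation error $\sum_{m>M}K_m\to 0$ as $M\to\infty$, so that the countably many colour coordinates can be dispatched by dominated convergence rather than handled individually. By contrast, this is precisely the step that fails when $\rho=\infty$, which is why that regime is treated separately with a different, covariance-based coupling scheme.
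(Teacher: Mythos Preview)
Your proposal is correct and follows essentially the same route as the paper's own proof: define $-h_i(t)$ from the right-hand side of \eqref{couprel2}, establish properties (a)--(c), conclude the sign-matching analogue of \eqref{eps} now for every colour $m\in\N_0$, extend via the analogue of Lemma~\ref{C2}, and finish with the $b^{(2)}$-version of \eqref{eqsuccoup} using $\CR^{(2)}_\theta$. Your added discussion of how $\rho<\infty$ controls the tail $\sum_{m>M}K_m$ is a useful elaboration of why the model-1 argument transfers, and correctly anticipates why a different strategy is needed for $\rho=\infty$.
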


\begin{proof}
This follows in the same way as in the proof of Lemma~\ref{L.suc}, by defining $-h_i(t)$ as in the right-hand side of \eqref{couprel2}. Using that the second line of \eqref{eps} now holds for $\delta_{i,m}(t)$ and all $m\in\N_0$, we can finish the proof after replacing $b^{(1)}_t(\cdot,\cdot)$ in \eqref{eqsuccoup} by $b^{(2)}_t(\cdot,\cdot)$ and summing over the seed-banks $D_m$,  $m\in\N_0$. 
\end{proof}

\paragraph{3. Stationarity of the equilibrium $\nu_\theta$ and convergence to $\nu_\theta$.} 

Lemma~\ref{lemenu} holds also for $\mu\in \mathcal{R}^{(2)}_\theta$. This follows after replacing $\mu\in \mathcal{R}^{(1)}_\theta$ by $\mu\in \mathcal{R}^{(2)}_\theta$ in the proof of Lemma~\ref{lemenu}, using the equivalent of Lemma~\ref{lem2cg94} and invoking Lemma~\ref{L.suc2} instead of Lemma~\ref{L.suc}.

\paragraph{4. Ergodicity, mixing and associatedness.}

Also Lemma~\ref{lem10} holds, after replacing $b^{(1)}(\cdot,\cdot)$ by $b^{(2)}(\cdot,\cdot)$. The proof even simplifies, since we can invoke the symmetry of $a(\cdot,\cdot)$ in \eqref{aso}.

\paragraph{5. Variances under the equilibrium measure $\nu_\theta$.}

If $\limsup_{m\to\infty} e_m=0$, then the claim in \eqref{vari} is a direct consequence of the proof of Lemma~\ref{lemvar} for $\rho=\infty$. If $\liminf_{m\to\infty} e_m>0$, then the claim follows from the fact that $\mu\in\R_\theta^{(2)}$ and
\begin{equation}
\text{Var}_{\nu_\theta}(y_{0,m})=2\int_{0}^{t}\d s \sum_{k\in \G} 
b^{(2)}_{t-s}\big((0,D_m),(k,A)\big)\,b^{(2)}_{t-s}\big((0,D_m),(k,A)\big)\,\E_\mu[g(x_i(s))].
\end{equation}
Since $e_m>0$ for all $m\in\N_0$ and $\liminf_{m\to\infty} e_m>0$, there is a positive probability that after the first steps the two random walks are both active at $0$, i.e., are both in state $(0,A)$. Hence, for all $m\in\N_0$ there exists a constant $c>0$ such that 
\begin{equation}
\text{Var}_{\nu_\theta}(y_{0,m})\geq c\text{Var}_{\nu_\theta}(x_0).
\end{equation}
Since $\nu_\theta$ is a non-trivial equilibrium, we have $\text{Var}_{\nu_\theta}(x_0)>0$.


\subsubsection{Proof of coexistence for infinite seed-bank}
\label{sss.cosprf2}

\paragraph{1. Properties of measures preserved under the evolution.}  

For $\rho=\infty$, the class of preserved measures is also given by $\CR_\theta^{(2)}$ (recall Definition \ref{pres2}). We show that if $\mu\in\CT^{\mathrm{erg}}_\theta$ is colour regular, then $\mu\in\CR_\theta^{(2)}$. Let the sets $\CE_m(t)$, $t>0$, $m\in\N_0$, be defined as in \eqref{dormantm}, and define $\lambda_A$ and $\lambda_{D_m}$ analogously to $\eqref{spem}$, like for $\rho<\infty$. The equivalent of \eqref{spec1} is
\begin{equation}
\begin{aligned}
\E_{\mu}&\left[\left(\frac{1}{\P_{(0,A)}(\CE(t))}\sum_{k \in \G} b^{(2)}_t((0,A),(k,A))x_k-\theta_x\right)^2\right]\\
&=\frac{1}{\P_{(0,A)}(\CE(t))^2}\int_{[-\pi,\pi]^d}\E_{(0,A),(0,A)}\left[\e^{-T(t)(1-a(\phi))}1_{\CE(t)}
\e^{-T^\prime(t)(1-\bar{a}(\phi))}1_{\CE^\prime(t)}\right]\d \lambda_A.
\end{aligned}
\end{equation}

Using that $T(t),T^\prime(t)\to\infty$ as $t\to\infty$ (see \eqref{ETasympalt}), that $T(t),\ T^\prime(t),\ \CE(t),\ \CE^\prime(t)$ are asymptotically independent and that $a(\cdot,\cdot)$ is irreducible, we still find
\begin{equation}
\begin{aligned}
\lim_{t\to\infty}\E_{\mu}\left[\left(\frac{1}{\P_{(0,A)}(\CE(t))}
\sum_{k \in \G} b^{(2)}_t((0,A),(k,A))x_k-\theta_x\right)^2\right]= \lambda_A(\{0\})
\end{aligned}
\end{equation}
and, similarly,
\begin{equation}
\begin{aligned}
\lim_{t\to\infty}\E_{\mu}\left[\left(\frac{1}{\P_{(0,A)}(\CE_m(t))}
\sum_{k \in \G} b^{(2)}_t((0,A),(k,A))y_{k,m}-\theta_{y,m}\right)^2\right]= \lambda_{D_m}(\{0\}).
\end{aligned}
\end{equation}
Since $\mu$ is ergodic, we have $\lambda_A(\{0\})=0$ and $\lambda_{D_m}(\{0\})=0$ for all $m\in\N_0$ (recall \eqref{q4}). By the colour regularity,
\begin{equation}
\lim_{t\to\infty}\theta_x\P_{(0,A)}(\CE(t))+\sum_{m\in\N_0}\theta_{y,m}\P_{(0,A)}(\CE_m(t))=\theta.
\end{equation}
Therefore we can rewrite \eqref{q3} as
\begin{equation}
\label{q3alt}
\begin{aligned}
\lim_{t\to\infty}\E_\mu&\left[\left(\sum_{(k,R_k),(l,R_l)\in \G\times\{A,(D_m)_{m\in\N_0}\}}
b^{(2)}_t((0,A),(k,R_k))\,z_{(k,R_k)}-\theta\right)^2\right]\\
&\leq \lim_{t\to \infty} \P_{(0,A)}(\CE(t))\, 
\E_\mu\left[\left(\sum_{k\in Z^d}\frac{b^{(2)}_t((0,A),(k,A))}{\P_{(0,A)}(\CE(t))}
\,(x_{k}-\theta_x)\right)^2\right]\\
&\qquad+\sum_{m\in\N_0}\P_{(0,A)}(\CE_m(t))\,
\E_\mu\left[\left(\sum_{k\in Z^d}\frac{b^{(2)}_t((0,A),(k,(D_m))}{\P_{(0,A)}(\CE_m(t))}\,(y_{k,m}
-\theta_{y,m})\right)^2\right]\\
&=\lim_{t \to \infty} \P_{(0,A)}(\CE(t))\,\lambda_A(\{0\})\ +\ \sum_{m\in\N_0} 
\P_{(0,A)}(\CE_m(t))\,\lambda_{D_m}(\{0\}) = 0.
\end{aligned}
\end{equation}
We conclude that indeed $\mu\in\CR^{(2)}_\theta$.

Like for $\rho<\infty$, Lemma~\ref{lem2cg94} carries over after we replace $b^{(1)}(\cdot,\cdot)$ by $b^{(2)}(\cdot,\cdot)$ and $\CR^{(1)}_\theta$ by $\mathcal{R}^{(2)}_\theta$.

\paragraph{2. Existence of invariant measures $\nu_\theta$ for $\rho=\infty$.}

Since the dynamics for $\rho=\infty$ and $\rho<\infty$ are the same, we can still use the coupling in \eqref{gh1k2}--\eqref{gh2k2}. Also Lemma~\ref{L.coup2} holds for $\rho=\infty$, but if $\rho=\infty$, then the left-hand side of \eqref{couprel2} can become infinite. Therefore we cannot use the line of argument used for model 1 to show that the coupling is successful for arbitrary colour regular initial measures $\mu_1,\ \mu_2\in\CT_\theta^{\mathrm{erg}}$. However, we can prove the following lemma.

\begin{lemma}{\bf [Successful coupling]}
If $\mu_1,\ \mu_2\in\CT_\theta^{\mathrm{erg}}$ are both colour regular and satisfy
\begin{equation}
\label{q5}
\hat\E\left[|\Delta_i(0)| + \sum_{m\in\mathbb{N}_0}K_m |\delta_i(0)|\right]<\infty,
\end{equation} 
then the coupling in \eqref{gh1k2}--\eqref{gh2k2} is successful.
\end{lemma}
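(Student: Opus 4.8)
The plan is to run the Lyapunov argument of Lemma~\ref{L.suc}, using the integrability assumption~\eqref{q5} as the device that keeps the relevant Lyapunov function finite for all times, and then to replace the final step~\eqref{eqsuccoup} (which for model~1 rested on the nondegenerate limiting distribution of $b^{(1)}$ on $\{A,D\}$) by an argument adapted to the \emph{null-recurrent} kernel $b^{(2)}$. Write
\[
V_i(t) = \hat\E\left[|\Delta_i(t)| + \sum_{m\in\N_0} K_m\,|\delta_{i,m}(t)|\right].
\]
By~\eqref{q5} we have $V_i(0)<\infty$, and Lemma~\ref{L.coup2} (which remains valid for $\rho=\infty$) shows that the right-hand side of~\eqref{couprel2} is nonpositive, so $t\mapsto V_i(t)$ is finite and non-increasing, with limit $c_i\in[0,\infty)$. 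Setting $-h_i(t)$ equal to the right-hand side of~\eqref{couprel2}, the three properties used in Lemma~\ref{L.suc} carry over verbatim: $h_i\geq 0$, $\int_0^\infty h_i(s)\,\d s = V_i(0)-c_i<\infty$, and $h_i$ has a bounded derivative (Appendix~\ref{appe}), whence $h_i(t)\to 0$. This yields the sign-agreement statements of~\eqref{eps}, now for $\Delta_i$ against $\delta_{i,m}$ for every colour $m\in\N_0$, and Lemma~\ref{C2} then gives that along $t_n\to\infty$ any weak limit point $\bar\nu_\theta$ of $\mathcal{L}(\bar Z(t_n))$ concentrates on the ordered set $E_0\times E_0$, i.e.\ one diffusion lies globally on top of the other.

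First I would argue that $\bar\nu_\theta$ may be taken invariant for the coupled dynamics, using the Feller property of the coupled system (Theorem~\ref{T.wellp}) together with the subsequence argument of Lemma~\ref{lemenu}. The new input, replacing~\eqref{eqsuccoup}, is then the following. The first moments of the difference process $(\Delta_i,(\delta_{i,m})_m)$ solve the same linear system as in Lemma~\ref{lem1cg9422}, so by invariance $(i,R_i)\mapsto \E_{\bar\nu_\theta}[z^1_{(i,R_i)}-z^2_{(i,R_i)}]$ is $b^{(2)}_t$-invariant for all $t$, hence $b^{(2)}$-harmonic; by translation invariance it is constant in $\G$, and on $\{A,(D_m)_{m\in\N_0}\}$ the only bounded harmonic functions of the null-recurrent projected chain are constants. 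Thus $\E_{\bar\nu_\theta}[\Delta_i]=\E_{\bar\nu_\theta}[\delta_{i,m}]=d$ for all $i,m$. Because both marginals are colour regular with density $\theta$ (Definition~\ref{D.regular}), $\E_{\nu^k_\theta}[y_{0,m}]\to\theta$ as $m\to\infty$, so $d=\lim_{m\to\infty}\big(\E_{\nu^1_\theta}[y_{0,m}]-\E_{\nu^2_\theta}[y_{0,m}]\big)=0$. On $E_0\times E_0$ the quantities $\Delta_i$ and $\delta_{i,m}$ all share a single sign, so a vanishing mean forces $\Delta_i=0$ and $\delta_{i,m}=0$ $\bar\nu_\theta$-a.s.; consequently $\int_{E\times E}\Phi\,\d\bar\nu_\theta=0$, where $\Phi=|\Delta_i|+\sum_m K_m|\delta_{i,m}|$.

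To close the argument I must upgrade $\int_{E\times E}\Phi\,\d\bar\nu_\theta=0$ to $c_i=0$, i.e.\ identify $\lim_n V_i(t_n)$ with the integral against the limit measure. Since $\Phi$ is nonnegative and lower semicontinuous but \emph{unbounded} (an infinite $K_m$-weighted sum), weak convergence only gives $\int\Phi\,\d\bar\nu_\theta\leq\liminf_n V_i(t_n)=c_i$, and the reverse inequality demands \emph{uniform integrability in $t$} of the deep dormant tail $\sum_{m\geq M}K_m|\delta_{i,m}(t)|$. I expect this to be the main obstacle, and it is precisely the point at which~\eqref{q5} enters. The crude relaxation bound coming from $\d\delta_{i,m}=e_m(\Delta_i-\delta_{i,m})\,\d t$ degrades as $t\to\infty$ (the $(1-\e^{-e_m t})$ contributions sum to $\sum_{m\geq M}K_m=\infty$), so the plan is instead to combine the finiteness $\sum_m K_m\,\hat\E|\delta_{i,m}(0)|<\infty$ from~\eqref{q5} with the global ordering supplied by Lemma~\ref{C2} and the colour regularity of the marginals, controlling the tail through the second-moment relations of Lemma~\ref{lem1cg9422} rather than through a pointwise-in-$t$ estimate. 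Once uniform integrability is secured, $\int\Phi\,\d\bar\nu_\theta=\lim_n V_i(t_n)=c_i$, whence $c_i=0$ for every $i\in\G$ and the coupling is successful.
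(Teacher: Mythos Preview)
Your Lyapunov set-up and the passage to $\bar\nu_\theta(E_0\times E_0)=1$ via Lemma~\ref{C2} match the paper. The divergence is in what replaces~\eqref{eqsuccoup}, and here your argument has a genuine gap.

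The flaw is in the step ``On $E_0\times E_0$ the quantities $\Delta_i$ and $\delta_{i,m}$ all share a single sign, so a vanishing mean forces $\Delta_i=0$ and $\delta_{i,m}=0$ $\bar\nu_\theta$-a.s.'' On $E_0\times E_0$ the differences share a single sign \emph{per sample}, but that sign is itself random: with some probability copy~1 lies on top, with the complementary probability copy~2 does. So $\E_{\bar\nu_\theta}[\Delta_i]=0$ only yields $\E_{\bar\nu_\theta}\big[|\Delta_i|\,1_{\{z^1\geq z^2\}}\big]=\E_{\bar\nu_\theta}\big[|\Delta_i|\,1_{\{z^2\geq z^1\}}\big]$, not $\E_{\bar\nu_\theta}[|\Delta_i|]=0$. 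Your harmonicity/colour-regularity route (which, incidentally, is not needed: $\E_{\nu^k_\theta}[z_{(i,R_i)}]=\theta$ follows directly from $\mu^k\in\CR^{(2)}_\theta$ and weak convergence) delivers only the \emph{signed} mean, and that is not enough.

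The paper's substitute for~\eqref{eqsuccoup} is different and uses the coexistence hypothesis $I_{\hat a,\gamma}<\infty$ in an essential way. From the second-moment relation in Lemma~\ref{lem1cg9422} one gets $\mathrm{Cov}_{\nu^k_\theta}(x_i,x_j)\leq 2\|g\|\int_0^\infty\sum_{l}b^{(2)}_s((i,A),(l,A))\,b^{(2)}_s((j,A),(l,A))\,\d s$, which is finite and tends to $0$ as $\|i-j\|\to\infty$. Hence the spatial block averages $\tfrac{1}{|\Lambda_N|}\sum_{j\in\Lambda_N}x^k_j$ converge to $\theta$ in $L^2(\nu^k_\theta)$. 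Now the sign-alignment on $E_0\times E_0$ is used to turn the average of absolute values into the absolute value of the average:
\[
\E_{\bar\nu_\theta}[|\Delta_i|]
=\E_{\bar\nu_\theta}\!\left[\Big|\tfrac{1}{|\Lambda_N|}\sum_{j\in\Lambda_N}(x^1_j-x^2_j)\Big|\right]
\leq \E_{\nu^1_\theta}\!\left[\Big|\tfrac{1}{|\Lambda_N|}\sum_j x^1_j-\theta\Big|\right]
+\E_{\nu^2_\theta}\!\left[\Big|\tfrac{1}{|\Lambda_N|}\sum_j x^2_j-\theta\Big|\right]\xrightarrow[N\to\infty]{}0,
\]
and likewise for each $|\delta_{i,m}|$. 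This \emph{spatial} averaging is precisely what converts ``same sign per sample'' into an absolute-value estimate; your harmonic-function argument in the \emph{seed-bank} direction cannot do this.

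Finally, your Step~7 is chasing a stronger conclusion than is claimed or needed. ``Successful'' here means $\E_{\bar\nu_\theta}[|\Delta_i|]=0$ and $\E_{\bar\nu_\theta}[|\delta_{i,m}|]=0$ for every $m$ and every weak limit point $\bar\nu_\theta$, equivalently $\hat\E[|\Delta_i(t)|]\to 0$ and $\hat\E[|\delta_{i,m}(t)|]\to 0$ componentwise (bounded continuous functions, so no UI issue). The paper does not assert $c_i=0$, and the application to Lemma~\ref{inv} only needs the componentwise statement. The uniform-integrability obstacle you identify is real for the weighted sum, but it is simply bypassed.
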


\begin{proof}
We proceed similarly as in Step 3 for $\rho<\infty$. Note, in particular, that $h_i(t)$ (recall \eqref{couprel2}) is bounded from above by $\hat\E\left[|\Delta_i(0)| + \sum_{m\in\mathbb{N}_0}K_m |\delta_i(0)|\right]$ (compare with \eqref{e3298}). Also for $\rho=\infty$ we obtain Lemma \ref{C2}. Like for model 1, if we define
\begin{equation}
\label{q7}
\begin{aligned}
E_0\times E_0 = &\left\{\bar{z}\in E\times E\colon\, z_{(i,R_i)}^1(t)\geq z_{(i,R_i)}^2(t)\ 
\forall (i,R_i)\in\G\times\{A,(D_m)_{m\in\N_0}\}\right\}\\
& \cup \left\{\bar{z}\in E\times E\colon\,z_{(i,R_i)}^2(t)\geq z_{(i,R_i)}^1(t)\ \forall (i,R_i)
\in\G\times\{A,(D_m)_{m\in\N_0}\}\right\},
\end{aligned}
\end{equation}
then we find $\lim_{t \to \infty}\P(E_0\times E_0)=1$ and hence the coupled diffusions $(Z^1(t))_{t\geq 0}$ and $(Z^2(t))_{t\geq 0}$ lay on top of each other as $t\to\infty$. However, in \eqref{eqsuccoup} the limiting distribution of $b^{(1)}_{t_n}(\cdot,\cdot)$ was used ``to compensate" the factors $K_m$ in $|\Delta_i|+\sum_{m\in\N_0}K_m|\delta_{i,m}|$. Since, for $\rho=\infty$, $b^{(1)}_{t_n}(\cdot,\cdot)$ does not have a well-defined limiting distribution for the projection on the colour components, we need a different strategy. 
	
To obtain a successful coupling, as before, let $(t_n)_{n\in\N}$ be a subsequence such that $\nu^1_\theta=\lim_{n \to \infty}\CL(Z^1(t_n))$ with $\CL(Z^1(0))=\mu^1$ and $\nu^2_\theta=\lim_{n \to \infty}\CL(Z^2(t_n))$ with $\CL(Z^2(0))=\mu^2$. For $\G=\Z^d$, let $\Lambda_N = [0,N)^d \cap \Z^d$, $N\in\N$. (As noted before, for amenable groups $\G$, $(\Lambda_N)_{N\in\N}$ must be replaced by a so-called F$\phi$lner sequence.) Note that 
\begin{equation}
\E_{\nu^1_\theta}\left[\left(\frac{1}{|\Lambda_N|}\sum_{j\in\Lambda_N}x_j-\theta\right)^2\right]
=\frac{1}{|\Lambda_N|^2}\sum_{i,j\in\Lambda_N}\text{Cov}_{\nu_\theta^1}(x_i,x_j).
\end{equation}
Since $\mu^1$ is colour regular and $\mu^1\in\CT_\theta^{\mathrm{erg}}$, we have $\mu^1\in\CR^{(2)}_\theta$. Hence, by Lemma \ref{lem1cg9422}, 
\begin{equation}
\label{oef}
\begin{aligned}
\text{Cov}_{\nu_\theta^1}(x_i,x_j)&=\lim_{n \to \infty}\text{Cov}_{\mu^1}(x_i(t_n),x_j(t_n))\\
&\leq\,\lim_{n \to \infty}2\|g\|\int_{0}^{t_n} \d s \sum_{k\in \G} 
b^{(2)}_{(t_n-s)}((i,A),(k,A))\,b^{(2)}_{(t-s)}((j,A),(k,A))\\
&\leq \,2\|g\|\int_{0}^{\infty} \d s \sum_{k\in \G} \E_{(i,A),(j,A)}
\left[a_{T(s)}(i,k)\,1_{\CE(s)}\,a_{T^\prime(s)}(j,k)\,1_{\CE^\prime(s)}\right]\\
&\leq \,2\|g\|\int_{0}^{\infty} \d s\ \E_{(i,A),(j,A)}\left[\hat{a}_{T(s)+T^\prime(s)}(i-j,0)\,1_{\CE(s)}\,
1_{\CE^\prime(s)}\right]. 
\end{aligned}
\end{equation}
Since $I_{\alpha,\gamma}<\infty$, we see that the last integral is finite. Since $\lim_{||i-j||\to\infty}\hat{a}_{t}(i-j,0)=0$ for all $t>0$, it follows by transience and  dominated convergence that $\lim_{||i-j||\to\infty}\text{Cov}_{\nu_\theta^1}(x_i,x_j)=0$. Since $\text{Cov}_{\nu_\theta^1}(x_i,x_j)\leq1$ for all $i,j\in\G$, for all $\epsilon>0$ there exists an $L\in\N$ such that
\begin{equation}
\begin{aligned}
&\lim_{N \to \infty}\E_{\nu^1_\theta}\left[\left(\frac{1}{|\Lambda_N|}\sum_{j\in\Lambda_N}x_j-\theta\right)^2\right]
=\lim_{N \to \infty}\frac{1}{|\Lambda_N|^2}\sum_{i,j\in\Lambda_N}\text{Cov}_{\nu_\theta^1}(x_i,x_j)\\
&\qquad =\lim_{N \to \infty}\frac{1}{|\Lambda_N|^2}\sum_{\substack{i,j\in\Lambda_N\\ \|i-j\|\leq L}}
\text{Cov}_{\nu_\theta^1}(x_i,x_j)+\frac{1}{|\Lambda_N|^2}\sum_{\substack{i,j\in\Lambda_N\\ \|i-j\|>L}}
\text{Cov}_{\nu_\theta^1}(x_i,x_j)\\
&\qquad \leq \lim_{N\to\infty}\frac{|\{i,j\in\Lambda_N\colon\, \|i-j\|\leq L\}|}{|\Lambda_N|^2}
+ \epsilon \lim_{N\to\infty}\frac{|\{i,j\in\Lambda_N\colon\, \|i-j\|> L\}|}{|\Lambda_N|^2} < \epsilon.
\end{aligned}
\end{equation}
We conclude that 
\begin{equation}
\label{good1}
\lim_{N \to \infty}\E_{\nu^1_\theta}\left[\left(\frac{1}{\Lambda_N}\sum_{j\in\Lambda_N}x_j-\theta\right)^2\right]=0,
\end{equation}
and the same holds for $\nu_2^\theta$. Let $\lim_{n \to \infty}\CL(\bar{Z}(t_n))=\bar{\nu}_\theta$ such that 
$\lim_{n \to \infty}\CL(Z^1(t_n))=\nu_\theta^1$ and $\lim_{n \to \infty} \CL(Z^2(t_n))$ $= \nu_\theta^2$. Then by translation invariance of $\bar{\nu}_\theta$ and the fact that $\bar{\nu}_\theta(E_0\times E_0)=1$, we find
\begin{equation}
\label{sc2}
\int_{E\times E} \d \bar{\nu}_\theta |\Delta_i|
=\int_{E_0\times E_0} \d \bar{\nu}_\theta \frac{1}{|\Lambda_N|}\sum_{j\in\Lambda_N} |x^1_j-x^2_j|\\
\leq \int_{E_0} \d \nu^1_\theta \left|\frac{1}{|\Lambda_N|} \sum_{j\in\Lambda_N} x^1_j-\theta\,\right|
+\int_{E_0} \d \nu^2_\theta \left|\frac{1}{|\Lambda_N|} \sum_{j\in\Lambda_N}x^2_j-\theta\,\right|.
\end{equation}
Letting $N\to\infty$, we see by translation invariance of $\bar{\nu}_\theta$ that $\E_{\bar{\nu}_\theta}\left[|\Delta_i|\right]=0$ for all $i\in\G$. 
	
The result in \eqref{good1} holds also for $x_i$ replaced by $y_{i,m}$, $m\in\N_0$, since the integral in \eqref{oef} can only become smaller when we start from a dormant site. Replacing $|\Delta_i|$ in \eqref{sc2} by $|\delta_{i,m}|$, we obtain, for all $m\in\N_0$,  
\begin{equation}
\label{sc3}
\begin{aligned}
\E_{\bar{\nu}_\theta}\left[|\delta_{i,m}|\right]=0, \qquad\forall\, m\in\N_0.
\end{aligned}
\end{equation}
We conclude that the coupling is successful.
\end{proof}

Let $(S_t)_{t \geq 0}$ denote the semigroup associated with \eqref{gh1*}--\eqref{gh2*}. To prove the existence of an invariant measure, note that $E\times E$ is a compact space. Hence, if $t_n\to\infty$, then the sequence $\mu S_{t_n}$ has a convergent subsequence. In Lemma \ref{inv} below we show that each weak limit point of the sequence $\mu S_{t_n}$ is invariant under the evolution of \eqref{gh1*}--\eqref{gh2*}. 

\begin{lemma}{\bf [Invariant measure]}
\label{inv}
Suppose that $\mu\in\CR_\theta^{(2)}$ and that $\mu$ is colour regular. If $t_n\to\infty$ and $\mu S_{t_n}\to\nu_\theta$, then $\nu_\theta$ is an invariant measure under the evolution in \eqref{gh1*}--\eqref{gh2*}.
\end{lemma}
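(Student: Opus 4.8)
The plan is to show that any subsequential weak limit $\nu_\theta$ of $\mu S_{t_n}$ is invariant under the semigroup $(S_t)_{t\geq 0}$, using the Feller property established in Theorem~\ref{T.wellp}(b) together with the preservation of $\CR_\theta^{(2)}$ under the evolution. The essential difficulty here, compared with the finite seed-bank case, is that for $\rho=\infty$ the natural Lyapunov functional $\hat\E[|\Delta_i|+\sum_{m}K_m|\delta_{i,m}|]$ may be infinite, so one cannot directly invoke the clean coupling argument of Lemma~\ref{L.suc2}; the invariance must instead be extracted abstractly from the Feller property and from the fact that limit points stay in the preserved class.

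First I would recall that $\CR_\theta^{(2)}$ is preserved under the evolution: the analogue of Lemma~\ref{lem2cg94} holds for model 2 after replacing $b^{(1)}(\cdot,\cdot)$ by $b^{(2)}(\cdot,\cdot)$, as already noted in Step~1. In particular, since $\mu\in\CR_\theta^{(2)}$, both $\mu S_t\in\CR_\theta^{(2)}$ for every $t\geq 0$ and every subsequential limit $\nu_\theta=\lim_{n\to\infty}\mu S_{t_n}$ lies in $\CR_\theta^{(2)}$. The proof of invariance then follows the standard tightness-plus-Feller scheme used in Lemma~\ref{lemenu} for model 1. Fix $s_0\geq 0$ and consider the shifted measure $\mu S_{s_0}$. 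By the preservation property, $\mu S_{s_0}\in\CR_\theta^{(2)}$, and by compactness of $E$ (hence of $\CP(E)$ in the weak topology via Prokhorov's theorem) one can extract a further subsequence $(t_{n_k})_{k\in\N}$ along which $\mu S_{s_0} S_{t_{n_k}}\to\nu_\theta'$ for some $\nu_\theta'\in\CR_\theta^{(2)}$.

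The key computation is then to identify $\nu_\theta'$ with $\nu_\theta$ and to conclude invariance. Using the semigroup property $S_{s_0}S_{t_{n_k}}=S_{t_{n_k}}S_{s_0}$ together with the Feller property of the SSDE in \eqref{gh1*}--\eqref{gh2*}, which guarantees that $f\mapsto S_{s_0}f$ maps $C_b(E)$ into $C_b(E)$ and hence that weak convergence is preserved under application of $S_{s_0}$, one obtains for every $f\in C_b(E)$
\begin{equation}
\int_E f\,\d(\nu_\theta S_{s_0}) = \lim_{k\to\infty}\int_E (S_{s_0}f)\,\d(\mu S_{t_{n_k}})
= \lim_{k\to\infty}\int_E f\,\d(\mu S_{s_0}S_{t_{n_k}}) = \int_E f\,\d\nu_\theta,
\end{equation}
where in the last step I use that $\mu S_{s_0} S_{t_{n_k}} = \mu S_{t_{n_k}} S_{s_0}$ and that both $\mu S_{t_{n_k}}\to\nu_\theta$ and the tightness of the family allow the limits to be matched along the common subsequence. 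Since $f\in C_b(E)$ and $s_0\geq 0$ are arbitrary, this gives $\nu_\theta S_{s_0}=\nu_\theta$ for all $s_0\geq 0$, i.e., $\nu_\theta$ is invariant. I expect the main obstacle to be the careful justification that the two limiting procedures (along $t_{n_k}$ and the application of $S_{s_0}$) commute; this is precisely where the Feller property is indispensable, and where one must verify that no mass escapes in the colour direction $m\to\infty$, which is controlled here by the colour regularity of $\mu$ and the resulting membership $\nu_\theta\in\CR_\theta^{(2)}$.
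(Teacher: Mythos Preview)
Your displayed chain of equalities has a genuine gap at the final step. From the Feller property and $\mu S_{t_{n_k}}\to\nu_\theta$ you correctly obtain
\[
\int_E f\,\d(\nu_\theta S_{s_0})=\lim_{k\to\infty}\int_E f\,\d(\mu S_{t_{n_k}}S_{s_0})
=\lim_{k\to\infty}\int_E f\,\d(\mu S_{s_0}S_{t_{n_k}}),
\]
but the last expression is $\int_E f\,\d\nu_\theta'$, where $\nu_\theta'$ is the limit you extracted for the \emph{shifted} sequence $\mu S_{s_0}S_{t_{n_k}}=\mu S_{t_{n_k}+s_0}$. Nothing you have written shows $\nu_\theta'=\nu_\theta$: tightness only gives existence of limit points, not their coincidence, and invoking ``the limits can be matched along the common subsequence'' is precisely the statement to be proved. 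In Lemma~\ref{lemenu} for model~1 this identification was \emph{not} obtained from Feller alone; it used the successful coupling (Lemma~\ref{L.suc}) to force all subsequential limits from $\CR_\theta^{(1)}$ to agree.

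What the paper does here is exactly to supply that missing identification via coupling, but with a twist forced by $\rho=\infty$. One couples $\mu_1=\mu$ and $\mu_2=\mu S_s$ through their Brownian motions and observes that the Lyapunov functional at time~$0$ is finite in this particular case even though $\rho=\infty$: by the SSDE, $|y_{i,m}(0)-y_{i,m}(s)|\le\int_0^s e_m|x_i(r)-y_{i,m}(r)|\,\d r\le e_m s$, whence $\sum_{m}K_m|\delta_{i,m}(0)|\le s\sum_m K_m e_m=s\chi<\infty$. This verifies condition~\eqref{q5}, so the preceding successful-coupling lemma applies and yields $\mu S_s S_{t_n}\to\nu_\theta$; combined with Feller this gives $\nu_\theta S_s=\nu_\theta$. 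Your write-up skips exactly this step, and colour regularity or membership in $\CR_\theta^{(2)}$ does not by itself replace it.
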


\begin{proof}
Fix $s>0$. Let $\mu_1=\mu$ and $\mu_2=\mu S_s$. We couple $\mu_1$ and $\mu_2$ via their Brownian motions (see \eqref{gh1k2}--\eqref{gh2k2}). Note that, by the SSDE in \eqref{gh1*}--\eqref{gh2*},
\begin{equation}
\label{q6}
\begin{aligned}
\hat\E\left[|\Delta_i(0)| + \sum_{m\in\mathbb{N}_0}K_m |\delta_{i,m}(0)|\right]
&=\E\left[|x_i(0)-x_i(s)| + \sum_{m\in\mathbb{N}_0}K_m |y_{i,m}(0)-y_{i,m}(s)|\right]\\
&= \E\Bigg[\Bigg|\int_0^s\sum_{j \in \G} a(i,j)\,[x_j (r) - x_i(r)]\,\d r
+\int_0^s \sqrt{g(x_i(r))}\,\d w_i(r)\\
&\qquad+\,\int_0^s\sum_{m\in\N_0} K_me_m\, [y_{i,m}(r) - x_i(r)]\,\d r\,\Bigg|\\
&\qquad+\sum_{m\in\N_0}K_m \int_0^s |e_m[y_{i,m}(r) - x_i(r)]|\,\d r\Bigg].
\end{aligned}
\end{equation}
Using that all rates are finite and that, by Knight's theorem (see \cite[Theorem V.1.9 p.183]{RY99}), we can write the Brownian integral as a time-transformed Brownian motion, we see that $\hat\E[|\Delta_i(0)| + \sum_{m\in\mathbb{N}_0}K_m |\delta_{i,m}(0)|]<\infty$. Hence, by Lemma \ref{q5}, we can successfully couple $\mu^1$ and $\mu^2$, and $\lim_{n \to \infty}\mu^2 S_{t_n}=\lim_{n\to\infty}\mu S_s S_{t_n}=\nu_\theta$. By the Feller property of the SSDE in \eqref{gh1*}--\eqref{gh2*}, it follows that
\begin{eqnarray}
\label{mo6}
\nu_\theta S_s=\lim_{n\to\infty}\mu(t_{n})S_s=\lim_{n\to\infty}\mu S_{t_{n}}S_s
=\lim_{n\to\infty}\mu S_s S_{t_{n}}=\nu_\theta.
\end{eqnarray}
We conclude that $\nu_\theta$ is indeed an invariant measure for the SSDE in \eqref{gh1*}--\eqref{gh2*}.
\end{proof}

\paragraph{3. Invariant measures have vanishing covariances in the seed-bank direction for $\rho=\infty$.}

In this step we prove that an invariant measure $\nu_\theta$ has vanishing variances in the seed-bank direction. In Step 5 we use this property to successfully couple any two invariant measures.

\begin{lemma}{\bf [Deterministic deep seed-banks]}
\label{lemvar}
If $\nu_\theta=\lim_{n\to\infty}\mu S_{t_n}$ for some colour regular $\mu\in\CR_\theta^{(2)}$ and $t_n\to\infty$, then
\begin{equation}
\label{varpre}
\lim_{m\to\infty} \text{\rm Var}_{\nu_\theta}[y_{i,m}]=0 \qquad \forall i\in\G.
\end{equation}
\end{lemma}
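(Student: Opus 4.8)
The plan is to reduce the equilibrium variance of $y_{i,m}$ to a single non-negative integral of the dual random walk, and then to show that this integral vanishes as the colour $m\to\infty$ because a lineage emerging from a very deep seed-bank wakes up after an enormously long delay, during which it spreads out in space and thereby loses the chance to meet a second lineage.

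First I would express the variance through the moment relations of Lemma~\ref{lem1cg9422}. Since $\nu_\theta=\lim_{n\to\infty}\mu S_{t_n}$ with $\mu$ colour regular and $\mu\in\CR^{(2)}_\theta$, the carried-over Lemma~\ref{lem2cg94} gives $\nu_\theta\in\CR^{(2)}_\theta$, and Lemma~\ref{inv} gives that $\nu_\theta$ is invariant. Applying \eqref{e5.532} with the invariant, translation-invariant measure $\nu_\theta$ and $(i,R_i)=(j,R_j)=(i,D_m)$, invariance turns $\E_{\nu_\theta}[g(x_k(s))]$ into the constant $\bar g:=\E_{\nu_\theta}[g(x_0)]$ and makes the left-hand side independent of $t$, so that
\[
\E_{\nu_\theta}[y_{i,m}^2]
=\sum_{(k,R_k),(l,R_l)}b^{(2)}_t\big((i,D_m),(k,R_k)\big)\,b^{(2)}_t\big((i,D_m),(l,R_l)\big)\,\E_{\nu_\theta}[z_{(k,R_k)}z_{(l,R_l)}]
+2\bar g\int_0^t\d u\sum_{k\in\G}b^{(2)}_u\big((i,D_m),(k,A)\big)^2 .
\]
Letting $t\to\infty$, property (2) of $\CR^{(2)}_\theta$ forces the double sum to $\theta^2=(\E_{\nu_\theta}[y_{i,m}])^2$, while the integral increases to $J_m:=\int_0^\infty\d u\sum_{k}b^{(2)}_u((i,D_m),(k,A))^2$. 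Hence $\mathrm{Var}_{\nu_\theta}[y_{i,m}]=2\bar g\,J_m$, and it suffices to prove $\lim_{m\to\infty}J_m=0$ (the integral is finite since we are in the coexistence regime).

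Second I would give $J_m$ a probabilistic meaning. Because $a(\cdot,\cdot)$ is symmetric, the convolution identity $\sum_k a_{s}(i,k)a_{s'}(i,k)=\hat a_{s+s'}(0,0)$ yields
\[
J_m=\int_0^\infty\d u\,\mathbb{E}_m\Big[\hat a_{T_1(u)+T_2(u)}(0,0)\,\mathbf 1_{\CE_1(u)}\,\mathbf 1_{\CE_2(u)}\Big],
\]
where $\mathbb{E}_m$ runs over the independent active/dormant time-lapse structures of two dual lineages that both start dormant at colony $i$ with colour $m$ (so the \emph{first} dormant sojourn of each is exponential with the small rate $e_m$, subsequent sojourns following the generic wake-up law), with $T_j(u)$ the accumulated active time and $\CE_j(u)$ the event of being active; this is exactly the two-lineage quantity analysed in Section~\ref{ss.clustcasealt}. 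For two lineages started \emph{active} at colonies with displacement $\delta$ put $G(\delta)=\int_0^\infty \d v\,\mathbb{E}_{(\delta,A),(0,A)}[\hat a_{T(v)+T'(v)}(\delta,0)\mathbf 1_{\CE(v)}\mathbf 1_{\CE'(v)}]$. Then $G(\delta)\le G(0)$, and $G(0)<\infty$ is precisely the integral shown finite when $I_{\hat a,\gamma}<\infty$; dominating by the integrable $\delta=0$ integrand and using $\hat a_t(\delta,0)\to0$ as $\|\delta\|\to\infty$ for each fixed $t$, dominated convergence gives $\lim_{\|\delta\|\to\infty}G(\delta)=0$.

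Finally, let $\sigma_m$ be the first time both lineages are simultaneously active and $D_m$ their spatial separation at that moment (with $G(\infty):=0$ on $\{\sigma_m=\infty\}$). Before $\sigma_m$ the integrand of $J_m$ vanishes, and the strong Markov property at $\sigma_m$ restarts two active lineages at displacement $D_m$, so that $J_m=\mathbb{E}[G(D_m)]$. The decisive point is that $D_m\to\infty$ in probability as $m\to\infty$: the two first wake-up times are independent $\mathrm{Exp}(e_m)$ variables, so their difference is of order $1/e_m\to\infty$, during which only the earlier lineage migrates; by the stable law of large numbers \eqref{lawasympalt}--\eqref{ETasympalt} it accumulates a migration time tending to infinity, and for any irreducible kernel $\sum_{\|x\|\le R}\hat a_t(0,x)\to0$ as $t\to\infty$, so the separation diverges. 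Since $G$ is bounded by $G(0)<\infty$ and $G(D_m)\to0$ in probability, dominated convergence gives $J_m=\mathbb{E}[G(D_m)]\to0$, whence $\mathrm{Var}_{\nu_\theta}[y_{i,m}]\to0$. I expect this last step to be the main obstacle: controlling $J_m$ requires combining the divergence of the wake-up offset for deep seed-banks with the spatial spreading of the dual walk, made uniform through the bound $G(\delta)\le G(0)$, which is available only in the coexistence regime $I_{\hat a,\gamma}<\infty$.
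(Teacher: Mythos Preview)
Your proof is correct and follows essentially the same route as the paper's. Both arguments reduce the variance to the integral $J_m=\int_0^\infty\sum_k b^{(2)}_u((i,D_m),(k,A))^2\,\d u$ (you get an exact equality $\mathrm{Var}=2\bar g\,J_m$ via invariance, the paper just bounds by $2\|g\|J_m$ along $t_n$), and both kill $J_m$ by the strong Markov property at the first time $\sigma_m=\tau^*$ both lineages are simultaneously active, using that the deep initial dormancy forces a huge time offset during which the earlier lineage spreads out. The only organisational difference is that the paper inserts one extra layer---first bounding $J_m\le \P(\tau<\infty)\,I_{\hat a,\gamma}$ with $\tau$ the first joint active visit to a common site, and then showing $\P(\tau<\infty)\to 0$ via $\tau^*$---whereas you write $J_m=\E[G(D_m)]$ directly; your version is marginally cleaner. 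One small point to make explicit: $\sigma_m$ may exceed the second wake-up time $W_m^{(2)}$, so the ``separation after the wake-up gap'' is not literally $D_m$; but since $D_m$ has law $\hat a_{T_1(\sigma_m)+T_2(\sigma_m)}(0,\cdot)$ and $T_1(\sigma_m)+T_2(\sigma_m)\ge T_1(W_m^{(2)})\to\infty$, the conclusion $\|D_m\|\to\infty$ in probability follows just as you indicate.
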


\begin{proof} 
Since $\nu_\theta$ is translation invariant, it is enough to show that $\lim_{m\to\infty} \text{Var}_{\nu_\theta}[y_{0,m}]=0$.  Since $\mu(0)\in\mathcal{R}_\theta^{(2)}$, it follows from Lemma~\ref{lem1cg9422} that 
\begin{equation}
\label{var}
\begin{aligned}
\lim_{m\to\infty} \text{Var}_{\nu_\theta}[y_{0,m}]
&=\lim_{m\to\infty}\lim_{n\to\infty} \E_{\mu}\left[\left(y_{0,m}(t_{n})-\E_{\mu}[y_{0,m}(t_{n})]\right)^2\right]\\
&=\lim_{m\to\infty} \lim_{n\to\infty} 2\int_{0}^{t_{n}} \d s \sum_{k\in \G} 
b^{(2)}_{(t_{n}-s)}((0,D_m),(k,A))\,b^{(2)}_{(t_{n_k}-s)}((0,D_m),(k,A))\,\E_z[g(x_k(s))].
\end{aligned}
\end{equation}	 
Since $g$ is positive and bounded, it is therefore enough to prove that 
\begin{equation}
\label{rw}
\lim_{m\to\infty}\lim_{n\to\infty}\int_{0}^{t_n} \d u \sum_{k\in \G} 
b^{(2)}_{u}((0,D_m),(k,A))\,b^{(2)}_{u}((0,D_m),(k,A))=0.
\end{equation}
Recall (see e.g.\ \eqref{oef}) that $b^{(2)}_{u}((0,D_m),(k,A))\,b^{(2)}_{u}((0,D_m),(k,A))$ is the probability that two random walks, denoted by $RW$ and $RW^\prime$ and moving according to $b^{(2)}(\cdot,\cdot)$, are at time $u$ at the same site and both active. Define
\begin{equation}
\label{tau}
\tau=\big\{t\geq 0: RW(t)=RW^\prime(t)=(i,A)\text{ for some }i\in\G\big\}.
\end{equation}
Then we can rewrite the left-hand side of \eqref{rw} as
\begin{equation}
\label{var1}
\begin{aligned}
\lim_{m\to\infty}\lim_{n\to\infty}&\int_{0}^{t_n} \d u\, \sum_{k\in \G} 
b^{(2)}_{u}((0,D_m),(k,A))\,b^{(2)}_{u}((0,D_m),(k,A))\\
&=\lim_{m\to\infty}\lim_{n\to\infty}
\int_{0}^{t_n}\d u\, \E_{(0,D_m),(0,D_m)}\left[\sum_{k\in\G} 
1_{\{RW(u)=k\}}\,1_{\{RW^\prime(u)=k\}}\,1_{\CE(u)}\,1_{\CE^\prime(u)}\right]\\
&=\lim_{m\to\infty}\lim_{n\to\infty}
\int_{0}^{t_n}\d u\, \E_{(0,D_m),(0,D_m)}\left[\sum_{k\in\G} 1_{\{RW(u)=k\}}\,
1_{\{RW^\prime(u)=k\}}\,1_{\CE(t)}\,1_{\CE^\prime(t)}\left(1_{\{\tau<\infty\}}
+1_{\{\tau=\infty\}}\right)\right]\\
&=\lim_{m\to\infty}\lim_{n\to\infty}
 \E_{(0,D_m),(0,D_m)}\left[1_{\{\tau<\infty\}}\E_{(0,D_m),(0,D_m)}\left[\int_{0}^{t_n}\d u\sum_{k\in\G} 
 1_{\{RW(u)=k\}}1_{\{RW^\prime(u)=k\}}1_{\CE(u)}1_{\CE^\prime(u)} \mid \CF_\tau\right]\right]\\
&=\lim_{m\to\infty}\lim_{n\to\infty}
 \E_{(0,D_m),(0,D_m)}\left[1_{\{\tau<\infty\}}\E_{(0,A),(0,A)}\left[\int_{0}^{t_n-\tau}\d u \sum_{k\in\G}  
 1_{\{RW(u)=k\}}1_{\{RW^\prime(u)=k\}}1_{\CE(u)}1_{\CE^\prime(u)}\right]\right]\\
&\leq\lim_{m\to\infty}\lim_{n\to\infty}
\E_{(0,D_m),(0,D_m)}\left[1_{\{\tau<\infty\}}\E_{(0,A),(0,A)}\left[\int_{0}^{\infty}\d u \sum_{k\in\G}  
1_{\{RW(u)=k\}}1_{\{RW^\prime(u)=k\}}1_{\CE(u)}1_{\CE^\prime(u)}\right]\right]\\
&=\lim_{m\to\infty} \P_{(0,D_m),(0,D_m)}\left(\tau<\infty\right)I_{\hat{a},\gamma},
\end{aligned}
\end{equation}
where we use that $I_{\hat{a},\gamma}<\infty$, the strong Markov property, and the fact that for $\tau=\infty$ the product of the indicators equals $0$ for all $u\in\R_{\geq 0}$. Therefore \eqref{varpre} holds if
\begin{equation}
\lim_{m\to\infty}\P_{(0,D_m),(0,D_m)}\left(\tau<\infty\right)=0.
\end{equation}

Define 
\begin{equation}
\label{tauster}
\tau^*=\inf\big\{t\geq 0:\text{ both }RW \text{ and }RW^\prime \text{ are active at time } t\big\}.
\end{equation}
Note that $\tau^*\leq \tau$. Theorefore we can write (recall that in model 2 the random walk kernel $a(\cdot,\cdot)$ is assumed to be symmetric),
\begin{equation}
\begin{aligned}
&\lim_{m\to\infty}\P_{(0,D_m),(0,D_m)}\left(\tau<\infty\right)\\
&=\lim_{m\to\infty}\E_{(0,D_m),(0,D_m)}[1_{\{\tau<\infty\}}]\\
&=\lim_{m\to\infty} \E_{(0,D_m),(0,D_m)}\left[1_{\{\tau^*<\infty\}}\,
\E_{(0,D_m)^2}\left[1_{\{\tau<\infty\}} \mid \CF_{\tau^*}\right]\right]\\
&=\lim_{m\to\infty} \E_{(0,D_m),(0,D_m)}\left[\E^{RW(\tau^*),RW^\prime(\tau^*)}
\left[1_{\{\tau<\infty\}}\right]\right]\\
&=\lim_{m\to\infty}\sum_{k,l\in\G}\P_{(0,D_m),(0,D_m)}\left(RW(\tau^*)=(k,A),RW^\prime(\tau^*)
=(l,A)\right)\,\E_{(k,A),(l,A)}\left[1_{\{\tau<\infty\}}\right]\\
&=\lim_{m\to\infty}\sum_{k,l\in\G} \E_{(0,D_m),(0,D_m)}[\hat{a}_{T(\tau^*)}(0,k)\,
\hat{a}_{T^\prime(\tau^*)}(0,l)]\,\E_{(0,A),(l-k,A)}\left[1_{\{\tau<\infty\}}\right]\\
&=\lim_{m\to\infty}\sum_{k,l\in\G} \E_{(0,D_m),(0,D_m)}[\hat{a}_{T(\tau^*)}(0,k)\,\hat{a}_{T^\prime(\tau^*)}(-k,l-k)]
\,\E_{(0,A),(l-k,A)}\left[1_{\{\tau<\infty\}}\right]\\
&=\lim_{m\to\infty}\sum_{k,j\in\G} \E_{(0,D_m),(0,D_m)}[\hat{a}_{T(\tau^*)}(0,-k)\,
\hat{a}_{T^\prime(\tau^*)}(-k,j)]\,\E_{(0,A),(j,A)}\left[1_{\{\tau<\infty\}}\right]\\
&=\lim_{m\to\infty}\sum_{j\in\G} \E_{(0,D_m),(0,D_m)}[\hat{a}_{T(\tau^*)+T^\prime(\tau^*)}(0,j)]
\,\E_{(0,A),(j,A)}\left[1_{\{\tau<\infty\}}\right]\\
&=\lim_{m\to\infty}\sum_{{j\in\G} \atop {\|j\|\leq L}} \E_{(0,D_m),(0,D_m)}[\hat{a}_{T(\tau^*)+T^\prime(\tau^*)}(0,j)]
\,\E_{(0,A),(j,A)}\left[1_{\{\tau<\infty\}}\right]\\
&\qquad+\lim_{m\to\infty}\sum_{ {j\in\G} \atop {\|j\|> L}} \E_{(0,D_m),(0,D_m)}[\hat{a}_{T(\tau^*)+T^\prime(\tau^*)}(0,j)]\,
\E_{(0,A),(j,A)}\left[1_{\{\tau<\infty\}}\right].
\end{aligned}
\end{equation}
To prove that the expression in the right-hand side tends to zero, we fix $\epsilon>0$ and prove that there exists an $L\in\N$ such that both sums are smaller that $\frac{\epsilon}{2}$.

\paragraph{Claim 1: There exists an $L$ such that  $\lim_{m\to\infty}\sum_{j\in\G, \|j\|> L}\E_{(0,D_m)^2}[ \hat{a}_{T(\tau^*)+T^\prime(\tau^*)}(0,j)]\,\E_{(0,A),(j,A)}[1_{\{\tau<\infty\}}]<\frac{\epsilon}{2}$.}

Using the symmetry of the kernel $a(\cdot,\cdot)$ in model 2, we find
\begin{equation}
\begin{aligned}
\E_{(0,A),(j,A)}\left[1_{\{\tau<\infty\}}\right]
&=\E_{(0,A),(j,A)}\left[\int_0^\infty \d s\, 1_{\{\tau\in\d s\}}\right]\\
&\leq\E_{(0,A),(j,A)}\left[\int_0^\infty \d s\sum_{k\in\G} 1_{\CE(s)}
1_{\CE^\prime(s)}1_{\{RW=k\}}1_{\{RW^\prime=k\}}\right]\\
&\leq \E_{(0,A),(j,A)}\left[\int_0^\infty \d s\, \sum_{k\in\G}
\hat{a}_{T(s)}(0,k)\,\hat{a}_{T^\prime(s)}(j,k)\,1_{\CE(s)}\,
1_{\CE^\prime(s)}\right]\\
&\leq \E_{(0,A),(j,A)}\left[\int_0^\infty \d s\, \hat{a}_{T(s)+T^\prime(s)}(j,0)1_{\CE(s)}
1_{\CE^\prime(s)}\right].
\end{aligned}
\end{equation}
The last integral in the right-hand side is dominated by $I_{\hat{a},\gamma}$ (recall \eqref{Idefalt}). Since, for all $t\in\R_{\geq 0}$,
\begin{equation}
\lim_{\|j\|\to\infty}\hat{a}_t(0,j)=0,
\end{equation}
it follows by dominated convergence that for each $\epsilon>0$ we can find an $L$ such that, for all $\|j\|>L$, 
\begin{equation}
\E_{(0,A),(j,A)}\left[1_{\{\tau<\infty\}}\right]<\tfrac{\epsilon}{2}.
\end{equation}
Hence, for $L$ sufficiently large, we find
\begin{equation}
\begin{aligned}
&\lim_{m\to\infty}\sum_{j\in\G, |\|j\||> L}\E_{(0,D_m),(0,D_m)}[ \hat{a}_{T(\tau^*)+T^\prime(\tau^*)}(0,j)]
\left[\E_{(0,A),(j,A)}\left[1_{\{\tau<\infty\}}\right]\right]\\
&\leq \lim_{m\to\infty}\tfrac{\epsilon}{2}\sum_{j\in\G, \|j\|> L}\E_{(0,D_m),(0,D_m)}
[\hat{a}_{T(\tau^*)+T^\prime(\tau^*)}(0,j)]\leq \tfrac{\epsilon}{2}.
\end{aligned}
\end{equation}

\paragraph{Claim 2: For $L$ given as in Claim 1, $\lim_{m\to\infty}\sum_{j\in\G, \|j\|\leq L}\E_{(0,D_m)^2}[ \hat{a}_{T(\tau^*)+T^\prime(\tau^*)}(0,j)]\,\E^{(0,A),(j,A)}[1_{\{\tau<\infty\}}]<\frac{\epsilon}{2}$.}

For the first sum, note that 
\begin{equation}
\begin{aligned}
\lim_{m\to\infty}&\sum_{{j\in\G} \atop {\|j\|\leq L}} 
\E_{(0,D_m),(0,D_m)}[ \hat{a}_{(T(\tau^*)+T^\prime(\tau^*))}(0,j)]
\,\E_{(0,A),(j,A)}\left[1_{\{\tau<\infty\}}\right]\\
&\leq \lim_{m\to\infty}\sum_{ {j\in\G} \atop {\|j\|\leq L}}
\E_{(0,D_m),(0,D_m)}[\hat{a}_{T(\tau^*)+T^\prime(\tau^*)}(0,j)]\\
&= \lim_{m\to\infty}\sum_{ {j\in\G} \atop {\|j\|\leq L}}
\E_{(0,A),(0,D_m)}[\hat{a}_{T(\tau^*)+T^\prime(\tau^*)}(0,j)],
\end{aligned}
\end{equation}
where in the last equality we condition on the first time one of the two random walks wakes up, and use the strong Markov property. We will show that the right-hand side tends to zero as $m\to\infty$. Recall that we assumed \eqref{assalt}: $e_m\sim Bm^{-\beta}$ for $\beta>0$. Note that, in order for the random walks to be both active at the same time, the random walk starting in $(0,D_m)$ has to become active at least once. Hence, for all $t \geq 0$, we have 
\begin{equation}
\label{sleep}
\lim_{m\to\infty}\P_{(0,D_m),(0,A)}(\tau^*\leq t)\leq \lim_{m\to\infty} 1-e^{-e_m t}=0.
\end{equation}
By \eqref{ETasympalt} and \cite{EJU19}, we also have for the random walk starting in $(0,A)$  that
\begin{equation}
\label{act}
\lim_{t\to\infty}T(t)\sim ct^\gamma.
\end{equation}

Fix $\epsilon>0$. Since $\lim_{t \to \infty}\hat{a}_t(0,j)=0$ for all $j\in\G$, we can find a $T^\star$ such that, for all $t>T^\star$,
\begin{equation}
\sum_{ {j\in\G} \atop {\|j\|\leq L}} \hat{a}_{t}(0,j)<\tfrac{\epsilon}{6}.
\end{equation}
By \eqref{act}, we can find a $\tilde{t}\in\R_{\geq 0}$ such that $\P_{(0,A)}(T(\tilde{t})>T^\star)\geq 1-\frac{\epsilon}{6}$. By \eqref{sleep}, we can find an $M\in\N_0$ such that for all $m>M$,
\begin{equation}
\lim_{m\to\infty}\P_{(0,D_m),(0,A)}(\tau^*\leq \tilde{t})< \tfrac{\epsilon}{6},
\end{equation}
and hence
\begin{equation}
\begin{aligned}
\lim_{m\to\infty}\sum_{ {j\in\G} \atop {\|j\|\leq L}} \E_{(0,A),(0,D_m)}[ \hat{a}_{T(\tau^*)+T^\prime(\tau^*)}(0,j)]
< \tfrac{\epsilon}{6}+\tfrac{\epsilon}{6}+\tfrac{\epsilon}{6} = \tfrac{\epsilon}{2}.
\end{aligned}
\end{equation}
\end{proof}

\paragraph{4. Uniqueness of the invariant measure $\nu_\theta$ when $\rho=\infty$.}

\begin{lemma}{\bf[Uniqueness of and convergence to $\nu_\theta$.]}
\label{lemuniq}
For all $\theta\in(0,1)$ there exists a unique invariant measure $\nu_\theta$ such that $\lim_{t\to\infty}\mu(t)=\nu_\theta$ for all colour regular $\mu(0)\in\CT^{\mathrm{erg}}_\theta$. 
\end{lemma}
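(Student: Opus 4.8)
The plan is to prove Lemma~\ref{lemuniq} by combining the three preceding ingredients: the existence of invariant measures (Lemma~\ref{inv}), the vanishing of covariances in the seed-bank direction (Lemma~\ref{lemvar}), and the successful-coupling machinery developed for $\rho=\infty$. First I would establish existence: given $\theta\in(0,1)$, start the system from $\delta_{\underline{\theta}}$, the deterministic configuration with all active and all dormant components equal to $\theta$. This measure is colour regular and lies in $\CR_\theta^{(2)}$ (it trivially satisfies properties (1)--(2) of Definition~\ref{pres2}, since it is a point mass with the correct density). By compactness of $E$, the orbit $\{\delta_{\underline{\theta}}\,S_t\}_{t\geq0}$ has a weak limit point $\nu_\theta$ along some sequence $t_n\to\infty$, and Lemma~\ref{inv} guarantees that $\nu_\theta$ is invariant. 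This produces \emph{an} equilibrium with density $\theta$.

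The heart of the argument is uniqueness, for which I would use coupling. Let $\nu_\theta^1$ and $\nu_\theta^2$ be any two invariant measures with density $\theta$, both colour regular (colour regularity of limit points follows from Lemma~\ref{lemvar}, which forces the deep seed-banks to be asymptotically deterministic and equal to $\theta$). To couple them I must verify the integrability hypothesis \eqref{q5} of the successful-coupling lemma, i.e.\ $\hat\E[|\Delta_i(0)|+\sum_{m\in\N_0}K_m|\delta_{i,m}(0)|]<\infty$. This is where Lemma~\ref{lemvar} is essential: since both invariant measures have $\lim_{m\to\infty}\mathrm{Var}_{\nu_\theta^k}[y_{i,m}]=0$ and the same limiting colour mean $\theta$, the differences $\delta_{i,m}(0)=y^1_{i,m}(0)-y^2_{i,m}(0)$ satisfy $\hat\E[|\delta_{i,m}(0)|]\to 0$ as $m\to\infty$, and one controls the weighted sum $\sum_m K_m\hat\E[|\delta_{i,m}(0)|]$ using the tail $K_m\sim Am^{-\alpha}$ from \eqref{assalt} together with a Cauchy--Schwarz bound $\hat\E[|\delta_{i,m}|]\leq (\hat\E[|\delta_{i,m}|^2])^{1/2}$ and the decay of the variance. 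Once \eqref{q5} holds, the successful-coupling lemma gives $\lim_{t\to\infty}\hat\E[|\Delta_i(t)|+\sum_m K_m|\delta_{i,m}(t)|]=0$, and since both measures are invariant this forces $\nu_\theta^1=\nu_\theta^2$; thus the invariant measure with density $\theta$ is unique.

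For the convergence statement, I would take an arbitrary colour-regular $\mu(0)\in\CT_\theta^{\mathrm{erg}}$, note $\mu(0)\in\CR_\theta^{(2)}$ by the Step~1 analysis in Section~\ref{sss.cosprf2}, and couple $\mu(0)$ with $\nu_\theta$ itself via their Brownian motions. The integrability condition \eqref{q5} for this pair again follows from Lemma~\ref{lemvar} applied to $\nu_\theta$ and from colour regularity of $\mu(0)$, which together make the initial seed-bank discrepancy summable against the weights $K_m$. Successful coupling then yields $\lim_{t\to\infty}\mu(t)=\lim_{t\to\infty}\nu_\theta S_t=\nu_\theta$, invariance of $\nu_\theta$ giving the second equality.

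The main obstacle I anticipate is precisely the verification of the integrability condition \eqref{q5} for the weighted seed-bank sum when $\rho=\sum_m K_m=\infty$. The weights $K_m$ are \emph{not} summable, so I cannot simply bound each term by a constant; I genuinely need the quantitative rate at which the deep-seed-bank variances vanish, combined with the polynomial tail of $K_m$ in \eqref{assalt}, to ensure $\sum_m K_m\hat\E[|\delta_{i,m}|]<\infty$. Making Lemma~\ref{lemvar} quantitative enough — tracking how fast $\mathrm{Var}_{\nu_\theta}[y_{0,m}]\to0$ relative to the growth $K_m\sim Am^{-\alpha}$ with $\alpha\leq1$ — is the delicate point, and it is the reason colour regularity is imposed as a hypothesis rather than derived. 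Everything else is a routine transcription of the $\rho<\infty$ coupling argument with the state space and kernel $b^{(2)}(\cdot,\cdot)$ in place.
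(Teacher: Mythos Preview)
Your overall architecture (existence via compactness and Lemma~\ref{inv}, then uniqueness via coupling, then convergence) matches the paper, but the mechanism you propose for verifying the integrability condition \eqref{q5} is not the one the paper uses, and your version has a genuine gap. You try to make Lemma~\ref{lemvar} quantitative, i.e.\ extract a decay rate for $\mathrm{Var}_{\nu_\theta}[y_{0,m}]$ fast enough to beat $K_m\sim Am^{-\alpha}$ with $\alpha\le 1$. No such rate is available from the argument of Lemma~\ref{lemvar}; it only gives $\lim_{m\to\infty}=0$. The paper sidesteps this entirely: since by Lemma~\ref{lemvar} the deep seed-banks are asymptotically deterministic equal to $\theta$ under both $\nu_\theta^1$ and $\nu_\theta^2$, the tail $\sigma$-algebras $\CB^k=\cap_M\sigma(y^k_{i,m}:i\in\G,\,m\ge M)$ are trivial and agree. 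Goldstein's theorem then produces a coupling of the seed-bank ``processes'' $(Y^k(m))_{m\ge -1}$ and a random index $T^{\mathrm{coup}}<\infty$ a.s.\ such that $\delta_{i,m}(0)=0$ for all $m\ge T^{\mathrm{coup}}$. On the event $\{T^{\mathrm{coup}}<T\}$ the weighted sum in \eqref{q5} has only finitely many nonzero terms and is trivially finite; one runs the coupling dynamics on $\bar\nu_\theta|_{\{T^{\mathrm{coup}}<T\}}$, obtains $\hat\P(E_0\times E_0)=1$ there, and lets $T\to\infty$. No decay rate is ever needed.

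Your convergence step has a further problem. You propose to couple a general colour-regular $\mu(0)\in\CT_\theta^{\mathrm{erg}}$ directly with $\nu_\theta$ and verify \eqref{q5}. But colour regularity only controls the \emph{means} $\E_{\mu(0)}[y_{0,m}]$, not the variances, so $\hat\E[|\delta_{i,m}(0)|]$ need not tend to zero and \eqref{q5} can fail outright for such a pair. The paper never couples $\mu(0)$ with $\nu_\theta$ directly. Instead it argues that every weak limit point of $(\mu S_t)_{t\ge 0}$ is invariant (Lemma~\ref{inv}), every such limit point satisfies Lemma~\ref{lemvar}, and then the Goldstein-based uniqueness argument forces all limit points to coincide; this yields $\lim_{t\to\infty}\mu(t)=\nu_\theta$ without ever needing \eqref{q5} for the pair $(\mu(0),\nu_\theta)$.
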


\begin{proof} 
Suppose that $\nu_\theta^1$ and $\nu_\theta^2$ and are two different weak limit points of $\mu(t_n)$ as $t_n\to\infty$, and that $\mu\in\CR_\theta^{(2)}$ is colour regular. Let $(\bar{Z}(t))_{t\geq 0}=(Z^1(t),Z^2(t))_{t\geq 0}$ be the coupled process from \eqref{gh1k}--\eqref{gh2k} with $\CL(\bar{Z}(0))=\bar{\nu}_\theta$, $\CL(Z^1(0))=\nu^1_\theta$ and $\CL(Z^2(0))=\nu^2_\theta$. Define the process $Y^1$ by
\begin{equation}
\begin{aligned}
&Y^{1} =\left(Y^1(m)\right)_{m\in\{-1\}\cup\N_0},\\
&Y^1(-1) = (x^1_i(0))_{i\in\G},\qquad Y^1(m) = (y^1_{i,m}(0))_{i\in\G}\text{ for }{m\in\N_0}.
\end{aligned}
\end{equation}
Thus, $Y^1$ has state space $[0,1]^\G$ and $\CL(Y^1)=\CL(Z^1(0))=\nu^1_\theta$. We can interpret $Y^1$ as a process that describes the states of the population \emph{in the seed-bank direction}. Similarly, define the process $Y^2$ by
\begin{equation}
\begin{aligned}
&Y^{2} =\left(Y^2(m)\right)_{m\in\{-1\}\cup\N_0},\\
&Y^2(-1) = (x^2_i(0))_{i\in\G}, \qquad Y^2(m) = (y^2_{i,m}(0))_{i\in\G}\text{ for }{m\in\N_0}.
\end{aligned}
\end{equation}
Thus, $Y^2$ has state space $[0,1]^\G$ and $\CL(Y^2)=\CL(Z^2(0))=\nu^2_\theta$. 

Define the $\sigma$-algebra's $\CB^1_M$ and $\CB^1$, respectively, $\CB^2_M$ and $\CB^2$ by 
\begin{equation}
\label{tailcolorpr}
\CB^k = \cap_{M\in\N_0} \CB^k_M, \qquad \CB^k_M 
= \sigma\big(\ y^k_{i,m}\colon\,i \in \G,\,m \geq M\big),\quad k\in\{1,2\}.
\end{equation}
Here, $\CB^1$ and $\CB^2$ are \emph{the tail-$\sigma$-algebras in the seed-bank direction.} By Lemma \ref{lemvar}, we have
\begin{equation}
\label{taildis}
\lim_{m\to\infty}\CL_{\nu^1_\theta}(y_{i,m}) = \lim_{m\to\infty}\CL_{\nu^2_\theta}(y_{i,m})=\delta_\theta.
\end{equation} 
Hence, $\CB^1=\CB^2$, both are trivial, and $\nu^1_\theta$ and $\nu^2_\theta$ agree on $\CB$. Therefore Goldstein's Theorem \cite{G78} implies that there exists a successful coupling of $Y^1$ and $Y^2$. Consequently, there exists a random variable $T^{\mathrm{coup}}\in\{-1\}\cup\N_0$ such that, for all $m\geq T^{\mathrm{coup}}$, $Y^1(m)=Y^2(m)$, i.e., $|\delta_{i,m}(0)|=0$ for all $i\in\G$ and $\P(T^{\mathrm{coup}}<\infty)=1$. Hence 
\begin{equation}
\label{63}
\hat\E\left[|\Delta_i(0)| + \sum_{m\in\mathbb{N}_0}K_m |\delta_i(0)|\right] 
=\hat\E\left[|\Delta_i(0)| + \sum_{m=0}^{T^{\mathrm{coup}}}K_m |\delta_i(0)|\right].
\end{equation}
However, we cannot conclude that the left-hand side of \eqref{63} is finite. Therefore, let $\bar{\nu}_\theta|_{\{T^{\mathrm{coup}}<T\}}$ denote the restriction of the measure $\bar\nu_\theta$ to the set ${\{T^{\mathrm{coup}}<T\}}$. Since ${\{T^{\mathrm{coup}}<T\}}$ is a translation-invariant event in the spatial direction, the measure $\bar\nu_\theta|_{\{T^{\mathrm{coup}}<T\}}$ is translation invariant. Moreover,
\begin{equation}
\label{63a}
\hat\E_{\bar{\nu}_\theta|_{\{T^{\mathrm{coup}}<T\}}}\left[|\Delta_i(0)| 
+ \sum_{m\in\mathbb{N}_0}K_m |\delta_i(0)|\right] 
= \hat\E_{\bar{\nu}_\theta|_{\{T^{\mathrm{coup}}<T\}}}\left[|\Delta_i(0)| 
+ \sum_{m=0}^{T}K_m |\delta_i(0)|\right]<\infty.
\end{equation}    
Therefore we can use the dynamics in \eqref{couprel2} and conclude that, for all $T\in\N$, $\hat\P_{\bar{\nu}_\theta|_{\{T^{\mathrm{coup}}<T\}}}(E_0\times E_0)=1$ (recall \eqref{q7}). Since $\lim_{T\to\infty}\bar{\nu}_\theta|_{\{T^{\mathrm{coup}}<T\}}=\bar{\nu}_\theta$, it follows that 
\begin{equation}
\hat\P_{\bar{\nu}_\theta}(E_0\times E_0)=1.
\end{equation}

By \eqref{sc2} and \eqref{sc3}, we conclude that $\nu_\theta^1=\nu_\theta^2$ and hence that all weak limit points of $(\mu(t))_{t\geq 0}$ are the same. Suppose now that $\mu^1(0)\in\CT^{\mathrm{erg}}_\theta$ and $\mu^2(0)\in\CT^{\mathrm{erg}}_\theta$ are two different colour regular initial measures. By the above argument, we know that $\lim_{t\to\infty} \mu^1(t)=\nu^1_\theta$ and $\lim_{t \to \infty} \mu^2(t)=\nu_\theta^2$. By Lemma~\ref{lemvar}, we know that $\nu_\theta^1$ and $\nu_\theta^2$ have the same trivial tail-$\sigma$-algebras in the seed-bank direction. Hence, repeating the above argument, we find that $\nu_\theta^1=\nu_\theta^2$. We conclude that for each colour regular initial measure $\mu\in\CT^{\mathrm{erg}}_\theta$ the SSDE in \eqref{gh1*}--\eqref{gh2*} converges to a unique non-trivial equilibrium measure $\nu_\theta$.  
\end{proof}

\paragraph{5. Ergodicity, mixing and associatedness.}

The equivalent of Lemma \ref{lem10} for $\rho=\infty$ follows in the same way as for $\rho<\infty$.


\subsection{Proof of the dichotomy}
\label{ss.recap2}

Theorem~\ref{T.ltb2}(I)(a) follows from Lemma \eqref{L.suc2} and Steps 3-5 in Section \ref{sss.cosprf}. The equality $\E_{\nu_\theta}[x_0] = \E_{\nu_\theta}[y_{0,m}] = \theta$, $m \in \N_0$, follows from \eqref{gh1*}--\eqref{gh2*},
the fact that $\nu_\theta$ is an equilibrium measure, and the preservation of $\theta$ (see Section~\ref{ss.scal2}). 
Theorem~\ref{T.ltb2}(I)(b) follows by combining Lemma \ref{lem:clusgisgfw2} with the analogue of Lemma~\ref{lem:comparison}. Theorem~\ref{T.ltb2}(II) follows from Lemmas~\ref{lem:clusgisgfw2}, \ref{lemvar}, \ref{lemuniq}, the analogue of Lemma~\ref{lem:comparison}, and Step 6 in Section \ref{sss.cosprf2}. The equality $\E_{\nu_\theta}[x_0] = \E_{\nu_\theta}[y_{0,m}] = \theta$, $m \in \N_0$, follows from \eqref{dens} in Step 1 of Section \ref{sss.cosprf2}.

Corollary \ref{threereg}(1) corresponds to $\gamma\in(1,\infty)$ and $\rho<\infty$, and migration dominates. Corollary \ref{threereg}(2) corresponds to $\gamma\in[\frac{1}{2},1]$ and $\rho=\infty$, and $I_{\hat{ a},\gamma}$ shows in interplay between migration and seed-bank. Corollary \ref{threereg}(3) corresponds to $\gamma\in(0,\frac{1}{2},1)$ and $\rho=\infty$, and the seed-bank dominates: $I_{\hat{a},\gamma}<\infty$ because $\hat{a}_t(0,0)\leq 1$.


\subsection{Different dichotomy for asymmetric migration}
\label{asymmig}

It remains to explain how the counterexample below Theorem~\ref{T.ltb2} arises. We focus on the case when $\rho<\infty$, which implies $\mathbb{E}(\tau)<\infty$, but we assume $\mathbb{E}(\tau^2)=\infty$. Therefore the central limit theorem does not hold for $T(t),\ T^\prime(t)$, and $\Delta(t) \gg \sqrt{M(t)}$. Hence \eqref{fdef} must be replaced by
\begin{equation}
\label{newf}
f(t) = \frac{1}{(2\pi)^d} \int_{[-\pi,\pi]^d} \d\phi\, \e^{-[1+o(1)]\,2Bt\,[1-\hat{a}(\phi)]}\,
\mathbb{E}\left[\cos\Big(\Delta(t)\tilde{a}(\phi)\Big)\right].
\end{equation}
The key observation is that if $\tilde{a}(\phi) \neq 0$ (due to the asymmetry of $a(\cdot,\cdot)$; recall \eqref{same6}), 
then the expectation in \eqref{newf} can change the integrability properties of $f(t)$.

Under the assumption that $\tau$ has a \emph{one-sided stable distribution} with parameter $\gamma \in (1,2)$, we have \eqref{ETasymp} with $A=\chi/(1+\rho)$ and $B=1/(1+\rho)$, while there exists a constant $C \in (0,\infty)$ such that (see \cite[Chapter XVII]{F71})
\begin{equation}
\label{e2487}
\mathbb{E}[\cos(\Delta(t)\tilde{a}(\phi))] = \e^{-[1+o(t)]\,At |C\tilde{a}(\phi)|^\gamma}.
\end{equation}
Substituting \eqref{e2487} into \eqref{newf}, we see that for large $t$ the contribution  to $f(t)$ comes from $\phi$ such that $\hat{a}(\phi)\to 1$ and $\tilde{a}(\phi)\to 0$. By our choice of the migration kernel in \eqref{achoice}, this holds as $\phi = (\phi_1,\phi_2) \to (0,0)$.  Using that  $1-\hat{a}(\phi) \sim \tfrac12 (\phi_1^2+\phi_2^2)$ and $\tilde{a}(\phi) \sim \tfrac12\eta(\phi_1+\phi_2)$ for $(\phi_1,\phi_2) \to (0,0)$, we find that \eqref{newf} equals
\begin{equation}
\label{e2494}
f(t) = \frac{1}{(2\pi)^2} \int_{[-\pi,\pi]^2} \d\phi\,\e^{-[1+o(1)]\,
\{Bt(\phi_1^2+\phi_2^2)+ At[|\tfrac{1}{2} C\eta(\phi_1+\phi_2)|]^\gamma\}},
\qquad t \to \infty.
\end{equation}
Hence the integral in \eqref{e2494} is determined by $\phi$ such that
\begin{equation}
B(\phi_1^2+\phi_2^2)+ A\big[|\tfrac12 C\eta(\phi_1+\phi_2)|\big]^\gamma \leq \frac{c}{t}.
\end{equation}
for $c$ a positive constant, and we find that $f(t) \asymp t^{-\left(\frac{1}{\gamma}+\frac{1}{2}\right)}$. Since $\gamma \in (1,2)$, $f(t)$ is much smaller than $\hat{a}_t(0,0) \asymp 1/t$, valid for two-dimensional simple random walk. Thus we see that $t \mapsto f(t)$ is integrable, while $t \mapsto \hat{a}_t(0,0)$ is not.


\subsection{Modulation of the law of the wake-up times by a slowly varying function}
\label{ss.proofslowvar}

The integral in \eqref{cluscritseed-b} is the \emph{total hazard of coalescence} of two dual lineages: 
\begin{itemize}
\item
If $\gamma \in (0,1)$, then the probability for each of the lineages to be active at time $s$ decays like $\asymp \varphi(s)^{-1} s^{-(1-\gamma)}$ \cite{AB16}. Hence the expected total time they are active up to time $s$ is $\asymp \varphi(s)^{-1} s^\gamma$. Because the lineages only move when they are active, the probability that the two lineages meet at time $s$ is $\asymp a^{(N)}_{\varphi(s)^{-1} s^\gamma}(0,0)$. Hence the total hazard is $\asymp \int_1^\infty \d s\, \varphi(s)^{-2}s^{-2(1-\gamma)}\,a^{(N)}_{\varphi(s)^{-1} s^\gamma}(0,0)$. After the transformation $t=t(s)=\varphi(s)^{-1} s^\gamma$, we get the integral in \eqref{cluscritseed-b}, modulo a constant. (When carrying out this transformation, we need that $\lim_{s\to\infty} s\varphi'(s)/\varphi(s)=0$, which is immediate from \eqref{hatphirepr}, and $\varphi(t(s))/\varphi(s) \asymp 1$ as $s\to\infty$, which is immediate from the bound we imposed on $\psi$ together with the fact that $\lim_{s\to\infty} \log \varphi(s)/\log s = 0$.)   
\item
If $\gamma = 1$, then the probability for each of the lineages to be active at time $s$ decays like $\hat\varphi(s)^{-1}$ \cite{AB16}. Hence the expected total time they are active up to time $s$ is $\asymp s \hat\varphi(s)^{-1}$. Hence the total hazard is $\asymp \int_1^\infty \d s\, \hat\varphi(s)^{-2} \,a^{(N)}_{\hat\varphi(s)^{-1}s}(0,0)$. After the transformation $t=t(s)=\hat\varphi(s)^{-1}s$, we get the integral in \eqref{cluscritseed-b}, modulo a constant.   
\end{itemize}


\section{Proofs: Long-time behaviour for Model 3}
\label{s.model3}

The arguments for model 2 in Section~\ref{s.model2} all carry over with minor adaptations. The only difference is that for $\rho=\infty$ the clustering criterion changes. In this section we prove the new clustering criterion and comment on the modifications needed in the corresponding proofs for model 2 in Section~\ref{s.model2}.

\subsection{Moment relations}
Like in model 1 and 2, we can relate the first and second moments of the system in \eqref{gh1**}--\eqref{gh2**} to the
random walk that evolves according to the transition kernel $b^{(3)}(\cdot,\cdot)$ on $\G\times\{A,(D_m)_{m\in\N_0}\}$ given by \eqref{mrw3}. Replacing in Lemma~\ref{lem1cg9422} the kernel $b^{(2)}(\cdot,\cdot)$ by $b^{(3)}(\cdot,\cdot)$, we find the moment relation for model 3.  Also here these moment relations hold for all $g\in\CG$. Moreover these moment relations holds for $\rho<\infty$ as well as for $\rho=\infty$. 

\subsection{The clustering case }

To obtain the equivalent of Lemma \ref{lem:clusgisgfw2}, we need to replace the kernel $\hat{a}(\cdot,\cdot)$ by the convoluted kernel $(\hat{a}\ast \hat{a}^{\dagger})(\cdot,\cdot)$. Each time one of the two copies of the random walk with migration kernel $a(\cdot,\cdot)$ moves from the active state to the dormant state, it makes a transition according to the displacement kernel $a^\dagger(\cdot,\cdot)$ (recall \eqref{displa}). Therefore the expression in \eqref{same2} needs to be replaced by
\begin{equation}
\label{same2alt}
I = \int_0^\infty \d t\,\sum_{k,k'\in\N} \sum_{i,i' \in \G} \sum_{j \in \G}
\mathbb{E}_{(0,A)}\Big[\hat{a}_{T(k,t)}(0,i)\,\hat{a}_{T'(k',t)}(0,i')\,\hat{a}^\dagger_k(i,j)\,\hat{a}^\dagger_{k'}(i',j)\,
1_{\CE(k,t)}\,1_{\CE'(k',t)}\Big], 
\end{equation}   
where $\hat{a}^\dagger_k(\cdot,\cdot)$ is the step-$k$ transition kernel of the random walk with displacement kernel $\hat{a}^\dagger(\cdot,\cdot)$. Using the \emph{symmetry} of both kernels, we can carry out the sum over $j,i'$ and write 
 \begin{equation}
\label{e2550}
\begin{aligned}
I &= \int_0^\infty \d t\,\sum_{k,k'\in\N} \sum_{j \in \G}
\mathbb{E}_{(0,A)}\Big[\hat{a}_{T(k,t)+T'(k',t)}(0,j)\,\,\hat{a}^\dagger_{k+k'}(0,j)\,1_{\CE(k,t)}\,1_{\CE'(k',t)}\Big]\\
&= \int_0^\infty \d t\,\sum_{j \in \G}
\mathbb{E}_{(0,A)}\Big[\hat{a}_{T(t)+T'(t)}(0,j)\,\,\hat{a}^\dagger_{N(t)+N'(t)}(0,j)\,1_{\CE(t)}\,1_{\CE'(t)}\Big]\\
&= \int_0^\infty \d t\,
\mathbb{E}_{(0,A)}\Big[\big(\hat{a}_{T(t)+T'(t)} \ast \hat{a}^\dagger_{N(t)+N'(t)}\big)\,(0,0)\,1_{\CE(t)}\,1_{\CE'(t)}\Big]. 
\end{aligned}
\end{equation}   
The last expression is the analogue of \eqref{same3}. 

For $\rho<\infty$, following the same line of argument as for model 2, we find with the help of \eqref{qsym} that
\begin{equation}
\label{Iappr1altb}
I \asymp \int_1^\infty \d t\, (\hat{a}_{t} \ast \hat{a}^\dagger_{t})(0,0).
\end{equation}
For $\rho=\infty$, with the help of the Fourier transform we compute
\begin{equation}
\label{vraag}
\begin{aligned}
\mathbb{E}_{(0,A)}\big[\big(a_{T(t)+T'(t)}\ast a^\dagger_{N(t)+N'(t)}\big)\,(0,0)\,\big]
&=\E_{(0,A)}\left[\frac{1}{(2\pi)^d} \int_{(-\pi,\pi]^d}\d\phi\,\e^{-(T(t)+T^\prime(t))[1-\hat{a}(\phi)]}\,
\hat{a}^\dagger(\phi)^{N(t)+N^\prime(t)}\right]\\
&=\frac{1}{(2\pi)^d}\int_{(-\pi,\pi]^d} \d\phi\,\e^{-[1+o(1)]\,2ct^{-\gamma}\,[1-\hat{a}(\phi)]}\,
\e^{-[1+o(1)]\,2t^{-\gamma}[1-\hat{a}^\dagger(\phi)]}\\
&\asymp (\hat{a}_{ct^{-\gamma}}\ast\hat{a}^\dagger_{t^{-\gamma}})(0,0)
\asymp (\hat{a}_{t^{-\gamma}}\ast\hat{a}^\dagger_{t^{-\gamma}})(0,0),
\end{aligned}
\end{equation}
where we use \eqref{qsym}, \eqref{ETasympalt} and the fact that deviations of $T(t)/t^\gamma$ and $T'(t)/t^\gamma$ away from order 1 are stretched exponentially costly in $t$ \cite{EJU19}. Hence
\begin{equation}
\label{Iappr1alt}
I \asymp \int_1^\infty \d t\,t^{-2(1-\gamma)} (\hat{a}_{t^\gamma} \ast \hat{a}^\dagger_{t^\gamma})(0,0).
\end{equation}
Putting $s=t^\gamma$ we obtain, instead of \eqref{Iappr1},
\begin{equation}
\label{Iappr12alt}
I = \infty \quad \Longleftrightarrow \quad I_{\hat{a} \ast \hat{a}^\dagger,\gamma} = \infty
\end{equation}
with
\begin{equation}
\label{e2581}
I_{\hat{a} \ast \hat{a}^\dagger,\gamma} = \int_1^\infty \d s\,s^{-(1-\gamma)/\gamma}\,(\hat{a}_s \ast \hat{a}^\dagger_s)(0,0),
\end{equation} 
which is precisely the integral in \eqref{Idefalt}. 

\subsection{The coexistence case}

The coexistence results in Theorem~\ref{T.ltb3} follow for both $\rho<\infty$ and $\rho=\infty$ by the same type of argument as the one we used for model 2 in Section \ref{ss.cosalt}. We replace \eqref{gh1*}--\eqref{gh2*} by \eqref{gh1**}--\eqref{gh2**}, replace $b^{(2)}(\cdot,\cdot)$ (see \ref{mrw2}) by $b^{(3)}(\cdot,\cdot)$ (see \ref{mrw3}), and use the Fourier transform of $\hat{a}\ast \hat{a}^{\dagger}(\cdot,\cdot)$ instead of $\hat{a}(\cdot,\cdot)$. The key of the argument is that, in the coexistence case, for $\rho<\infty$ we have  $I_{\hat{a}\ast\hat{a}^\dagger}<\infty$, while for $\rho=\infty$ we have $I_{\hat{a}\ast\hat{a}^\dagger,\gamma}<\infty$.

\subsection{Proof of the dichotomy}

This follows in exactly the same way as for model 2.


\appendix


\section{Derivation of continuum frequency equations}
\label{appA}

\paragraph{Model 1.}
We give the derivation of \eqref{gh1}--\eqref{gh2} as the continuum limit of an individual-based model when the size of the colonies tends to infinity. We start with the continuum limit of the Fisher-Wright model with (strong) seed-bank for a \emph{single-colony} model as defined in \cite{BCKW16}. Subsequently we show how the limit extends to a \emph{multi-colony} model with seed-bank.
 
\paragraph{Single-colony model.}
The Fisher-Wright model with (strong) seed-bank defined in \cite{BCKW16} consists of a \emph{single colony} with $N\in\N$ active individuals and $M\in\N$ dormant individuals. Each individual can carry one of two types: $\heartsuit$ or $\diamondsuit$. Let $\epsilon \in [0,1]$ be such that $\epsilon N$ is integer and $\epsilon N \leq M$. Put $\delta=\frac{\epsilon N}{M}$. The evolution of the population is described by a discrete-time Markov chain that undergoes four transitions per step:
\begin{itemize}
\item[(1)] 
From the $N$ active individuals, $(1-\epsilon)N$ are selected uniformly at random without replacement. Each of these individuals resamples, i.e. it adopts the type of an active individual selected uniformly at random with replacement, and remains active. 
\item[(2)] 
Each of the $\epsilon N$ active individuals not selected first resamples, it adopts the type of an active individual selected uniformly at random with replacement, and subsequently becomes dormant.
\item[(3)] 
From the $M$ dormant individuals, $\delta M= \epsilon N$ are selected uniformly at random without replacement, and each of these becomes active. Since these individuals come from the dormant population they do not resample.  
\item[(4)] 
Each of $(1-\delta)M$ dormant individuals not selected remains dormant and retains its type.  
\end{itemize}
Note that the total sizes of the active and the dormant population remain fixed. During the evolution the dormant and active population \emph{exchange } individuals. We are interested in the fractions of individuals of type $\heartsuit$ in the active and the dormant population. 

\begin{figure}[htbp]
\begin{center}
\begin{tikzpicture}[xscale=0.6, yscale=0.8]
\draw [fill=black!20!black!20!] (6.5,-0.5) rectangle (9.5,4.5);
\foreach \x in {1,3,4}
{
\node [] at (\x,0) {\textcolor{black}{\Huge $\heartsuit$}} ;
}
\foreach \x in {2,5}
{
\node [] at (\x,0) {\Huge $\diamondsuit$} ;
}
\foreach \x in {3}
{
\node [] at (\x+6,0) {\textcolor{black}{\Huge $\heartsuit$}} ;
}
\foreach \x in {1,2}
{
\node [] at (\x+6,0) {\Huge $\diamondsuit$} ;
}
		
		
\foreach \x in {4}
{
\node [] at (\x,1) {\textcolor{black}{\Huge $\heartsuit$}} ;
}
\foreach \x in {1,2,3,5}
{
\node [] at (\x,1) {\Huge $\diamondsuit$} ;
}
\foreach \x in {1,3}
{
\node [] at (\x+6,1) {\textcolor{black}{\Huge $\heartsuit$}} ;
}
\foreach \x in {2}
{
\node [] at (\x+6,1) {\Huge $\diamondsuit$} ;
}
\foreach \x in {}
{
\node [] at (\x,2) {\textcolor{black}{\Huge $\heartsuit$}} ;
}
\foreach \x in {1,2,3,4,5}
{
\node [] at (\x,2) {\Huge $\diamondsuit$} ;
}
\foreach \x in {1,2,3}
{
\node [] at (\x+6,2) {\textcolor{black}{\Huge $\heartsuit$}} ;
}
\foreach \x in {}
{
\node [] at (\x+6,2) {\Huge $\diamondsuit$} ;
}
\foreach \x in {4,5}
{
\node [] at (\x,3) {\textcolor{black}{\Huge $\heartsuit$}} ;
}
\foreach \x in {1,2,3}
{
\node [] at (\x,3) {\Huge $\diamondsuit$} ;
}
\foreach \x in {1,3}
{
\node [] at (\x+6,3) {\textcolor{black}{\Huge $\heartsuit$}} ;
}
\foreach \x in {2}
{
\node [] at (\x+6,3) {\Huge $\diamondsuit$} ;
}
\foreach \x in {2,4}
{
\node [] at (\x,4) {\textcolor{black}{\Huge $\heartsuit$}} ;
}
\foreach \x in {1,3,5}
{
\node [] at (\x,4) {\Huge $\diamondsuit$} ;
}
\foreach \x in {1}
{
\node [] at (\x+6,4) {\textcolor{black}{\Huge $\heartsuit$}} ;
}
\foreach \x in {2,3}
{
\node [] at (\x+6,4) {\Huge $\diamondsuit$} ;
}
\draw [ultra thick, black] (1,4-0.25)to [out=90,in=270](1, 3+0.25) ;
\draw [ultra thick, black] (1,4-0.25)to [out=270,in=90](3, 3+0.25) ;
\draw [ultra thick, black] (9,4-0.25) to [out=200,in=30] (2, 3+0.25) ;
\draw [ultra thick, black] (2,4+0.25)to [out=20, in=160](9, 4.5+0) ;
\draw [ultra thick, black] (1.9,4+0.3)to [out=170, in=180](2, 5) ;
\draw [ultra thick, black] (2,5)to [out=0, in=0](2.1, 4.3) ;
\draw [ultra thick, black] (9,4.5)to [out=340, in=0](9.25, 3+0) ;
\draw [ultra thick, black] (4,4-0.25)to [out=270,in=90](4, 3+0.25) ;
\draw [ultra thick, black] (4,4-0.25)to [out=270,in=90](5, 3+0.25) ;
\foreach \x in {0,...,4}
{
\node [] at (-0.5,4-\x) { $\x$} ;
}
\node[]at (-0.5,5) {t};
\node[align=left, below] at (8,-.5)
{Dormant\\population};
\node[align=left, below] at (2,-.5)
{Active\\population};
\end{tikzpicture}	
\caption{\small Example of the evolution for a population with $N=5$ active individuals and $M=3$ dormant individuals. The solid lines within the active population represent resampling, those between the active and the dormant population represent exchange with the seed-bank.  Only 1 active individual and 1 dormant individual exchange places per unit of time, which corresponds to $\epsilon=\tfrac15$ and $\delta=\tfrac13$. The relative size of the dormant and the active population is $K=\tfrac35$. Note that the genetic diversity in the active population is lost in generation $t=2$, but returns in generation $t=3$ via the seed-bank.}
\label{fig:seedbankcartoon}		
\end{center}
\end{figure}
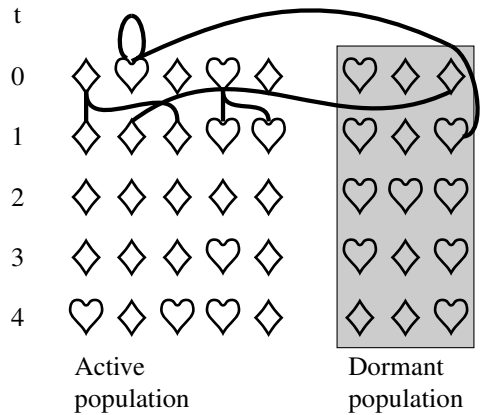

Let $c=\epsilon N=\delta M$, i.e., $c$ is the number of pairs of individuals that change state. Label the $N$ active individuals from $1$ to $N$ and the $M$ dormant individuals from $1$ up to $M$. We denote by $[N] = \{1,\ldots,N\}$ and by $[M] = \{1,\ldots, M\}.$  Let $\xi(k)=(\xi_j(k))_{j\in[N]}\in\{0,1\}^{[N]}$ be the random vector where $\xi_j(k)=1$ if the $j$'th individual is of type $\heartsuit$ at time $k$ and $\xi_j(k)=0$ if the $j$'th individual is of type $\diamondsuit$ at time $k$. Similarly, we let $\eta(k)=(\eta_j(k))_{j\in[M]}\in\{0,1\}^{[M]}$ be the random vector where $\eta_j(k)=1$ if the $j$'th individual is of type $\heartsuit$ at time $k$ and $\eta_j(k)=0$ if the $j$'th individual is of type $\diamondsuit$ at time $k$. Let $I^N=\{0,\frac{1}{N},\frac{2}{N},\frac{3}{N}\ldots,1\}$ and $I^M=\{0,\frac{1}{M},\frac{2}{M},\frac{3}{M}\ldots,1\}$. Define the variables 
\begin{equation}
\label{e2975}
\begin{aligned}
X^N(k)=\frac{1}{N}\sum_{j\in [N]}\mathbf{1}_{\{\xi_j(k)=\heartsuit\}} \quad \text{ on } I^N,\\
Y^N(k)=\frac{1}{N}\sum_{j\in [N]}\mathbf{1}_{\{\eta_j(k)=\heartsuit\}} \quad \text {on } I^M.
\end{aligned}
\end{equation} 
Let $\mathbb{P}_{x,y}$ denote the law of 
\begin{equation}\label{e2982}
(X^N,Y^N) = (X^N(k),Y^N(k))_{k \in \N_0}
\end{equation}
given that $(X^N(0),Y^N(0)) = (x,y) \in I^N\times I^M$. Then, as shown in \cite{BCKW16},
\begin{equation}
\label{e2986}
\begin{aligned}
p_{x,y}(\bar{x},\bar{y}) 
&=\mathbb{P}_{x,y}(X_1^N=\bar{x},Y_1^N=\bar{y})\\
&=\sum_{c'=0}^{c} \mathbb{P}_{x,y}(Z=c')\,\mathbb{P}_{x,y}(U=\bar{x}N-c')\,
\mathbb{P}_{x,y}(V=\bar{y}M-yM+c'). 
\end{aligned}
\end{equation}
Here, $Z$ denotes the number of dormant $\heartsuit$-individuals in generation $0$ that become active in generation $1$ ($\mathcal{L}_{x,y}(Z)=\text{Hyp}_{M,c,yM}$), $U$ denotes the number of active individuals in generation $1$ that are offspring of active $\heartsuit$-individuals in generation $0$ ($\mathcal{L}_{x,y}(U)=\text{Bin}_{N-c,x}$), and $V$ denotes the number of active individuals in generation $0$ that become dormant $\heartsuit$-individuals in generation $1$ ($\mathcal{L}_{x,y}(V)=\text{Bin}_{c,x}$). 

Speed up time by a factor $N$. The generator $G^N$ for the process $((X^{N}(\lfloor Nk \rfloor),Y^{N}(\lfloor Nk \rfloor))_{k\in\N_0}$ equals
\begin{equation}
\label{e3001}
\begin{aligned}
&(G^Nf)(x,y) = N\, \mathbb{E}_{x,y}\big[f(X^N(1),Y^N(1))-f(x,y)\big],\\ 
&(x,y) \in I^N \times I^M,
\end{aligned}
\end{equation} 
where the prefactor $N$ appears because one step of the Markov chain takes time $\frac{1}{N}$. Inserting the Taylor expansion for $f$ (which we assume to be smooth), using that $X^N(1)=\frac{U+Z}{N}$ and $Y^{N}(1)=\frac{yM+V-U}{M}$ and letting $N\to\infty$, we end up with the limiting generator $G$ given by
\begin{equation}
\label{e3010}
\begin{aligned}
&(Gf)(x,y) = c(y-x)\frac{\partial f}{\partial x}(x,y)+\frac{c}{K}(x-y) \frac{\partial f}{\partial y}(x,y)
+\tfrac{1}{2}x(1-x)\frac{\partial ^2 f}{\partial x^2}(x,y),\\ 
&(x,y) \in [0,1] \times [0,1],
\end{aligned}
\end{equation} 
where $K=\frac{M}{N}$ is the relative size of the dormant population compared to the active population. This is the generator of the Markov process in the continuum limit \cite[Section 7.8]{EK86}. It follows from the form of $G$ that this limit is described by the system of coupled stochastic differential equations 
\begin{equation}
\label{seedbankwithoutmigration}
\begin{aligned}
\d x(t)  &= c\,[y(t)-x(t)]\,\d t+\sqrt{x(t)(1-x(t))}\,\d w(t),\\
\d y(t) &= \frac{c}{K}\,[x(t)-y(t)]\,\d t.
\end{aligned}
\end{equation}
This is the version of \eqref{gh1}--\eqref{gh2} for a single colony (no migration) and exchange rate
\begin{equation}
\label{e3029}
e = \frac{c}{K}.
\end{equation} 

\paragraph{Multi-colony model.}
First fix a number $L\in\N$ and consider $|\G|=L$ colonies. The \emph{multi-colony} version with migration is obtained by allowing the $(1-\epsilon)N$ selected active individuals to undergo a migration in step (1):
\begin{itemize}
\item[(1)]
Each active individual at colony $i\in\G$ chooses colony $j\in\G$ with probability $\frac{1}{N} a(i,j)$ and adopts the type of a parent chosen from colony $j$. If an active individual does not migrate, it adopts the type of a parent chosen from its own population.    
\end{itemize}
Using the same strategy as in the single-colony model, this  results in \eqref{gh1}--\eqref{gh2}, for $|\G|=L$. Subsequently we can let $L\to\infty$ and use convergence of generators to obtain \eqref{gh1}--\eqref{gh2} for countable $\G$.

\paragraph{Model 2.}

The same argument works for \eqref{gh1*}--\eqref{gh2*}. Steps (1)-(4) are extended by considering a seed-bank with colours labelled by $\N_0$. First we consider the truncation where only finitely many colours are allowed, for which the argument carries through with minor adaptations. Afterwards, we pass to the limit of infinitely many colours, which is straightforward for a finite time horizon because large colours are only seen after large times. See also \cite{M18}.

\paragraph{Model 3.}

To get \eqref{gh1**}--\eqref{gh2**}, also extend Step (3) by adding a displacement via the kernel $a^\dagger(\cdot,\cdot)$ for each transition into the seed-bank.


\section{Alternative models}
\label{appB}

In this appendix we consider the Moran versions of models 1 and 2. What is written below is based on \cite{M18}. In the Moran version each active individual resamples at rate 1 and becomes dormant at a certain rate, while each dormant individual does not resample and becomes active at a certain rate. Since switches between active and dormant are done independently, the sizes of the active and the dormant population are \emph{no longer} fixed and individuals \emph{change} state without the necessity to \emph{exchange} state. In model 1 there are two Poisson clocks, in model 2 there are two sequences of Poisson clocks, namely, two for each colour. In Appendices~\ref{app1c}--\ref{app2c} we compute the scaling limit for the case where the number of colours is $\mathfrak{m}=1$ and $\mathfrak{m}=2$, respectively. The extension to $\mathfrak{m} \geq 3$ is given in Appendix~\ref{app3c}. Migration can be added in the same way as is done in Appendix \ref{appA}.  


\subsection{Alternative for Model 1}
\label{app1c}

To describe the Moran version of Model 1 we need the following variables.
\begin{itemize}
\item 
Total number of individuals: $N\in\mathbb{N}$.
\item 
Two types: $\heartsuit$ and $\diamondsuit$.
\item 
$X(t)$ is the number of $\heartsuit$-individuals in the active population at time $t$.
\item 
$Y(t)$ is the number of $\heartsuit$-individuals in the dormant population at time $t$.
\item 
$Z(t)$ is the number of individuals in the active population at time $t$ (either $\heartsuit$ 
or $\diamondsuit$).
\end{itemize}
In the Moran model with seed-bank each active individual resamples at rate $1$, each active individual becomes dormant at rate $\epsilon$ and each dormant individual becomes active at rate $\delta$. Hence the transition rates for $(X(t),Y(t),Z(t))$ are:
\begin{itemize}
\item 
$(i,j,k)\rightarrow(i+1,j,k)$ at rate $(k-i)\frac{i}{k}$.
\item 
$(i,j,k)\rightarrow(i-1,j,k)$ at rate $i\frac{(k-i)}{k}$.
\item 
$(i,j,k)\rightarrow(i-1,j+1,k-1)$ at rate $\epsilon i $.
\item 
$(i,j,k)\rightarrow(i+1,j-1,k+1)$ at rate $\delta j $.
\item $(i,j,k)\rightarrow(i,j,k-1)$ at rate $  \epsilon \frac{k-i}{N}$.
\item $(i,j,k)\rightarrow(i,j,k+1)$ at rate $\delta \frac{N-k-j}{N}$.
\end{itemize}
For the scaling limit we consider the variables
\begin{equation}
\label{B1}
\bar X(t)=\frac{1}{N}X(N t),\quad \bar Y(t)=\frac{1}{N}Y(N t), \quad \bar Z(t)=\frac{1}{N}Z(N t).
\end{equation}
Hence
\begin{equation}
\label{e3100}
(\bar X(t), \bar Y(t), \bar Z(t)) \in I^N \times I^N \times I^N, \qquad
I^N = \left\{0,\tfrac{1}{N},\tfrac{2}{N},\ldots,\tfrac{N-1}{N},1\right\}.
\end{equation}
Since in \eqref{B1} we speed up time by a factor $N$, we must also speed up the transition rates by a factor $N$. To get a meaningful scaling limit, we assume that there exist $c^A, c^D \in (0,\infty)$ such that (see \cite[p.~8]{BCKW16})
\begin{equation}
\label{e3107}
N\epsilon=c^A, \qquad N\delta=c^D, \qquad N\in\mathbb{N}.
\end{equation} 
We can then write down the generator $G^N$:
\begin{equation}
\begin{aligned}
\label{e3111}
(G^Nf)\left(\frac{i}{N},\frac{j}{N},\frac{k}{N}\right)&=N(k-i)\frac{i}{k}\left[f\left(\frac{i+1}{N},\frac{j}{N},\frac{k}{N}\right)-f\left(\frac{i}{N},\frac{j}{N},\frac{k}{N}\right)\right]\\
&\quad+N i\frac{k-i}{k}\left[f\left(\frac{i-1}{N},\frac{j}{N},\frac{k}{N}\right)-f\left(\frac{i}{N},\frac{j}{N},\frac{k}{N}\right)\right]\\
&\quad+c^A i \left[f\left(\frac{i-1}{N},\frac{j+1}{N},\frac{k-1}{N}\right)-f\left(\frac{i}{N},\frac{j}{N},\frac{k}{N}\right)\right]\\
&\quad+c^D j \left[f\left(\frac{i+1}{N},\frac{j-1}{N},\frac{k+1}{N}\right)-f\left(\frac{i}{N},\frac{j}{N},\frac{k}{N}\right)\right]\\
&\quad+c^A (k-i) \left[f\left(\frac{i}{N},\frac{j}{N},\frac{k-1}{N}\right)-f\left(\frac{i}{N},\frac{j}{N},\frac{k}{N}\right)\right]\\
&\quad+c^D (N-k-j) \left[f\left(\frac{i}{N},\frac{j}{N},\frac{k+1}{N}\right)-f\left(\frac{i}{N},\frac{j}{N},\frac{k}{N}\right)\right]\\
\end{aligned}
\end{equation}
Assuming that $f$ is smooth and Taylor expanding $f$ around $\left(\frac{i}{N},\frac{j}{N},\frac{k}{N}\right)$, we get
\begin{equation}
\begin{aligned}
\label{MoranAN}
(G^Nf)\left(\frac{i}{N},\frac{j}{N},\frac{k}{N}\right)&=\frac{i(k-i)}{k}
\left[\left(\frac{1}{N}\right)\frac{\partial^2 f}{\partial x ^2}+\mathcal{O}\left(\left(\frac{1}{N}\right)^2\right)\right]\\
&\quad+c^A i \left[\left(\frac{-1}{N}\right)\frac{\partial f}{\partial x }
+\left(\frac{1}{N}\right)\frac{\partial f}{\partial y }+\left(\frac{-1}{N}\right)\frac{\partial f}{\partial z }
+\mathcal{O}\left(\left(\frac{1}{N}\right)^2\right)\right]\\
&\quad+c^D j \left[\left(\frac{1}{N}\right)\frac{\partial f}{\partial x }
+\left(\frac{-1}{N}\right)\frac{\partial f}{\partial y }+\left(\frac{1}{N}\right)\frac{\partial f}{\partial z }
+\mathcal{O}\left(\left(\frac{1}{N}\right)^2\right)\right]\\
&\quad+c^A (k-i) \left[\left(\frac{-1}{N}\right)\frac{\partial f}{\partial z }
+\mathcal{O}\left(\left(\frac{1}{N}\right)^2\right)\right]\\
&\quad+c^D (N-k-j) \left[\left(\frac{1}{N}\right)\frac{\partial f}{\partial z }
+\mathcal{O}\left(\left(\frac{1}{N}\right)^2\right)\right].
\end{aligned}
\end{equation}

Next, suppose that 
\begin{equation}
\label{e3157}
\lim_{N\rightarrow\infty}\frac{i}{N}=x, \qquad \lim_{N\rightarrow\infty}\frac{j}{N}=y,
\qquad\lim_{N\rightarrow\infty}\frac{k}{N}=z.
\end{equation}
Letting $N\rightarrow\infty$ in \eqref{MoranAN}, we obtain the limiting generator $G$:
\begin{equation}
\label{MoranA}
\begin{aligned}
(Gf)(x,y,z)=\  z \frac{x}{z}\left(1-\frac{x}{z}\right)\left(\frac{\partial^2 f}{\partial x^2}\right)
+[c^D\,y-c^A\,x]\frac{\partial f}{\partial x}
+[c^A\,x-c^D\,y]\frac{\partial f}{\partial y}
+\big[c^D\,(1-z)-c^A\,z\big]\frac{\partial f}{\partial z}.
\end{aligned}
\end{equation}
Therefore the continuum limit equals
\begin{equation}
\label{Moranone}
\begin{aligned}
\d x(t) &=\sqrt{z(t)\,\frac{x(t)}{z(t)}\left(1-\frac{x(t)}{z(t)}\right)}\,\,\d w(t)+\big[c^D\,y(t)-c^A\,x(t)\big]\,\d t,\\
\d y(t) &= \big[c^A\,x(t)-c^D\,y(t)\big]\,\d t,\\
\d z(t) &= \big[c^D\,(1-z(t))-c^A\,z(t)\big]\,\d t.
\end{aligned}
\end{equation}
Since $z(t)$ is the fraction of active individuals in the population, $1-z(t)$ is the fraction of dormant individuals in the population. Therefore the equivalent of the parameter $K$ in Appendix \ref{appA} is $K(t)=(1-z(t))/z(t)$. Moreover, $x(t)/z(t)$ is the fraction of $\heartsuit$-individuals in the active population at time $t$ and $y(t)/(1-z(t))$ is the fraction of $\heartsuit$-individuals in the dormant population at time $t$. The last line of \eqref{Moranone} is an autonomous differential equation whose solution converges to
\begin{equation}
z^*=\frac{1}{1+\frac{c^A}{c^D}}
\end{equation}
exponentially fast. After this transition period we can replace $z(t)$ by $z^*$, and we see that $K^*=c^A/c^D$. 

Time is to be scaled by the total number of active \emph{and} dormant individuals, instead of the total number of active individuals only:
\begin{equation}
x(t)=\frac{ \text{number of  active individuals of type }\heartsuit}{\text{ total number of individuals}},
\quad
y(t)=\frac{ \text{number of dormant individuals of type }\heartsuit}{\text{ total number of individuals}}.
\end{equation}
To compare the Moran model with a 1-colour seed-bank with the Fisher-Wright model with a 1-colour seed-bank, we look at the variables
\begin{equation}
\bar{x}(t)=\left(1+\frac{c^A}{c^D}\right)x\left(\frac{t}{1+\frac{c^A}{c^D}}\right),
\qquad 
\bar{y}(t)=\left(1+\frac{c^A}{c^D}\right)\left(\frac{c^D}{c^A}\right)y\left(\frac{t}{1+\frac{c^A}{c^D}}\right). 
\end{equation}
After a short transition period in which $z(t)$ tends to $z^*$, we see that by setting 
\begin{equation}
K=K^*=\frac{c^A}{c^D}, \qquad e=\frac{c^D}{c^A}\frac{c^Ac^D}{c^A+c^D},
\end{equation} 
we obtain 
\begin{equation}
\label{Moranonealt}
\begin{aligned}
\d \bar{x}(t)  &= \sqrt{\bar{x}(t)(1-\bar{x}(t))}\,\d w(t)+Ke\,[\bar{y}(t)-\bar{x}(t)]\,\d t,\\
\d \bar{y}(t) &= e\,[\bar{x}(t)-\bar{y}(t)]\,\d t,
\end{aligned}
\end{equation}
which is the single-colony version of \eqref{gh1}--\eqref{gh2} but without migration. Migration can be added in the same way as was done in Appendix~\ref{appA}.


\subsection{Alternative for Model 2: Two colours}
\label{app2c}

We consider the following system:
\begin{itemize}
\item 
Total number of individuals: $N\in\mathbb{N}$.
\item 
Two types: $\heartsuit$ and $\diamondsuit$.
\item 
$X(t)$ is the number of $\heartsuit$-individuals in the active population at time $t$.
\item 
$Y_1(t)$ is the number of $\heartsuit$-individuals of colour 1 in the dormant population 
at time $t$.
\item 
$Y_2(t)$ is the number of $\heartsuit$-individuals of colour 2 in the dormant population 
at time $t$.
\item 
$Z_{D_1}(t)$ is the number of dormant individuals of colour 1 at time $t$ (either $\heartsuit$ or $\diamondsuit$).
\item 
$Z_{D_2}(t)$ is the number of dormant individuals of colour 2 at time $t$.
(either $\heartsuit$ or $\diamondsuit$).
\end{itemize}
Note that the number of active individuals at time $t$ (either $\heartsuit$ or $\diamondsuit$) is given by $Z_A(t)=N-Z_{D_1}(t)-Z_{D_2}(t)$. Since the number of individuals $N$ is constant during the evolution, $Z_A(t)$ can be derived from $Z_{D_1}(t)$ and $Z_{D_2}(t)$. Each active individual resamples at rate 1, and becomes dormant at rate $\epsilon$. When an individual becomes dormant, it gets either colour $1$ with probability $p_1$ or colour $2$ with probability $p_2$, where $p_1,p_2 \in (0,1)$ and $p_1+p_2=1$. For ease of notation, we denote the rate to become dormant with colour $1$ by $\epsilon_1=\epsilon\cdot p_1$ and the rate to become dormant with colour $2$ by $\epsilon_2=\epsilon\cdot p_2$. A dormant individual with colour $1$ becomes active at rate $\delta_1$, a dormant individual with colour $2$ becomes active at rate $\delta_2$. Thus, the transition rates for $(X(t),Y_1(t),Y_2(t),Z_{D_1}(t),Z_{D_2}(t))$ are:
\begin{itemize}
\item 
$(i,j,k,l,m)\rightarrow(i+1,j,k,l,m)$ at rate $(N-l-m-i)\frac{i}{N-l-m}$.
\item 
$(i,j,k,l,m)\rightarrow(i-1,j,k,l,m)$ at rate $i\frac{(N-l-m-i)}{N-l-m}$.
\item 
$(i,j,k,l,m)\rightarrow(i-1,j+1,k,l+1,m)$ at rate $\epsilon_1 i$.
\item 
$(i,j,k,l,m)\rightarrow(i+1,j-1,k,l-1,m)$ at rate $\delta_1 j$.
\item 
$(i,j,k,l,m)\rightarrow(i-1,j,k+1,l,m+1)$ at rate $ \epsilon_2 i$.
\item 
$(i,j,k,l,m)\rightarrow(i+1,j,k-1,l,m-1)$ at rate $\delta_2 k $.
\item 
$(i,j,k,l,m)\rightarrow(i,j,k,l+1,m)$ at rate $\epsilon_1 (N-l-m-i)$.
\item 
$(i,j,k,l,m)\rightarrow(i,j,k,l,m+1)$ at rate $\epsilon_2 (N-l-m-i)$.
\item 
$(i,j,k,l,m)\rightarrow(i,j,k,l-1,m)$ at rate $\delta_1 (l-j)$.
\item 
$(i,j,k,l,m)\rightarrow(i,j,k,l,m-1)$ at rate $\delta_2 (m-k)$.
\end{itemize}
Proceeding in the same way as for the 1-colour seed-bank, we define the scaled variables
\begin{equation}
\label{B1m}
\begin{aligned}
\bar X(t)&=\frac{1}{N}X(N t),\quad \bar Y_1(t)=\frac{1}{N}Y_1(N t),\quad \bar Y_2(t)=\frac{1}{N}Y_2(N t), \\
\bar Z_{D_1}(t)&=\frac{1}{N}Z_{D_1}(N t),\quad \bar Z_{D_2}(t)=\frac{1}{N}Z_{D_1}(N t).
\end{aligned}
\end{equation}
We assume that there exist $c^A_1,c^A_2, c^D_1, c^D_2 \in (0,\infty)$ such that
\begin{equation}
\label{e3252}
N\epsilon_1=c^A_1,  \qquad N\epsilon_2=c^A_2,  \qquad N\delta_1=c^D_1, \qquad N\delta_2=c^D_2, \qquad N\in\mathbb{N},
\end{equation} 
and further assume that
\begin{equation}
\begin{aligned}
&\lim_{N \to \infty}\frac{i}{N}=x, \quad \lim_{N \to \infty}\frac{j}{N}=y_1,\quad \lim_{N \to \infty}\frac{k}{N}=y_2,\\
&\lim_{N \to \infty}\frac{N-l-m}{N}=z_A\quad\lim_{N \to \infty}\frac{N-l-m}{N}=z_{D_2},
\quad \lim_{N \to \infty}\frac{N-l-m}{N}=z_{D_1}.
\end{aligned}
\end{equation}
Using the same method of converging generators as for model 1, we obtain the following continuum limit:
\begin{equation}
\label{mvan}
\begin{aligned}
\d x(t)&=\sqrt{z_A(t)\frac{z_{A}-x(t)}{z_A(t)}\frac{x(t)}{z_A(t)}}\,\d w(t)\\
&\qquad+ \left[c_1^Dy_1(t)-c_1^Ax(t)\right]\d t+\left[c_2^D y_2(t)-c_2^A x(t)\right]\d t,\\
\d y_1(t)&=\left[c_1^A x(t)-c_1^Dy_1(t)\right]\d t,\\
\d y_2(t)&=\left[c_2^A x(t)-c_2^D y_2(t)\right]\d t,\\
\d z_{A}(t)&=\left[c_1^Dz_{D_1}(t)-c_1^A z_A(t)+c_2^D z_{D_2}(t)-c_2^Az_A(t)\right]\d t,\\
\d z_{D_1}(t)&=\left[c_1^Az_{A}(t)-c_1^D z_{D_1}(t)\right]\d t,\\
\d z_{D_2}(t)&= \left[c_2^Az_{A}(t)-c_2^D z_{D_2}(t)\right]\d t.                  
\end{aligned}
\end{equation}
Note that the equation for $z_A(t)=1-z_{D_1}(t)-z_{D_2}(t)$ follows directly from the equations from $z_{D_1}(t)$ and $z_{D_2}(t)$. It is therefore redundant, but we use it for notational reasons. Again, we see that $z(t)=(z_A(t),z_{D_1}(t), z_{D_2}(t))$ is governed by an autonomous system of differential equations. Solving this system, we see that 
\begin{equation}
\label{zlim}
\begin{aligned}
\lim_{t \to \infty}z_A(t)=\frac{1}{1+\frac{c_1^A}{c_1^D}+\frac{c_2^A}{c_2^D}},\quad
\lim_{t\to\infty}z_{D_1}(t)=\frac{\frac{c_1^A}{c_1^D}}{1+\frac{c_1^A}{c_1^D}+\frac{c_2^A}{c_2^D}},\quad
\lim_{t \to \infty}z_{D_2}(t))=\frac{\frac{c_2^A}{c_2^D}}{1+\frac{c_1^A}{c_1^D}+\frac{c_2^A}{c_2^D}}.
\end{aligned}
\end{equation}

To compare the Moran model with a 2-colour seed-bank with the Fisher-Wright model with a 2-colour seed-bank, we look at the variables
\begin{equation}
\label{B16alt}
\begin{aligned}
\bar{x}(t)
&=\left(1+\frac{c_1^A}{c_1^D}+\frac{c_2^A}{c_2^D}\right)x\left(\frac{t}{1+\frac{c_1^A}{c_1^D}
+\frac{c_2^A}{c_2^D}}\right),\\ \bar{y}_1(t)&=\left(1+\frac{c_1^A}{c_1^D}
+\frac{c_2^A}{c_2^D}\right)\left(\frac{c^D_1}{c^A_1}\right)y_1\left(\frac{t}{1+\frac{c_1^A}{c_1^D}
+\frac{c_2^A}{c_2^D}}\right),\\
\bar{y}_2(t)&=\left(1+\frac{c_1^A}{c_1^D}+\frac{c_2^A}{c_2^D}\right)
\left(\frac{c^D_2}{c^A_2}\right)y_2\left(\frac{t}{1+\frac{c_1^A}{c_1^D}+\frac{c_2^A}{c_2^D}}\right). 
\end{aligned}
\end{equation}
Defining
\begin{equation}
K_m=\frac{c_m^A}{c_m^D}, \quad e_m=\frac{c_m^D}{1+\frac{c_1^A}{c_1^{D}}+\frac{c_2^A}{c_2^{D}}},
\qquad m\in\{1,2\},
\end{equation} 
we see that, after a short transition period, the system becomes
\begin{equation}
\label{Moranonealt2}
\begin{aligned}
\d \bar{x}(t)  
&= \sqrt{\bar{x}(t)(1-\bar{x}(t))}\,\d w(t)+K_1e_1\,[\bar{y}_2(t)-\bar{x}(t)]\,\d t
+K_2e_2\,[\bar{y}_1(t)-\bar{x}(t)]\,\d t,\\
\d \bar{y}_1(t) &= e_1\,[\bar{x}(t)-\bar{y}_1(t)]\,\d t,\\
\d \bar{y}_2(t) &= e_2\,[\bar{x}(t)-\bar{y}_2(t)]\,\d t,
\end{aligned}
\end{equation}
which is the single-colony version of \eqref{gh1*}--\eqref{gh2*} with 2 colours and without migration. Note, in particular, that after $z(t)$ reaches the equilibrium point in \eqref{zlim}, we have 
\begin{equation}
K_m=\frac{\text{number of dormant individuals with colour }m}{\text{ number of active individuals}}, \quad m \in \{1,2\}.
\end{equation}

\medskip
It is instructive to show how the above result can also be derived with the help of \emph{duality}. The argument that follows easily extends to an $n$-coloured seed-bank for any $n\in\N$ finite, to be considered in Appendix~\ref{app3c}. Recall from \eqref{mvan} that
\begin{equation}
\label{zev}
\begin{aligned}
\d z_{A}(t)&=\left[c_1^Dz_{D_1}(t)-c_1^A z_A(t)+c_2^D z_{D_2}(t)-c_2^Az_A(t)\right]\d t,\\
\d z_{D_1}(t)&=\left[c_1^Az_{A}(t)-c_1^D z_{D_1}(t)\right]\d t,\\
\d z_{D_2}(t)&= \left[c_2^Az_{A}(t)-c_2^D z_{D_2}(t)\right]\d t.                  
\end{aligned}
\end{equation}
Let 
\begin{equation}
\label{defbar}
\begin{aligned}
\bar{z}_A(t)&=\left(1+\frac{c_1^A}{c_1^D}+\frac{c_2^A}{c_2^D}\right)z_A(t),\\  \bar{z}_{D_1}(t)&=\left(1+\frac{c_1^A}{c_1^D}+\frac{c_2^A}{c_2^D}\right)\left(\frac{c_1^D}{c_1^A}\right)z_{D_1}(t),\\ 
\bar{z}_{D_2}(t)&=\left(1+\frac{c_1^A}{c_1^D}+\frac{c_2^A}{c_2^D}\right)\left(\frac{c_2^D}{c_2^A}\right)z_{D_2}(t).
\end{aligned}
\end{equation}
Substitute \eqref{defbar} into \eqref{zev}, to obtain
\begin{equation}
\begin{aligned}
\d \bar{z}_{A}(t)&=c_1^A\left[\bar{z}_{D_1}(t)- \bar{z}_A(t)\right]+c_2^A \left[\bar{z}_{D_2}(t)-\bar{z}_A(t)\right]\d t,\\
\d \bar{z}_{D_1}(t)&=c_1^D\left[\bar{z}_{A}(t)- \bar{z}_{D_1}(t)\right]\d t,\\
\d \bar{z}_{D_2}(t)&=c_2^D \left[\bar{z}_{A}(t)- \bar{z}_{D_2}(t)\right]\d t.                  
\end{aligned}
\end{equation}
To define a dual for the process $(\bar{z}_{A}(t),\bar{z}_{D_1}(t),\bar{z}_{D_2}(t)))_{t \geq 0}$, let $(M(t))_{t \geq 0}$ be the continuous-time Markov chain on $\{A,D_1, D_2\}$ with transition rates
\begin{equation}
\begin{aligned}
&A \to D_m \text{ at rate } c_m^A, \quad m\in\{1,2\},\\
&D_m \to A \text{ at rate } c_m^D, \quad m\in\{1,2\}.
\end{aligned}
\end{equation}
Consider $l$ independent copies of $(M(t))_{t \geq 0}$, evolving on the same state space $\{A,D_1,D_2\}$. Let $(L(t))_{t\geq 0}=(L_A(t),L_{D_1}(t),L_{D_2}(t))_{t\geq 0}$ be the process that counts how many copies of $M(t)$ are on site $\{A\}$, $\{D_1\}$ and $\{D_2\}$ at time $t$. Let $l=m+n_1+n_2$. Then $(L(t))_{t\geq 0}$ is the Markov process on $\N_0^3$ with transition rates
\begin{equation}
(m,n_1,n_2) \to \begin{cases}
(m-1,n_1+1,n_2) \qquad \text{ at rate } mc_1^A,\\
(m-1,n_1,n_2+1) \qquad \text{ at rate } mc_2^A,\\
(m+1,n_1-1,n_2) \qquad \text{ at rate } n_1c_1^D,\\
(m+1,n_1,n_2-1) \qquad \text{ at rate } n_2c_2^D.
\end{cases}.
\end{equation} 
Note that $L_A(t)+L_{D_1}(t)+L_{D_2}(t)=L_A(0)+L_{D_1}(0)+L_{D_2}(0)=m+n_1+n_2=l$. Define $H\colon\, \R^3\times\N_0^3\to \R$ by
\begin{equation}
H((\bar{z}_A,\bar{z}_{D_1},\bar{z}_{D_2}),(m,n_1,n_2)):=\bar{z}_{A}^m\bar{z}_{D_1}^{n_1}\bar{z}_{D_2}^{n_2}
\end{equation}
Using the generator criterion \cite[Proposition 1.2]{JK14}, we see that, for all $t\geq 0$,
\begin{equation}
\label{m10}
\E\left[H((\bar{z}_A(t),\bar{z}_{D_1}(t),\bar{z}_{D_2}(t)),(m(0),n_1(0),n_2(0)))\right]
=\E\left[H((\bar{z}_A(0),\bar{z}_{D_1}(0),\bar{z}_{D_2}(0)),(m(t),n_1(t),n_2(t)))\right].
\end{equation} 
Therefore $(L(t))_{t\geq 0}$ and $(\bar{z}(t))_{t\geq 0}$ are dual to each other with duality function $H$.

Since $(M(t))_{t \geq 0}$ is a irreducible and recurrent, we can define 
\begin{equation}
\label{m11}
\begin{aligned}
\pi_A = \lim_{t\to\infty} \P(M(t)=A)&=\frac{1}{1+\frac{c_1^A}{c_1^D}+\frac{c_2^A}{c_2^D}},\\ 
\pi_{D_1} = \lim_{t\to\infty} \P(M(t)=D_1)&=\frac{\frac{c_1^A}{c_1^D}}{1+\frac{c_1^A}{c_1^D}+\frac{c_2^A}{c_2^D}},\\ 
\pi_{D_2} = \lim_{t\to\infty} \P(M(t)=D_2)&=\frac{\frac{c_2^A}{c_2^D}}{1+\frac{c_1^A}{c_1^D}+\frac{c_2^A}{c_2^D}}.
\end{aligned}
\end{equation} 
Using the duality relation in \eqref{m10} together with \eqref{m11} and \eqref{defbar}, we find
\begin{equation}
\begin{aligned}
\lim_{t \to \infty}\E[\bar z_A(t)]
&=\pi_A \bar{z}_A(0)+\pi_{D_1} \bar{z}_{D_1}(0)+\pi_{D_2} \bar{z}_{D_2}(0)\\
&=\frac{1}{1+\frac{c_1^A}{c_1^D}+\frac{c_2^A}{c_2^D}}\, \bar{z}_A(0)
+\frac{\frac{c_1^A}{c_1^D}}{1+\frac{c_1^A}{c_1^D}
+\frac{c_2^A}{c_2^D}}\, \bar{z}_{D_1}(0)+\frac{\frac{c_2^A}{c_2^D}}{1+\frac{c_1^A}{c_1^D} 
+ \frac{c_2^A}{c_2^D}}\, \bar{z}_{D_2}(0)\\
&=z_A(0)+z_{D_1}(0)+z_{D_2}(0) = 1.
\end{aligned}
\end{equation}
Using the duality relation in\eqref{m10} once more, we get
\begin{equation}\label{mom1}
\lim_{t \to \infty}\E[\bar z_A(t)] = \lim_{t \to \infty}\E[\bar z_{D_1}(t)]=\lim_{t \to \infty}\E[\bar z_{D_2}(t)]=1.
\end{equation}
Computing the limiting second moment $\lim_{t\to\infty}\E[\bar z_A(t)^2]$ by duality, we obtain
\begin{equation}
\label{mom2}
\lim_{t\to\infty}\E[\bar z_A(t)^2]
=\lim_{t\to\infty}\sum_{\substack{i,j\in\\\{A,D_1,D_2\}}}\P(M_t^1=i)\,\bar{z}_i(0)\,\P(M_t^2=j)\,\bar{z}_j(0)
=\sum_{i\in\{A,D_1,D_2\}}\pi_i\bar{z}_i(0)\sum_{j\in\{A,D_1,D_2\}}\pi_j\bar{z}_j(0) = 1.
\end{equation}
Similarly, we find $\lim_{t\to\infty}\E[\bar z_{D_1}(t)^2]=1$ and $\lim_{t\to\infty}\E[\bar z_{D_2}(t)^2]=1$. Combining \eqref{mom1} and \eqref{mom2}, we find
\begin{equation}
\lim_{t \to \infty}\bar{z}_A(t) = \lim_{t \to \infty}\bar{z}_{D_1}(t) = \lim_{t \to \infty}\bar{z}_{D_2}(t) = 1.
\end{equation}
Hence we conclude that
\begin{equation}
\label{glim}
\begin{aligned}
\lim_{t \to \infty} z_A(t) =\frac{1}{1+\frac{c_1^A}{c_1^D}+\frac{c_2^A}{c_2^D}},\quad
\lim_{t\to\infty} z_{D_1}(t) =\frac{\frac{c_1^A}{c_1^D}}{1+\frac{c_1^A}{c_1^D}+\frac{c_2^A}{c_2^D}},\quad
\lim_{t \to \infty} z_{D_2}(t) =\frac{\frac{c_2^A}{c_2^D}}{1+\frac{c_1^A}{c_1^D}+\frac{c_2^A}{c_2^D}}.
\end{aligned}
\end{equation}
Continuing as in \eqref{B16alt}, we again find the single-colony version of \eqref{gh1*}-\eqref{gh2*} with 2 colours and no migration. 


\subsection{Alternative for Model 2: Three or more colours}
\label{app3c}

The argument in Appendix~\ref{app2c} can be extended to an $\mathfrak{m}\in\N$-colour seed-bank, by introducing sequences of variables $(Y_m(t))_{m=0}^\mathfrak{m}$ and $(Z_m(t))_{m=0}^\mathfrak{m}$ that count the number of $\heartsuit$-individuals in the colour-$m$ seed-bank at time $t$, respectively, the total number of individuals in the colour-$m$ seed-bank at time $t$. Let $\epsilon>0$ be the total rate at which an active individual becomes dormant, and define a probability vector $(p_m)_{m=0}^\mathfrak{m}$ such that $\epsilon_m=\epsilon p_m$ is the rate at which an active individual becomes dormant with colour $m$. Let $\delta_m$ be the rate at which $m$-dormant individuals become active. Via the same line of argument as in Appendix \ref{app2c}, we see that the equivalent of \eqref{mvan} reads
\begin{equation}
\label{mvan2}
\begin{aligned}
\d x(t) &=\sqrt{z_A(t)\frac{z_{A}-x(t)}{z_A(t)}\frac{x(t)}{z_A(t)}}\,\d w(t)
+ \sum_{m=0}^\mathfrak{m}\left[c_m^Dy_m(t)-c_m^Ax(t)\right]\,\d t,\\
\d y_m(t) &=\left[c_m^A x(t)-c_m^Dy_m(t)\right]\,\d t,\\
\d z_{A}(t) &=\sum_{m=0}^\mathfrak{m}\left[c_m^Dz_{D_m}(t)-c_m^Az_A(t)\right]\,\d t,\\
\d z_{D_m}(t) &=\left[c_m^Az_{A}(t)-c_m^D z_{D_m}(t)\right]\,\d t,\qquad 0\leq m\leq N.                  
\end{aligned}
\end{equation}
Solving the autonomous system describing $z(t)=(z_A(t),(z_{D_m}(t))_{m=0}^N)$ via duality, and subsequently substituting into \eqref{mvan2} the variables 
\begin{equation}
\label{B16}
\begin{aligned}
\bar{x}(t) &=\left(1+\sum_{n=0}^\mathfrak{m}\frac{c_n^A}{c_n^D}\right)
x\left(\frac{t}{1+\sum_{n=0}^\mathfrak{m}\frac{c_n^A}{c_n^D}}\right),\\ 
\bar{y}_m(t) &=\left(1+\sum_{n=0}^\mathfrak{m}\frac{c_n^A}{c_n^D}\right)
\left(\frac{c^D_m}{c^A_m}\right)y_m\left(\frac{t}{1+\sum_{n=0}^\mathfrak{m}\frac{c_n^A}{c_n^D}}\right),
\qquad 0\leq m\leq N,
\end{aligned}
\end{equation}
we find the single-colony version of \eqref{gh1*}--\eqref{gh2*} with $N$-colours and no migration. Migration can be added as  in Appendix \ref{appA}.

It is straightforward to derive the version \eqref{gh1*}--\eqref{gh2*} with $N$-colours and $M$ colonies. Afterwards
we can let $N,M\to\infty$ and use convergence of generators, to find \eqref{gh1*}--\eqref{gh2*}. The limit is unproblematic because we are interested in finite time horizons only.


\section{Successful coupling}
\label{appC}

To prove Lemma \ref{C2} we proceed as in \cite{CG94}, with minor adaptations. The notation used in this appendix is the same as in Section \ref{ss.cos}. For model 1 we write down the full proof. The proof holds works for model 2 and 3 by invoking the colours $m\in\N_0$ and the SSDE in \eqref{gh1*}--\eqref{gh2*}, respectively, \eqref{gh1**}--\eqref{gh2**}.   

\paragraph{Proof of Lemma \ref{C2}.}

The proof consists of 5 steps.

\paragraph{\textbf{Step 1}.}
If $z\in E$ with $x_i=0$ and $x_k>0$ for some $k \neq i$, then 
\begin{equation}
\label{ceen}
\P_z\left(\exists\, t^*>0 \text{ such that } x_i(t)=0\ \forall\, t\in [0,t^*]\right)=0.
\end{equation}
		
\begin{proof}
Suppose that $z$ is such that $x_i=0$, but $x_k>0$ for some $i,k\in\G$. By \eqref{gh1}, 
\begin{equation}
x_i(t)=\int_{0}^{t}\sum_{j\in\G} a(i,j)[x_j(s)-x_i(s)]\,\d s+\int_{0}^t Ke[y_i(s)-x_i(s)]\,\d s
+\int_0^t\sqrt{g(x_i(s))}\,\d w_i(s).
\end{equation}
Suppose that there exists a $T>0$ such that $x_i(t)=0$ for all $t\in[0,T]$, and therefore $g(x_i(t))=0$. Then we obtain for all $t\in[0,T]$ that
\begin{equation}
\int_{0}^{t} \sum_{j\in\G} a(i,j) x_j(s)\,\d s +\int_{0}^t Key_i(s)\, \d s=0.
\end{equation}
Hence, by path continuity of $(Z(t))_{t\geq 0}$, we see that $y_i(t)=0$ for all $t\in[0,T]$, as well as $x_j(t)=0$ for all $j\in \G$ such that $a(i,j)>0$. Repeating this argument, we obtain by irreducibility of $a(\cdot,\cdot)$ that $x_k(t)=0$ for all $k\in\G$ and hence $y_k(t)=0$ for all $k\in\G$. By path continuity, this contradicts the assumption that $x_k(0)>0$. We conclude that \eqref{ceen} holds.
\end{proof}

\paragraph{\textbf{Step 2}.} 
If $\bar{z}\in E\times E$ and $g(x^1_i)\neq g(x^2_i)$, then for all $j$, 
\begin{equation}
\hat{\P}_{\bar{z}}\left(\exists\, t^*>0 \text{ such that } \Delta_j(t)=0\  \forall\, t\in [0,t^*]\right)=0.
\end{equation}
		
\begin{proof}
	
Note that the SSDE in \eqref{gh1}--\eqref{gh2} can be rewritten as
\begin{equation}\label{CM1}
\begin{aligned}
&\d z_{(i,R_i)}(t)
= \sum_{(j,R_j)\in\G\times\{A,D\}}b^{(1)}((i,R_i),(j,R_j))[z_{(j,R_j)}(t)-z_{(i,R_i)}(t)]\,\d t
+\sqrt{g(z_{(i,R_i)}(t))}\ 1_{\{R_i=A\}}\,\d w_i(t),\\
&\forall\, (i,R_i)\in\G\times\{A,D\},
\end{aligned}
\end{equation}	
with $b^{(1)}(\cdot,\cdot)$ defined as in \eqref{mrw}.	
	
Suppose that $\bar{z}$ is such that $g(x_i^1)\neq g(x_i^2)$. Suppose there exist a $T>0$ such that $\Delta_j(t)=0$ for all $t\in[0,T]$. Then also $\sqrt{g(x_j^1(t))}-\sqrt{g(x_j^2(t))}=0$ for all $t\in[0,T]$.  Using \eqref{CM1} on $\Delta_j(t)=z^1_{(j,A)}(t)-z^2_{(j,A)}(t)$, we obtain
\begin{equation}
0 = \int_0^t \sum_{(k,R_k)\in\G\times\{A,D\}}b^{(1)}((j,A),(k,R_k))
\left[\left(z^1_{(k,R_k)}(s)-z^2_{(k,R_k)}(s)\right)-\left(z^1_{(j,R_j)}(s)-z^2_{(j,R_j)}(s)\right)\right]\,\d s.
\end{equation}
Hence
\begin{equation}
\sum_{(k,R_k)\in\G\times\{A,D\}}b^{(1)}((j,A),(k,R_k))
\left[\left(z^1_{(k,R_k)}(t)-z^2_{(k,R_k)}(t)\right)-\left(z^1_{(j,R_j)}(t)-z^2_{(j,R_j)}(t)\right)\right] = 0 
\qquad \forall\, t \in [0,T].
\end{equation}
Using \eqref{CM1}, we can write the SDE for
\begin{equation}
\sum_{(k,R_k)\in\G\times\{A,D\}}b^{(1)}((j,A),(k,R_k))
\left[\left(z^1_{(j,R_j)}(t)-z^2_{(j,R_j)}(t)\right)-\left(z^1_{(i,R_i)}(t)-z^2_{(i,R_i)}(t)\right)\right],
\end{equation}
which yields that, for all $t\in[0,T]$,
\begin{equation}
\label{CM2}
\begin{aligned}
-&\int_{0}^{t}\sum_{(k,R_k)\in\G\times\{A,D\}}b^{(1)}((j,A),(k,R_k))
\left(\sqrt{g(z^1_{k,R_k}(s))}-\sqrt{g(z^2_{k,R_k}(s))}\,\right)1_{\{R_k=A\}}\,\d w_k(s)\\
&=\int_{0}^t\sum_{(k,R_k)\in\G\times\{A,D\}}b^{(1),2}((j,A),(l,R_l))
\left[\left(z^1_{(j,R_j)}(t)-z^2_{(j,R_j)}(t)\right)-\left(z^1_{(i,R_i)}(t)-z^2_{(i,R_i)}(t)\right)\right]\,\d s,
\end{aligned}
\end{equation}
where $b^{(1),2}(\cdot,\cdot)$ is the $2$-step kernel of $b^{(1)}(\cdot,\cdot)$.

The two process in the right-hand side form a process of bounded variation, while the process in the left-hand side is a continuous square-integrable martingale, whose quadratic variation is given by 
\begin{equation}
\label{kwadrvar}
\int_0^t \sum_{k\in\G} a(j,k)^2 \left( \sqrt{g(x_k^1(s))}-\sqrt{g(x_k^2(s))}\,\right)^2\, \d s.
\end{equation}
Since a square-integrable martingale of bounded variation is constant, it follows that \eqref{kwadrvar} equals $0$. Hence, for all $k$ such that $a(j,k)>0$, it follows that $g(x^1_k(t))=g(x^2_k(t))$ for all  $t\in[0,T]$. Moreover, the right-hand side of \eqref{CM2} is equal to $0$. Iterating the right-hand side of \eqref{CM2} further, we find by the irreducibility of $a(\cdot,\cdot)$ that $g(x^1_i(t))=g(x^2_i(t))$ for all $t\in[0,T]$, which contradicts the assumption on $\bar{z}$ that $g(x^1_i(0))\neq g(x^2_i(0))$. Hence we find that there does not exist a $T>0$ such that $\Delta_j(t)=0$ for all $t\in[0,T]$. 
\end{proof}
		
\paragraph{\textbf{Step 3}.} 

If $\bar{z}\in E\times E,\ i,k\in\G$ and $g(x_i^1)=g(x_i^2)$ with $\Delta_i<0$ and $\Delta_k>0$ for some $k \neq i$, then 
\begin{equation}
\label{cnegen}
\hat{\P}_{\bar{z}}\left(\exists\, t^*\in[0,\tfrac{1}{2}]\colon\, \Delta_i(t^*)<0,\ 
\Delta_k(t^*)>0 ,\ g(x_i^1(t^*))\neq g(x_i^2(t^*))\right)>0. 
\end{equation}

\begin{proof}
Note that by assumption we have $x_i^1<1$ and $x_k^1>0$. Let $t_0\in[0,\frac{1}{4}]$. If $x_i^1>0$, then set $t_0=0$. Otherwise, by Step 1 and path continuity, we find with probability 1 a $t_0\in[0,\frac{1}{4}]$ such that $x_i^1(t_0)>0$, $\Delta_i(t_0)<0$ and $\Delta_k(t_0)>0$. Let $\tilde z= \bar{z}(t_0)$. By the existence of $t_0$ and the Markov property, it is enough to prove that  
\begin{equation}
\label{ctien}
\hat{\P}_{\tilde{z}}\left(\exists\, t^*\in[0,\tfrac{1}{4}]\colon\, \Delta_i(t^*)<0,\, 
\Delta_k(t^*)>0,\ g(x_i^1(t^*))\neq g(x_i^2(t^*))\right)>0 
\end{equation}
in order to prove \eqref{cnegen}. Define the following two martingales:
\begin{eqnarray}
M_i(t)&=&\int_0^t\sqrt{g(x_i^1(s))}\,\d w_i(s),\\
M_k(t)&=&\int_0^t\left(\sqrt{g(x_k^1(s))}-\sqrt{2g(x_k^2(s))}\,\right)\,\d w_k(s).
\end{eqnarray}
Their corresponding quadratic variation processes are given by 
\begin{eqnarray}
\left\langle M_i(t)\right\rangle &=& \int_0^t g(x_i(s))\,\d s,\\
\left\langle M_k(t)\right\rangle &=& \int_0^t \left(\sqrt{g(x_k^1(s))}-\sqrt{2g(x_k^2(s))}\,\right)^2\,\d s.
\end{eqnarray}
By Knight's theorem (see \cite[Theorem V.1.9 p.183]{RY99}), we can write $M_i(t)$ and $M_k(t)$ as time-transformed Brownian motions:
\begin{eqnarray}
M_i(t) &=& w_i\left(\left\langle M_i(t)\right\rangle\right),\\
M_k(t) &=& w_k\left(\left\langle M_k(t)\right\rangle\right).
\end{eqnarray}
We may assume that $g(\tilde{x}^1_i)=g(\tilde{x}^2_i)$, otherwise we can set $t^*=0$. Recall that $0<\tilde{x}^1_i<1$, $\tilde\Delta_i<0$ and $\tilde \Delta_k>0$, and, since $0<g(\tilde{x}^1_i) = g(\tilde{x}^2_i)$, also $\tilde x_i^2<1$. Choose an $\epsilon\in(0,\frac{1}{15})$ such that $\tilde x_i^1,\tilde x_i^2\in[5 \epsilon,1-5\epsilon]$, $-\tilde \Delta_i>5 \epsilon$ and $\tilde \Delta_k>5\epsilon$. Let $\xi\in(0,\epsilon)$ be such that $g(\xi)<\min\{g(u)\colon\,\epsilon\leq u\leq 1-\epsilon\}$, and set $c_1=\min\{g(u)\colon\,\xi\leq u\leq 1-\xi\}$ and $c_2=\|g\|$. Then we can make the following estimates:
\begin{eqnarray}
\left\langle M_i(t)\right\rangle &\leq& c_2t\qquad \left\langle M_k(t)\right\rangle \leq c_2 t,\,t\geq0,\\
\left\langle M_i(t)\right\rangle &\geq& c_1t\qquad \text{ for } t\geq 0 \text{ such that } x_i(s)\in[\xi,1-\xi]\ \forall\, s\in[0,t].
\end{eqnarray}
Define $c_3=\min\{\frac{\xi}{2Ke},\frac{\xi}{2}\}$. Fix $T\in[0,c_3]$ and define 
\begin{eqnarray}
\Omega_0 &=& \left\{\min_{t\in[0,c_1T]} w_i(t)<-1, \max_{t\in[0,c_2T]} w_i(t)< \epsilon, 
\max_{t\in[0,c_2T]}|w_k(t)|<\epsilon\right\},\\
\Omega_1&=&\left\{\exists t^*\in[0,1] \text{ such that } \Delta_i(t^*)<0,\ 
\Delta_k(t^*)>0,\ g(x_i^1(t^*))=g(x_i^2(t^*))\right\}.
\end{eqnarray}
Note that $\P(\Omega_0)>0$. Therefore it suffices that $\Omega_0\subset\Omega_1$. 

We start by checking the conditions $\Delta_k$. Using \eqref{gh1}, we can write 
\begin{equation}
\begin{aligned}
\Delta_k(t) &=\Delta_k(0)+\int_0^t \sum_{l\in\G} a(k,l)(\Delta_l(s)-\Delta_k(s))\,\d s
+\int_0^t Ke \left[\delta_k(s)-\Delta_k(s)\,\d s\right]\\
&+\int_0^t \left(\sqrt{g(x_k^1(s))}-\sqrt{2g(x_k^2(s))}\right)^2\,\d w_k(s). 
\end{aligned}
\end{equation}
Since $|\Delta_l(t)|\leq 1$, $|\delta_k(t)|\leq 1$ for all $t\geq 0$, and $M_k(t)=w_k(\left\langle M_k(t)\right\rangle)$ for $t\in[0,T]$, we may estimate
\begin{eqnarray}
\Delta_k(t)>5 \epsilon- 2c_3-2Ke c_3- \epsilon =2\epsilon.
\end{eqnarray}
So, on $\Omega_0$, $\Delta_k(t)>0$ for all $t\in[0,T]$. By expanding $x_i^1(t)$, we find
\begin{equation}
\label{C24}
x_i^1(t)=x_i^1(0)+\int_0^t \sum_{l\in\G} a(i,l)(x_l^1(s)-x_i^1(s))\,\d s
+\int_0^t Ke (y_i^1(s)-x_i^1(s))\,\d s+M_i(t),
\end{equation}
so that on $\Omega_0$ we have, for $t\in[0,T]$,
\begin{eqnarray}
\label{C25}
x_i^1(t)<1-10\epsilon+c_3+Ke c_3+\epsilon=1-8\epsilon.
\end{eqnarray}
To check the conditions on $x^1_i(t)$ and $\Delta_i(t)$, we define the following random times:
\begin{equation}
\begin{aligned}
\sigma &= \inf\{t\geq0:x^1_i(t)=\xi\},\\
\tau &= \inf\big\{t>0\colon\,g(x_i^1(t))\neq g(x_i^2(t))\big\}.
\end{aligned}
\end{equation}
We will prove that, on $\Omega_0$, we have $\sigma<\tau$ and $x_i^2(\tau)\geq x_i^1(\tau)+3\epsilon$. To do so, we first prove that $\sigma<T$. Assume the contrary $\sigma\geq T$. Then by \eqref{C25} we have $x^1_i(t)\in[\xi,1-\xi]$ for all $t\in[0,T]$, which implies that $\min_{[0,T]} M_i(t)<-1$. Hence there exists a $\kappa$ such that, by \eqref{C24}, 
\begin{eqnarray}
x_i^1(\kappa)<1-10\epsilon+\epsilon-1<0.
\end{eqnarray}
However, this contradicts the fact that $x_i^1>0$ for all $t\geq 0$. We conclude that $\sigma<T$. Now suppose 
that $\tau>\sigma$. Expanding $\Delta_i$, we get, for $t<\tau$,
\begin{equation}
\Delta_i(t)=\Delta_i(0)+\int_0^t \sum_{l\in\G} a(i,l)(\Delta_l(s)-\Delta_i(s))\,\d s
+ \int_0^t Ke [\delta_i(s)-\Delta_i(s)]\,\d s, 
\end{equation}
which can be rewritten as
\begin{equation}
\label{C30}
x_i^2(t)=x^1_i(t)-x^1_i(0)+x_i^2(0)-\int_0^t\sum_{l\in\G} a(i,l)[\Delta_l(s)-\Delta_i(s)]\,\d s
- \int_0^t Ke [\delta_i(s)-\Delta_i(s)]\,\d s.
\end{equation}
By \eqref{C30}, we obtain, for $t\in[0,\sigma]$,
\begin{equation}
\label{C31}
\begin{aligned}
x_i^2(t) &\leq 1-5\epsilon+ 2 \epsilon +  2\epsilon=1-\epsilon,\\
x_i^2(t) &\geq x_i^1(t)+5 \epsilon -2 \epsilon\geq 3 \epsilon,
\end{aligned}
\end{equation}
so $x_i^2 (t)\in[\epsilon,1-\epsilon]$ for $t\in[0,\sigma]$. But then $g(x_i^1(\sigma))=g(\xi)<g(x_i^2(t))$ by the definition of $\xi$. Hence we obtain a contradiction and conclude that $\tau\leq \sigma$. From \eqref{C31} we obtain that $\Delta_i(t)<0$ for all $t\in[0,\tau]$, which concludes the proof that $\Omega_0\subset\Omega_1$.
\end{proof}

\paragraph{\textbf{Step 4}.}

If $\bar{z}\in E\times E$ and $\Delta_i<0, \Delta_j=0$, $\Delta_k>0$ for some $i,j,k$, then
\begin{equation}
\label{oudlem}
\hat{\P}_{\bar{z}}\left(\exists\, t^*\in[0,1]\colon\,\Delta_i(t^*)<0,\Delta_j(t^*) \neq 0, \Delta_k(t^*)>0\right)>0.
\end{equation}

\begin{proof}
Suppose that  $\bar{z}$ satisfies $\Delta_i<0$, $\Delta_j=0$, $\Delta_k>0$. Define
\begin{equation}
\begin{aligned}
\Gamma_0 &= \{\bar{z}\in E\times E:\Delta_i<0,\Delta_j\neq0, \Delta_k>0\},\\
\Gamma_1 &= \{\bar{z}\in E\times E:\Delta_i<0,\ g(x_i^1)\neq g(x_i^2),\ \Delta_k>0\}.
\end{aligned}
\end{equation}
By Step 3 and path continuity, there exists a $T\in[0,\frac{1}{2}]$ such that $\P^{\bar{z}}\left(\bar{z}(T)\in\Gamma_1\right)>0$. By the Markov property, 
\begin{eqnarray} 
\label{mo1706}
\hat{\P}_{\tilde{z}}\big(\exists\, t^*\in [0,1]\colon\,\bar{z}(t^*)\in \Gamma_0\big)
\geq \int_{\Gamma_1} \hat{\P}_{\bar{z}}(\bar{z}(T)\in d\tilde{z})\,
\hat{\P}_{\tilde{z}}\big(\exists t^*\in[0,\tfrac{1}{2}]\colon\,\bar{z}(t^*)\in \Gamma_0\big).
\end{eqnarray}
By path continuity, we can find for $\bar{z}\in\Gamma_1$ a $t^\prime$ such that, for all $t\leq t^\prime$, $\Delta_i(t)<0$, $\Delta_k(t)>0$ and $g(x_i^1(t))\neq g(x_i^2(t))$. By Step 2 there exists a $t^*<t^\prime$ such that $\bar{z}(t^*)\in\Gamma_0$. Hence both probabilities in the integral on the right-hand side of \eqref{mo1706} are positive.
\end{proof}
		
\paragraph{\textbf{Step 5}.} 

Proof of Lemma \ref{C2}.

\begin{proof}
Suppose that \eqref{eqlem58} holds for the pair $i,j,$ and $a(j,k)>0$, but \eqref{eqlem58} fails for the pair $i,k$. This implies that there exist $\epsilon_0>0$, $\delta_0>0$ and a positive increasing sequence $(t_n)_{n\in\N}$ of times with $t_n\to\infty$, such that 
\begin{equation}
\label{oei}
\lim_{t\to \infty} \hat{\P}_{\bar{z}}\left(\{\Delta_i(t)<\epsilon_0, \Delta_k(t)>\epsilon_0\}
\cup\{\Delta_i(t)>\epsilon_0, \Delta_k(t)<\epsilon_0\}\right)>\delta_0.
\end{equation} 	
By compactness of $E\times E$, there exists a  subsequence $t_{n_k}$ such that $\CL(\bar{z}(t_{n_k}))$ converges and \eqref{oei} holds. Let $\bar{\nu}=\lim_{k\to\infty} \CL(\bar{z}(t_{n_k}))$. Then 
\begin{equation}
\label{consq}
\begin{aligned}
\bar{\nu}\left(\{\Delta_i<\epsilon_0, \Delta_j>\epsilon_0\}
\cup\{\Delta_i>\epsilon_0, \Delta_j<\epsilon_0\}\right)
&=0,\\
\bar{\nu}\left(\{\Delta_j<\epsilon_0, \Delta_k>\epsilon_0\}
\cup\{\Delta_j>\epsilon_0, \Delta_k<\epsilon_0\}\right)
&=0,\\
\bar{\nu}\left(\{\Delta_i<\epsilon_0, \Delta_k>\epsilon_0\}
\cup\{\Delta_i>\epsilon_0, \Delta_k<\epsilon_0\}\right)
&>\delta_0.
\end{aligned}
\end{equation}
Assume without loss of generality that $ \bar{\nu}\left(\{\Delta_i<\epsilon_0, \Delta_k>\epsilon_0\}\right)>0$. Hence, by \eqref{consq},
\begin{eqnarray}
\label{C37}
\bar{\nu}\left(\{\Delta_i<\epsilon_0, \Delta_k>\epsilon_0\}\right)
=\bar{\nu}\left(\{\Delta_i<\epsilon_0, \Delta_j\in(-\epsilon_0,\epsilon_0), \Delta_k>\epsilon_0\}\right)>0.
\end{eqnarray}
For each $\bar{z}\in \{\Delta_i<\epsilon_0, \Delta_j\in(-\epsilon_0,\epsilon_0), \Delta_k>\epsilon_0\}$, Step 4 implies that 
\begin{equation}
\hat{\P}_{\bar{z}}\left(\exists\, t^*\in[0,1]\colon\,\Delta_i(t^*)<0,\Delta_j(t^*) \neq 0, \Delta_k(t^*)>0\right)>0,
\end{equation}
and therefore, by \eqref{C37},
\begin{equation}
\hat{\P}_{\bar{\nu}}\left(\exists\, t^*\in[0,1]\colon\,\Delta_i(t^*)<0,\Delta_j(t^*) \neq 0, \Delta_k(t^*)>0\right)>0.
\end{equation}
By path continuity, we can find $T\in[0,1]$ and $\epsilon>0$ such that 
\begin{equation}\label{nut}
\hat{\P}_{\bar\nu}\left(\Delta_i(T)<-\epsilon,\ \left|\Delta_j(T)\right|,\ \Delta_k(T)>\epsilon \right)>0.
\end{equation}
Let $\bar{\mu}(t_n)=\CL(\bar{z}(t_n))$. Then, by the Markov property and \eqref{nut}, 
\begin{equation}
\begin{aligned}
&\liminf_{n\to\infty} \hat{\P}_{\bar{\mu}(t_n)}\left(\Delta_i(T)<-\epsilon,\ 
\left|\Delta_j(T)\right|>\epsilon,\ \Delta_k(T)>\epsilon \right)\\
&=\liminf_{n\to\infty} \hat{\P}_{\bar{\mu}(0)}\left(\Delta_i(T+t_n)<-\epsilon,\ 
\left|\Delta_j(T+t_n)\right|>\epsilon,\ \Delta_k(T+t_n)>\epsilon \right) > 0.
\end{aligned}
\end{equation}
However, this violates \eqref{eqlem58} for either $i,j$ or $j,k$. We conclude that \eqref{oei} fails and that \eqref{oei} holds for $i,k$. By irreducibility, \eqref{oei} holds for all $i,k\in\G$. 
\end{proof} 


\section{Bounded derivative of Lyapunov function}
\label{appe}

Recall from Section \ref{ss.cos} that 
\begin{equation}
h(t)= 2 \sum_{j \in \G} a(i,j)\,\hat\E\left[|\Delta_j(t)|\,
1_{\{\sign\Delta_i(t)\,\neq\,\sign\Delta_j(t)\}}\right] + 2Ke\,\hat\E\left[\big(|\Delta_i(t)| + |\delta_i(t)|\big)\,
1_{\{\sign\Delta_i(t)\,\neq\,\sign\delta_i(t)\}}\right].
\end{equation}
In this section we show that $h^\prime(t)$ exists for all $t>0$ and is bounded. To do so, we need to get rid of the indicator in the expectations.

Let 
\begin{equation}
h_{1,j}(t)=\hat\E\left[|\Delta_j(t)|\,
1_{\{\sign\Delta_i(t)\,\neq\,\sign\Delta_j(t)\}}\right] 
\end{equation}
and
\begin{equation}
h_2(t)=2Ke\,\hat\E\left[\big(|\Delta_i(t)| + |\delta_i(t)|\big)\,
1_{\{\sign\Delta_i(t)\,\neq\,\sign\delta_i(t)\}}\right].
\end{equation}
Then $h(t)=2\sum_{j\in\G}a(i,j)h_{1,j}(t)+h_2(t)$. We show that $h_{1,j}(t)$ is differentiable with bounded derivative for $j\in\G$. The proof of the differentiability of $h_2(t)$ is similar. Fix $t\geq 0$. Note that
\begin{equation}
\begin{aligned}
&\hat\E\left[|\Delta_j(t)|\,
1_{\{\sign\Delta_i(t)\,\neq\,\sign\Delta_j(t)\}}\right]\\
&=\hat\E\left[|\Delta_j(t)|\,1_{\{\sign\Delta_i(t)\,\neq\,\sign\Delta_j(t)\}} \mid |\Delta_i(t)|
\neq 0, |\Delta_i(t)|\neq 0 \right]\P\left(|\Delta_i(t)|\neq 0,|\Delta_j(t)|\neq  0\right)\\ 
&\qquad +\hat\E\left[|\Delta_j(t)|\,1_{\{\sign\Delta_i(t)\,\neq\,\sign\Delta_j(t)\}} \mid |\Delta_i(t)|= 0 
\text{ or } |\Delta_j(t)|= 0 \right]  \P\left(|\Delta_i(t)|= 0\text{ or  }|\Delta_j(t)|= 0 \right).
\end{aligned}
\end{equation}
Since $\Delta_i(t)$ and $\Delta_j(t)$ have zero local time, the second term vanishes and $\P(|\Delta_i(t)|\neq 0,|\Delta_j(t)|\neq  0)=1$. By continuity of $\Delta_i(t)$ and $\Delta_j(t)$, we can define sets
\begin{equation}
B_{n} = \big\{|\Delta_i(r)|>0 \text{ and } |\Delta_j(r)|>0, \forall r \in \mathcal{B}(t,\tfrac{1}{n})\big\}.
\end{equation}
Then
\begin{equation}
\cdots\subset B_n\subset B_{n+1}\subset B_{n+2}\subset \cdots,
\end{equation}
so
\begin{equation}
B_n=\bigcup_{i=0}^n B_i
\end{equation}
and we define 
\begin{equation}
B:=\bigcup_{i=0}^\infty B_n=\lim_{n\to\infty} B_n.
\end{equation}
Since $\P(|\Delta_i(t)|\neq 0,|\Delta_j(t)|\neq  0)=1$, it follows that $\P(B)=1$.

For each $B_n$, we have
\begin{equation}
\begin{aligned}
B_n=C_n\cup C_n^c, \qquad C_n = \big\{\omega\in B_n\colon\,  
1_{\{\sign\Delta_i(r)\,\neq\,\sign\Delta_j(r)\}}=1,\ \forall r\in\CB(t,\tfrac{1}{n})\big\},
\end{aligned}
\end{equation}
and, by the definition of $B_n$,
\begin{equation}
\cdots\subset C_n\subset C_{n+1}\subset C_{n+2}\subset \cdots \qquad \qquad
\cdots\subset C^c_n\subset C^c_{n+1}\subset C^c_{n+2}\subset \cdots
\end{equation}
Let $C=\bigcup_{i=0}^\infty C_i $ and $C^c=\bigcup_{i=0}^\infty C^c_i $ be such that $B=C\cup C^c$. Using \eqref{Ito1}, we obtain
\begin{equation}
\begin{aligned}
\frac{1}{s} \left(h_{1,j}(t+s)-h_{1,j}(t)\right)&=\frac{1}{s}\left[\hat\E\left[|\Delta_j(t)|\,
1_{\{\sign\Delta_i(t+s)\,\neq\,\sign\Delta_j(t+s)\}}\right]-\hat\E\left[|\Delta_j(t)|\,
1_{\{\sign\Delta_i(t)\,\neq\,\sign\Delta_j(t)\}}\right]\right]\\
&= \frac{1}{s}\left[\hat\E\left[|\Delta_j(t+s)|\,
1_{\{\sign\Delta_i(t+s)\,\neq\,\sign\Delta_j(t+s)\}}-|\Delta_j(t)|\,
1_{\{\sign\Delta_i(t)\,\neq\,\sign\Delta_j(t)\}} \mid B\right]\right]\\
&= \frac{1}{s}\left[\hat\E\left[|\Delta_j(t+s)|-|\Delta_j(t)| \mid C\right]\right]\P(C)\\
&= \frac{1}{s}\hat\E\left[\sum_{j\in\G} a(i,j) \int_{t}^{t+s}\sign(\Delta_i(r))
[\Delta_j(r)-\Delta_i(r)]\,\d r \mid  C\right]\,\P(C)\\
&\qquad + \frac{1}{s}\hat\E\left[\int_{t}^{t+s}\sign(\Delta_i(r))\left[\sqrt{g(x^1_i(r))} 
- \sqrt{g(x^2_i(r))}\,\right]\, \d w_i(r) \mid C\right]\,\P(C)\\
&\qquad +\frac{1}{s}\hat\E\left[ Ke\,\int_{t}^{t+s}\sign(\Delta_i(r))\big[\delta_i(r) 
- \Delta_i(r)\big]\,\d r \mid C\right]\,\P(C)\\
&= \sum_{j\in\G} a(i,j)\hat\E\left[\frac{1}{s} \int_{t}^{t+s}\sign(\Delta_i(r))
[\Delta_j(r)-\Delta_i(r)]\,\d r \mid C\right]\,\P(C)\\
&\qquad + \frac{1}{s}\hat\E\left[\int_{t}^{t+s}\sign(\Delta_i(r))\left[\sqrt{g(x^1_i(r))} 
- \sqrt{g(x^2_i(r))}\,\right]\, \d w_i(r) \mid C\right]\,\P(C)\\
&\qquad +\hat\E\left[ Ke\,\frac{1}{s}\int_{t}^{t+s}\sign(\Delta_i(r))\big[\delta_i(r) - \Delta_i(r)\big]\,
\d r \mid C\right]\,\P(C).
\end{aligned} 
\end{equation}
In the last equality, the first and third term are bounded, because $\Delta_i(t),\delta_i(t)$ and $\Delta_j(t)$ are continuous functions of $t$, and $\sign(\Delta_i)$ is constant since we conditioned on the set $C$. Therefore, letting $s\to 0$, it follows from the fundamental theorem of calculus that these terms are bounded. The second term is more involved. Since, on the set $C$,
\begin{equation}
\sign(\Delta_i(r))\left[\sqrt{g(x^1_i(r))} - \sqrt{g(x^2_i(r))}\,\right]
\end{equation} 
is a continuous function, we can rewrite the stochastic integral as a time-transformed Brownian motion: 
\begin{equation}
\begin{aligned}
\frac{1}{s}
&\hat\E\left[\int_{t}^{t+s}\sign(\Delta_i(r))\left[\sqrt{g(x^1_i(r))} - \sqrt{g(x^2_i(r))}\,\right]\, \d w_i(r) \mid C\right]\\
&=\frac{1}{s}\hat\E\left[W\left(\int_0^{t+s} \left[\sqrt{g(x^1_i(r))} - \sqrt{g(x^2_i(r))}\,\right]^2\d r\right)
-W\left(\int_0^t \left[\sqrt{g(x^1_i(r))} - \sqrt{g(x^2_i(r))}\,\right]^2\d r\right) \mid C\right].
\end{aligned}
\end{equation}
Since the normal distribution is differentiable with respect to its variance, we are done. 


\bibliography{seedbank}
\bibliographystyle{alpha}


\end{document}